\newcommand{\iprod}{\mathbin{\lrcorner}}
\theoremstyle{plain}
\newtheorem{thmy}{Theorem}
\newtheorem{cormy}[thmy]{Corollary}
\newtheorem{theorem}{Theorem}[section]
\newtheorem{corollary}[theorem]{Corollary}
\newtheorem{lemma}[theorem]{Lemma}
\newtheorem{proposition}[theorem]{Proposition}
\newtheorem{remark}[theorem]{Remark}
\theoremstyle{definition}
\newtheorem{definition}[theorem]{Definition}
\newtheorem{axiom}[theorem]{Axiom}
\newtheorem{conjecture}[theorem]{Conjecture}
\newtheorem{example}[theorem]{Example}
\newtheorem{exercise}[theorem]{Exercise}
\newtheorem{notation}[theorem]{Notation}
\numberwithin{equation}{section}
\chardef\@x10\chardef\@xv60
\def\tcitime{
\def\@time{%
  \@minute\time\@hour\@minute\divide\@hour\@xv
  \ifnum\@hour<\@x 0\fi\the\@hour:%
  \multiply\@hour\@xv\advance\@minute-\@hour
  \ifnum\@minute<\@x 0\fi\the\@minute
  }}%
\def\x@hyperref#1#2#3{%
   % Trun off various catcodes before reading parameter 4
   \catcode`\~ = 12
   \catcode`\% = 12
   \catcode`\$ = 12
   \catcode`\_ = 12
   \catcode`\# = 12
   \catcode`\& = 12
   \y@hyperref{#1}{#2}{#3}%
}
\def\y@hyperref#1#2#3#4{%
   #2\ref{#4}#3
   \catcode`\~ = 13
   \catcode`\% = 14
   \catcode`\$ = 3
   \catcode`\_ = 8
   \catcode`\# = 6
   \catcode`\& = 4
}
\def\QCTOpt[#1]#2{%
  \def\QCTOptB{#1}
  \def\QCTOptA{#2}
}
\def\QCTNOpt#1{%
  \def\QCTOptA{#1}
  \let\QCTOptB\empty
}
\def\Qct{%
  \@ifnextchar[{%
    \QCTOpt}{\QCTNOpt}
}
\def\QCBOpt[#1]#2{%
  \def\QCBOptB{#1}%
  \def\QCBOptA{#2}%
}
\def\QCBNOpt#1{%
  \def\QCBOptA{#1}%
  \let\QCBOptB\empty
}
\def\Qcb{%
  \@ifnextchar[{%
    \QCBOpt}{\QCBNOpt}%
}
\def\PrepCapArgs{%
  \ifx\QCBOptA\empty
    \ifx\QCTOptA\empty
      {}%
    \else
      \ifx\QCTOptB\empty
        {\QCTOptA}%
      \else
        [\QCTOptB]{\QCTOptA}%
      \fi
    \fi
  \else
    \ifx\QCBOptA\empty
      {}%
    \else
      \ifx\QCBOptB\empty
        {\QCBOptA}%
      \else
        [\QCBOptB]{\QCBOptA}%
      \fi
    \fi
  \fi
}
\def\GRAPHICSPS#1{%
 \ifcase\GRAPHICSTYPE%\GRAPHICSTYPE=0
   \special{ps: #1}%
 \or%\GRAPHICSTYPE=1
   \special{language "PS", include "#1"}%
%%%\or%\GRAPHICSTYPE=2
%%%  #1%
 \fi
}%
\def\graffile#1#2#3#4{%
%%% \ifnum\GRAPHICSTYPE=\tw@
%%%  %Following if using psfig
%%%  \@ifundefined{psfig}{\input psfig.tex}{}%
%%%  \psfig{file=#1, height=#3, width=#2}%
%%% \else
  %Following for all others
  % JCS - added BOXTHEFRAME, see below
    \bgroup
	   \@inlabelfalse
       \leavevmode
       \@ifundefined{bbl@deactivate}{\def~{\string~}}{\activesoff}%
        \raise -#4 \BOXTHEFRAME{%
           \hbox to #2{\raise #3\hbox to #2{\null #1\hfil}}}%
    \egroup
}%
\def\draftbox#1#2#3#4{%
 \leavevmode\raise -#4 \hbox{%
  \frame{\rlap{\protect\tiny #1}\hbox to #2%
   {\vrule height#3 width\z@ depth\z@\hfil}%
  }%
 }%
}%
\newif\ifwasdraft
\def\GRAPHIC#1#2#3#4#5{%
   \ifnum\draft=\@ne\draftbox{#2}{#3}{#4}{#5}%
   \else\graffile{#1}{#3}{#4}{#5}%
   \fi
}
\def\addtoLaTeXparams#1{%
    \edef\LaTeXparams{\LaTeXparams #1}}%
\newif\ifBoxFrame \BoxFramefalse
\newif\ifOverFrame \OverFramefalse
\newif\ifUnderFrame \UnderFramefalse
\def\BOXTHEFRAME#1{%
   \hbox{%
      \ifBoxFrame
         \frame{#1}%
      \else
         {#1}%
      \fi
   }%
}
\def\doFRAMEparams#1{\BoxFramefalse\OverFramefalse\UnderFramefalse\readFRAMEparams#1\end}%
\def\readFRAMEparams#1{%
 \ifx#1\end%
  \let\next=\relax
  \else
  \ifx#1i\dispkind=\z@\fi
  \ifx#1d\dispkind=\@ne\fi
  \ifx#1f\dispkind=\tw@\fi
  \ifx#1t\addtoLaTeXparams{t}\fi
  \ifx#1b\addtoLaTeXparams{b}\fi
  \ifx#1p\addtoLaTeXparams{p}\fi
  \ifx#1h\addtoLaTeXparams{h}\fi
  \ifx#1X\BoxFrametrue\fi
  \ifx#1O\OverFrametrue\fi
  \ifx#1U\UnderFrametrue\fi
  \ifx#1w
    \ifnum\draft=1\wasdrafttrue\else\wasdraftfalse\fi
    \draft=\@ne
  \fi
  \let\next=\readFRAMEparams
  \fi
 \next
 }%
\def\IFRAME#1#2#3#4#5#6{%
      \bgroup
      \let\QCTOptA\empty
      \let\QCTOptB\empty
      \let\QCBOptA\empty
      \let\QCBOptB\empty
      #6%
      \parindent=0pt
      \leftskip=0pt
      \rightskip=0pt
      \setbox0=\hbox{\QCBOptA}%
      \@tempdima=#1\relax
      \ifOverFrame
          % Do this later
          \typeout{This is not implemented yet}%
          \show\HELP
      \else
         \ifdim\wd0>\@tempdima
            \advance\@tempdima by \@tempdima
            \ifdim\wd0 >\@tempdima
               \setbox1 =\vbox{%
                  \unskip\hbox to \@tempdima{\hfill\GRAPHIC{#5}{#4}{#1}{#2}{#3}\hfill}%
                  \unskip\hbox to \@tempdima{\parbox[b]{\@tempdima}{\QCBOptA}}%
               }%
               \wd1=\@tempdima
            \else
               \textwidth=\wd0
               \setbox1 =\vbox{%
                 \noindent\hbox to \wd0{\hfill\GRAPHIC{#5}{#4}{#1}{#2}{#3}\hfill}\\%
                 \noindent\hbox{\QCBOptA}%
               }%
               \wd1=\wd0
            \fi
         \else
            \ifdim\wd0>0pt
              \hsize=\@tempdima
              \setbox1=\vbox{%
                \unskip\GRAPHIC{#5}{#4}{#1}{#2}{0pt}%
                \break
                \unskip\hbox to \@tempdima{\hfill \QCBOptA\hfill}%
              }%
              \wd1=\@tempdima
           \else
              \hsize=\@tempdima
              \setbox1=\vbox{%
                \unskip\GRAPHIC{#5}{#4}{#1}{#2}{0pt}%
              }%
              \wd1=\@tempdima
           \fi
         \fi
         \@tempdimb=\ht1
         %\advance\@tempdimb by \dp1
         \advance\@tempdimb by -#2
         \advance\@tempdimb by #3
         \leavevmode
         \raise -\@tempdimb \hbox{\box1}%
      \fi
      \egroup%
}%
\def\DFRAME#1#2#3#4#5{%
 \begin{center}
     \let\QCTOptA\empty
     \let\QCTOptB\empty
     \let\QCBOptA\empty
     \let\QCBOptB\empty
	 \vbox\bgroup
        \ifOverFrame 
           #5\QCTOptA\par
        \fi
        \GRAPHIC{#4}{#3}{#1}{#2}{\z@}
        \ifUnderFrame 
           \par#5\QCBOptA
        \fi
	 \egroup
 \end{center}%
 }%
\def\FFRAME#1#2#3#4#5#6#7{%
 %If float.sty loaded and float option is 'h', change to 'H'  (gp) 1998/09/05
  \@ifundefined{floatstyle}
    {%floatstyle undefined (and float.sty not present), no change
     \begin{figure}[#1]%
    }
    {%floatstyle DEFINED
	 \ifx#1h%Only the h parameter, change to H
      \begin{figure}[H]%
	 \else
      \begin{figure}[#1]%
	 \fi
	}
  \let\QCTOptA\empty
  \let\QCTOptB\empty
  \let\QCBOptA\empty
  \let\QCBOptB\empty
  \ifOverFrame
    #4
    \ifx\QCTOptA\empty
    \else
      \ifx\QCTOptB\empty
        \caption{\QCTOptA}%
      \else
        \caption[\QCTOptB]{\QCTOptA}%
      \fi
    \fi
    \ifUnderFrame\else
      \label{#5}%
    \fi
  \else
    \UnderFrametrue%
  \fi
  \begin{center}\GRAPHIC{#7}{#6}{#2}{#3}{\z@}\end{center}%
  \ifUnderFrame
    #4
    \ifx\QCBOptA\empty
      \caption{}%
    \else
      \ifx\QCBOptB\empty
        \caption{\QCBOptA}%
      \else
        \caption[\QCBOptB]{\QCBOptA}%
      \fi
    \fi
    \label{#5}%
  \fi
  \end{figure}%
 }%
\def\makeactives{
  \catcode`\"=\active
  \catcode`\;=\active
  \catcode`\:=\active
  \catcode`\'=\active
  \catcode`\~=\active
}
   \gdef\activesoff{%
      \def"{\string"}
      \def;{\string;}
      \def:{\string:}
      \def'{\string'}
      \def~{\string~}
      %\bbl@deactivate{"}%
      %\bbl@deactivate{;}%
      %\bbl@deactivate{:}%
      %\bbl@deactivate{'}%
    }
\def\FRAME#1#2#3#4#5#6#7#8{%
 \bgroup
 \ifnum\draft=\@ne
   \wasdrafttrue
 \else
   \wasdraftfalse%
 \fi
 \def\LaTeXparams{}%
 \dispkind=\z@
 \def\LaTeXparams{}%
 \doFRAMEparams{#1}%
 \ifnum\dispkind=\z@\IFRAME{#2}{#3}{#4}{#7}{#8}{#5}\else
  \ifnum\dispkind=\@ne\DFRAME{#2}{#3}{#7}{#8}{#5}\else
   \ifnum\dispkind=\tw@
    \edef\@tempa{\noexpand\FFRAME{\LaTeXparams}}%
    \@tempa{#2}{#3}{#5}{#6}{#7}{#8}%
    \fi
   \fi
  \fi
  \ifwasdraft\draft=1\else\draft=0\fi{}%
  \egroup
 }%
\def\TEXUX#1{"texux"}
\def\func#1{\mathop{\rm #1}\nolimits}%
\long\def\QQQ#1#2{%
     \long\expandafter\def\csname#1\endcsname{#2}}%
\long\def\QQA#1#2{}%
\def\QTR#1#2{{\csname#1\endcsname #2}}%(gp) Is this the best?
\def\EXPAND#1[#2]#3{}%
\def\NOEXPAND#1[#2]#3{}%
\def\LaTeXparent#1{}%
\def\ChildStyles#1{}%
\def\ChildDefaults#1{}%
\def\QTagDef#1#2#3{}%
  \providecommand{\UNICODE}[2][]{\protect\rule{.1in}{.1in}}
  \providecommand{\U}[1]{\protect\rule{.1in}{.1in}}
\def\QQfnmark#1{\footnotemark}
 \def\abstract{%
  \if@twocolumn
   \section*{Abstract (Not appropriate in this style!)}%
   \else \small 
   \begin{center}{\bf Abstract\vspace{-.5em}\vspace{\z@}}\end{center}%
   \quotation 
   \fi
  }%
   \def\registered{\relax\ifmmode{}\r@gistered
                    \else$\m@th\r@gistered$\fi}%
 \def\r@gistered{^{\ooalign
  {\hfil\raise.07ex\hbox{$\scriptstyle\rm\text{R}$}\hfil\crcr
  \mathhexbox20D}}}}{}%
\def\TEXTsymbol#1{\mbox{$#1$}}%
\newdimen\theight
\def\newfmtname{LaTeX2e}
  \DeclareOldFontCommand{\rm}{\normalfont\rmfamily}{\mathrm}
  \DeclareOldFontCommand{\sf}{\normalfont\sffamily}{\mathsf}
  \DeclareOldFontCommand{\tt}{\normalfont\ttfamily}{\mathtt}
  \DeclareOldFontCommand{\bf}{\normalfont\bfseries}{\mathbf}
  \DeclareOldFontCommand{\it}{\normalfont\itshape}{\mathit}
  \DeclareOldFontCommand{\sl}{\normalfont\slshape}{\@nomath\sl}
  \DeclareOldFontCommand{\sc}{\normalfont\scshape}{\@nomath\sc}
\def\alpha{{\Greekmath 010B}}%
\def\beta{{\Greekmath 010C}}%
\def\gamma{{\Greekmath 010D}}%
\def\delta{{\Greekmath 010E}}%
\def\epsilon{{\Greekmath 010F}}%
\def\zeta{{\Greekmath 0110}}%
\def\eta{{\Greekmath 0111}}%
\def\theta{{\Greekmath 0112}}%
\def\iota{{\Greekmath 0113}}%
\def\kappa{{\Greekmath 0114}}%
\def\lambda{{\Greekmath 0115}}%
\def\mu{{\Greekmath 0116}}%
\def\nu{{\Greekmath 0117}}%
\def\xi{{\Greekmath 0118}}%
\def\pi{{\Greekmath 0119}}%
\def\rho{{\Greekmath 011A}}%
\def\sigma{{\Greekmath 011B}}%
\def\tau{{\Greekmath 011C}}%
\def\upsilon{{\Greekmath 011D}}%
\def\phi{{\Greekmath 011E}}%
\def\chi{{\Greekmath 011F}}%
\def\psi{{\Greekmath 0120}}%
\def\omega{{\Greekmath 0121}}%
\def\varepsilon{{\Greekmath 0122}}%
\def\vartheta{{\Greekmath 0123}}%
\def\varpi{{\Greekmath 0124}}%
\def\varrho{{\Greekmath 0125}}%
\def\varsigma{{\Greekmath 0126}}%
\def\varphi{{\Greekmath 0127}}%
\def\nabla{{\Greekmath 0272}}
\def\FindBoldGroup{%
   {\setbox0=\hbox{$\mathbf{x\global\edef\theboldgroup{\the\mathgroup}}$}}%
}
\def\Greekmath#1#2#3#4{%
    \if@compatibility
        \ifnum\mathgroup=\symbold
           \mathchoice{\mbox{\boldmath$\displaystyle\mathchar"#1#2#3#4$}}%
                      {\mbox{\boldmath$\textstyle\mathchar"#1#2#3#4$}}%
                      {\mbox{\boldmath$\scriptstyle\mathchar"#1#2#3#4$}}%
                      {\mbox{\boldmath$\scriptscriptstyle\mathchar"#1#2#3#4$}}%
        \else
           \mathchar"#1#2#3#4% 
        \fi 
    \else 
        \FindBoldGroup
        \ifnum\mathgroup=\theboldgroup % For 2e
           \mathchoice{\mbox{\boldmath$\displaystyle\mathchar"#1#2#3#4$}}%
                      {\mbox{\boldmath$\textstyle\mathchar"#1#2#3#4$}}%
                      {\mbox{\boldmath$\scriptstyle\mathchar"#1#2#3#4$}}%
                      {\mbox{\boldmath$\scriptscriptstyle\mathchar"#1#2#3#4$}}%
        \else
           \mathchar"#1#2#3#4% 
        \fi     	    
	  \fi}
\newif\ifGreekBold  \GreekBoldfalse
\let\SAVEPBF=\pbf
\def\pbf{\GreekBoldtrue\SAVEPBF}%
  \newcounter{equationnumber}  
  \def\mathletters{%
     \addtocounter{equation}{1}
     \edef\@currentlabel{\theequation}%
     \setcounter{equationnumber}{\c@equation}
     \setcounter{equation}{0}%
     \edef\theequation{\@currentlabel\noexpand\alph{equation}}%
  }
    \def\BibTeX{{\rm B\kern-.05em{\sc i\kern-.025em b}\kern-.08em
                 T\kern-.1667em\lower.7ex\hbox{E}\kern-.125emX}}}{}%
\def\AmS{{\protect\usefont{OMS}{cmsy}{m}{n}%
                A\kern-.1667em\lower.5ex\hbox{M}\kern-.125emS}}}{}%
\def\@@eqncr{\let\@tempa\relax
    \ifcase\@eqcnt \def\@tempa{& & &}\or \def\@tempa{& &}%
      \else \def\@tempa{&}\fi
     \@tempa
     \if@eqnsw
        \iftag@
           \@taggnum
        \else
           \@eqnnum\stepcounter{equation}%
        \fi
     \fi
     \global\tag@false
     \global\@eqnswtrue
     \global\@eqcnt\z@\cr}
\def\TCItag{\@ifnextchar*{\@TCItagstar}{\@TCItag}}
\def\@TCItag#1{%
    \global\tag@true
    \global\def\@taggnum{(#1)}}
\def\@TCItagstar*#1{%
    \global\tag@true
    \global\def\@taggnum{#1}}
\def\binom#1#2{{#1 \choose #2}}%
\def\dsum{\mathop{\displaystyle \sum }}%
\def\ExitTCILatex{\makeatother }
\let\DOTSI\relax
\def\RIfM@{\relax\ifmmode}%
\def\FN@{\futurelet\next}%
\def\iint{\DOTSI\intno@\tw@\FN@\ints@}%
\def\iiint{\DOTSI\intno@\thr@@\FN@\ints@}%
\def\iiiint{\DOTSI\intno@4 \FN@\ints@}%
\def\idotsint{\DOTSI\intno@\z@\FN@\ints@}%
\def\ints@{\findlimits@\ints@@}%
\newif\iflimtoken@
\newif\iflimits@
\def\findlimits@{\limtoken@true\ifx\next\limits\limits@true
 \else\ifx\next\nolimits\limits@false\else
 \limtoken@false\ifx\ilimits@\nolimits\limits@false\else
 \ifinner\limits@false\else\limits@true\fi\fi\fi\fi}%
\def\multint@{\int\ifnum\intno@=\z@\intdots@                          %1
 \else\intkern@\fi                                                    %2
 \ifnum\intno@>\tw@\int\intkern@\fi                                   %3
 \ifnum\intno@>\thr@@\int\intkern@\fi                                 %4
 \int}%                                                               %5
\def\multintlimits@{\intop\ifnum\intno@=\z@\intdots@\else\intkern@\fi
 \ifnum\intno@>\tw@\intop\intkern@\fi
 \ifnum\intno@>\thr@@\intop\intkern@\fi\intop}%
\def\intic@{%
    \mathchoice{\hskip.5em}{\hskip.4em}{\hskip.4em}{\hskip.4em}}%
\def\negintic@{\mathchoice
 {\hskip-.5em}{\hskip-.4em}{\hskip-.4em}{\hskip-.4em}}%
\def\ints@@{\iflimtoken@                                              %1
 \def\ints@@@{\iflimits@\negintic@
   \mathop{\intic@\multintlimits@}\limits                             %2
  \else\multint@\nolimits\fi                                          %3
  \eat@}%                                                             %4
 \else                                                                %5
 \def\ints@@@{\iflimits@\negintic@
  \mathop{\intic@\multintlimits@}\limits\else
  \multint@\nolimits\fi}\fi\ints@@@}%
\def\intkern@{\mathchoice{\!\!\!}{\!\!}{\!\!}{\!\!}}%
\def\plaincdots@{\mathinner{\cdotp\cdotp\cdotp}}%
\def\intdots@{\mathchoice{\plaincdots@}%
 {{\cdotp}\mkern1.5mu{\cdotp}\mkern1.5mu{\cdotp}}%
 {{\cdotp}\mkern1mu{\cdotp}\mkern1mu{\cdotp}}%
 {{\cdotp}\mkern1mu{\cdotp}\mkern1mu{\cdotp}}}%
\def\RIfM@{\relax\protect\ifmmode}
\def\text{\RIfM@\expandafter\text@\else\expandafter\mbox\fi}
\let\nfss@text\text
\def\text@#1{\mathchoice
   {\textdef@\displaystyle\f@size{#1}}%
   {\textdef@\textstyle\tf@size{\firstchoice@false #1}}%
   {\textdef@\textstyle\sf@size{\firstchoice@false #1}}%
   {\textdef@\textstyle \ssf@size{\firstchoice@false #1}}%
   \glb@settings}
\def\textdef@#1#2#3{\hbox{{%
                    \everymath{#1}%
                    \let\f@size#2\selectfont
                    #3}}}
\newif\iffirstchoice@
\def\Let@{\relax\iffalse{\fi\let\\=\cr\iffalse}\fi}%
\def\vspace@{\def\vspace##1{\crcr\noalign{\vskip##1\relax}}}%
\def\multilimits@{\bgroup\vspace@\Let@
 \baselineskip\fontdimen10 \scriptfont\tw@
 \advance\baselineskip\fontdimen12 \scriptfont\tw@
 \lineskip\thr@@\fontdimen8 \scriptfont\thr@@
 \lineskiplimit\lineskip
 \vbox\bgroup\ialign\bgroup\hfil$\m@th\scriptstyle{##}$\hfil\crcr}%
\def\Sb{_\multilimits@}%
\def\endSb{\crcr\egroup\egroup\egroup}%
\def\Sp{^\multilimits@}%
\newdimen\ex@
\def\rightarrowfill@#1{$#1\m@th\mathord-\mkern-6mu\cleaders
 \hbox{$#1\mkern-2mu\mathord-\mkern-2mu$}\hfill
 \mkern-6mu\mathord\rightarrow$}%
\def\leftarrowfill@#1{$#1\m@th\mathord\leftarrow\mkern-6mu\cleaders
 \hbox{$#1\mkern-2mu\mathord-\mkern-2mu$}\hfill\mkern-6mu\mathord-$}%
\def\leftrightarrowfill@#1{$#1\m@th\mathord\leftarrow
\mkern-6mu\cleaders
 \hbox{$#1\mkern-2mu\mathord-\mkern-2mu$}\hfill
 \mkern-6mu\mathord\rightarrow$}%
\def\overrightarrow{\mathpalette\overrightarrow@}%
\def\overrightarrow@#1#2{\vbox{\ialign{##\crcr\rightarrowfill@#1\crcr
 \noalign{\kern-\ex@\nointerlineskip}$\m@th\hfil#1#2\hfil$\crcr}}}%
\def\overleftarrow{\mathpalette\overleftarrow@}%
\def\overleftarrow@#1#2{\vbox{\ialign{##\crcr\leftarrowfill@#1\crcr
 \noalign{\kern-\ex@\nointerlineskip}$\m@th\hfil#1#2\hfil$\crcr}}}%
\def\overleftrightarrow{\mathpalette\overleftrightarrow@}%
\def\overleftrightarrow@#1#2{\vbox{\ialign{##\crcr
   \leftrightarrowfill@#1\crcr
 \noalign{\kern-\ex@\nointerlineskip}$\m@th\hfil#1#2\hfil$\crcr}}}%
\def\underrightarrow{\mathpalette\underrightarrow@}%
\def\underrightarrow@#1#2{\vtop{\ialign{##\crcr$\m@th\hfil#1#2\hfil
  $\crcr\noalign{\nointerlineskip}\rightarrowfill@#1\crcr}}}%
\def\underleftarrow{\mathpalette\underleftarrow@}%
\def\underleftarrow@#1#2{\vtop{\ialign{##\crcr$\m@th\hfil#1#2\hfil
  $\crcr\noalign{\nointerlineskip}\leftarrowfill@#1\crcr}}}%
\def\underleftrightarrow{\mathpalette\underleftrightarrow@}%
\def\underleftrightarrow@#1#2{\vtop{\ialign{##\crcr$\m@th
  \hfil#1#2\hfil$\crcr
 \noalign{\nointerlineskip}\leftrightarrowfill@#1\crcr}}}%
\def\qopnamewl@#1{\mathop{\operator@font#1}\nlimits@}
\let\nlimits@\displaylimits
\def\setboxz@h{\setbox\z@\hbox}
\def\varlim@#1#2{\mathop{\vtop{\ialign{##\crcr
 \hfil$#1\m@th\operator@font lim$\hfil\crcr
 \noalign{\nointerlineskip}#2#1\crcr
 \noalign{\nointerlineskip\kern-\ex@}\crcr}}}}
 \def\rightarrowfill@#1{\m@th\setboxz@h{$#1-$}\ht\z@\z@
  $#1\copy\z@\mkern-6mu\cleaders
  \hbox{$#1\mkern-2mu\box\z@\mkern-2mu$}\hfill
  \mkern-6mu\mathord\rightarrow$}
\def\leftarrowfill@#1{\m@th\setboxz@h{$#1-$}\ht\z@\z@
  $#1\mathord\leftarrow\mkern-6mu\cleaders
  \hbox{$#1\mkern-2mu\copy\z@\mkern-2mu$}\hfill
  \mkern-6mu\box\z@$}
\def\projlim{\qopnamewl@{proj\,lim}}
\def\injlim{\qopnamewl@{inj\,lim}}
\def\varinjlim{\mathpalette\varlim@\rightarrowfill@}
\def\varprojlim{\mathpalette\varlim@\leftarrowfill@}
\def\varliminf{\mathpalette\varliminf@{}}
\def\varliminf@#1{\mathop{\underline{\vrule\@depth.2\ex@\@width\z@
   \hbox{$#1\m@th\operator@font lim$}}}}
\def\varlimsup{\mathpalette\varlimsup@{}}
\def\varlimsup@#1{\mathop{\overline
  {\hbox{$#1\m@th\operator@font lim$}}}}
\def\align{\@verbatim \frenchspacing\@vobeyspaces \@alignverbatim
You are using the "align" environment in a style in which it is not defined.}
\let\csname endalign*\endcsname =\endtrivlist
\def\alignat{\@verbatim \frenchspacing\@vobeyspaces \@alignatverbatim
You are using the "alignat" environment in a style in which it is not defined.}
\let\csname endalignat*\endcsname =\endtrivlist
\def\xalignat{\@verbatim \frenchspacing\@vobeyspaces \@xalignatverbatim
You are using the "xalignat" environment in a style in which it is not defined.}
\let\csname endxalignat*\endcsname =\endtrivlist
\def\gather{\@verbatim \frenchspacing\@vobeyspaces \@gatherverbatim
You are using the "gather" environment in a style in which it is not defined.}
\let\csname endgather*\endcsname =\endtrivlist
\def\multiline{\@verbatim \frenchspacing\@vobeyspaces \@multilineverbatim
You are using the "multiline" environment in a style in which it is not defined.}
\let\csname endmultiline*\endcsname =\endtrivlist
\def\arrax{\@verbatim \frenchspacing\@vobeyspaces \@arraxverbatim
You are using a type of "array" construct that is only allowed in AmS-LaTeX.}
\def\tabulax{\@verbatim \frenchspacing\@vobeyspaces \@tabulaxverbatim
You are using a type of "tabular" construct that is only allowed in AmS-LaTeX.}
\let\csname endarrax*\endcsname =\endtrivlist
\let\csname endtabulax*\endcsname =\endtrivlist
 \def\endequation{%
     \ifmmode\ifinner % FLEQN hack
      \iftag@
        \addtocounter{equation}{-1} % undo the increment made in the begin part
        $\hfil
           \displaywidth\linewidth\@taggnum\egroup \endtrivlist
        \global\tag@false
        \global\@ignoretrue   
      \else
        $\hfil
           \displaywidth\linewidth\@eqnnum\egroup \endtrivlist
        \global\tag@false
        \global\@ignoretrue 
      \fi
     \else   
      \iftag@
        \addtocounter{equation}{-1} % undo the increment made in the begin part
        \eqno \hbox{\@taggnum}
        \global\tag@false%
        $$\global\@ignoretrue
      \else
        \eqno \hbox{\@eqnnum}% $$ BRACE MATCHING HACK
        $$\global\@ignoretrue
      \fi
     \fi\fi
 } 
 \newif\iftag@ \tag@false
 \def\TCItag{\@ifnextchar*{\@TCItagstar}{\@TCItag}}
 \def\@TCItag#1{%
     \global\tag@true
     \global\def\@taggnum{(#1)}}
 \def\@TCItagstar*#1{%
     \global\tag@true
     \global\def\@taggnum{#1}}
     \def\tag{\@ifnextchar*{\@tagstar}{\@tag}}
     \def\@tag#1{%
         \global\tag@true
         \global\def\@taggnum{(#1)}}
     \def\@tagstar*#1{%
         \global\tag@true
         \global\def\@taggnum{#1}}
\begin{document}
\title[Spectral bundles]{Spectral bundles on Abelian varieties, complex
projective spaces and Grassmannians}
\author{Ching-Hao Chang}
\address{Xiamen University Malaysia, Selangor, Malaysia}
\email{chinghao.chang@xmu.edu.my}
\author{Jih-Hsin Cheng}
\address{Institute of Mathematics, Academia Sinica and National Center for
Theoretical Sciences,\\
\hspace*{10pt} Taipei, Taiwan, R.O.C.}
\email{cheng@math.sinica.edu.tw}
\author{I-Hsun Tsai}
\address{Department of Mathematics, National Taiwan University, Taipei,
Taiwan, R.O.C.}
\email{ihtsai@math.ntu.edu.tw}
\subjclass{Primary 32J25; Secondary 14K25, 32L10, 14K30, 14F25}
\keywords{Abelian variety, Picard variety, contraction operator,
Bochner-Kodaira identities, Hirzebruch-Riemann-Roch theorem, Hermitian
symmetric spaces.}
\thanks{}

\begin{abstract}
In this paper we study the spectral analysis of Bochner-Kodaira Laplacians
on an Abelian variety, complex projective space $\mathbb{P}^{n}$ and a
Grassmannian with a holomorphic line bundle. By imitating the method of
creation and annihilation operators in physics, we convert those
eigensections (of the \textquotedblleft higher energy" level) into
holomorphic sections (of the \textquotedblleft lowest energy" level). This
enables us to endow these spectral bundles, which are defined over the dual
Abelian variety, with natural holomorphic structure. Using this conversion
expressed in a concrete way, all the higher eigensections are explicitly
expressible using holomorphic sections formed by theta functions. Moreover,
we give an explicit formula for the dimension of the space of higher-level
eigensections on $\mathbb{P}^{n}$ through vanishing theorems and the
Hirzebruch-Riemann-Roch theorem. These give a theoretical study related to
some problems newly discussed by string theorists using numerical analysis.
Some partial results on Grassmannians are proved and some directions for
future research are indicated.
\end{abstract}

\maketitle

%\address{}

\section{\textbf{Introduction and Statement of the results}}

\noindent \hspace*{12pt} Let $L$ be an ample line bundle on an Abelian
variety $M.$ In our previous paper \cite{CCT24} we study the direct image
bundle $E$ on the Picard variety Pic$^{0}(M)$ of $M$, whose fiber $E_{\eta
}, $ $\eta \in $Pic$^{0}(M),$ is the vector space spanned by the holomorphic
sections of the line bundle $L\otimes \eta .$ This $E$ can be considered as
a spectral bundle of the \textquotedblleft lowest energy" level. In this
paper we study the spectral bundles of the \textquotedblleft higher energy"
level. One main result is Theorem \ref{T-main}, based on Theorem \ref{TeoA}
and Corollary \ref{TeoB} below. Theorem \ref{TeoA} allows us to convert
eigensections of the \textquotedblleft higher energy" level to holomorphic
sections of the \textquotedblleft lowest energy" level. We also analyze the
spectrum for the case of complex projective space (see Theorem \ref{TeoC}
below) and compute the dimension of eigensections based on the
Hirzebruch-Riemann-Roch theorem and a new type of vanishing theorems \cite%
{Mani97} that are particularly suitable for our need here (see Corollary \ref%
{TeoD} and Theorem \ref{TeoE} below). Furthermore, we have a partial result
on Grassmannians (see Theorem \ref{T-7-1}). All the auxiliary results
mentioned above appear to be of independent interest.

Our study into this research subject was influenced by the work of Prieto (%
\cite{Prieto06(b)}, cf. \cite{Prieto06(a)}), in which he studied similar
questions on compact Riemann surfaces. In \cite{CCT22} and \cite{CCT24} we
computed the \textit{full curvature} of the \textquotedblleft spectral
bundle" (formed by the space of holomorphic sections of line bundles) and
used theta functions (realization of holomorphic sections) for explicit
computations in place of his approach. In fact, we also obtain a globally
well-defined metric on the family of line bundles $L_{\hat{\mu}}$ with the
parameter [$\hat{\mu}]$ $\in $ Pic$^{0}(M)$ which naturally induces an $%
L^{2} $-metric on certain direct image bundle $E$ (see \cite{CCT24} or (\ref%
{EqB}) below with $q=0).$ In this paper we work on the spectral analysis of
the \textquotedblleft higher energy" level for Abelian varieties and complex
projective space. For Grassmannians, we work on the first and second energy
levels.

Prieto's idea is to use annihilation/creation operators to lower/raise
energy levels. We try to make this idea work also in higher dimensions. For
this purpose, our idea is to introduce suitable \textit{contraction operators%
} (see below) in deducing new types of Bochner-Kodaira identities whose
formulation heavily uses these operators. In fact, the computational details
reveal their complexity most often in dimensions higher than one. These
operators become quite trivial in the one-dimensional case. Another new
aspect is that the higher energy eigensections have values in \textit{%
symmetrized} tangent or cotangent bundles, which we realized only after
explicit computations\footnote{%
In view of the recent work \cite{CH24} this phenomenon (the appearance of
symmetrized tangent bundles) might admit a conceptual explanation. See also 
\textit{Notes added in proof} near the end of this Introduction.}. Our
results can yield explicit formulas for high-energy eigensections based on
holomorphic sections which are easily understood in the case of Abelian
varieties (via theta functions). In the $\mathbb{P}^{n}$ case, the part of
the holomorphic sections is less obvious because the bundle involves $%
L\otimes (\odot ^{q}T),$ whose holomorphic sections do not seem easily
constructible in an explicit way. Fortunately, we can prove a dimension
formula for these holomorphic sections; see Theorem \ref{TeoE} below.

\noindent \hspace*{12pt} We remark that many of the computational results
(e.g. Bochner-Kodaira type identities of the third order, raising or
lowering energies via contraction operators) do require the precise forms as
proved here in order to be usable. We do not have \textit{a priori} or 
\textit{conceptual }explanation of these results. This may partly account
for the length of this paper. Nevertheless, we have tried to reduce the
repetition to a minimum. For instance, some proofs for the Abelian variety
are omitted and only indicated as simplified versions of those as in the $%
\mathbb{P}^{n}$ case. Whenever the precise coefficients matter, we choose to
point out where the difference between Abelian variety and $\mathbb{P}^{n}$
lies.

\noindent \hspace*{12pt} Many of our operations work for a K\"{a}hler
manifold $N.$ Let $E\rightarrow N$ be a complex vector bundle over $N$. For
any $r,s\in \{0\}\cup \mathbb{N}$ we denote by $\Omega ^{r,s}(N,E)$ the
space of $(r,s)$-forms on $N$ with values in $E$. Let $T$ (resp. $T^{\ast })$
be the holomorphic tangent (resp. cotangent) bundle of $N.$ \ 

Let $\odot ^{q}T^{\ast }$ (resp. $\odot ^{q}T)$ be the symmetric part of $%
\otimes ^{q}T^{\ast }$ (resp. $\otimes ^{q}T)$. Let $L$ be a holomorphic
Hermitian line bundle over $N.$ On $L\otimes ^{q}T$ and $L\otimes
^{q}T^{\ast },$ there are connections $\nabla ^{q}$ and $\nabla ^{-q}$
naturally induced from the Chern connection $\nabla ^{L}$ on $L$ and the
Levi-Civita connection $\nabla $ on $T$. We can then define associated
operators $\partial ^{q}$, $\partial ^{-q}$, $\overline{\partial }^{q}$, $%
\overline{\partial }^{-q}$ ((\ref{2-1-1} ) - (\ref{2-1-4})), their
contraction operators $I^{q}$, $I^{\,-q}$, $\overline{I}^{\,q}$, $\overline{I%
}^{\,-q}$ ((\ref{2-2-1}) - (\ref{2-2-4})) and the Laplacians $\Delta ^{q}$, $%
\Delta _{q}$, $\Delta ^{-q}$ and $\Delta _{-q}$ ((\ref{partial q star}) -(%
\ref{Delta q})). See Section 2 for more details. For integers $k,q$ we
define \textquotedblleft annihilation operators" $A_{k}:\Omega ^{0,0}\left(
N,L\right) $ $\rightarrow $ $\Omega ^{0,0}(N,$ $L\otimes (\odot ^{k}T))$ and
\textquotedblleft creation operators" $A_{k}^{\#}:\Omega ^{0,0}\left( N,%
\text{ }L\otimes (\odot ^{q}T)\right) $ $\rightarrow $ $\Omega ^{0,0}\left(
N,\text{ }L\otimes (\odot ^{k}T)\right) $ by%
\begin{eqnarray}
&& A_{k}s^{0}\ {:=}\left( I^{-k}\partial ^{-k}\right) ^{\ast }\cdot \cdot
\cdot \left( I^{-1}\partial ^{-1)}\right) ^{\ast }s^{0},\text{ }0<k\leq q,%
\text{ }A_{0}:=Id,  \label{ACs} \\
&& A_{k}^{\#}t^{-q} {:=}\left( I^{-k-1}\partial ^{-k-1}\right) \left(
I^{-k-2}\partial ^{-k-2}\right) \cdot \cdot \cdot \left( I^{-q}\partial
^{-q}\right) t^{-q},\text{ }0\leq k<q,\text{ }A_{q}^{\#}:=Id,\text{
respectively.}  \notag
\end{eqnarray}

%\%
%
%For integers $j,k,q$ such that $0\leq j<k<q,$ we define \textquotedblleft
%annihilation operators" $A_{k,j}$ $:$ $\Omega ^{0,0}\left( N,L\otimes (\odot
%^{j}T)\right) $ $\rightarrow $ $\Omega ^{0,0}(N,$ $L\otimes (\odot ^{k}T))$
%and \textquotedblleft creation operators" $A_{k,q}^{\ast }$ $:$ $\Omega
%^{0,0}\left( N,\text{ }L\otimes (\odot ^{q}T)\right) $ $\rightarrow $ $%
%\Omega ^{0,0}\left( N,\text{ }L\otimes (\odot ^{k}T)\right) $ by%
%\begin{eqnarray}
%A_{k,j}s^{-j} &:&=\left( I^{-k}\partial ^{-k}\right) ^{\ast }...\left(
%I^{-(j+1)}\partial ^{-(j+1)}\right) ^{\ast }s^{-j},  \label{AC} \\
%A_{k,q}^{\ast }t^{-q} &:&=\left( I^{-k-1}\partial ^{-k-1}\right) \left(
%I^{-k-2}\partial ^{-k-2}\right) ...\left( I^{-q}\partial ^{-q}\right) t^{-q}%
%\text{ respectively.}  \notag
%\end{eqnarray}
%
%\%

\noindent \hspace*{12pt} To state the results let us first restrict
ourselves to an Abelian variety $M$. We equip $M$ with a special K\"{a}hler
metric $\omega =\frac{2\pi }{B}\omega ^{h}$ where $B>0$ and $\omega ^{h}$ is
a Hodge metric. After a linear change of coordinates, we express the K\"{a}%
hler metric as 
\begin{equation*}
\omega =i\overset{n}{\underset{\alpha =1}{\sum }}\,dz\wedge d\overline{z}^{%
\overline{\alpha }}.
\end{equation*}%
Let $T$ and $T^{\ast }$ be the holomorphic tangent and cotangent bundles of $%
M$ respectively. Let $\nabla $ be the Levi-Civita connection on $T$. Let $L$
be an holomorphic line bundle on $M$ with $c_{1}(L)=[\omega ^{h}]$. On this $%
L$ we equip it with a Hermitian metric $h_{L}$ and the Chern connection $%
\nabla ^{L}$ so that the curvature $\Theta =-iB\omega $. In fact we can
obtain a globally well-defined metric on the family of line bundles $L_{\hat{%
\mu}}$ with the parameter [$\hat{\mu}]$ $\in $ Pic$^{0}(M)$ (see the second
paragraph above) so that the above correspondence (\ref{ACs}) can be treated
within a natural family-version framework.

Denote by $E_{qB}$ (or $E_{qB}(L)\subset $ $\Omega ^{0,0}\left( M,L\right) )$
(resp. $E_{(q-k)B}\subset \Omega ^{0,0}(M,$ $L\otimes (\odot ^{k}T)))$ the
vector space of all eigensections of $\Delta _{0}$ (resp. $\Delta _{-k})$
with the eigenvalues $qB$ (resp. $(q-k)B)$. Note that the notations $%
E_{(q-k)B}$, $0\leq k\leq q$, depend on the bundles which are suppressed for
the sake of simplicity. Let $H^{0}(M,L\otimes (\odot ^{q}T))$ ($\subset $ $%
\Omega ^{0.0}\left( M,L\otimes (\odot ^{q}T)\right) $ denote the space of
holomorphic sections of $L\otimes (\odot ^{q}T)$ over $M.$ With these
notations and settings, we have a one-to-one correspondence between $E_{qB}$
and $H^{0}(M,L\otimes (\odot ^{q}T))$ among others.

\begin{thmy}
\label{TeoA} With the notation above, we have the correspondence between
certain eigensections of high-low energies:

%\begin{enumerate}

$i)$ $A_{q}$ $:$ $E_{qB}$ $\rightarrow $ $E_{0}$ $=$ $H^{0}(M,$ $L\otimes
(\odot ^{q}T))$ of (\ref{ACs}) is a linear isomorphism with the inverse ($%
q!B^{q})^{-1}A_{0}^{\#}.$

\noindent More precisely, for integers $k,q$ such that $0\leq k\leq q,$ we
have

$ii)$ Let $0\neq s^{0}\in E_{qB}$ ($\subset $ $\Omega ^{0,0}\left( M,\text{ }%
L\right) )$. Then $0\neq s^{-k}:=A_{k}s^{0}$ $\in $ $E_{(q-k)B}$ \ ($\subset 
$ $\Omega ^{0,0}(M,$ $L\otimes (\odot ^{k}T)))$ (note $A_{k=0}$ $=$ $Id).$
In particular $0\neq s^{-q}$ $=$ $A_{q}s^{0}$ $\in $ $E_{0}$ $=$ $H^{0}(M,$ $%
L\otimes (\odot ^{q}T))$ is holomorphic.

$iii)$ %
%
%
%\end{enumerate}
Let $0\neq t^{-q}\in H^{0}(M,L\otimes (\odot ^{q}T))$ be a holomorphic
section. Then $0\neq t^{-k}:=A_{k}^{\#}t^{-q}$ $\in $ $E_{(q-k)B}$ (note $%
A_{k=q}^{\#}$ $=$ $Id)$.
\end{thmy}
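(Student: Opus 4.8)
The plan is to establish Theorem \ref{TeoA} by combining the relevant third-order Bochner-Kodaira identities of Section 2 with an inductive argument that walks energy levels one step at a time via the contraction operators. The central mechanism is the following kind of statement: for the bundle $L\otimes(\odot^k T)$ equipped with $\nabla^{-k}$, the operator $I^{-k}\partial^{-k}$ intertwines the Laplacian $\Delta_{-k}$ with $\Delta_{-(k-1)}$ up to an explicit additive shift of $B$ in the eigenvalue, while its adjoint $(I^{-k}\partial^{-k})^\ast$ intertwines in the opposite direction. Concretely, one expects commutation relations of the shape $\Delta_{-(k-1)}\,(I^{-k}\partial^{-k}) = (I^{-k}\partial^{-k})\,(\Delta_{-k} - B)$ (and the adjoint version), valid on the Abelian variety because $\Theta = -iB\omega$ is parallel and the curvature of $T$ vanishes. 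Granting these, parts $ii)$ and $iii)$ follow by induction on $k$: if $s^0 \in E_{qB}$, then applying $(I^{-1}\partial^{-1})^\ast$ lands in the $qB - B = (q-1)B$ eigenspace of $\Delta_{-1}$, and iterating $k$ times gives $s^{-k} = A_k s^0 \in E_{(q-k)B}$; at $k=q$ the eigenvalue is $0$, and a section killed by $\Delta_0$ on a compact Kähler manifold is $\overline\partial$-closed, i.e.\ holomorphic, giving $s^{-q} \in H^0(M, L\otimes(\odot^q T))$. Symmetrically, starting from $t^{-q} \in H^0 = E_0$ (for $\Delta_{-q}$) and applying the raising operators $I^{-k-1}\partial^{-k-1}$ moves the eigenvalue up by $B$ each step, so $t^{-k} = A_k^\# t^{-q} \in E_{(q-k)B}$.

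The nonvanishing claims ("$0 \ne s^{-k}$", "$0 \ne t^{-k}$") require a little more than the intertwining. Here I would use the positivity of the Bochner-Kodaira identities: on the relevant eigenspaces the norm $\|I^{-k}\partial^{-k}u\|^2$ should be computable as a positive multiple of $\|u\|^2$ (the multiple being a product of the eigenvalue shifts, hence a power of $B$ times a combinatorial factor like $q!$), so the operator is injective on each eigenspace in the chain and the image of a nonzero section is nonzero. This is exactly what underlies the explicit inverse $(q!B^q)^{-1}A_0^\#$ in part $i)$: one shows $A_0^\# A_q = q!B^q\cdot\mathrm{Id}$ on $E_{qB}$ and $A_q A_0^\# = q!B^q\cdot\mathrm{Id}$ on $H^0(M,L\otimes(\odot^q T))$ by composing the step-by-step norm/intertwining identities, and in particular both $A_q$ and $A_0^\#$ are injective, hence (being each other's inverse up to scalar) bijective. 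Thus $i)$ is a formal consequence of $ii)$ and $iii)$ together with the composite-norm computation.

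The main obstacle is the precise form of the third-order Bochner-Kodaira identities and the attendant commutator algebra between $\partial^{-k}$, $\overline\partial^{-k}$, and the contraction operators $I^{-k}, \overline I^{-k}$ — the paper itself flags that "the computational details reveal their complexity most often in dimensions higher than one" and that the exact coefficients matter. So the real work is to verify: (a) the commutation relation shifting the $\Delta_{-k}$-eigenvalue by exactly $B$ under one application of $I^{-k}\partial^{-k}$, with the correct sign and constant, and (b) that the composed constant over the full chain of length $q$ is exactly $q!B^q$. For the Abelian variety this is made tractable by the flat model metric $\omega = i\sum dz^\alpha\wedge d\bar z^{\bar\alpha}$ and $\Theta = -iB\omega$: in this gauge the curvature terms are constant, the Levi-Civita connection on $T$ is trivial, and the symmetrization in $\odot^k T$ interacts cleanly with the contraction, so the identities reduce to an essentially combinatorial bookkeeping of index contractions. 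I would carry this out by first doing $k=1$ by hand (where the operators are nearly the classical ones and the eigenvalue shift is the familiar Kodaira-Nakano gap), then setting up the inductive step so that the $k$-th case follows from the same identity applied to the bundle $L\otimes(\odot^k T)$ in place of $L$, reusing the Section 2 formulas rather than reproving them.
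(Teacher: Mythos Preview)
Your overall strategy is precisely the paper's: the intertwining relation you anticipate is Proposition \ref{10.1}, its adjoint is (\ref{STAR}), and the recursive walk through eigenvalues is executed just as you outline, in (\ref{holomor}), (\ref{3-4}) for part $ii)$ and in (\ref{3-5z}) for part $iii)$; part $i)$ is then deduced exactly as you say, by multiplying the step constants $(q+1-k)B$ from (\ref{3-0}) to get $q!B^q$. One slip: the sign in your displayed commutation formula is wrong --- the paper's (\ref{BBB}) gives $\Delta_{-(k-1)}(I^{-k}\partial^{-k}) = (I^{-k}\partial^{-k})(\Delta_{-k} + B)$, with $+B$ rather than $-B$; your verbal description of eigenvalues going up under $I^{-k}\partial^{-k}$ and down under the adjoint is nonetheless correct, so this is a bookkeeping error rather than a conceptual one.

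The genuine gap is the symmetry tracking, which you dismiss as ``interacting cleanly with the contraction.'' The adjoint $(I^{-k}\partial^{-k})^\ast$ is identified in Proposition \ref{8} $i)$ only as $-\mathcal{S}\,\overline{I}^{\,-(k-1)}\overline\partial^{\,-(k-1)}$, with an explicit symmetrizer, and only on $\Omega^{0,0}(M,L\otimes(\odot^{k-1}T))$; likewise Proposition \ref{9} $iii)$ is stated only on symmetric tensors. To iterate, one must verify at each step that the intermediate sections remain in $\odot^k T$ so that these results continue to apply, and then that the inserted $\mathcal{S}$ in the composite is actually redundant. The paper needs three separate lemmas for this (Lemmas \ref{-q-q}, \ref{lemmaA}, \ref{L-2-13}) and calls the redundancy of $\mathcal{S}$ in the composite ``surprising.'' In particular, the nonvanishing argument for part $iii)$ hinges on Lemma \ref{L-2-13}, which itself requires a recursive use of Proposition \ref{9} $iii)$ to show that $\overline{I}^{\,-k}\overline\partial^{-k}t^{-k}$ stays symmetric along the chain; without this the passage from (\ref{3-10a}) to (\ref{3-10c}) fails and you cannot conclude $(I^{-(k+1)}\partial^{-(k+1)})^\ast t^{-k} = (q-k)B\,t^{-(k+1)}$. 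On the flat abelian variety the checks are elementary, but they are not automatic and your plan does not account for them.
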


\noindent \hspace*{12pt} It turns out that the set of eigenvalues of $\Delta
_{0}$ is exactly the set $\{qB$ \TEXTsymbol{\vert} $q\in \{0\}\cup \mathbb{N}%
\}$ (see Remark \ref{R-3-16} $a)$); it is independent of the dimensions of
Abelian varieties.

\begin{cormy}
\label{TeoB} For every $q\in \{0\}\cup \mathbb{N}$, $\dim _{\mathbb{C}%
}E_{qB}=C_{q}^{\,n+q-1}\delta _{1}...\delta _{n}$\ where $C_{\,k}^{\,n}$
denote the binomial coefficients and $\delta _{1},\cdot \cdot \cdot ,$ $%
\delta _{n}$ are elementary divisors of the polarization $c_{1}(L)$ ($=$ $%
[\omega ^{h}]$) \cite[p.306]{GH84}.
\end{cormy}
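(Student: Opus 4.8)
The plan is to reduce the statement to the classical dimension count for holomorphic sections of an ample line bundle on an Abelian variety, using Theorem \ref{TeoA}. By part $i)$ of that theorem, $A_{q}$ is a $\mathbb{C}$-linear isomorphism from $E_{qB}$ onto $E_{0}=H^{0}(M,L\otimes (\odot ^{q}T))$; hence it suffices to compute $\dim _{\mathbb{C}}H^{0}(M,L\otimes (\odot ^{q}T))$.

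The key observation is that on an Abelian variety the holomorphic tangent bundle $T$ is holomorphically trivial: the translation-invariant vector fields $\partial /\partial z^{1},\dots ,\partial /\partial z^{n}$ attached to the chosen linear coordinates descend to a global holomorphic frame of $T$, so $T\cong \mathcal{O}_{M}^{\oplus n}$. Passing to $q$-th symmetric powers, the degree-$q$ monomials in the $\partial /\partial z^{\alpha }$ form a global holomorphic frame of $\odot ^{q}T$, so $\odot ^{q}T\cong \mathcal{O}_{M}^{\oplus r}$ where $r=\dim _{\mathbb{C}}\odot ^{q}\mathbb{C}^{n}=C_{q}^{\,n+q-1}$. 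Consequently $L\otimes (\odot ^{q}T)\cong L^{\oplus r}$ and
\[
H^{0}(M,L\otimes (\odot ^{q}T))\cong H^{0}(M,L)^{\oplus r},
\]
so that $\dim _{\mathbb{C}}H^{0}(M,L\otimes (\odot ^{q}T))=C_{q}^{\,n+q-1}\cdot h^{0}(M,L)$.

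Finally I would invoke the classical formula $h^{0}(M,L)=\delta _{1}\cdots \delta _{n}$, which holds because $L$ is ample with $c_{1}(L)=[\omega ^{h}]$ of type $(\delta _{1},\dots ,\delta _{n})$ --- this is the Lefschetz/Frobenius theorem on theta functions \cite[p.306]{GH84}, or equivalently Kodaira vanishing $H^{i}(M,L)=0$ for $i>0$ together with Riemann-Roch $\chi (L)=\frac{1}{n!}\int _{M}c_{1}(L)^{n}=\delta _{1}\cdots \delta _{n}$. Multiplying gives $\dim _{\mathbb{C}}E_{qB}=C_{q}^{\,n+q-1}\,\delta _{1}\cdots \delta _{n}$; the case $q=0$ is consistent since $C_{0}^{\,n-1}=1$ and $E_{0}=H^{0}(M,L)$. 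Essentially all the content sits in Theorem \ref{TeoA}, so there is no serious obstacle in this corollary; the only points needing care are verifying that the trivializing frame of $\odot ^{q}T$ is genuinely holomorphic (so that it induces an isomorphism on spaces of holomorphic sections) and that $\dim _{\mathbb{C}}\odot ^{q}\mathbb{C}^{n}$ equals the stated binomial coefficient.
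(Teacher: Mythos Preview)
Your proof is correct and follows essentially the same route as the paper: invoke Theorem \ref{TeoA} $i)$ to identify $E_{qB}$ with $H^{0}(M,L\otimes(\odot^{q}T))$, use holomorphic triviality of $T$ on the Abelian variety to get $H^{0}(M,L\otimes(\odot^{q}T))\cong H^{0}(M,L)^{\oplus C_{q}^{\,n+q-1}}$, and apply $h^{0}(L)=\delta_{1}\cdots\delta_{n}$. The paper's proof is the same argument stated more tersely.
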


As an Abelian variety is a flat space, many of our computations may not
reach their full effectivity. Our methodology also extends to
curved spaces: the complex projective space $\mathbb{P}^{n}$ and
Grassmannians. Let $\omega _{FS}$ be the Fubini-Study metric ((\ref{FS})
depending on a scaling constant $c>0$) on $\mathbb{P}^{n}$ and $L$ be a
holomorphic line bundle such that $c_{1}(L)$ $\in $ $H^{2}(\mathbb{P}^{n},%
\mathbb{Z})$ $\cong $ $\mathbb{Z}$ is denoted by $B,$ called the degree of $%
L $.\ Equip $L$ with the Hermitian metric $h_{L}$ and the Chern connection $%
\nabla ^{L}$ such that the curvature $\Theta =-iB\omega _{FS}\frac{c}{2}$.
Some works in theoretical physics (e.g. \cite{AHHO23}) use a vector bundle $V$ carrying a
Hermitian-Yang-Mills connection (see \textit{Notes added in proof} near the
end of this Introduction)\footnote{%
The interest arising from physics does not stick to the line bundle case of $%
V$ as we assume here. Even in the line bundle case, the work \cite{AHHO23}
refers to that of Prieto on elliptic curves \cite{Prieto06(a)}. None of the
results here were obtained by, or known to, \cite{AHHO23} except the $%
\mathbb{P}^{n}$ case, which is discussed more in the Introduction later.}.
In normal coordinates we can express the curvature tensor of $\mathbb{P}^{n}$
as follows: ($c$ is set to be $2$ in what follows) 
\begin{equation*}
R_{i\overline{j}k\overline{l}}=\frac{c}{2}\left( g_{i\overline{j}}g_{k%
\overline{l}}+g_{i\overline{l}}g_{k\overline{j}}\right) ,\text{ \ \ \ \ }R_{%
\overline{i}j\overline{k}l}=\overline{R_{i\overline{j}k\overline{l}}}.%
\hspace*{100pt}
\end{equation*}

To write the eigenvalues, let $N_{k,l}$ $:=$ $(k-l)B$ $+$ $\frac{c}{2}%
[(k^{2}-l^{2})$ $+$ $n(k-l)],$ $k-l$ $\geq $ $0,$ $k,$ $l$ $\in $ $\{0\}\cup 
\mathbb{N}$. On $\mathbb{P}^{n}$ we denote by $E_{0,q,B,c}$ ($\subset $ $%
\Omega ^{0,0}\left( \mathbb{P}^{n},L)\right) $ (resp. $E_{k,q,B,c}(\subset $ 
$\Omega ^{0,0}(\mathbb{P}^{n},L\otimes (\odot ^{k}T)))$ the vector space of
eigensections of $\Delta _{0}$ (resp. $\Delta _{-k})$ \textbf{with the
eigenvalues} 
\begin{eqnarray*}
&&qB+\frac{cq(n+q)}{2}( = N_{q,0}),q\in \{0\}\cup \mathbb{N} \\
&&(\text{resp}.(q-k)B+\frac{c}{2}(q^{2}-k^{2})+\frac{c}{2}n(q-k)( =
N_{q,k}),0\leq k\leq q).
\end{eqnarray*}%
\noindent It turns out that we have results for $\mathbb{P}^{n}$ similar to
those of the Abelian variety case.

\begin{thmy}
\label{TeoC} With the notation above, we have the following for $B>0$ and
every $q\in \mathbb{N}$:

%\begin{enumerate}

$i)$ $A_{q}$$:$$E_{0,q,B,c}$$\rightarrow $$E_{q,q,B,c}$$=$$H^{0}(\mathbb{P}%
^{n},$ $L\otimes (\odot ^{q}T))$ of (\ref{ACs}) is a linear isomorphism with
the inverse $(\prod\limits_{k=0}^{q-1}N_{q,k})^{-1}A_{0}^{\#}.$

$ii)$ Let $0\neq s^{0}\in E_{0,q,B,c}$ ($\subset $ $\Omega ^{0,0}\left( 
\mathbb{P}^{n},L\right) $)$.$ Then $0\neq s^{-k}:=A_{k}s^{0}$ $\in $ $%
E_{k,q,B,c}(\subset $ $\Omega ^{0,0}(\mathbb{P}^{n},$ $L\otimes (\odot
^{k}T))$ for $k=1,...,q$. In particular $0\neq s^{-q}$ $=$ $A_{q}s^{0}$ $\in 
$ $E_{0}$ $=$ $H^{0}(\mathbb{P}^{n},$ $L\otimes (\odot ^{q}T))$ is
holomorphic.

$iii)$ %\end{enumerate}
Let $0\neq t^{-q}\in H^{0}(\mathbb{P}^{n},L\otimes (\odot ^{q}T))$ be a
holomorphic section. Then $0$ $\neq $ $t^{-k}$ $:=$ $A_{k}^{\#}t^{-q}$ $\in $
$E_{k,q,B,c}$ for $k=0,1,...,q-1$.
\end{thmy}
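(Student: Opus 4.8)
The plan is to run, for $\mathbb{P}^{n}$, the same ladder argument used for the Abelian variety in Theorem~\ref{TeoA}, with the flat Bochner--Kodaira identities replaced by their curved counterparts on $\mathbb{P}^{n}$; here the explicit curvature tensor $R_{i\overline{j}k\overline{l}}=\tfrac{c}{2}(g_{i\overline{j}}g_{k\overline{l}}+g_{i\overline{l}}g_{k\overline{j}})$ is exactly what feeds the zeroth-order terms producing the shift constants $N_{q,k}$. The whole statement is reduced to two ``ladder'' facts, to be read off from the third-order Bochner--Kodaira identities relating $\Delta_{-k}$, $\Delta^{-k}$ and the operators $\partial^{-k},\overline{\partial}^{-k},I^{-k},\overline{I}^{\,-k}$ on $L\otimes(\odot^{k}T)$, together with the extra first-order relations that members of $E_{k,q,B,c}$ satisfy (so that the would-be obstruction terms drop out): \emph{(a)} $(I^{-(k+1)}\partial^{-(k+1)})^{\ast}$ carries $E_{k,q,B,c}$ into $E_{k+1,q,B,c}$ and $I^{-(k+1)}\partial^{-(k+1)}$ carries $E_{k+1,q,B,c}$ into $E_{k,q,B,c}$, so the composites $A_{k}$ and $A_{k}^{\#}$ land in the advertised spaces; \emph{(b)} on $E_{k,q,B,c}$ one has $(I^{-(k+1)}\partial^{-(k+1)})(I^{-(k+1)}\partial^{-(k+1)})^{\ast}=N_{q,k}\cdot\mathrm{Id}$, and on $E_{k+1,q,B,c}$ the adjoint composite $(I^{-(k+1)}\partial^{-(k+1)})^{\ast}(I^{-(k+1)}\partial^{-(k+1)})$ equals the same scalar $N_{q,k}\cdot\mathrm{Id}$. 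Implicit here is the check that all these operators respect the symmetric bundles $\odot^{k}T$ rather than $\otimes^{k}T$ --- the point flagged in the Introduction.

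Granting \emph{(a)} and \emph{(b)}, the three items follow by induction on $k$, just as in the Abelian case. For \emph{ii)}: if $0\neq s^{-k}:=A_{k}s^{0}\in E_{k,q,B,c}$, then $s^{-(k+1)}:=A_{k+1}s^{0}=(I^{-(k+1)}\partial^{-(k+1)})^{\ast}s^{-k}\in E_{k+1,q,B,c}$ by \emph{(a)}, and $s^{-(k+1)}\neq0$ because $I^{-(k+1)}\partial^{-(k+1)}$ sends it back to $N_{q,k}\,s^{-k}\neq0$ --- here $N_{q,k}>0$ since $B>0$, $c>0$ and $0\le k<q$. Taking $k=q-1$ and using $N_{q,q}=0$, i.e.\ $\Delta_{-q}s^{-q}=0\iff\overline{\partial}^{-q}s^{-q}=0$ on the compact $\mathbb{P}^{n}$ (on $(0,0)$-forms the second half of the $\overline{\partial}$-Laplacian vanishes), one gets $s^{-q}=A_{q}s^{0}\in H^{0}(\mathbb{P}^{n},L\otimes(\odot^{q}T))$; the same remark gives $E_{q,q,B,c}=H^{0}(\mathbb{P}^{n},L\otimes(\odot^{q}T))$. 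For \emph{iii)}: start from $0\neq t^{-q}\in H^{0}(\mathbb{P}^{n},L\otimes(\odot^{q}T))=E_{q,q,B,c}$ and descend via $t^{-k}:=A_{k}^{\#}t^{-q}=(I^{-(k+1)}\partial^{-(k+1)})\,A_{k+1}^{\#}t^{-q}$; membership in $E_{k,q,B,c}$ is \emph{(a)}, and $t^{-k}\neq0$ because $(I^{-(k+1)}\partial^{-(k+1)})^{\ast}$ sends it to $N_{q,k}\,A_{k+1}^{\#}t^{-q}\neq0$. For \emph{i)}: $A_{q}\colon E_{0,q,B,c}\to E_{q,q,B,c}$ and $A_{0}^{\#}\colon E_{q,q,B,c}\to E_{0,q,B,c}$ by the above, and inserting \emph{(b)} at each rung telescopes the scalars, giving $A_{0}^{\#}A_{q}=\big(\prod_{k=0}^{q-1}N_{q,k}\big)\mathrm{Id}$ on $E_{0,q,B,c}$ and $A_{q}A_{0}^{\#}=\big(\prod_{k=0}^{q-1}N_{q,k}\big)\mathrm{Id}$ on $E_{q,q,B,c}$; since each $N_{q,k}>0$ for $k<q$, $A_{q}$ is a linear isomorphism with inverse $\big(\prod_{k=0}^{q-1}N_{q,k}\big)^{-1}A_{0}^{\#}$.

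The main obstacle is establishing \emph{(a)} and \emph{(b)}, i.e.\ proving the precise third-order Bochner--Kodaira identities on $\mathbb{P}^{n}$ and chasing every constant so that the zeroth-order remainders assemble into $N_{q,k}=(q-k)B+\tfrac{c}{2}(q^{2}-k^{2})+\tfrac{c}{2}n(q-k)$. In contrast with the Abelian case, where $R\equiv0$ makes the contraction operators essentially trivial and the eigenvalue shift comes only from the line-bundle curvature $\Theta=-iB\omega$, here the two-term form of $R_{i\overline{j}k\overline{l}}$ intervenes in every commutator between $\partial^{-k}$, $I^{-k}$ and their conjugates, and the symmetrization has to be propagated through all of them. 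This coefficient bookkeeping --- morally the proof of Theorem~\ref{TeoA} rerun with curvature switched on --- is where essentially all the work lies; once the identities are in place the ladder argument above is purely formal, and the only genuinely new input beyond the Abelian case is the exact value of the shift constants $N_{q,k}$.
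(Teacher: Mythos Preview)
Your outline is essentially the paper's own approach: the same ladder argument as for the Abelian variety, with Propositions~\ref{Prop 4.3} and~\ref{Prop 4.4} (the curved Bochner--Kodaira identities) supplying the intertwining \emph{(a)} and the shift constants. Your identification of where the real work lies --- tracking the coefficients $N_{q,k}$ through the curvature terms --- is exactly right.

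One point needs sharpening. Your ingredient \emph{(b)} is stated as an identity on all of $E_{k,q,B,c}$ and $E_{k+1,q,B,c}$, but this is stronger than what is (or can be, non-circularly) proved. The operator $(I^{-(k+1)}\partial^{-(k+1)})(I^{-(k+1)}\partial^{-(k+1)})^{\ast}$ does \emph{not} equal $\Delta_{-k}$ for $k\geq 1$, so the eigenvalue condition alone does not give \emph{(b)}. What the paper establishes is \emph{(b)} on the specific ladder elements $s^{-k}=A_{k}s^{0}$ and $t^{-(k+1)}=A_{k+1}^{\#}t^{-q}$, by a recursion that hinges on a symmetry fact: for these particular elements, $\overline{I}^{\,-k}\overline{\partial}^{\,-k}s^{-k}$ and $\overline{I}^{\,-k}\overline{\partial}^{\,-k}t^{-k}$ are already symmetric (Lemmas~\ref{lemmaA} and~\ref{L-2-13}), so one may drop the symmetrization $\mathcal{S}$ and convert $\overline{I}^{\,-k}\overline{\partial}^{\,-k}$ into $-(I^{-(k+1)}\partial^{-(k+1)})^{\ast}$ via Proposition~\ref{8}\,$i)$. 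This is the mechanism your phrase ``extra first-order relations that members of $E_{k,q,B,c}$ satisfy'' is reaching for, but it is a property of the ladder construction, not of the eigenspace as a whole. Since your actual use of \emph{(b)} is only ever on ladder elements, the argument goes through once this is made precise; the paper's derivation of \eqref{5-0} and \eqref{5-ca} is exactly this recursion, and the apparently different scalar $\sum_{l}N_{q-l,q-l-1}$ arising in the upward direction is then checked to telescope to $N_{q,k}$.
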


\begin{cormy}
\label{TeoD} For every $q\in \{0\}\cup \mathbb{N}$, $\dim _{\mathbb{C}%
}E_{0,q,B,c}=h^{0}(\mathbb{P}^{n},$ $L\otimes (\odot ^{q}T))$ $\left( =\dim
_{\mathbb{C}}H^{0}(\mathbb{P}^{n},\text{ }L\otimes (\odot ^{q}T))\right) $ $%
> $ $0$.
\end{cormy}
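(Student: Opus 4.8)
The plan is to split the statement into its two constituent assertions: the equality $\dim_{\mathbb{C}}E_{0,q,B,c}=h^0(\mathbb{P}^n,L\otimes(\odot^q T))$, and the strict positivity of this common value.

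The equality I would read off from Theorem~\ref{TeoC}. For $q\ge 1$, part~$i)$ of that theorem supplies a $\mathbb{C}$-linear isomorphism $A_q\colon E_{0,q,B,c}\xrightarrow{\sim}H^0(\mathbb{P}^n,L\otimes(\odot^q T))$, with the explicit inverse $(\prod_{k=0}^{q-1}N_{q,k})^{-1}A_0^{\#}$, so the two finite-dimensional spaces have equal dimension. For $q=0$ the equality is immediate, since $\odot^0 T=\mathcal{O}_{\mathbb{P}^n}$ and the eigenvalue-$0$ eigenspace $E_{0,0,B,c}$ of $\Delta_0$ on sections of $L$ is exactly $H^0(\mathbb{P}^n,L)$. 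Thus the only remaining content of the corollary is the non-vanishing $H^0(\mathbb{P}^n,L\otimes(\odot^q T))\neq 0$ when $B>0$.

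For that non-vanishing I would argue by global generation. As $B$ is a positive integer, $L\cong\mathcal{O}_{\mathbb{P}^n}(B)$ is globally generated; and the Euler sequence $0\to\mathcal{O}_{\mathbb{P}^n}\to\mathcal{O}_{\mathbb{P}^n}(1)^{\oplus(n+1)}\to T\to 0$ presents $T=T\mathbb{P}^n$ as a quotient of a globally generated bundle, so $T$ is globally generated. Global generation is inherited by symmetric powers (apply $\odot^q$ to a surjection $\mathcal{O}^{\oplus N}\twoheadrightarrow T$ to obtain $\mathcal{O}^{\oplus\binom{N+q-1}{q}}\twoheadrightarrow\odot^q T$) and by tensor products, so $L\otimes(\odot^q T)$ is globally generated. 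A globally generated coherent sheaf of positive rank on the nonempty projective variety $\mathbb{P}^n$ has a nonzero global section, because its evaluation map at any point surjects onto the nonzero fibre there; and $\operatorname{rk}\bigl(L\otimes(\odot^q T)\bigr)=\binom{n+q-1}{q}\ge 1$. Hence $h^0(\mathbb{P}^n,L\otimes(\odot^q T))>0$, which together with the previous paragraph completes the proof.

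I do not expect a genuine obstacle here: the corollary is a repackaging of Theorem~\ref{TeoC}\,$i)$ (which carries all the Bochner-Kodaira weight) together with the elementary non-vanishing above, the only mildly delicate point being the routine check that global generation propagates through $\odot^q(\cdot)$ and through $\otimes L$. An alternative, more quantitative proof of the inequality would be to invoke the Hirzebruch-Riemann-Roch-plus-vanishing computation of Theorem~\ref{TeoE}, which exhibits an explicitly positive value of $h^0(\mathbb{P}^n,L\otimes(\odot^q T))$; but I would keep the global-generation argument as the main line, since it is short and self-contained.
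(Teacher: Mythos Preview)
Your proof is correct. The equality part is handled exactly as in the paper (invoking Theorem~\ref{TeoC}\,$i)$, with the $q=0$ case being trivial since the kernel of $\Delta_0$ is $H^0(\mathbb{P}^n,L)$). For the non-vanishing, however, the paper takes a more direct and elementary route than your global-generation argument: it simply exhibits a nonzero section. Namely, since $\mathbb{P}^n$ admits a nontrivial holomorphic vector field $V$ (e.g.\ from the positive-dimensional automorphism group) and $H^0(\mathbb{P}^n,L)\neq 0$ for $B>0$, one picks $0\neq s\in H^0(\mathbb{P}^n,L)$ and observes that $s\otimes V^{\otimes q}$ is a nonzero holomorphic section of $L\otimes(\odot^q T)$. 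Your approach via the Euler sequence and propagation of global generation is perfectly valid and perhaps more robust in principle (it would work verbatim on any variety whose tangent bundle is globally generated), but the paper's one-line construction is shorter and avoids any appeal to how global generation behaves under $\odot^q$ and $\otimes$.
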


Note that by Remark \ref{R-5-13} $a),$ $\cup _{q\in \{0\}\cup \mathbb{N}%
}E_{0,q,B,c}$ is exactly the set of all eigensections of $\Delta _{0}.$

We can give an explicit formula for the quantity in Corollary \ref{TeoD}.
Let td$(T)$ denote the Todd class of $T\rightarrow \mathbb{P}^{n}$ and let $%
\omega =c_{1}(\mathcal{O}(1))$ denote the first Chern class of $\mathcal{O}%
(1)$ (the hyperplane bundle on $\mathbb{P}^{n}$). Denote by $%
\begin{pmatrix}
a \\ 
n%
\end{pmatrix}%
=\frac{a(a-1)(a-2)...(a-n+1)}{n!}$ the generalized binomial coefficients for 
$a\in \mathbb{C}$, $n\in \mathbb{N}$. Notice that $%
\begin{pmatrix}
k \\ 
n%
\end{pmatrix}%
=C_{n}^{k}$ when $k\geq n$, and $k\in \mathbb{N}$, $n\in \{0\}\cup \mathbb{N}
$.

\begin{thmy}
\label{TeoE} $\displaystyle$ With the notation above, we have%
\begin{equation}
h^{0}(\mathbb{P}^{n},L\otimes (\odot ^{q}T))=\underset{k_{1}+\cdot \cdot
\cdot +k_{n}=q}{\sum }\int_{\mathbb{P}^{n}}\mbox{td}(T)\cdot
e^{(y_{k_{1},\cdot \cdot \cdot ,k_{n}}+B)\,\omega }=\underset{k_{1}+\cdot
\cdot \cdot +k_{n}=q}{\sum }%
\begin{pmatrix}
y_{k_{1},\cdot \cdot \cdot ,k_{n}}+n+B \\ 
n%
\end{pmatrix}
\label{E-1}
\end{equation}
\noindent where $y_{k_{1},\cdot \cdot \cdot ,k_{n}}:=\underset{j}{\sum }%
k_{j}\lambda _{j}$\hspace*{4pt}for any $k_{j}\in \mathbb{N}\cup \{0\}$ and $%
\lambda _{j}$ are $1-e^{i\frac{2\pi j}{n+1}},$ $j=1,$ $\cdot \cdot \cdot $ $%
, $ $n,$ the roots of $%
x^{n}+(-1)^{1}C_{1}^{n+1}x^{n-1}+...+(-1)^{n}C_{n}^{n+1}=0$ and $B=\deg
(L)\in \mathbb{N}$.
\end{thmy}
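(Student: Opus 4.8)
The plan is to pass from $h^{0}$ to the holomorphic Euler characteristic by a vanishing theorem, and then to evaluate $\chi$ by Hirzebruch--Riemann--Roch after writing $\mbox{ch}(\odot^{q}T)$ in terms of the ``formal'' Chern roots $\lambda_{j}\,\omega$. First I would record that $H^{i}(\mathbb{P}^{n},L\otimes(\odot^{q}T))=0$ for all $i\geq 1$ when $B=\deg L>0$; this is the input already used for Corollary \ref{TeoD}, and follows either from the ample vector bundle vanishing of \cite{Mani97} (note $T_{\mathbb{P}^{n}}$, hence $\odot^{q}T_{\mathbb{P}^{n}}$, is ample) or, more elementarily, by symmetrizing the Euler sequence $0\to\mathcal{O}\to\mathcal{O}(1)^{\oplus(n+1)}\to T\to 0$: for $q\geq 1$ this yields the standard short exact sequence $0\to\odot^{q-1}(\mathcal{O}(1)^{\oplus(n+1)})\to\odot^{q}(\mathcal{O}(1)^{\oplus(n+1)})\to\odot^{q}T\to 0$, i.e. after tensoring with $L=\mathcal{O}(B)$ an exact sequence $0\to\bigoplus\mathcal{O}(B+q-1)\to\bigoplus\mathcal{O}(B+q)\to L\otimes(\odot^{q}T)\to 0$, from which the vanishing is read off from the cohomology of line bundles on $\mathbb{P}^{n}$ (the hypothesis $B+q-1\geq 0$ is exactly what kills the top cohomology of the two end terms). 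Consequently $h^{0}(\mathbb{P}^{n},L\otimes(\odot^{q}T))=\chi(\mathbb{P}^{n},L\otimes(\odot^{q}T))$.

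Next I would pin down the arithmetic of the $\lambda_{j}$. Writing $\zeta=e^{2\pi i/(n+1)}$, the numbers $\zeta^{1},\dots,\zeta^{n}$ are the nontrivial $(n+1)$st roots of unity, so $\prod_{j=1}^{n}(x-\zeta^{j})=(x^{n+1}-1)/(x-1)=1+x+\cdots+x^{n}$; substituting $x\mapsto 1-x$ and simplifying $\sum_{m=0}^{n}(1-x)^{m}=(1-(1-x)^{n+1})/x$ gives $\prod_{j=1}^{n}(x-\lambda_{j})=\sum_{k=0}^{n}(-1)^{k}\binom{n+1}{k}x^{n-k}$, which is exactly the polynomial in the statement and in particular shows $e_{k}(\lambda_{1},\dots,\lambda_{n})=\binom{n+1}{k}=C_{k}^{n+1}$ for $0\leq k\leq n$. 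Working in $H^{*}(\mathbb{P}^{n};\mathbb{C})=\mathbb{C}[\omega]/(\omega^{n+1})$ with $\omega=c_{1}(\mathcal{O}(1))$, the Euler sequence gives $c(T)=(1+\omega)^{n+1}=\sum_{k=0}^{n}\binom{n+1}{k}\omega^{k}$ (the top term dies since $\omega^{n+1}=0$), and by the previous sentence this equals $\prod_{j=1}^{n}(1+\lambda_{j}\omega)$. Thus $T$ has formal Chern roots $\lambda_{1}\omega,\dots,\lambda_{n}\omega$, so by the splitting principle the Chern roots of $\odot^{q}T$ are the sums $(\sum_{j}k_{j}\lambda_{j})\omega=y_{k_{1},\dots,k_{n}}\,\omega$ as $(k_{1},\dots,k_{n})$ runs over nonnegative integers with $k_{1}+\cdots+k_{n}=q$. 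Hence $\mbox{ch}(\odot^{q}T)=\sum_{k_{1}+\cdots+k_{n}=q}e^{y_{k_{1},\dots,k_{n}}\omega}$, and $\mbox{ch}(L\otimes(\odot^{q}T))=e^{B\omega}\,\mbox{ch}(\odot^{q}T)=\sum_{k_{1}+\cdots+k_{n}=q}e^{(y_{k_{1},\dots,k_{n}}+B)\omega}$.

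Finally I would apply Hirzebruch--Riemann--Roch, $h^{0}=\chi(\mathbb{P}^{n},L\otimes(\odot^{q}T))=\int_{\mathbb{P}^{n}}\mbox{td}(T)\,\mbox{ch}(L\otimes(\odot^{q}T))$, which together with the formula for $\mbox{ch}(L\otimes(\odot^{q}T))$ above is the middle expression of (\ref{E-1}). For the last expression I would note that for every $a\in\mathbb{C}$ one has $\int_{\mathbb{P}^{n}}\mbox{td}(T)\,e^{a\omega}=\binom{a+n}{n}$: both sides are polynomials in $a$ of degree $n$, and for each integer $a=m$ the left side is $\chi(\mathbb{P}^{n},\mathcal{O}(m))=\binom{m+n}{n}$ by Hirzebruch--Riemann--Roch, so the two polynomials coincide identically; taking $a=y_{k_{1},\dots,k_{n}}+B$ and summing over $k_{1}+\cdots+k_{n}=q$ gives the right-hand side of (\ref{E-1}).

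The main obstacle is the second paragraph: one has to be willing to run the splitting principle with formal Chern roots $\lambda_{j}$ that are genuine non-real complex numbers, and to assemble $\mbox{ch}(\odot^{q}T)$ correctly. What makes this rigorous is that the factorization $c(T)=\prod_{j}(1+\lambda_{j}\omega)$ is a true identity in $H^{*}(\mathbb{P}^{n};\mathbb{C})$ precisely because $\omega^{n+1}=0$ absorbs the only discrepancy between $\prod_{j}(1+\lambda_{j}\omega)$ and $(1+\omega)^{n+1}$; once this is in place, all the characteristic-class manipulations are purely formal, and the remaining ingredients (the vanishing, the roots-of-unity identity, and the polynomial interpolation for $\int\mbox{td}(T)e^{a\omega}$) are routine.
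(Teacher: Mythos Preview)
Your proof is correct and follows essentially the same route as the paper: higher cohomology vanishing (so $h^{0}=\chi$), then HRR, then the splitting principle to write $\mbox{ch}(\odot^{q}T)=\sum e^{y_{k_1,\dots,k_n}\omega}$, and finally the polynomial interpolation $\int_{\mathbb{P}^{n}}\mbox{td}(T)e^{a\omega}=\binom{a+n}{n}$. The only notable deviations are cosmetic: the paper invokes Manivel's theorem for the vanishing (whereas you also give the more elementary Euler-sequence argument), and the paper passes to the flag bundle $F(T)$ to make the Chern roots honest, while you work directly in $H^{*}(\mathbb{P}^{n};\mathbb{C})$ with the formal factorization $c(T)=\prod_{j}(1+\lambda_{j}\omega)$---both are standard implementations of the splitting principle.
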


The proof of the above dimension formula in Theorem \ref{TeoE} is based on
the Hirzebruch-Riemann-Roch theorem and vanishing theorems proved by Manivel
(see Theorem \ref{l+n-p} \cite{Mani97}). We also give an explicit formula
for $n=2$ in Example \ref{E-6-1} which is deduced from some basic facts and
is consistent with the above formula (\ref{E-1}).

We turn now to study the holomorphic structure of spectral bundles over the
Picard variety $\hat{M}$ $=$ Pic$^{0}(M).$ Using the line bundle $\tilde{E}%
=\pi _{1}^{\ast }L\otimes P$ $\rightarrow $ $M\times \hat{M}$ where $P$ is
the Poincar\'{e} line bundle, we have the direct image ($\pi _{2})_{\ast }%
\tilde{E}$ $\rightarrow $ $\hat{M},$ whose fibre at $[\hat{\mu}]$ $\in $ $%
\hat{M}$ is (($\pi _{2})_{\ast }\tilde{E})_{[\hat{\mu}]}$ $=$ $H^{0}(M,%
\tilde{E}|_{M\times \{[\hat{\mu}]\}})$ which is $E_{(q=0)B}(\tilde{E}%
|_{M\times \{[\hat{\mu}]\}})$ in the notation above (see $E_{qB}$ and $%
E_{qB}(L)$)$.$ For higher energy with $q>0$ we consider the fibre (($\pi
_{2}^{q})_{\ast }\tilde{E})_{[\hat{\mu}]}$ to be $E_{qB}(\tilde{E}|_{M\times
\{[\hat{\mu}]\}})$ and our spectral bundles are first defined to be, as a set%
\begin{equation}
\mathbb{E}_{qB}:=\bigcup\limits_{[\hat{\mu}]\in \hat{M}}E_{qB}(\tilde{E}%
|_{M\times \{[\hat{\mu}]\}})\rightarrow \hat{M}.  \label{EqB}
\end{equation}

%For any $\mu \in V$ ($M=V/\Lambda )$ let $\tau _{\mu }$ $:M$ $\rightarrow $ $%
%M$ be the map defined by the translation by $[\mu ]$ $\in $ $M.$ 
The fact that $\mathbb{E}_{qB}$ forms a smooth bundle is well known (cf. 
\cite[p.167]{BF86}). The idea for the theorem below is that for $q=0$ the
full curvature using a\ natural $L^{2}$-metric on $\mathbb{E}_{(q=0)B}$ is
obtained in \cite[Theorem 1.2]{CCT24}, whose computation went through an
auxiliary holomorphic bundle over $M$ and then pushed forward to $\hat{M}$
via isogeny $M\rightarrow \hat{M}$. This holomorphic case and the linear
isomorphism (Theorem \ref{TeoA}) lead to (\ref{FC}) below for eigenbundles.
See Section \ref{Sec3}.

%\%we recall \cite[page 7 of 23 with $L_{0}$ $=$ $L$ in this paper]{CCT2}
%that a well-known map $\varphi _{L}:M\rightarrow $Pic$^{0}(M)=\hat{M}$ is
%given by $\varphi _{L}([\mu ])$ $=$ $\tau _{\mu }^{\ast }L\otimes L^{\ast }$
%parametrized by [$\hat{\mu}]$ $\in $ $\hat{M}$ \cite[(2.13)]{CCT2}. We are
%going to prove that $\mathbb{E}_{q,B}$ is a holomorphic vector bundle over
%Pic$^{0}(M)$ through $\pi $ (which generalizes the $q=0$ case in \cite{CCT2}%
%). Based on Theorem \ref{TeoA} and Corollary \ref{TeoB} we have the
%following result.\%

\begin{theorem}
\label{T-main} With the notation above, $\mathbb{E}_{qB}$ can be endowed
with a natural holomorphic bundle structure. Moreover, the full curvature $%
\Theta (\mathbb{E}_{qB},h_{q})$ associated with a natural metric $h_{q}$ on $%
\mathbb{E}_{qB}$ is given by (see (\ref{HodgeMetric}) for the notation $%
W_{\alpha \beta }$ and \cite[second paragraph on p.7]{CCT24} for the
notation $\hat{\mu}_{\alpha })$%
\begin{equation}
\Theta (\mathbb{E}_{qB},h_{q})=\Big{(}-\pi \sum_{\alpha ,\beta
=1}^{n}W_{\alpha \beta }\frac{\delta _{\alpha }\delta _{\beta }}{\delta
_{n}\delta _{n}}d\hat{\mu}_{\alpha }\wedge d\overline{\hat{\mu}}_{\beta }%
\Big{)}\cdot I_{\Delta _{q}\times \Delta _{q}}  \label{FC}
\end{equation}%
where $I_{\bullet }$ denotes the identity matrix of rank $\Delta
_{q}:=C_{q}^{\,n+q-1}\delta _{1}...\delta _{n}$\ (= $\dim _{\mathbb{C}%
}E_{qB};$ see Corollary \ref{TeoB} above)$.$
\end{theorem}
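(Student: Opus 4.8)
The plan is to reduce Theorem~\ref{T-main} to its $q=0$ case, which is \cite[Theorem~1.2]{CCT24}, by transporting both the holomorphic structure and the metric through the fibrewise isomorphism $A_q$ of Theorem~\ref{TeoA}. The geometric reason this works is that the holomorphic tangent bundle $T$ of an Abelian variety is trivial, so passing from $L$ to $L\otimes(\odot^q T)$ creates no new curvature, and the whole $q>0$ problem collapses onto the $q=0$ one.

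First I would build the family version of $A_q$. For $[\hat\mu]\in\hat M$ put $L_{\hat\mu}:=\tilde E|_{M\times\{[\hat\mu]\}}=L\otimes\hat\mu$; since $\hat\mu\in\mathrm{Pic}^0(M)$ carries a flat metric, the curvature $\Theta=-iB\omega$ and all data feeding \eqref{ACs} are unchanged, so Theorem~\ref{TeoA}$(i)$ applies to each $L_{\hat\mu}$ and gives a linear isomorphism $A_q\colon E_{qB}(L_{\hat\mu})\xrightarrow{\sim}H^0(M,L_{\hat\mu}\otimes(\odot^q T))$ with inverse $(q!\,B^q)^{-1}A_0^{\#}$. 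Invoking the globally well-defined metric on the family $\{L_{\hat\mu}\}$ recalled in the Introduction (the ``family-version framework'' attached to \eqref{ACs}), these fibrewise maps assemble into a smooth bundle isomorphism of the smooth bundle $\mathbb E_{qB}$ with $\bigcup_{[\hat\mu]}H^0(M,L_{\hat\mu}\otimes(\odot^q T))$. Since $T$ is holomorphically trivial, so is $\odot^q T$, and $H^0(M,L_{\hat\mu}\otimes(\odot^q T))=H^0(M,L_{\hat\mu})\otimes\odot^q T_0$ with $T_0:=T_0M$; hence the target bundle is $\mathbb E_{(q=0)B}\otimes\odot^q T_0$. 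Now $\mathbb E_{(q=0)B}=(\pi_2)_\ast\tilde E$ is the direct image of a holomorphic line bundle under the proper holomorphic submersion $\pi_2$, with constant fibre dimension $\delta_1\cdots\delta_n$ by Corollary~\ref{TeoB}, hence a holomorphic vector bundle on $\hat M$ (Grauert) --- this is the structure used in \cite{CCT24}; tensoring with the fixed space $\odot^q T_0$ preserves holomorphy. I would then take the holomorphic structure on $\mathbb E_{qB}$ to be the pullback along $A_q$ of this one; since $A_q$, the direct image, and the trivialisation of $T$ are all canonical, this is the natural structure asserted.

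For the curvature, I would first note --- reading \eqref{ACs} and using that the adjoint of a composition reverses the order of its factors --- that $A_q=(A_0^{\#})^{\ast}$, i.e.\ $A_q^{\ast}=A_0^{\#}$ for the natural $L^2$-metrics on the spaces $\Omega^{0,0}(M,L_{\hat\mu}\otimes\odot^j T)$. Combined with $A_q^{-1}=(q!B^q)^{-1}A_0^{\#}$ this yields the fibrewise identity $A_q^{\ast}A_q=q!\,B^q\cdot\mathrm{Id}$, so $A_q$ is $\sqrt{q!B^q}$ times a unitary isomorphism of $(\mathbb E_{qB},h_q)$ onto $\mathbb E_{(q=0)B}\otimes\odot^q T_0$ equipped with the tensor product of the natural $L^2$-metric $h_0$ on $\mathbb E_{(q=0)B}$ and the constant Hermitian metric on $\odot^q T_0$ induced by $\omega$; the constant $q!B^q$ is independent of $[\hat\mu]$, hence invisible to the Chern connection and its curvature. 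Because $(\odot^q T_0,\text{const})$ is flat, $\Theta(\mathbb E_{(q=0)B}\otimes\odot^q T_0)=\Theta(\mathbb E_{(q=0)B},h_0)\otimes\mathrm{Id}_{\odot^q T_0}$; inserting the scalar $q=0$ curvature of \cite[Theorem~1.2]{CCT24} (the $q=0$ instance of \eqref{FC}) and using $\dim_{\mathbb C}\odot^q T_0=C_q^{\,n+q-1}$ produces \eqref{FC}, with identity matrix of rank $\Delta_q=C_q^{\,n+q-1}\delta_1\cdots\delta_n$.

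The step I expect to be the main obstacle is the first one: showing that the fibrewise isomorphisms $A_q$ of Theorem~\ref{TeoA} really glue to a \emph{smooth} bundle morphism out of $\mathbb E_{qB}$, i.e.\ that the induced connections $\nabla^{-k}$ and the contraction operators $I^{-k}$ of \eqref{ACs} vary smoothly with $[\hat\mu]$, so that the $\bar\partial$-operator transported to $\mathbb E_{qB}$ is well-defined and integrable. This is precisely what the family version of \eqref{ACs} is designed to provide; granting it, the remaining points --- the algebraic identity $A_q^{\ast}A_q=q!B^q\,\mathrm{Id}$, the flatness of $\odot^q T_0$, and the quotation of the $q=0$ curvature formula --- are essentially formal.
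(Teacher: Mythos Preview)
Your proposal is correct and follows essentially the same route as the paper: transport the holomorphic structure through the fibrewise isomorphism $A_q$ of Theorem~\ref{TeoA}, use the holomorphic triviality of $T$ to identify the target with $\mathbb{E}_{(q=0)B}\otimes\odot^q T_0$, invoke Grauert for the direct image, and then read off the curvature from \cite[Theorem~1.2]{CCT24} since the flat factor $\odot^q T_0$ contributes nothing. The one addition you make beyond the paper is the identity $A_q^{\ast}A_q=q!B^q\cdot\mathrm{Id}$, which shows that the natural $L^2$-metric on the eigenspace side agrees (up to the harmless constant $q!B^q$) with the tensor-product metric the paper places directly on the holomorphic side; the paper simply \emph{defines} $h_q$ as the latter without verifying this compatibility, so your extra step is a genuine clarification rather than a different argument.
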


There is a vast literature\footnote{%
In particular, the famous works of S. Helgason deal with invariant
differential operators acting on functions \cite{Hel00} and mostly on
noncompact Riemannian symmetric spaces \cite{Hel08}. The works of A. Terras
are closely related to automorphic forms associated to certain discrete
subgroups (\cite{Ter85}, \cite{Ter88}). As we are concerned with
bundle-valued sections on Hermitian symmetric spaces of compact type, none
of these works can be immediately adopted for the purpose of the present
paper.} on the study of eigenvalues of Laplace-type operators on Riemannian
manifolds, including the famous Selberg's $\frac{1}{4}$ conjecture on
Riemann surfaces (compare the discussion below). In a physical context,
Landau Hamiltonians $H$ with various energy levels/eigenvalues occur
naturally on two-dimensional tori (see \cite{Kato50}, \cite{ASY87} and \cite%
{Lev95}, where the $H$ parametrized by Aharanov-Bohm potentials and the
evolution of spectral bundles on the parameter space are discussed). When
the parameter space is Pic$^{0}(M)$ ($M$ as an Abelian variety), the
eigenvalues and spectral bundles are studied fully and explicitly in the
present paper. Since the sort of \textquotedblleft periodic conditions" in
physics can usually be satisfied also by a non-algebraic complex torus, it
seems natural to include the case of complex tori. Via the description of
line bundles on complex tori (e.g. \cite{BL99} or \cite{DEP05}), we leave it
to the reader to generalize the results of the present paper to complex
tori. 
%\%It is unclear at this stage how the results of the present paper can
%be generalized to complex tori.\%
For another study, one can ask for the similar spectral problem on Hermitian
symmetric spaces of compact type such as Grassmannians or even rational
homogeneous spaces. At least on Grassmannians, the similar results hold for
the second lowest eigenvalue (and the lowest), but for general eigenvalues
the Bochner-Kodaira type identities require revision by extra terms (see
Section \ref{Sec7}). The Grassmannians have these extra terms arising from
the \textit{nullity} of the space (regarded as a feature of symmetric spaces
of rank $\geq 2$); the nullity also accounts for the rigidity phenomena
initiated by Ngaiming Mok (\cite{Mok87a}, \cite{Mok87b}). This new
phenomenon resulting from the new Bochner-Kodaira type identities seems
quite interesting and worth a closer study in the future. As for negative
curvature, the answer to the spectral problem gets much more complicated and
is not quite analogous to the cases of nonnegative curvature. In \cite%
{Prieto06(a)}, \cite{Prieto06(b)} Prieto has studied it on hyperbolic
(compact) Riemann surfaces. Unlike the cases of genus $\leq $ $1,$ although
some eigensections do get identified with certain holomorphic sections, some
others can only be obtained by using the\textit{\ Maass automorphic forms} (%
\cite[p.387]{Prieto06(b)} and \cite[Section 4.3]{Prieto06(a)}). A natural
question appears to concern similar phenomena for negatively curved spaces
in higher dimensions.

A further study may be related to the group representation theory which we
have not touched upon. For instance, an answer to the question how/whether
the above dimension formula for $\mathbb{P}^{n}$ (Theorem \ref{TeoE}) can be
interpreted by the representation theory of $U(n+1)$ is unknown to us, let
alone that for Hermitian symmetric spaces (if any).

It is worth mentioning that the works R. Kuwabara \cite{Kuw88} and D.
Bykov-A. Smilga \cite{BS23} (on which the physical work \cite{AHHO23} is
partly based) consider (real) Bochner Laplacians on $\mathbb{P}^{n}$, in
contrast to the Dolbeault Laplacians of ours, and obtain the eigenvalues and
multiplicities. Their methods are quite special\footnote{%
The work \cite{Kuw88} uses Hopf fibrations $S^{2N+1}$ $\rightarrow $ $%
\mathbb{P}^{N\text{ }}$and converts the eigensections on $\mathbb{P}^{N\text{
}}$to the well-known spherical harmonics. The work \cite{BS23} is
essentially \textit{physical}, and give no precise proof that the
eigensections obtained \ there are exhaustive. Neither of the works \cite%
{Kuw88}, \cite{BS23} deals with Abelian varieties.} as far as
generalizations to other spaces are concerned; for instance, it is unclear
how the Abelian variety case here can be treated using their methodology. As
aforementioned, our method is more general and paves a way to working for
Hermitian symmetric spaces and K\"{a}hler manifolds. See Section \ref{Sec7}
and in particular, Theorem \ref{T-7-1}, a partial result for a Grassmannian.
Further, we give an explicit construction of higher energy eigensections
starting from holomorphic sections (lowest energy), which is not considered
in \cite{Kuw88} and \cite{BS23}. As a consequence, our method gives a
uniform and natural holomorphic structure on these spectral bundles (which
are \textit{a priori} non-holomorphic).

In \cite[Introduction]{CCT24} we wrote \textquotedblleft\ ... the theta
function method here is expected to be also relevant to analogous problems
in a $p$-adic setting \cite{Ro70} $...$ ". As such, let us mention in
passing that it is possible to give a formulation of \textquotedblleft
higher energy states" in suitable $p$-adic settings. Indeed, this has been
discussed (and a vast literature is thus entailed) at least in the sense of $%
p$-adic quantum mechanics (see e.g. \cite[p.236]{VVZ94}, \cite[Section 3]%
{Dr04}, \cite[Subsection 8.2.3]{ZG24}). Mathematically, it is unknown to us
whether the higher energy study exists in a $p$-adic sense (and can be
connected to concrete problems) or not, but it seems to be a natural and
interesting subject for future research.

\textit{Notes added in proof}. The recently published work by L. Charles 
\cite{CH24} focuses on problems (a kind of \textit{semi-classical analysis})
closely related to the ones here, having completely different methods and
results. For instance, in contrast to our Corollary \ref{TeoD} (no Theorem %
\ref{TeoE}-like version was given there) they consider the power $L^{k}$ and
obtained the dimension formula only when $k>>1.$ Another new and very
interesting publication by superstring theorists A. Ashmore, Y-H. He, E.
Heyes and B. A. Ovrut discussed the spectrum of $\Delta _{\bar{\partial}%
_{V}} $ on $\mathbb{P}^{3}$ including some numerical results (where $V$ is a
bundle carrying a Hermitian-Yang-Mills connection) \cite[Section 3]{AHHO23}.
Some physical works on the aforementioned semi-classical analysis on Riemann
surfaces are discussed, e.g. by M. V. Karasev in \cite{Kar07(a)}, \cite%
{Kar07(b)}. A mathematical counterpart of $\Delta _{\bar{\partial}_{V}}$ has
been studied by M. Jardim and R. F. Le\~{a}o in \cite{JL09} where they
obtained a lower bound estimate of eigenvalues on compact K\"{a}hler
manifolds and compare this result to the exact formula on Riemann surfaces
by Prieto et al. \cite{AP06}. The vector bundle $V$ (or the line bundle $L$)
of \cite{JL09} means the same as that of \cite[Section 3]{AHHO23} and is of
non-positive degree (in contrast to our line bundle $L$ here which is of
positive degree).

The paper is organized as follows. In Section \ref{Sec2} we give basic
definitions of some operators on K\"{a}hler manifolds, and introduce
Bochner-Kodaira type identities on Abelian varieties. We prove Theorem \ref%
{TeoA}, Corollary \ref{TeoB} and Theorem \ref{T-main} in Section \ref{Sec3}.
For complex projective spaces we discuss relevant Bochner-Kodaira type
identities in Section \ref{Sec4}. We then prove Theorem \ref{TeoC} and
Corollary \ref{TeoD} in Section \ref{Sec5}. Finally, we give a proof of
Theorem \ref{TeoE} in Section \ref{Sec6} and that of Theorem \ref{T-7-1} in
Section \ref{Sec7} respectively. Some proofs, bearing similarities to
previous ones, are placed in the Appendix.

\bigskip

\textbf{Acknowledgements. }The first author is supported by Xiamen
University Malaysia Research Fund: grant no.XMUMRF/2024-C14/IMAT/0034. The
second author would like to thank the National Science and Technology
Council of Taiwan for the support: grant no. 112-2115-M-001-012 and the
National Center for Theoretical Sciences for the constant support. The third
author thanks the Ministry of Education of Taiwan for the financial support.

\section{\textbf{Contraction operators and Bochner-Kodaira type identities
of the third order$\label{Sec2}$}}

\begin{notation}
\label{notation MN} If not specifically mentioned otherwise, we will use $M$
to denote an Abelian variety and $(N,\omega )$ to denote any K\"{a}hler
manifold with a K\"{a}hler form $\omega $ $=$ $i\,\overset{n}{\underset{%
\alpha ,\beta =1}{\dsum }}g_{\alpha \overline{\beta }}\,dz^{\alpha }\wedge d%
\overline{z}^{\overline{\beta }}$. Notations given in the Introduction are
adopted in what follows.
\end{notation}

\noindent \hspace*{12pt} For $(N,\omega )$ and a holomorphic Hermitian line
bundle $L$ over $N$, denote by $\Omega ^{r,s}(N,L\otimes T^{\ast q})$ the
space of $(r,s)$-forms on $N$ with values in $L\otimes T^{\ast q}.$ Write
the local expression of any given element in $\Omega ^{r,s}(N,L\otimes
T^{\ast q})$ as%
\begin{equation}
dz^{I}\wedge d\overline{z}^{\overline{J}}\otimes s_{I\overline{J}}\otimes
f_{\alpha _{1}...\alpha _{q}}(\overset{q}{\underset{k=1}{\otimes }}%
dz^{\alpha _{k}})  \label{2-1}
\end{equation}%
\noindent where $s_{I\overline{J}}$ are smooth local sections on $L$ with
multi-indices $I$, $\overline{J}$, $|I|=r,|\overline{J}|=s$ and $f_{\alpha
_{1}...\alpha _{q}}$ are complex-valued smooth local functions with $\alpha
_{i}\in \{1,...,n\}$, $i=1,...,q$. 
%\%The set $\{dz^{I}\wedge d\overline{z}^{%
%\overline{J}}\otimes s\otimes \overset{q}{(\underset{k=1}{\otimes }}%
%dz^{\alpha _{k}})\}_{I,\bar{J},\alpha _{k}}$ constitutes a local basis and
%for the coefficients we wrote them as in (\ref{2-1}) (this has no loss of
%generality).\%
Similar notations apply to $\Omega ^{r,s}(N,L\otimes T^{q})$.%
%\%as well\%.
\newline

\noindent \hspace*{12pt} With the connection $\nabla ^{q}$ on $L\otimes
T^{\ast q}$ one has the operators $\partial ^{q}\ (q\in \{0\}\cup \mathbb{N}%
):$ (often omitting the dependence of the connection $\nabla ^{q}$ in
notation$)$%
\begin{align}
{\normalsize \partial }^{q}& {\normalsize :}\ {\normalsize \Omega }^{r,s}%
{\normalsize (N,L\otimes T}^{\ast q}{\normalsize )\longrightarrow \Omega }%
^{r+1,s}{\normalsize (N,L\otimes T}^{\ast q})\hspace*{244pt}  \notag
\label{2-1-1} \\
& \ \ 
\mbox{by sending $dz^{I} \wedge d\overline{z}^{\overline{J}} \otimes s_{I
\overline{J}}\otimes f_{\alpha_{1}...\alpha_{q}}  (
\overset{q}{\underset{k=1}{\otimes}} dz^{\alpha_{k}} )$}\hspace*{180pt} 
\notag \\
& \ \ 
\mbox{to\hspace*{10pt}$\overset{n}{\underset{i=1}{\sum}}\, 
 dz^{i}
\wedge  dz^{I} \wedge d\overline{z}^{\overline{J}}\otimes\nabla^{L}_{\frac{\partial}{\partial z^{i}}}s_{I \overline{J}}    \otimes
f_{\alpha_{1}...\alpha_{q}} ( \overset{q}{\underset{k=1}{\otimes}}
dz^{\alpha_{k}} )$}\hspace*{140pt} \\
& \ \ 
\mbox{$+\overset{n}{\underset{i=1}{\sum}}\,  dz^{i}
\wedge dz^{I} \wedge d\overline{z}^{\overline{J}} \otimes s_{I \overline{J}}\otimes
\nabla_{\frac{\partial}{\partial z^{i}}} \big( f_{\alpha_{1}...\alpha_{q}}  
( \overset{q}{\underset{k=1}{\otimes}} dz^{\alpha_{k}} ) \big)$}.  \notag
\end{align}%
\noindent The operator $\partial ^{q}$ is globally well-defined (cf. \cite[%
p.221]{Wells08}).

\noindent \hspace*{12pt} Similarly, we define $\overline{\partial }^{q}$, $%
\partial ^{-q}$, and $\overline{\partial }^{-q}$ $(q\in \{0\}\cup \mathbb{N}%
) $ as variants of the usual exterior covariant derivatives. Although it is
not strictly necessary to use all of these operators (compare the footnote
attached to the Laplacians (\ref{Delta q})), we introduce them simply to
streamline our presentation. For the sake of clarity\footnote{%
In conformity with the standard notations, these operators are the $%
D^{\prime }$ and $D^{\prime \prime }$ in \cite[p.221]{Wells08}. For example,
if $E=L\otimes T^{*q}$ then $D^{\prime q}=\partial ^{q}$ and $D^{\prime
\prime }=\overline{\partial }^{q}$.}, we choose to spell out the details: 
\begin{eqnarray}
&&\ \ \ \ \ \ 
\mbox{$\overline{\partial} ^{q} {\normalsize :}\ {\normalsize \Omega
}^{r,s}{\normalsize (N, L\otimes T}^{\ast q}{\normalsize )\longrightarrow
\Omega }^{r,s+1}{\normalsize (N, L\otimes T}^{\ast q} )$}\hspace*{244pt}
\label{2-1-2} \\
&&\ \ \ \ \ \ \ \ \ \ \ 
\mbox{$dz^{I} \wedge d\overline{z}^{\overline{J}} \otimes s_{I
\overline{J}}\otimes f_{\alpha_{1}...\alpha_{q}} (
\overset{q}{\underset{k=1}{\otimes}} dz^{\alpha_{k}} )\longmapsto$}\hspace*{%
214pt}  \notag \\
&&\ \ \ \ \ \ \ \ \ \ \ 
\mbox{$\overset{n}{\underset{j=1}{\sum}}\,  
  d\overline{z}^{\overline{j}} \wedge dz^{I} \wedge
d\overline{z}^{\overline{J}}  \otimes \nabla^{L}_{\frac{\partial}{\partial \overline{z}^{\overline{j}}}} s_{I
	\overline{J}}\otimes f_{\alpha_{1}...\alpha_{q}}  (
\overset{q}{\underset{k=1}{\otimes}} dz^{\alpha_{k}} )$}\hspace*{156pt} 
\notag \\
&&\ \ \ \ \ \ \ \ \ \ \ 
\mbox{$+\overset{n}{\underset{j=1}{\sum}}\,  
d\overline{z}^{\overline{j}} \wedge dz^{I} \wedge
d\overline{z}^{\overline{J}}   \otimes s_{I \overline{J}} \otimes \nabla_{\frac{\partial}{\partial
\overline{z}^{\overline{j}}}}\big( f_{\alpha_{1}...\alpha_{q}}  (
\overset{q}{\underset{k=1}{\otimes}} dz^{\alpha_{k}} )\big)$},\hspace*{96pt}
\notag
\end{eqnarray}%
\begin{eqnarray}
&&%
\mbox{${\normalsize \partial }^{-q} {\normalsize :}\ {\normalsize \Omega
}^{r,s}{\normalsize (N, L\otimes T}^{q}{\normalsize )\longrightarrow \Omega
}^{r+1,s}{\normalsize (N, L\otimes T}^{q} )$}\hspace*{208pt}  \label{2-1-3}
\\
&&\ \ \ \ \ \ 
\mbox{$dz^{I} \wedge d\overline{z}^{\overline{J}} \otimes s_{I
\overline{J}}\otimes f^{\alpha_{1}...\alpha_{q}} (
\overset{q}{\underset{k=1}{\otimes}}  \frac{\partial}{\partial
z^{\alpha_{k}}} ) \longmapsto$}\hspace*{208pt}  \notag \\
&&\ \ \ \ \ \ 
\mbox{$\overset{n}{\underset{i=1}{\sum}}\,
 dz^{i}\wedge dz^{I} \wedge d\overline{z}^{\overline{J}}\otimes \nabla^{L}_{\frac{\partial}{\partial z^{i}}}s_{I \overline{J}}  \otimes
f^{\alpha_{1}...\alpha_{q}} ( \overset{q}{\underset{k=1}{\otimes}} 
\frac{\partial}{\partial z^{\alpha_{k}}})$}\hspace*{152pt}  \notag \\
&&\ \ \ \ \ \ 
\mbox{$+\overset{n}{\underset{i=1}{\sum}}\, dz^{i}
\wedge dz^{I} \wedge d\overline{z}^{\overline{J}} \otimes s_{I\overline{J}}\otimes 
\nabla_{\frac{\partial}{\partial z^{i}}}\big( f^{\alpha_{1}...\alpha_{q}}  (
\overset{q}{\underset{k=1}{\otimes}}  \frac{\partial}{\partial
z^{\alpha_{k}}}) \big),  $}\hspace*{98pt}  \notag
\end{eqnarray}%
\begin{eqnarray}
&&%
\mbox{$\overline{\partial} ^{-q} {\normalsize :}\ {\normalsize \Omega
}^{r,s}{\normalsize (N, L\otimes T}^{q}{\normalsize )\longrightarrow \Omega
}^{r,s+1}{\normalsize (N, L\otimes T}^{q} )$}\hspace*{212pt}  \label{2-1-4}
\\
&&\ \ \ \ \ \ 
\mbox{$dz^{I} \wedge d\overline{z}^{\overline{J}} \otimes s_{I
\overline{J}}\otimes f^{\alpha_{1}...\alpha_{q}}  (
\overset{q}{\underset{k=1}{\otimes}}  \frac{\partial}{\partial
z^{\alpha_{k}}})\longmapsto$}\hspace*{208pt}  \notag \\
&&\ \ \ \ \ \ 
\mbox{$\overset{n}{\underset{j=1}{\sum}}\,
 d\overline{z}^{\overline{j}} \wedge dz^{I} \wedge
d\overline{z}^{\overline{J}} \otimes \nabla^{L}_{\frac{\partial}{\partial \overline{z}^{\overline{j}}}}s_{I
	\overline{J}} \otimes f^{\alpha_{1}...\alpha_{q}}  (
\overset{q}{\underset{k=1}{\otimes}}  \frac{\partial}{\partial
z^{\alpha_{k}}})$}\hspace*{152pt}  \notag \\
&&\ \ \ \ \ \ 
\mbox{$+\overset{n}{\underset{j=1}{\sum}}\, 
d\overline{z}^{\overline{j}} \wedge dz^{I} \wedge
d\overline{z}^{\overline{J}} \otimes s_{I \overline{J}}\otimes  \nabla_{\frac{\partial}{\partial
\overline{z}^{\overline{j}}}}\big( f^{\alpha_{1}...\alpha_{q}}  (
\overset{q}{\underset{k=1}{\otimes}}  \frac{\partial}{\partial
z^{\alpha_{k}}}) \big)$}.\hspace*{100pt}  \notag
\end{eqnarray}%
\noindent Note that $\partial ^{0}=\partial ^{-0}$, $\overline{\partial }%
^{0}=\overline{\partial }^{\,-0}$ by the definitions above.

\begin{notation}
\label{notation}

%\begin{enumerate}

$i)$ We also write $\nabla _{\frac{\partial }{\partial z^{i}}}\big(f_{\alpha
_{1}...\alpha _{q}}(\overset{q}{\underset{k=1}{\otimes }}dz^{\alpha _{k}})%
\big)$ as $f_{\alpha _{1}...\alpha _{q},i}(\overset{q}{\underset{k=1}{%
\otimes }}dz^{\alpha _{k}})$ $=:$ \textbf{f }where $\nabla _{\frac{\partial 
}{\partial z^{i}}}$ means $\nabla _{\frac{\partial }{\partial z^{i}}%
}^{T^{\ast q}}$. Similar notations apply in a related context. For the order
of covariant derivatives, $f_{\alpha _{1}...\alpha _{q},i\bar{j}}$ means
taking the $i$-derivative first, and then $\bar{j}$ using $\nabla _{\frac{%
\partial }{\partial z^{\overline{j}}}}^{T^{\ast q}}$ (rather than $\nabla _{%
\frac{\partial }{\partial z^{\overline{j}}}}^{T^{\ast (q+1)}})$. Here, note
that $f_{\alpha _{1}...\alpha _{q},i}$ is regarded as the section of $%
T^{\ast q}$ (rather than $T^{\ast (q+1)})$ that is obtained by pairing the
section $\nabla $\textbf{f }of $(T^{\ast q})\otimes T^{\ast }$ with $\frac{%
\partial }{\partial z^{i}}$ \cite[p.148]{Taubes11}$.$\vspace*{-8pt}\newline

$ii)$ If $s$ is a local section of $L$, we write $\nabla _{{\tiny \frac{%
\partial }{\partial z^{i}}}}^{L}s$ simply as $s_{i}$, $\nabla _{{\tiny \frac{%
\partial }{\partial \overline{z}^{\overline{j}}}}}^{L}s$ as $s_{\overline{j}%
} $ and $s_{i\overline{j}}$ as $\nabla _{{\tiny \frac{\partial }{\partial 
\overline{z}^{\overline{j}}}}}^{L}s_{i}$. %\end{enumerate}
This is not to be confused with $s_{I\bar{J}}$ in (\ref{2-1}).
\end{notation}

\noindent \hspace*{12pt} Write ${\alpha }^{r,s}$ for a local expression of
an element of $\Omega ^{r,s}(N,L)\ (r,s\in \{0\}\cup \mathbb{N})$. We first
define\footnote{%
No multiplicative constant $r$! is involved. More precisely, we identify the 
$(r,0)$-forms on $N$ with the skew-symmetric covariant tensor fields of rank 
$r$ on $N$:
\par
\begin{equation*}
dz^{1}\wedge ...\wedge dz^{r}=\underset{\sigma \in S_{r}}{\sum }sgn%
\begin{pmatrix}
1 & 2 & ... & r \\ 
\sigma (1) & \sigma (2) & ... & \sigma (r)%
\end{pmatrix}%
dz^{\sigma (1)}\otimes ...\otimes dz^{\sigma (r)}.
\end{equation*}%
\par
\noindent Similar definition applies to (0,s)-form, and $dz^{\alpha
_{1}}\wedge ...\wedge dz^{\alpha _{r}}\wedge d\overline{z}^{\overline{\beta }%
_{1}}\wedge ...\wedge d\overline{z}^{\overline{\beta }_{s}}$ identifies with 
$(dz^{\alpha _{1}}\wedge ...\wedge dz^{\alpha _{r}})\otimes (d\overline{z}^{%
\overline{\beta }_{1}}\wedge ...\wedge d\overline{z}^{\overline{\beta }%
_{s}}) $ (see \cite[p.107 and p.141]{Kod86}).} the operator $I^{q}$: (The
notation $\otimes $ is often omitted below if there is no danger of
confusion)%
\begin{eqnarray}
&&{I}^{q}\ {\normalsize :}\ \Omega ^{r+1,s}(N,L\otimes T^{\ast
q})\longrightarrow \Omega ^{r,s}(N,L\otimes T^{\ast (q+1)}),\text{ }q\in
\{0\}\cup \mathbb{N}\hspace*{120pt}  \label{2-2-1} \\
&&\ \ \ \ \ \ 
\mbox{by sending $\alpha^{r+1,s} (\overset{q}{\underset{k=1}{\otimes}}
dz^{\alpha_{k}})=(-1)^{r+s} \overset{n}{\underset{i=1}{\sum}}\,
\big((\frac{\partial}{\partial z^{i}} \iprod \alpha^{r+1,s})\wedge
dz^{i}\big)(\overset{q}{\underset{k=1}{\otimes}} dz^{\alpha_{k}})$}\hspace*{%
48pt}  \notag \\
&&\ \ \ \ \ \ 
\mbox{to\hspace*{12pt} $(-1)^{r+s}\big( \overset{n}{\underset{i=1}{\sum}}\,
\frac{\partial}{\partial z^{i}} \iprod \alpha^{r+1,s} \big) \big( dz^{i}
\otimes (\overset{q}{\underset{k=1}{\otimes}} dz^{\alpha_{k}})\big)$}%
\hspace*{148pt}  \notag
\end{eqnarray}

\noindent using the interior product $\mathbin{\lrcorner}$ on differential
forms on $N$. 
%\%For $q=-1$ we define $I^{-1}:\Omega ^{1,0}(N,L\otimes T)\longrightarrow
%\Omega ^{0,0}(N,L)$ (speculating $T^{\ast (-1)}$ to be $T)$ by the pairing%
%\begin{equation}
%I^{-1}(f_{k}^{j}dz^{k}s\otimes \frac{\partial }{\partial z^{j}}%
%)=f_{k}^{j}\delta _{j}^{k}s=f_{j}^{j}s  \label{I-1}
%\end{equation}
%
%\%
Similarly we define $\overline{I}^{q}$, $I^{-q}$ and $\overline{I}^{-q}$. We
choose to put down the details (see Notation \ref{notation MN} for $%
g^{\alpha _{q}\bar{j}}$ below): 
\begin{eqnarray}
&&%
\mbox{${\overline{I}}^{q}\ {\normalsize :}\ \Omega ^{r,s+1}(N, L\otimes
T^{\ast q})\longrightarrow \Omega^{r,s}(N, L\otimes T^{\ast (q-1)}),\text{
}q\in  \mathbb{N}$}\hspace*{136pt}  \label{2-2-2} \\
&&\ \ \ \ \ \ 
\mbox{$\alpha^{r,s+1} (\overset{q}{\underset{k=1}{\otimes}}
dz^{\alpha_{k}})=(-1)^{r+s} \overset{n}{\underset{j=1}{\sum}}\, \big(
(\frac{\partial}{\partial \overline{z}^{\overline{j}}} \iprod
\alpha^{r,s+1})\wedge d\overline{z}^{\overline{j}}
\big)\big(\overset{q}{\underset{k=1}{\otimes}} dz^{\alpha_{k}})\longmapsto$}%
\hspace*{64pt}  \notag \\
&&\ \ \ \ \ \ 
\mbox{$(-1)^{r+s} \overset{n}{\underset{j=1}{\sum}}\,\big(
\frac{\partial}{\partial \overline{z}^{\overline{j}}} \iprod \alpha^{r,s+1}
\big)  g^{\alpha_{q}\overline{j}}(\overset{q-1}{\underset{k=1}{\otimes}}
dz^{\alpha_{k}})$}.\hspace*{40pt}  \notag
\end{eqnarray}

%\noindent \%For $q=0$ we define $\overline{I}^{0}:\Omega
%^{0,1}(N,L)\rightarrow \Omega ^{0,0}(N,L\otimes T)$ by%
%\begin{equation}
%\overline{I}^{0}(f_{\overline{k}}d\bar{z}^{\overline{k}}s)=f_{\overline{k}%
%}g^{j\overline{k}}s\otimes \frac{\partial }{\partial z^{j}}.  \label{I-0}
%\end{equation}
%
%\%

\begin{eqnarray}
&& 
\mbox{${I}^{-q}\ {\normalsize :}\ \Omega ^{r+1,s}(N, L\otimes
T^{q})\longrightarrow \Omega^{r,s}(N, L\otimes T^{(q-1)}),\text{ }q\in
\mathbb{N}$}\hspace*{136pt}  \label{2-2-3} \\
&& \ \ \ \ \ \ 
\mbox{$\alpha^{r+1,s} (\overset{q}{\underset{k=1}{\otimes}}
\frac{\partial}{\partial z^{\alpha_{k}}})=(-1)^{r+s}
\overset{n}{\underset{i=1}{\sum}}\, \big( (\frac{\partial}{\partial z^{i}}
\iprod \alpha^{r+1,s})\wedge
dz^{i}\big)(\overset{q}{\underset{k=1}{\otimes}} \frac{\partial}{\partial
z^{\alpha_{k}}} )\longmapsto$}\hspace*{68pt}  \notag \\
&& \ \ \ \ \ \ 
\mbox{$(-1)^{r+s}\overset{n}{\underset{i=1}{\sum}}\, \big(
\frac{\partial}{\partial z^{i}} \iprod \alpha^{r+1,s} \big)   \delta
_{\alpha _{q}}^{i}(\underset{k=1}{\overset{q-1}{\otimes }}\frac{\partial
}{\partial z^{\alpha _{k}}}),$}\hspace*{48pt}  \notag
\end{eqnarray}%
\begin{eqnarray}
&& 
\mbox{${\overline{I}}^{\, -q}\ {\normalsize :}\ \Omega ^{r,s+1}(N, L\otimes
T^{q})\longrightarrow \Omega^{r,s}(N, L\otimes T^{(q+1)}),\text{ }q\in \{0\}
\cup \mathbb{N}$}\hspace*{100pt}  \label{2-2-4} \\
&& \ \ \ \ \ \ 
\mbox{$ \alpha^{r,s+1} (\overset{q}{\underset{k=1}{\otimes}}
\frac{\partial}{\partial z^{\alpha_{k}}})=(-1)^{r+s}
\overset{n}{\underset{j=1}{\sum}}\, \big( (\frac{\partial}{\partial
\overline{z}^{\overline{j}}} \iprod \alpha^{r,s+1})\wedge
d\overline{z}^{\overline{j}}\big)(\overset{q}{\underset{k=1}{\otimes}}
\frac{\partial}{\partial z^{\alpha_{k}}} )\longmapsto$}\hspace*{64pt}  \notag
\\
&& \ \ \ \ \ \ 
\mbox{$(-1)^{r+s} \overset{n}{\underset{i, j=1}{\sum}}\, \big(
\frac{\partial}{\partial \overline{z}^{\overline{j}}} \iprod \alpha^{r,s+1}
\big)    g^{i\overline{j}}\big( \frac{\partial}{\partial z^{i}}\otimes
(\underset{k=1}{\overset{q}{\otimes }}\frac{\partial }{\partial z^{\alpha
_{k}}})\big).$}\hspace*{8pt}  \notag
\end{eqnarray}

\noindent These operators are globally well-defined.\newline

\noindent \hspace*{12pt} For any K\"{a}hler manifold $(N,\omega )$, let $%
\Omega ^{r,s}(N)$ be the space of $C^{\infty }(r,s)$-form on $N$ and $\ast $
be the (real) Hodge star operator associated with the K\"{a}hler form $%
\omega =i\,\overset{n}{\underset{\alpha ,\beta =1}{\dsum }}g_{\alpha 
\overline{\beta }}\,dz^{\alpha }\wedge d\overline{z}^{\overline{\beta }}$.
For $\psi =\underset{A_{r},B_{s}}{\sum }\,\psi _{A_{r}\overline{B}%
_{s}}(z)\,dz^{A_{r}}\wedge d\overline{z}^{\overline{B}_{s}}\in \Omega
^{r,s}(N)$ recall the definition of $\ast $ in \cite[p.150, (3.119)]{Kod86}: 
\begin{equation}
\ast \psi =i^{n}(-1)^{\frac{n(n-1)}{2}+nr}\underset{A_{r},B_{s}}{\sum }\,sgn%
\begin{pmatrix}
A_{r} & A_{n-r} \\ 
B_{s} & B_{n-s}%
\end{pmatrix}%
g(z)\,\psi ^{\overline{A_{r}}B_{s}}(z)\,dz^{B_{n-s}}\wedge d\overline{z}^{%
\overline{A}_{n-r}}  \label{stardef}
\end{equation}%
where 
\begin{equation}
g(z)=\det (g_{i\bar{j}}),\text{ }\psi ^{\overline{A_{r}}B_{s}}=\sum g^{%
\overline{\alpha }_{1}\lambda _{1}}...g^{\overline{\alpha }_{r}\lambda
_{r}}g^{\overline{\mu }_{1}\beta _{1}}...g^{\overline{\mu }_{s}\beta
_{s}}\psi _{\lambda _{1}...\lambda _{r}\overline{\mu }_{1}...\overline{\mu }%
_{s}}.\hspace*{80pt}  \label{gz}
\end{equation}%
\noindent Here $A_{r}=\alpha _{1}...\alpha _{r}$, $B_{s}=\beta _{1}...\beta
_{s}$ with $\alpha _{1}<...<\alpha _{r}$, $\beta _{1}<...<\beta _{s}$, $%
dz^{A_{r}}=dz^{\alpha _{1}}\wedge ...\wedge dz^{\alpha _{r}}$ and $d%
\overline{z}^{\overline{B}_{s}}=d\overline{z}^{\overline{\beta }_{1}}\wedge
...\wedge d\overline{z}^{\overline{\beta }_{s}}$. Moreover, for $%
A_{r}=\alpha _{1}...\alpha _{r}$ we put 
\begin{equation*}
A_{n-r}=\alpha _{r+1}...\alpha _{n}
\end{equation*}%
where $\alpha _{r+1}<...<\alpha _{n}$ and $\{\alpha _{1},...,\alpha
_{r},\alpha _{r+1},...\alpha _{n}\}$ is a permutation of $\{1,...,n\}$.
Similarly, we define $B_{n-s}$ if $B_{s}$ is given.\newline

\noindent From (\ref{stardef}), it is easy to see: 
\begin{align}
\ast & (dz^{j})=-i^{n}g(z)\sum\limits_{k}g^{\overline{k}j}\,dz^{k}\wedge (%
\mbox{$\underset{\alpha \neq k}{\bigwedge }$}dz^{\alpha }\wedge d\overline{z}%
^{\overline{\alpha }})\hspace*{120pt}  \label{star1} \\
\ast \,& d\overline{z}^{\overline{j}}=i^{n}g(z)\sum\limits_{k}\overline{g^{%
\overline{k}j}}\,d\overline{z}^{\overline{k}}\wedge (%
\mbox{$\underset{\alpha \neq
k}{\bigwedge}$}dz^{\alpha }\wedge d\overline{z}^{\overline{\alpha }})
\label{star2} \\
\ast & (i^{n}g(z)\,\mbox{$\underset{\alpha }{\bigwedge }$}\,dz^{\alpha
}\wedge d\overline{z}^{\overline{\alpha }})=\ast (\,\frac{\omega ^{n}}{n!}%
\,)=1\in \mathbb{C}.  \label{star3}
\end{align}%
\noindent Here dVol(z)$\displaystyle=i^{n}g(z)\,%
\mbox{$\underset{\alpha }{\bigwedge
}$}\,dz^{\alpha }\wedge d\overline{z}^{\overline{\alpha }}=\frac{\omega ^{n}%
}{n!}$ is the volume form.\newline

\noindent \hspace*{12pt} A standard inner product $<\ ,\,>$ for $\Omega
^{r,s}(N)$ can be defined as in (\cite[p.147, (3.115)]{Kod86}):\vspace*{8pt}%
\newline
\hspace*{28pt} For $\varphi =\frac{1}{r!s!}\sum \,\varphi _{\alpha
_{1}...\alpha _{r}\overline{\beta }_{1}...\overline{\beta }_{s}}dz^{\alpha
_{1}}\wedge ...\wedge dz^{\alpha _{r}}\wedge d\overline{z}^{\overline{\beta }%
_{1}}\wedge ...\wedge d\overline{z}^{\overline{\beta }_{s}}$,\newline
\hspace*{52pt}$\psi =\frac{1}{r!s!}\sum \,\psi _{\alpha _{1}...\alpha _{r}%
\overline{\beta }_{1}...\overline{\beta }_{s}}dz^{\alpha _{1}}\wedge
...\wedge dz^{\alpha _{r}}\wedge d\overline{z}^{\overline{\beta }_{1}}\wedge
...\wedge d\overline{z}^{\overline{\beta }_{s}}$, we first put 
\begin{equation*}
(\varphi ,\psi )(z)=\frac{1}{r!s!}\,\sum \,\varphi _{\alpha _{1}...\alpha
_{r}\overline{\beta }_{1}...\overline{\beta }_{s}}\overline{\psi }^{\,\alpha
_{1}...\alpha _{r}\overline{\beta }_{1}...\overline{\beta }_{s}}\hspace*{%
100pt}
\end{equation*}%
where 
\begin{equation*}
\overline{\psi }^{\,\alpha _{1}...\alpha _{r}\overline{\beta }_{1}...%
\overline{\beta }_{s}}=\sum g^{\overline{\lambda }_{1}\alpha _{1}}...g^{\,%
\overline{\lambda }_{r}\alpha _{r}}g^{\overline{\beta }_{1}\mu _{1}}...g^{\,%
\overline{\beta }_{s}\mu _{s}}\overline{\psi _{_{\lambda _{1}...\lambda _{r}%
\overline{\mu }_{1}...\overline{\mu }_{s}}}}\hspace*{108pt}
\end{equation*}%
and define the inner product $<\ ,\,>$ to be 
\begin{equation}
<\varphi ,\psi >=\int_{N}(\varphi ,\psi )(z)\,\frac{\omega ^{n}}{n!}\hspace*{%
192pt}  \label{2-16a}
\end{equation}

\noindent with (\cite[p.150, (3.122)]{Kod86}): 
\begin{equation*}
<\varphi ,\ \psi >=\int_{N}\varphi \wedge \overline{\ast \psi }%
=\int_{N}\varphi \wedge \ast \overline{\psi },\hspace*{30pt}\varphi ,\psi
\in \Omega ^{r,s}(N).\hspace*{20pt}
\end{equation*}%
One can naturally extend the inner product $<\ ,\,>$ from $\Omega ^{r,s}(N)$
to $\Omega ^{r,s}(N,E)$ for a holomorphic vector bundle $E$ with a Hermitian
metric \cite[(3.139) on p.159]{Kod86} including $E=L\otimes T^{q}$ or $%
L\otimes T^{\ast q}$ and we still use the same notation $<\ ,\,>$ \textit{%
without the subscript }$E$ . \newline

\noindent \hspace*{12pt} With this inner product $<\,,>$ on (compact) $N$
(with $E)$, one can easily show that the $L^{2}$-adjoints of $\ {\scriptsize %
\partial }^{q},\overline{\partial }^{q}$, ${\scriptsize \partial }^{-q}$,
and $\overline{\partial }^{\,-q}$ are given by: 
\begin{equation}
\begin{cases}
({\scriptsize \partial }^{q})^{\ast }={\scriptsize -\ast }\overline{\partial 
}^{q}{\scriptsize \ast }\text{,}\hspace*{20pt} & ({\scriptsize \partial }%
^{-q})^{\ast }={\scriptsize -\ast }\overline{\partial }^{-q}{\scriptsize %
\ast }\text{,} \\ 
({\scriptsize \overline{\partial }}^{q})^{\ast }={\scriptsize -\ast }%
\partial ^{q}{\scriptsize \ast }\text{,} & ({\scriptsize \overline{\partial }%
}^{-q})^{\ast }={\scriptsize -\ast }\partial ^{-q}{\scriptsize \ast }\text{,
\ }q\in \{0\}\cup \mathbb{N}.%
\end{cases}%
\hspace*{110pt}  \label{partial q star}
\end{equation}%
(cf. \cite[(3.142) on p.160]{Kod86}, \cite[p.227]{Wells08}). For instance,
our $\partial ^{q}$ corresponds to $D^{\prime }(=\partial +\theta )$ in the
notation of \cite[p.227]{Wells08}. It is understood that the star operator $%
\ast $ acts componentwise on vector-valued forms \cite[p.227]{Wells08}.

\noindent \hspace*{12pt} One defines the following Bochner-Kodaira type
Laplacians\footnote{%
These B-K Laplacians are not absolutely independent of one another, since
the connection on a holomorphic bundle $E$ and that on its dual $E^{\ast }$
are closely related (compare (7-7-1) in the proof of Theorem \ref{T-7-1}).
To make our treatment \textit{uniform}, we find it natural to ignore their
dependence and work on them as if they were independent ones.} (%
\mbox{cf.
\cite[p.296]{Prieto06(a)}}): for $q\in \{0\}\cup \mathbb{N}$%
\begin{equation}
\begin{cases}
\Delta ^{q}:=(\partial ^{q})^{\ast }{\scriptsize \partial }^{q}\text{, }%
\Delta _{q}:=(\overline{\partial }^{q})^{\ast }{\scriptsize \overline{%
\partial }^{q}}\text{\ \ \ \ \ \ \ \ \ \ \ \ \ \ \ \ \ \ on }\Omega
^{0,0}(N,L\otimes T^{\ast q}) \\ 
\Delta ^{-q}:=({\scriptsize \partial }^{-q})^{\ast }\text{ }{\scriptsize %
\partial }^{-q}\text{, }\Delta _{-q}:=({\scriptsize \overline{\partial }}%
^{\,-q})^{\ast }\text{ }{\scriptsize \overline{\partial }}^{\,-q}\text{\ \ \
\ \ \ on }\Omega ^{0,0}(N,L\otimes T^{q}).%
\end{cases}
\label{Delta q}
\end{equation}

%Since $(\partial^{q})^{*}f=0$ for all $f \in \Omega^{0,0}(N, L \otimes T^{q})=0$ we can also write $\Delta ^{q}f  = (\partial^{q})^{\ast }{\scriptsize \partial }^{q}f + {\scriptsize \partial }^{q}(\partial^{q})^{\ast }f.$

%~~\newline
%\noindent \%\hspace*{12pt} For the rest of Section 2, we assume the K\"{a}%
%hler manifold $N$ to be an Abelian variety $M$ with the setting from (\ref%
%{HodgeMetric}) to (\ref{curvature}). \%

Let $V$ be a complex vector space of dimension $n$ and $\Lambda \subseteq V$
a discrete lattice of rank $2n$ with integral basis $\{\lambda
_{1},...,\lambda _{2n}\}$. Assume that $M=V/\Lambda $ is an Abelian variety
with the Hodge metric%
\begin{equation}
\omega ^{h}=\frac{i}{2}\overset{n}{\underset{\alpha ,\beta =1}{\sum }}%
\,W_{\alpha \beta }\,dz^{\alpha }\wedge d\overline{z}^{\overline{\beta }}
\label{HodgeMetric}
\end{equation}%
\noindent given in the complex coordinates $(z^{1},...,z^{n})$ on $V$ with
respect to the complex basis $v_{\alpha }$ where $\lambda _{\alpha }=\delta
_{\alpha }v_{\alpha }$, $\lambda _{n+\alpha }=\sum \,\tau _{\alpha k}v_{k}$
with $\delta _{\alpha }\in \mathbb{N},\delta _{\alpha }|\delta _{\alpha +1}$
for $\alpha =1,...,n-1$, and $(W_{\alpha \beta })=(\func{Im}\tau _{\alpha
\beta })^{-1}$ a symmetric and positive-definite matrix (\cite[pp.191, 306]%
{GH84}). For our setting, we would like to be more flexible and equip $M$
with the K\"{a}hler metric 
\begin{equation}
\omega =\frac{2\pi }{B}\,\omega ^{h}=\frac{\pi i}{B}\overset{n}{\underset{%
\alpha ,\beta =1}{\sum }}\,W_{\alpha \beta }\,dz^{\alpha }\wedge d\overline{z%
}^{\overline{\beta }}\hspace*{10pt}\mbox{for some $B>0$.}
\end{equation}%
\noindent Since the matrix $(W_{\alpha \beta })$ is symmetric and
positive-definite, after some linear change of complex coordinates we can
express the K\"{a}hler metric as (retaining the same $z^{\alpha }$-notation)%
\begin{equation}
\omega =i\overset{n}{\underset{\alpha =1}{\sum }}\,dz^{\alpha }\wedge d%
\overline{z}^{\overline{\alpha }}\hspace*{10pt}\mbox{on $M$.}  \label{omega}
\end{equation}

\noindent \hspace*{12pt} On a K\"{a}hler manifold $N$ we denote the
holomorphic tangent, cotangent bundle by $T$, $T^{\ast }$ respectively, and $%
\otimes ^{q}T$, $\otimes ^{q}T^{\ast }$ by $T^{q}$, $T^{\ast q}$
respectively, $q\in \{0\}\cup \mathbb{N}$. Write $\nabla $ for the
associated Levi-Civita connection on $T$. Let $L\rightarrow N$ be a
holomorphic line bundle with a Hermitian metric $h_{L}$. On $L$ there exists
a unique holomorphic connection $\nabla ^{L}$ (i.e.\thinspace\ ${\nabla ^{L}}%
^{(0,1)}=\overline{\partial }_{L}$) compatible with $h_{L}$ (called the 
\textit{Chern connection} \cite[p.11]{MM07} ). If $c_{1}(L)=[\omega ^{h}]$
on an Abelian variety $M,$ we choose the above $h_{L}$ to be of the
curvature 
\begin{equation}
\Theta (=\overline{\partial }\partial \log (h_{L}))=-iB\omega .
\label{curvature}
\end{equation}%
\noindent With the connections $\nabla ^{L}$ on $L$ and $\nabla $ on $%
T^{\ast }$ (dual to $\nabla $ on $T$) we have the induced connections $%
\nabla ^{q}$ on $L\otimes T^{\ast q}$ and $\nabla ^{-q}$ on $L\otimes T^{q}$%
, $q\in \{0\}\cup \mathbb{N}$.\newline

\textbf{(C.C.)} Curvature condition on $L$ over $N$ with a Kahler form $%
\omega _{N}$ (the line bundle case of Hermitian-Yang-Mills connections as
noted in the Introduction)$:$ we assume that we can equip $L$ with a
Hermitian metric $h_{L}$ and $\nabla ^{L}$ the Chern connection on $L$ such
that the curvature 
\begin{equation*}
\Theta (=\overline{\partial }\partial \log (h_{L}))=-iB\omega _{N}\text{ for
some }B>0.
\end{equation*}

By \textbf{c.g.c. }at $p$ we mean a system of complex geodesic coordinates
around $p$, that is, $g_{i\overline{j}}(p)=\delta _{i\overline{j}}$ and $%
dg_{i\overline{j}}(p)=0.$ The condition $dg_{i\overline{j}}(p)=0$ is not
always computationally needed; let us still adopt the c.g.c. for the sake of
convenience.

\begin{lemma}
\label{Lemma 2.1} Let $s$ be a (local) holomorphic section of $L$ on $N$
with (C.C.). For $1\leq i,j\leq n$\newline
\noindent i) $s_{\overline{j}}=s_{\overline{j}i}=0,$\hspace*{4pt} ii) $s_{i%
\overline{j}}=-Bg_{i\overline{j}}\,s$\hspace*{4pt} iii) $s_{ij}=s_{ji}$
where in $ii)$ $\sum_{i,j}ig_{i\overline{j}}dz^{i}\wedge d\bar{z}^{\overline{%
j}}$ $=$ $\omega _{N}.$ (On the Abelian variety $M,$ $g_{i\overline{j}%
}\equiv \delta _{i\bar{j}}$ while on $\mathbb{P}^{n},$ $g_{i\overline{j}}$
is the Fubini-Study metric$,$ see (\ref{4-3}).$)$
\end{lemma}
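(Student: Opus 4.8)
The plan is to trivialize $L$ near $p$ and reduce all three claims to elementary identities for a holomorphic function. Fix a holomorphic frame $e$ of $L$ on a neighbourhood of $p$, write the given holomorphic section as $s=\sigma e$ with $\sigma$ a holomorphic function, and put $h:=h_{L}(e,e)>0$ and $\phi_{i}:=\partial\log h/\partial z^{i}$. Recalling that the Chern connection reads $\nabla^{L}=d+\partial\log h$ in this frame, I would first record the two working formulas $\nabla^{L}_{\partial/\partial z^{i}}(\tau e)=(\partial_{i}\tau+\tau\phi_{i})e$ and $\nabla^{L}_{\partial/\partial\overline{z}^{\overline{j}}}(\tau e)=(\partial_{\overline{j}}\tau)e$, valid for every smooth $\tau$, and then read off the three statements from them.

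Part i) is immediate: since $\sigma$ is holomorphic, $s_{\overline{j}}=\nabla^{L}_{\partial/\partial\overline{z}^{\overline{j}}}s=(\partial_{\overline{j}}\sigma)e=0$ identically on the frame neighbourhood, whence $s_{\overline{j}i}=\nabla^{L}_{\partial/\partial z^{i}}(s_{\overline{j}})=\nabla^{L}_{\partial/\partial z^{i}}(0)=0$.

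For part ii) I would compute $s_{i}=(\partial_{i}\sigma+\sigma\phi_{i})e$ and then apply $\nabla^{L}_{\partial/\partial\overline{z}^{\overline{j}}}$ to it; holomorphy of $\sigma$ kills $\partial_{\overline{j}}\sigma$ and $\partial_{\overline{j}}\partial_{i}\sigma$, leaving $s_{i\overline{j}}=\sigma(\partial_{\overline{j}}\phi_{i})e=\sigma(\partial_{\overline{j}}\partial_{i}\log h)e$. The remaining task is to identify $\partial_{i}\partial_{\overline{j}}\log h$ from (C.C.): the Chern curvature is $\Theta=\overline{\partial}\partial\log h=-\sum_{i,j}(\partial_{i}\partial_{\overline{j}}\log h)\,dz^{i}\wedge d\overline{z}^{\overline{j}}$, while the hypothesis $\Theta=-iB\omega_{N}$ with $\omega_{N}=i\sum_{i,j}g_{i\overline{j}}\,dz^{i}\wedge d\overline{z}^{\overline{j}}$ forces $\partial_{i}\partial_{\overline{j}}\log h=-Bg_{i\overline{j}}$; substituting gives $s_{i\overline{j}}=-Bg_{i\overline{j}}\sigma e=-Bg_{i\overline{j}}s$. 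I expect the only real care needed anywhere in this lemma is keeping the signs straight in this last identification (the conventions $\Theta=\overline{\partial}\partial\log h_{L}$ and $\omega_{N}=i\sum g_{i\overline{j}}dz^{i}\wedge d\overline{z}^{\overline{j}}$), and I would cross-check it via $[\nabla_{\partial/\partial z^{i}},\nabla_{\partial/\partial\overline{z}^{\overline{j}}}]s=\Theta(\partial/\partial z^{i},\partial/\partial\overline{z}^{\overline{j}})\,s$ together with part i).

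For part iii), expanding $s_{ij}=\nabla^{L}_{\partial/\partial z^{j}}s_{i}=(\partial_{j}\partial_{i}\sigma+\phi_{i}\partial_{j}\sigma+\phi_{j}\partial_{i}\sigma+\sigma\phi_{i}\phi_{j}+\sigma\,\partial_{j}\phi_{i})e$ shows every term is manifestly symmetric in $i,j$ except possibly the last, and $\partial_{j}\phi_{i}=\partial_{j}\partial_{i}\log h=\partial_{i}\partial_{j}\log h=\partial_{i}\phi_{j}$ (equivalently $\partial(\partial\log h)=0$), so $s_{ij}=s_{ji}$. Finally I would observe that both settings of the paper are instances of (C.C.) --- on the Abelian variety $M$ one has $g_{i\overline{j}}\equiv\delta_{i\overline{j}}$, $\omega=i\sum dz^{\alpha}\wedge d\overline{z}^{\overline{\alpha}}$ and $\Theta=-iB\omega$ by (\ref{curvature}), and on $\mathbb{P}^{n}$ with the Fubini--Study metric (C.C.) holds with the curvature normalization fixed in the Introduction --- and that the c.g.c. hypothesis is not actually used here, the frame computation being intrinsic.
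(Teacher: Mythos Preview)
Your proof is correct and follows essentially the same approach as the paper: trivialize $L$ by a local holomorphic frame and read off $s_{i\overline{j}}=\sigma\,\partial_{\overline{j}}\partial_{i}\log h\cdot e=-Bg_{i\overline{j}}s$ from the curvature condition (C.C.). The paper's proof is terser---it displays only the computation for part~ii) and leaves i) and iii) implicit---whereas you spell out all three parts and the sign bookkeeping explicitly; the underlying argument is the same.
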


\begin{proof}
\begin{equation*}
\nabla _{\frac{\partial }{\partial \overline{z}^{\overline{j}}}}^{L}\nabla _{%
\frac{\partial }{\partial z^{i}}}^{L}\,s=%
\mbox{$
\frac{\partial}{\partial \overline{z}^{\overline{j}}} 
\frac{\partial}{\partial z^{i}} (\log h_{L}(z)\, )  s $}=Bg_{i\overline{j}%
}dz^{i}\wedge d\bar{z}^{\overline{j}}(\frac{\partial }{\partial \bar{z}^{%
\overline{j}}},\frac{\partial }{\partial z^{i}})s=-Bg_{i\overline{j}}s.
\end{equation*}
\end{proof}

\noindent \hspace*{12pt} Unless specifically stated otherwise, we assume
that $s$ is\textit{\ holomorphic} for the rest of this paper.

\begin{lemma}
\label{Lemma 2.2} On $N,$ for the local sections $f_{\alpha _{1}...\alpha
_{q}}s\otimes (\overset{q}{\underset{k=1}{\otimes }}dz^{\alpha _{k}})$, $%
f^{\alpha _{1}...\alpha _{q}}s\otimes (\overset{n}{\underset{k=1}{\otimes }}%
\frac{\partial }{\partial z^{\alpha _{k}}})$ of $L\otimes T^{\ast q}$, $%
L\otimes T^{q}$ respectively,\ $q\in \{0\}\cup \mathbb{N}$, we have the
following (w.r.t. some c.g.c. at $p$), where the curvature condition (C.C.)
is assumed for $i)$ and $iii)$ below

%\begin{enumerate}
%\item \lbrack i)]

$i)$ $\Delta ^{q}(f_{\alpha _{1}...\alpha _{q}}s{{\normalsize \otimes }(%
\underset{k=1}{\overset{q}{\otimes }}}{\normalsize dz}^{\alpha _{k}}))=-%
\underset{i}{\sum }(f_{\alpha _{1}...\alpha _{q},i\overline{i}}s+f_{\alpha
_{1}...\alpha _{q},\overline{i}}s_{i}-Bf_{\alpha _{1}...\alpha
_{q}}s)\otimes {(\overset{q}{\underset{k=1}{\otimes }}}{\normalsize dz}%
^{\alpha _{k}}).$\newline

$ii)$ $\Delta _{q}(f_{\alpha_{1}...\alpha_{q}}s{{\normalsize \otimes }(%
\overset{q}{\underset{k=1}{\otimes }}}{\normalsize dz}^{\alpha _{k}}))=-%
\underset{i}{\sum } (f_{\alpha_{1}...\alpha_{q},\overline{i}i}
s+f_{\alpha_{1}...\alpha_{q},\overline{_{i}}} {s}_{i} )\otimes{(\underset{k=1%
}{\overset{q}{\otimes }}} dz^{\alpha _{k}} )$.\newline

$iii)$ $\Delta ^{-q}(f^{\alpha _{1}...\alpha _{q}}s{{\normalsize \otimes }(%
\overset{q}{\underset{k=1}{\otimes }}}\frac{\partial }{\partial z^{\alpha
_{k}}}))=-\underset{i}{\sum }({f^{\alpha _{1}...\alpha _{q}}}_{,i\overline{i}%
}s+{f^{\alpha _{1}...\alpha _{q}}}_{,\overline{i}}s_{i}-Bf^{\alpha
_{1}...\alpha _{q}}s)\otimes {(\overset{q}{\underset{k=1}{\otimes }}}\frac{%
\partial }{\partial z^{\alpha _{k}}}).$\newline

$iv)$ $\Delta _{-q}(f^{\alpha_{1}...\alpha_{q}} s{{\normalsize \otimes }(%
\overset{q}{\underset{k=1}{\otimes }}}\frac{\partial }{\partial z^{\alpha
_{k}}}))=-\underset{i}{\sum }({f^{\alpha_{1}...\alpha_{q}}}_{,\overline{i}i}
s+{f^{\alpha_{1}...\alpha_{q}}}_{,\overline{i}} s_{i})\otimes {(\overset{q}{%
\underset{k=1}{\otimes }}}\frac{\partial }{\partial z^{\alpha _{k}}})$. 
%\end{enumerate}
\end{lemma}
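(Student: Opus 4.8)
The plan is to verify all four identities by a direct local computation at an arbitrary point $p$, working in c.g.c.\ at $p$ and evaluating everything at $p$; there $g_{i\bar j}(p)=\delta_{i\bar j}$, all first derivatives of $g_{i\bar j}$ vanish, the Christoffel symbols vanish, and the Hodge star operator takes the simple pointwise form dictated by (\ref{star1})--(\ref{star3}) with $\det(g_{i\bar j})=1$ and all metric coefficients replaced by Kronecker deltas. I would first observe that $iii)$ and $iv)$ reduce verbatim to $i)$ and $ii)$: passing from $T^{\ast q}$ to $T^{q}$ only exchanges a lowering by $g_{i\bar j}$ for a raising by $g^{i\bar j}$ --- both the identity at $p$ --- while $\partial^{-q},\overline{\partial}^{-q}$ are defined by the same formulas (\ref{2-1-3})--(\ref{2-1-4}) as $\partial^{q},\overline{\partial}^{q}$. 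So it suffices to treat $ii)$ and then $i)$.

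For $ii)$ I would use $\Delta_{q}=(\overline{\partial}^{q})^{\ast}\overline{\partial}^{q}$ from (\ref{Delta q}) together with $(\overline{\partial}^{q})^{\ast}=-\ast\,\partial^{q}\,\ast$ from (\ref{partial q star}). Writing $\phi=f_{\alpha_{1}\dots\alpha_{q}}\,s\otimes(\otimes_{k}dz^{\alpha_{k}})$, the definition (\ref{2-1-2}) gives $\overline{\partial}^{q}\phi=\sum_{j}d\bar z^{\bar j}\otimes(f_{\alpha_{1}\dots\alpha_{q},\bar j}\,s+f_{\alpha_{1}\dots\alpha_{q}}\,s_{\bar j})\otimes(\otimes_{k}dz^{\alpha_{k}})$, and the second summand vanishes because $s$ is holomorphic, i.e.\ $s_{\bar j}=0$ by Lemma~\ref{Lemma 2.1}~$i)$. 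I would then apply $\ast$ via (\ref{star2}), apply $\partial^{q}$ via (\ref{2-1-1}), and apply $\ast$ again via (\ref{star1}); in c.g.c.\ at $p$ every term carrying a derivative of the metric disappears and every metric coefficient becomes a Kronecker delta, so the composition collapses at $p$ to $-\sum_{i}\nabla_{i}\nabla_{\bar i}\phi$ (covariant derivatives for $\nabla^{q}$). Expanding by the Leibniz rule and using $s_{\bar i}=0$ once more gives $-\sum_{i}(f_{\alpha_{1}\dots\alpha_{q},\bar i i}\,s+f_{\alpha_{1}\dots\alpha_{q},\bar i}\,s_{i})\otimes(\otimes_{k}dz^{\alpha_{k}})$, which is $ii)$. (Structurally this is just the classical Bochner identity for the Hermitian holomorphic bundle $L\otimes T^{\ast q}$; I prefer to derive it from the definitions to keep the section self-contained.)

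For $i)$ I would run the same template with $\Delta^{q}=(\partial^{q})^{\ast}\partial^{q}$ and $(\partial^{q})^{\ast}=-\ast\,\overline{\partial}^{q}\,\ast$. Now (\ref{2-1-1}) gives $\partial^{q}\phi=\sum_{i}dz^{i}\otimes(f_{\alpha_{1}\dots\alpha_{q},i}\,s+f_{\alpha_{1}\dots\alpha_{q}}\,s_{i})\otimes(\otimes_{k}dz^{\alpha_{k}})$, where \emph{both} summands survive ($s_{i}\neq0$). The analogous $\ast$--$\overline{\partial}^{q}$--$\ast$ computation, collapsed at $p$, becomes $-\sum_{i}\nabla_{\bar i}\nabla_{i}\phi$, and the Leibniz rule produces the four terms $f_{\dots,i\bar i}\,s$, $f_{\dots,i}\,s_{\bar i}$, $f_{\dots,\bar i}\,s_{i}$, $f_{\dots}\,s_{i\bar i}$. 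Lemma~\ref{Lemma 2.1}~$i)$ kills $s_{\bar i}$, and Lemma~\ref{Lemma 2.1}~$ii)$ --- which uses the curvature condition (C.C.) --- replaces $s_{i\bar i}$ by $-Bg_{i\bar i}s=-B\,s$ at $p$, so that $-\sum_{i}f_{\dots}\,s_{i\bar i}$ becomes $+B\sum_{i}f_{\dots}\,s$. This yields exactly $-\sum_{i}(f_{\alpha_{1}\dots\alpha_{q},i\bar i}\,s+f_{\alpha_{1}\dots\alpha_{q},\bar i}\,s_{i}-Bf_{\alpha_{1}\dots\alpha_{q}}\,s)\otimes(\otimes_{k}dz^{\alpha_{k}})$, which is $i)$; and $iii)$, $iv)$ then follow by the reduction above, with (C.C.) entering $iii)$ exactly as in $i)$.

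The part that I expect to be the real work --- and the only nonroutine point --- is the middle step: checking that the double star sandwich $-\ast\,\overline{\partial}^{q}\,\ast$ (resp.\ $-\ast\,\partial^{q}\,\ast$), composed with the explicit formulas (\ref{2-1-1})--(\ref{2-1-2}) and the star identities (\ref{star1})--(\ref{star3}), really does collapse at $p$ to $-\sum_{i}\nabla_{\bar i}\nabla_{i}$ (resp.\ $-\sum_{i}\nabla_{i}\nabla_{\bar i}$). This means bookkeeping of the factor $i^{n}$, the various sign exponents, the $\det(g_{i\bar j})$ and $g^{i\bar j}$ factors, and the wedge/interior-product combinatorics, and confirming that every term containing a first derivative of the metric drops out once c.g.c.\ is imposed. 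Once that is settled, the remaining input is only the Leibniz rule for $\nabla^{q}$ and $\nabla^{-q}$ together with Lemma~\ref{Lemma 2.1}.
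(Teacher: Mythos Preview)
Your proposal is correct and follows essentially the same approach as the paper: both compute $\Delta^{\pm q}$ and $\Delta_{\pm q}$ directly from the definitions $(\partial^{\pm q})^{\ast}=-\ast\overline{\partial}^{\pm q}\ast$, $(\overline{\partial}^{\pm q})^{\ast}=-\ast\partial^{\pm q}\ast$ in c.g.c.\ at $p$, expand via the Leibniz rule, and invoke Lemma~\ref{Lemma 2.1} to eliminate $s_{\bar i}$ and (under (C.C.)) replace $s_{i\bar i}$ by $-Bs$. The paper writes out only case $i)$ explicitly, tracing the $\ast$--$\overline{\partial}^{q}$--$\ast$--$\partial^{q}$ chain through (\ref{star1})--(\ref{star3}) just as you outline, and dismisses $ii)$--$iv)$ as similar; your reduction of $iii),iv)$ to $i),ii)$ and your remark that the only nontrivial bookkeeping is the star-sandwich collapse are exactly in line with this.
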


\begin{proof}
Let us show the first equation. We have (with $\nabla g_{i\overline{j}%
}\equiv 0)$%
\begin{align*}
& \hspace*{24pt}\Delta ^{q}(f_{\alpha _{1},,,\alpha _{q}}s\otimes (\underset{%
k=1}{\overset{q}{\otimes }}{\normalsize dz}^{\alpha _{k}}))=-\ast \overline{%
\partial }^{q}\ast \partial ^{q}(f_{\alpha _{1}...\alpha _{q}}s\otimes (%
\underset{k=1}{\overset{q}{\otimes }}\,{\normalsize dz}^{\alpha _{k}})) \\
& =-\ast \overline{\partial }^{q}\ast \Big(\mbox{$\underset{i}{\sum }$}%
\,dz^{i}\otimes (f_{\alpha _{1}...\alpha _{q},i}s+f_{\alpha _{1}...\alpha
_{q}}s_{i})\otimes (\underset{k=1}{\overset{q}{\otimes }}{\normalsize dz}%
^{\alpha _{k}})\Big) \\
& \overset{(\ref{star1})}{=}i^{n}\ast \overline{\partial }^{q}\Big(%
\mbox{$\underset{i,k}{\sum }$}gg^{\overline{k}i}(dz^{k}\wedge (%
\mbox{$\underset{\alpha \neq k}{\bigwedge} $}dz^{\alpha }\wedge d\overline{z}%
^{\overline{\alpha }}))\otimes (f_{\alpha _{1}...\alpha _{q},i}s+f_{\alpha
_{1}...\alpha _{q}}s_{i})\otimes (\underset{k=1}{\overset{q}{\otimes }}%
{\normalsize dz}^{\alpha _{k}})\Big) \\
& \overset{\text{at }p}{=}-i^{n}\ast \Big(\mbox{$\underset{i}{\sum } $}gg^{%
\bar{\imath}i}(\mbox{$\underset{\alpha
}{\bigwedge }$}dz^{\alpha }\wedge d\overline{z}^{\overline{\alpha }%
})(f_{\alpha _{1}...\alpha _{q},i\overline{i}}s+f_{\alpha _{1}...\alpha
_{q},i}\otimes s_{\overline{i}}+f_{\alpha _{1}...\alpha _{q},\overline{i}%
}s_{i}+f_{\alpha _{1}...\alpha _{q}}s_{i\overline{i}})\otimes (\underset{k=1}%
{\overset{q}{\otimes }}{\normalsize dz}^{\alpha _{k}})\Big) \\
& \overset{Lem.\ref{Lemma 2.1}\text{ }ii)}{=}-\mbox{$\underset{i}{\sum }$}%
(f_{\alpha _{1}...\alpha _{q},i\overline{i}}s+f_{\alpha _{1}...\alpha _{q},%
\overline{i}}{\normalsize s}_{i}-Bf_{\alpha _{1}...\alpha _{q}}{\normalsize s%
})\otimes (\underset{k=1}{\overset{q}{\otimes }}{\normalsize dz}^{\alpha
_{k}}).
\end{align*}

\noindent The others are similarly proved (via Lemma \ref{Lemma 2.1} $i))$.
Note that there is no $B$ involved in $ii)$ and $iv)$ as the $\overline{i}$%
-covariant derivative is taken first for these two cases.
\end{proof}

Propositions \ref{10.2PZ} and \ref{10.1} in the following can be viewed as
new types of Bochner-Kodaira identities\footnote{%
The familiar Bochner-Kodaira identities give the difference between two
second order (elliptic) operators on a fixed Hilbert space. Here, our
identity is similar in nature despite that our operators are of the third
order.} involving contraction operators. For the sake of clarity we prove
them in full details. See Propositions \ref{Prop 4.2} and \ref{Prop 4.3} as
analogues on $\mathbb{P}^{n},$ for which the curvature of $\mathbb{P}^{n}$
enters. We most often use the $T$-valued version ($T:$ tangent bundle); see
Theorem \ref{q+n+k} for the $T^{\ast }$-valued one. Another notable
difference between Abelian varieties and $\mathbb{P}^{n}$ is that on $%
\mathbb{P}^{n}$ those BK identities usually hold only for \textit{symmetric} 
$T^{\ast }$ (or $T$)-tensors while on Abelian varieties the symmetric
condition is not always needed.

\begin{proposition}
\label{10.2PZ} ($T^{\ast }$-valued) For all $q\in \{0\}\cup \mathbb{N}$,%
\begin{eqnarray}
&&{\scriptsize \Delta }^{q}{\scriptsize \overline{I}}^{(q+1)}\overline{%
\partial }^{(q+1)}{\scriptsize -\overline{I}}^{(q+1)}\overline{\partial }%
^{(q+1)}{\scriptsize \Delta }^{(q+1)}  \label{-BBB} \\
&&{\scriptsize =-B\overline{I}^{(q+1)}\overline{\partial }^{(q+1)}}:\Omega
^{0,0}\left(M, L\otimes T^{\ast (q+1)}\right)\rightarrow\Omega
^{0,0}\left(M, L\otimes T^{\ast q}\right).  \notag
\end{eqnarray}
\end{proposition}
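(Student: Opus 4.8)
The plan is to verify this operator identity by evaluating both compositions on a local generating section $t=f_{\alpha_{1}\cdots \alpha_{q+1}}\,s\otimes (\otimes_{k=1}^{q+1}dz^{\alpha_{k}})$, where $s$ is a local holomorphic frame of $L$ and $f_{\alpha_{1}\cdots \alpha_{q+1}}$ is an arbitrary smooth function (no symmetry assumed); by $\mathbb{C}$-linearity this suffices. Throughout I would exploit that the metric (\ref{omega}) on $M$ is flat: the global coordinates are c.g.c.\ everywhere, $g_{i\overline{j}}\equiv g^{i\overline{j}}\equiv \delta_{ij}$, the Levi-Civita connection on $T,T^{\ast }$ is trivial, and hence every iterated covariant derivative of the coefficient functions reduces to an ordinary iterated partial derivative in the $z$'s, so these commute freely. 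The only non-trivial ``commutator'' left is the one fed by the curvature $\Theta =-iB\omega $ of $L$ from (\ref{curvature}), and this is what produces the $-B$ on the right-hand side of (\ref{-BBB}).

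First I would compute $\overline{I}^{(q+1)}\overline{\partial }^{(q+1)}t$ directly from (\ref{2-1-2}) and (\ref{2-2-2}). Since $s$ is holomorphic, Lemma \ref{Lemma 2.1} gives $s_{\overline{j}}=0$, so $\overline{\partial }^{(q+1)}t=\sum_{j}d\overline{z}^{\overline{j}}\otimes f_{\alpha_{1}\cdots\alpha_{q+1},\overline{j}}\,s\otimes (\otimes_{k=1}^{q+1}dz^{\alpha_{k}})$; contracting the $d\overline{z}^{\overline{j}}$-slot against the last cotangent slot via $g^{\alpha_{q+1}\overline{j}}=\delta$ then gives $\overline{I}^{(q+1)}\overline{\partial }^{(q+1)}t=\phi_{\alpha_{1}\cdots\alpha_{q}}\,s\otimes (\otimes_{k=1}^{q}dz^{\alpha_{k}})$ with $\phi_{\alpha_{1}\cdots\alpha_{q}}:=\sum_{j}f_{\alpha_{1}\cdots\alpha_{q}j,\overline{j}}$. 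Next, Lemma \ref{Lemma 2.2}$\,i)$ applied with $q$ cotangent indices computes $\Delta^{q}$ of this as $-\sum_{i}\big(\phi_{\alpha_{1}\cdots\alpha_{q},i\overline{i}}\,s+\phi_{\alpha_{1}\cdots\alpha_{q},\overline{i}}\,s_{i}-B\phi_{\alpha_{1}\cdots\alpha_{q}}\,s\big)\otimes(\otimes_{k=1}^{q}dz^{\alpha_{k}})$.

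For the other composition, Lemma \ref{Lemma 2.2}$\,i)$ with $q+1$ indices writes $\Delta^{(q+1)}t$ as a $T^{\ast(q+1)}$-valued section with an ``$s$-part'' $-\sum_{i}(f_{\alpha_{1}\cdots\alpha_{q+1},i\overline{i}}-Bf_{\alpha_{1}\cdots\alpha_{q+1}})\,s\otimes(\otimes dz)$ and an ``$s_{i}$-part'' $-\sum_{i}f_{\alpha_{1}\cdots\alpha_{q+1},\overline{i}}\,s_{i}\otimes(\otimes dz)$. Applying $\overline{\partial }^{(q+1)}$ then $\overline{I}^{(q+1)}$ to each part: on the $s$-part one uses $s_{\overline{j}}=0$ once more and gets $-\sum_{i,j}(f_{\alpha_{1}\cdots\alpha_{q}j,i\overline{i}\overline{j}}-Bf_{\alpha_{1}\cdots\alpha_{q}j,\overline{j}})\,s\otimes(\otimes dz)$; on the $s_{i}$-part the Leibniz rule produces $s_{i\overline{j}}$, which by Lemma \ref{Lemma 2.1}$\,ii)$ equals $-B\delta_{ij}s$, contributing after contraction the term $B\sum_{i}f_{\alpha_{1}\cdots\alpha_{q}i,\overline{i}}\,s\otimes(\otimes dz)$ together with $-\sum_{i,j}f_{\alpha_{1}\cdots\alpha_{q}j,\overline{i}\overline{j}}\,s_{i}\otimes(\otimes dz)$. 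Subtracting the two compositions, the $s_{i}$-coefficients cancel because $\partial_{\overline{i}}\partial_{\overline{j}}=\partial_{\overline{j}}\partial_{\overline{i}}$, the top-order $s$-coefficients cancel because $\partial_{\overline{j}}\partial_{i}\partial_{\overline{i}}=\partial_{i}\partial_{\overline{i}}\partial_{\overline{j}}$, and precisely $-B\sum_{i}f_{\alpha_{1}\cdots\alpha_{q}i,\overline{i}}\,s\otimes(\otimes dz)=-B\,\overline{I}^{(q+1)}\overline{\partial }^{(q+1)}t$ survives, which is (\ref{-BBB}).

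I expect the only real work to be the bookkeeping in the last step: tracking the precise coefficients through $\overline{\partial }^{(q+1)}$ and $\overline{I}^{(q+1)}$, checking that the $(-1)^{r+s}$-factors in (\ref{2-2-2}) are harmless here (both $r=s=0$), and confirming that the single surviving curvature term is exactly the one fed by $s_{i\overline{j}}=-Bg_{i\overline{j}}s$. Conceptually there is no obstacle on $M$ because flatness kills every tensor-derivative commutator, so $\Delta^{q}\overline{I}^{(q+1)}\overline{\partial }^{(q+1)}-\overline{I}^{(q+1)}\overline{\partial }^{(q+1)}\Delta^{(q+1)}$ only feels the curvature of $L$; on a curved base such as $\mathbb{P}^{n}$ this fails and extra curvature-of-$T$ terms appear, which is why the analogues there (Propositions \ref{Prop 4.2} and \ref{Prop 4.3}) carry additional terms and the statement here is confined to Abelian varieties.
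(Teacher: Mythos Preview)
Your proof is correct and follows essentially the same route as the paper: both evaluate the two compositions on a local section $f_{\alpha_1\cdots\alpha_{q+1}}s\otimes(\otimes_{k}dz^{\alpha_k})$ via Lemma~\ref{Lemma 2.2}\,$i)$, use $s_{\overline{j}}=0$ and $s_{i\overline{j}}=-Bg_{i\overline{j}}s$ from Lemma~\ref{Lemma 2.1}, and then exploit flatness of $M$ to cancel all derivative-commutator terms, leaving precisely the single $-B$ contribution from the line-bundle curvature. The paper records the two intermediate formulas (\ref{2-3-1}) and (\ref{2-3-2}) in a form valid on any K\"ahler manifold with (C.C.) so that they can be reused for $\mathbb{P}^n$ in Proposition~\ref{Prop 4.2}, but the underlying argument is the same as yours.
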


\begin{proof}
We first derive formulas (see $(\ref{2-3-1})$ and $(\ref{2-3-2})$ below) for
any K\"{a}hler manifold $N$ with (C.C.); see Proposition \ref{Prop 4.2} for
use of them on $\mathbb{P}^{n}.$ Choosing c.g.c. $z_{i}$ at $p$ and letting $%
f_{\alpha _{1}...\alpha _{q+1}}s\otimes (\overset{q+1}{\underset{k=1}{%
\otimes }}dz^{\alpha _{k}})$ be a local section of $L\otimes T^{\ast (q+1)},$
we have 
\begin{eqnarray}
&&{\normalsize \Delta }^{q}{\normalsize \overline{I}^{(q+1)}\overline{%
\partial }^{(q+1)}}({\normalsize f_{\alpha _{1}...\alpha _{q+1}}s\otimes (}%
\underset{k=1}{\overset{q+1}{\otimes }}{\normalsize dz}^{\alpha _{k}})%
{\normalsize )}={\normalsize \Delta }^{q}{\normalsize \overline{I}}^{(q+1)}%
\big(%
\mbox{$\underset{j}{\sum
}d\overline{z}^{\overline{j}}(f_{\alpha_{1}...\alpha_{q+1},\overline{j}} s)
\otimes {\normalsize (}\underset{k=1}{\overset{q+1}{\otimes }}{\normalsize
dz}^{\alpha _{k}})$}\big)  \label{2-3-1} \\
&& \overset{(\ref{2-2-2})}{=}\Delta ^{q}\big(%
\mbox{$\underset{j,l}{\sum }\,  (\frac{\partial}{\partial \overline{z}^{\overline{l}}} \iprod
d\overline{z}^{\overline{j}})\otimes(f_{\alpha_{1}...\alpha_{q+1},\overline{j}} s) \otimes g^{\overline{l}\alpha_{q+1}} {\normalsize
(}\underset{k=1}{\overset{q}{\otimes }}{\normalsize dz}^{\alpha _{k}})$}\big)
\notag \\
&& =\ \Delta ^{q}\big((f_{\alpha _{1}...\alpha _{q+1},\overline{\alpha }%
_{q+1}}{\normalsize s})\otimes {\normalsize (}\underset{k=1}{\overset{q}{%
\otimes }}{\normalsize dz}^{\alpha _{k}})\big)  \notag \\
&& \overset{Lem.\ref{Lemma 2.2}\,i)}{=}%
\mbox{$ -\underset{i}{\sum
}(f_{\alpha_{1}...\alpha_{q+1},\overline{\alpha}_{q+1}i\overline{i}}
s+f_{\alpha_{1}...\alpha_{q+1},\overline{\alpha}_{q+1}\overline{i}}{\normalsize  s}_{i} -Bf_{\alpha_{1}...\alpha_{q+1},\overline{\alpha}_{q+1}}s
{\normalsize )\otimes}( \underset{k=1}{\overset{q}{\otimes }}dz^{\alpha
_{k}})$}.  \notag
\end{eqnarray}%
\noindent We also have 
\begin{eqnarray}
&&\ \ \ \ \ {\normalsize \overline{I}}^{(q+1)}\overline{\partial }^{(q+1)}%
{\normalsize \Delta }^{(q+1)}\big({\normalsize f_{\alpha _{1}...\alpha
_{q+1}}s\otimes (}\underset{k=1}{\overset{q+1}{\otimes }}{\normalsize dz}%
^{\alpha _{k}}){\normalsize \big)}\hspace*{204pt}  \label{2-3-2} \\
&&\ \ \ \ \ \overset{Lem.\ref{Lemma 2.2}\,i)}{=}{\normalsize -\overline{I}}%
^{(q+1)}\overline{\partial }^{(q+1)}\big(%
\mbox{$\underset{i}{\sum }{\normalsize
(f}_{\alpha_{1}...\alpha_{q+1},i\overline{i}}{\normalsize 
s+f}_{\alpha_{1}...\alpha_{q+1},\overline{i}}{\normalsize  s}_{i}
-Bf_{\alpha_{1}...\alpha_{q+1}} s )\otimes (
\underset{k=1}{\overset{q+1}{\otimes }}dz^{\alpha _{k}})$}\big)\hspace*{60pt}
\notag \\
&&\ \ \ \ \ =-\overline{I}^{q+1}\big(%
\mbox{$\underset{i,j}{\sum }\,
d\overline{z}^{\overline{j}}
\otimes(f_{\alpha_{1}...\alpha_{q+1},i\overline{i}\,\overline{j}}
s+f_{\alpha_{1}...\alpha_{q+1},\overline{i}\,\overline{j}}{\normalsize 
s}_{i})\otimes( \underset{k=1}{\overset{q+1}{\otimes }}dz^{\alpha _{k}})$}%
\hspace*{146pt}  \notag \\
&&\ \ \ \ \ \hspace*{20pt}+%
\mbox{$ \underset{i,j}{\sum }\, d\overline{z}^{\overline{j}}
\otimes ( f_{\alpha_{1}...\alpha_{q+1},\overline{i}}s_{i\overline{j}}
-Bf_{\alpha_{1}...\alpha_{q+1},\,\overline{j}}  s {\normalsize
)}\otimes(\underset{k=1}{\overset{q+1}{\otimes }}dz^{\alpha_{k}})$}\big)%
\hspace*{144pt}  \notag \\
&&\ \ \ \ \ \overset{(\ref{2-2-2})}{=}-\big(%
\mbox{$\underset{i,j,l}{\sum }\,
(\frac{\partial}{\partial \overline{z}^{\overline{l}}} \iprod
d\overline{z}^{\overline{j}})
\otimes(f_{\alpha_{1}...\alpha_{q+1},i\overline{i}\,\overline{j}}
s+f_{\alpha_{1}...\alpha_{q+1},\overline{i}\,\overline{j}}{\normalsize 
s}_{i}))g^{\alpha_{q+1}\overline{l}}\otimes(
\underset{k=1}{\overset{q}{\otimes }}dz^{\alpha _{k}})$}\hspace*{52pt} 
\notag \\
&&\ \ \ \ \ \hspace*{20pt}+%
\mbox{$ \underset{i,j,l}{\sum }\,(\frac{\partial}{\partial
\overline{z}^{\overline{l}}} \iprod d\overline{z}^{\overline{j}}) \otimes (
f_{\alpha_{1}...\alpha_{q+1},\overline{i}}s_{i\overline{j}}
-Bf_{\alpha_{1}...\alpha_{q+1},\,\overline{j}}  s 
)g^{\alpha_{q+1}\overline{l}}\otimes(\underset{k=1}{\overset{q}{\otimes
}}dz^{\alpha_{k}})$}\big)\hspace*{80pt}  \notag \\
&&\ \ \ \ \ =-\mbox{$\underset{i}{\sum }$}{\normalsize (f}_{\alpha
_{1}...\alpha _{q+1},i\overline{i}\overline{\alpha }_{q+1}}s+f_{\alpha
_{1}...\alpha _{q+1},\overline{i}\overline{\alpha }_{q+1}}{\normalsize s}%
_{i})\otimes (\underset{k=1}{\overset{q}{\otimes }}dz^{\alpha _{k}})\hspace*{%
180pt}  \notag \\
&&\ \ \ \ \ \hspace*{20pt}-\mbox{$\underset{i}{\sum }$}({\normalsize f}%
_{\alpha _{1}...\alpha _{q+1},\overline{i}}s_{i\overline{\alpha }%
_{q+1}}-Bf_{\alpha _{1}...\alpha _{q+1},\overline{\alpha }_{q+1}}s)\otimes (%
\underset{k=1}{\overset{q}{\otimes }}dz^{\alpha _{k}}).\hspace*{156pt} 
\notag
\end{eqnarray}%
Therefore, $(\ref{2-3-1})-(\ref{2-3-2})$: 
\begin{align*}
& \big(\Delta ^{q}{\normalsize \overline{I}}^{(q+1)}\overline{\partial }%
^{(q+1)}{\normalsize -\overline{I}}^{(q+1)}\overline{\partial }^{(q+1)}%
{\normalsize \Delta }^{(q+1)}\big)\big(f_{\alpha _{1}...\alpha
_{q+1}}s\otimes (\underset{k=1}{\overset{q+1}{\otimes }}{\normalsize dz}%
^{\alpha _{k}})\big)\hspace*{168pt} \\
& =-B(f_{\alpha _{1}...\alpha _{q+1},\overline{\alpha }_{q+1}}s)\otimes (%
\underset{k=1}{\overset{q}{\otimes }}{\normalsize dz}^{\alpha _{k}})=-B\big(%
\mbox{$\underset{j,k}{\sum}(  \frac{\partial}{\partial
\overline{z}^{\overline{k}}}\iprod d\overline{z}^{\overline{j}})$}\otimes
(f_{\alpha _{1}...\alpha _{q+1},\,\overline{j}}s)g^{\overline{\alpha }%
_{q+1}k}\otimes (\underset{k=1}{\overset{q}{\otimes }}{\normalsize dz}%
^{\alpha _{k}})\big) \\
& \overset{(\ref{2-2-2})}{=}-B\overline{I}^{q+1}\big(\mbox{$\underset{j}{%
\sum}$}(d\overline{z}^{\overline{j}}\otimes (f_{\alpha _{1}...\alpha
_{q+1},\,\overline{j}}s))\otimes (\underset{k=1}{\overset{q+1}{\otimes }}%
{\normalsize dz}^{\alpha _{k}})\big)=-B\overline{I}^{q+1}\overline{\partial }%
^{q}\big((f_{\alpha _{1}...\alpha _{q+1}}s)\otimes (\underset{k=1}{\overset{%
q+1}{\otimes }}{\normalsize dz}^{\alpha _{k}})\big).\hspace*{40pt}
\end{align*}
\end{proof}

\begin{proposition}
\label{10.1} \noindent For all $q\in \{0\}\cup \mathbb{N}$, 
\begin{eqnarray}
&&{\scriptsize \Delta }_{-q}{\scriptsize I}^{-(q+1)}{\scriptsize \partial }%
^{-(q+1)}{\scriptsize -I}^{-(q+1)}{\scriptsize \partial }^{-(q+1)}%
{\scriptsize \Delta }_{-(q+1)}  \label{BBB} \\
&&{\scriptsize =BI}^{-(q+1)}{\scriptsize \partial }^{-(q+1)}:%
\mbox{$\Omega ^{0,0}\left(M,
L\otimes T^{(q+1)}\right) \rightarrow \Omega ^{0,0}\left(M,
L\otimes T^{q}\right) $.}  \notag
\end{eqnarray}
\end{proposition}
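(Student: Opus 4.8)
The plan is to mirror the proof of Proposition~\ref{10.2PZ}, of which this is the $T$-valued, $\overline{\partial}$-Laplacian counterpart. (One could instead try to deduce it from Proposition~\ref{10.2PZ} by taking $L^2$-adjoints, but that would require separately identifying the adjoints of the algebraic operators $\overline{I}^{(q+1)}$, so I prefer the direct route.) Fix $p\in M$, choose c.g.c. at $p$ --- on $M$ the coordinates in (\ref{omega}) already have $g_{i\overline{j}}\equiv\delta_{i\overline{j}}$ and serve as c.g.c. everywhere --- and evaluate the two composite operators $\Delta_{-q}I^{-(q+1)}\partial^{-(q+1)}$ and $I^{-(q+1)}\partial^{-(q+1)}\Delta_{-(q+1)}$ on a local section $f^{\alpha_1\dots\alpha_{q+1}}s\otimes(\overset{q+1}{\underset{k=1}{\otimes}}\frac{\partial}{\partial z^{\alpha_k}})$ of $L\otimes T^{q+1}$ with $s$ holomorphic; since all operators involved are globally well defined and c.g.c. exist at every point, the resulting pointwise identity is (\ref{BBB}).

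For the first composite, $\partial^{-(q+1)}$ gives by (\ref{2-1-3}) the form $\sum_i dz^i\otimes(f^{\alpha_1\dots\alpha_{q+1}}_{,i}s+f^{\alpha_1\dots\alpha_{q+1}}s_i)\otimes(\overset{q+1}{\underset{k=1}{\otimes}}\frac{\partial}{\partial z^{\alpha_k}})$; the term $f^{\dots}s_i$ with $s_i=\nabla^{L}_{\partial/\partial z^i}s\neq 0$ is the essential new feature compared with Proposition~\ref{10.2PZ}, where $\overline{\partial}^{(q+1)}$ annihilated $s$ because $s_{\overline{j}}=0$, and it must be carried along. Then $I^{-(q+1)}$, by (\ref{2-2-3}), contracts $dz^i$ against the last tangent slot ($\delta^i_{\alpha_{q+1}}$) and yields the ``divergence'' $D:=\sum_a(f^{\alpha_1\dots\alpha_q a}_{,a}s+f^{\alpha_1\dots\alpha_q a}s_a)\otimes(\overset{q}{\underset{k=1}{\otimes}}\frac{\partial}{\partial z^{\alpha_k}})$, which is exactly $I^{-(q+1)}\partial^{-(q+1)}$ applied to the original section. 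Now apply $\Delta_{-q}=(\overline{\partial}^{-q})^{\ast}\overline{\partial}^{-q}=-\ast\partial^{-q}\ast\overline{\partial}^{-q}$ (see (\ref{partial q star})): on the summand $f^{\dots a}_{,a}s\otimes(\cdots)$ Lemma~\ref{Lemma 2.2}$\,iv)$ applies verbatim and produces no $B$; on the summand $f^{\dots a}s_a\otimes(\cdots)$ one differentiates directly, and $\overline{\partial}^{-q}$ acting on $s_a$ yields $s_{a\overline{j}}=-Bg_{a\overline{j}}s$ by Lemma~\ref{Lemma 2.1}$\,ii)$, whose minus sign combines with the defining minus of $(\overline{\partial}^{-q})^{\ast}$ to give the net contribution $+B\,D$, together with purely third-order terms. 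On the other side, $\Delta_{-(q+1)}$ is given by Lemma~\ref{Lemma 2.2}$\,iv)$ as $-\sum_i(f^{\dots}_{,\overline{i}i}s+f^{\dots}_{,\overline{i}}s_i)\otimes(\cdots)$ with \emph{no} $B$-term (the $\overline{i}$-derivative precedes the $i$-derivative), and applying $\partial^{-(q+1)}$ (Leibniz, Lemma~\ref{Lemma 2.1}) followed by $I^{-(q+1)}$ again produces only third-order and $s_i$-type terms and no $B$.

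Subtracting the two, the third-order terms cancel: on the flat Abelian variety $M$ the Levi-Civita connection satisfies $\nabla\frac{\partial}{\partial z^{\alpha}}\equiv 0$ in the coordinates (\ref{omega}), so iterated covariant derivatives of the component functions are ordinary partials and commute, matching reorderings such as $f^{\dots a}_{,a\overline{b}b}$ with $f^{\dots a}_{,\overline{b}b a}$; the leftover mismatch between terms carrying $s_{ab}$ and $s_{ba}$ vanishes by $s_{ij}=s_{ji}$ (Lemma~\ref{Lemma 2.1}$\,iii)$), and the $s_a$-carrying third-order terms cancel identically. What remains is precisely $+B\,D=+B\,I^{-(q+1)}\partial^{-(q+1)}$ applied to the section, i.e. (\ref{BBB}). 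I expect the main obstacle to be bookkeeping rather than conceptual depth: tracking through four successive operators which derivative --- holomorphic, anti-holomorphic, or the contracted tangent index --- acts in which order, and in particular isolating the single $B$-term from $\Delta_{-q}$ acting on the $s_a$-summand; this is the step that fixes the sign as $+B$ (in contrast with the $-B$ of Proposition~\ref{10.2PZ}) and where Lemma~\ref{Lemma 2.2}$\,iv)$ cannot be quoted directly, since that summand is not of the form $(\mathrm{scalar})\cdot s\otimes(\mathrm{tensor})$.
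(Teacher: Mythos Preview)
Your proposal is correct and follows essentially the same route as the paper's proof: compute each composite in c.g.c.\ (the paper records these as (\ref{2-4-1}) and (\ref{2-4-2})), expand by Leibniz, and subtract. The paper carries the terms $s_{\alpha_{q+1}\overline{i}}$ and $s_{\alpha_{q+1}\overline{i}i}$ symbolically and only at the end substitutes $s_{\alpha_{q+1}\overline{i}i}=-Bg_{\alpha_{q+1}\overline{i}}s_i$ from Lemma~\ref{Lemma 2.1}, whereas you substitute $s_{a\overline{j}}=-Bg_{a\overline{j}}s$ right after $\overline{\partial}^{-q}$; either way the two $B$-contributions combine to $+B\,D$, and the remaining four third-order terms cancel pairwise exactly as you say (two by flatness of $M$, one identically, one by $s_{ij}=s_{ji}$).
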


\begin{proof}
At $p$ we compute the first term of (\ref{BBB}), using c.g.c. at $p,$ (the
two formulas (\ref{2-4-1}) and (\ref{2-4-2}) below will also be used for $%
\mathbb{P}^{n},$ see Proposition \ref{Prop 4.3})%
\begin{eqnarray}
&&\Delta _{-q}{\normalsize I}^{-(q+1)}{\normalsize \partial }^{-(q+1)}%
{\normalsize \big(f^{\alpha _{1}...\alpha _{q+1}}s}\underset{k=1}{\overset{%
q+1}{\otimes }}\mbox{$(\frac{\partial }{\partial z^{\alpha _{k}}} )\big)$}%
\hspace*{228pt}  \label{2-4-1} \\
&=&{\normalsize \Delta }_{-q}{\normalsize I}^{-(q+1)}\big(%
\mbox{$\underset{i}{\sum}( ({f^{\alpha_{1}...\alpha_{q+1}}}_{,i} 
s+f^{\alpha_{1}...\alpha_{q+1}} s_{i}) \otimes  dz^{i} ) \otimes
(\underset{k=1}{\overset{q+1}{\otimes }}\mbox{$(\frac{\partial }{\partial
z^{\alpha _{k}}} )$}) $}\big)\hspace*{112pt}  \notag \\
&&\overset{(\ref{2-2-3})}{=}{\normalsize \Delta }_{-q}\big(%
\mbox{$\underset{j,i}{\sum}(\frac{\partial }{\partial
z^{j}}$}\lrcorner \text{ }({f^{\alpha _{1}...\alpha _{q+1}}}_{,i}s+f^{\alpha
_{1}...\alpha _{q+1}}s_{i})\otimes dz^{i})\delta _{\alpha _{q+1}}^{j}\otimes
(\underset{k=1}{\overset{q}{\otimes }}%
\mbox{$(\frac{\partial }{\partial
z^{\alpha _{k}}} )) $}\big)  \notag \\
&=&\Delta _{-q}\big(({f^{\alpha _{1}...\alpha _{q+1}}}_{,\alpha
_{q+1}}s+f^{\alpha _{1}...\alpha _{q+1}}s_{\alpha _{q+1}})\otimes (\underset{%
k=1}{\overset{q}{\otimes }}%
\mbox{$(\frac{\partial }{\partial z^{\alpha
_{k}}} ))$}\big)\hspace*{168pt}  \notag \\
&&\overset{Lem.\ref{Lemma 2.2}\,iv)}{=}{\normalsize -}\mbox{$\underset{i}{%
\sum }$}({f^{\alpha _{1}...\alpha _{q+1}}}_{,\alpha _{q+1}\overline{i}i}s+{%
f^{\alpha _{1}...\alpha _{q+1}}}_{,\alpha _{q+1}\overline{i}}s_{i}+{%
f^{\alpha _{1}...\alpha _{q+1}}}_{,\overline{i}i}s_{\alpha _{q+1}}%
{\normalsize )}\otimes (\overset{q}{\underset{k=1}{\otimes }}%
\mbox{$(\frac{\partial }{\partial z^{\alpha _{k}}} )$})\hspace*{56pt}  \notag
\\
&&\hspace*{40pt}-\mbox{$\underset{i}{\sum }$}({f^{\alpha _{1}...\alpha
_{q+1}}}_{,\overline{i}}s_{\alpha _{q+1}i}+{f^{\alpha _{1}...\alpha _{q+1}}}%
_{,i}s_{\alpha _{q+1}\overline{i}}+f^{\alpha _{1}...\alpha _{q+1}}s_{\alpha
_{q+1}\overline{i}i})\otimes (\overset{q}{\underset{k=1}{\otimes }}%
\mbox{$(\frac{\partial }{\partial z^{\alpha _{k}}} )$}).\hspace*{56pt} 
\notag
\end{eqnarray}%
\noindent The second term of (\ref{BBB}) is 
\begin{eqnarray}
&&I^{-(q+1)}{\normalsize \partial }^{-(q+1)}{\normalsize \Delta }_{-(q+1)}%
{\normalsize \big({f^{\alpha _{1}...\alpha _{q+1}}}s\underset{k=1}{\overset{%
q+1}{\otimes }}\mbox{$(\frac{\partial }{\partial z^{\alpha _{k}}} )$}}\big)%
\hspace*{180pt}  \label{2-4-2} \\
&&\overset{Lem.\ref{Lemma 2.2}\,iv)}{=}{\normalsize -I}^{-(q+1)}{\normalsize %
\partial }^{-(q+1)}\big(\mbox{$ \underset{i}{\sum }$}({f^{\alpha
_{1}...\alpha _{q+1}}}_{,\overline{i}i}s+{f^{\alpha _{1}...\alpha _{q+1}}}_{,%
\overline{i}}{\normalsize s}_{i})\otimes {(\overset{q+1}{\underset{k=1}{%
\otimes }}}\mbox{$(\frac{\partial }{\partial z^{\alpha _{k}}} )$})\big)%
\hspace*{92pt}  \notag \\
&=&{\normalsize -I}^{-(q+1)}\big(\mbox{$\underset{i,l}{\sum } $}(({f^{\alpha
_{1}...\alpha _{q+1}}}_{,\overline{i}il}{\normalsize s+{f^{\alpha
_{1}...\alpha _{q+1}}}_{,\overline{i}i}s}_{l}\hspace*{216pt}  \notag \\
&&\ \ \ \ \ +{f^{\alpha _{1}...\alpha _{q}}}_{,\overline{i}l}{\normalsize s}%
_{i}+{f^{\alpha _{1}...\alpha _{q+1}}}_{,\overline{i}}{\normalsize s}%
_{il})\otimes dz^{l})\otimes {(\overset{q+1}{\underset{k=1}{\otimes }}}%
\mbox{$(\frac{\partial }{\partial z^{\alpha _{k}}} )$})\big)\hspace*{160pt} 
\notag \\
&&\overset{(\ref{2-2-3})}{=}-\big(%
\mbox{$\underset{j,i,l}{\sum }
(\frac{\partial}{\partial z^{j}} $}\mathbin{\lrcorner}(({f^{\alpha
_{1}...\alpha _{q+1}}}_{,\overline{i}il}{\normalsize s+{f^{\alpha
_{1}...\alpha _{q+1}}}_{,\overline{i}i}s}_{l}\hspace*{176pt}  \notag \\
&&\ \ \ \ \ +{f^{\alpha _{1}...\alpha _{q}}}_{,\overline{i}l}{\normalsize s}%
_{i}+{f^{\alpha _{1}...\alpha _{q+1}}}_{,\overline{i}}{\normalsize s}%
_{il})\otimes dz^{l}))\delta _{q+1}^{j}\otimes {(\overset{q}{\underset{k=1}{%
\otimes }}}\mbox{$(\frac{\partial }{\partial z^{\alpha _{k}}} )$})\big)%
\hspace*{136pt}  \notag \\
&=&{\normalsize -}\mbox{$\underset{i}{\sum}$}\,({f^{\alpha _{1}...\alpha
_{q+1}}}_{,\overline{i}i\alpha _{q+1}}s+{f^{\alpha _{1}...\alpha _{q+1}}}_{,%
\overline{i}i}s_{\alpha _{q+1}}\hspace*{228pt}  \notag \\
&&\ \ \ \ \ +{f^{\alpha _{1}...\alpha _{q+1}}}_{,\overline{i}\alpha
_{q+1}}s_{i}+{f^{\alpha _{1}...\alpha _{q+1}}}_{,\overline{i}}s_{i\alpha
_{q+1}})\otimes (\overset{q}{\underset{k=1}{\otimes }}%
\mbox{$(\frac{\partial }{\partial
z^{\alpha _{k}}} )$}).\hspace*{160pt}  \notag
\end{eqnarray}%
\noindent Taking (\ref{2-4-1}) ${\normalsize -}$ (\ref{2-4-2}) on $M$ with $%
s_{\alpha _{q+1}\overline{i}i}=-Bg_{\alpha _{q+1}\overline{i}}s_{i}$ by
Lemma \ref{Lemma 2.1}, we have%
\begin{eqnarray*}
&&\big(\Delta _{-q}{\normalsize I}^{-(q+1)}{\normalsize \partial }^{-(q+1)}%
{\normalsize -I}^{-(q+1)}{\normalsize \partial }^{-(q+1)}{\normalsize \Delta 
}_{-(q+1)}{\normalsize \big)(f^{\alpha _{1}...\alpha _{q+1}}s}\underset{k=1}{%
\overset{q+1}{\otimes }}%
\mbox{$(\frac{\partial
}{\partial z^{\alpha _{k}}} )$})\hspace*{60pt} \\
&&\overset{}{=}\big({f^{\alpha _{1}...\alpha _{q+1}}}_{,\alpha
_{q+1}}Bs+f^{\alpha _{1}...\alpha _{q+1}}Bs_{\alpha _{q+1}}\big)\otimes (%
\overset{q}{\underset{k=1}{\otimes }}%
\mbox{$(\frac{\partial
}{\partial z^{\alpha _{k}}} )$}),
\end{eqnarray*}

\noindent which is, as similar in (\ref{2-4-1}), 
\begin{eqnarray*}
&&{\normalsize BI}^{-(q+1)}\big(\mbox{$\underset{i}{\sum}$}(({f^{\alpha
_{1}...\alpha _{q+1}}}_{,i}s+f^{\alpha _{1}...\alpha _{q+1}}s_{i})\otimes
dz^{i}){\normalsize \otimes }(\overset{q+1}{\underset{k=1}{\otimes }}%
\mbox{$(\frac{\partial }{\partial
z^{\alpha _{k}}} )$})\big)\hspace*{98pt} \\
&& ={\normalsize BI}^{-(q+1)}{\normalsize \partial }^{-(q+1)}\big(f^{\alpha
_{1}...\alpha _{q+1}}s{\normalsize \otimes }(\overset{q+1}{\underset{k=1}{%
\otimes }}\mbox{$(\frac{\partial }{\partial z^{\alpha _{k}}} )$})\big).%
\hspace*{200pt}
\end{eqnarray*}
\end{proof}

\begin{notation}
\label{2.6} %\begin{enumerate}[i)]
%\item 
Let $\odot ^{q}T^{\ast }$ be the symmetric part of $\otimes ^{q}T^{\ast }$.
We have $\underset{k=1}{\overset{q}{\odot }}dz^{\alpha _{k}}$ $=\mathcal{S}(%
\underset{k=1}{\overset{q}{\otimes }}dz^{\alpha _{\sigma (k)}}):=\frac{1}{q!}%
\underset{\sigma \in S_{q}}{\sum }(\underset{k=1}{\overset{q}{\otimes }}%
dz^{\alpha _{\sigma (k)}})$. Similar notations apply to $\odot ^{q}T$ of $%
T^{q}$.\newline
\end{notation}

For the next proposition we start with the symmetry-preserving property:

\begin{lemma}
\label{-q-q}

%\begin{enumerate}
%\item \lbrack i)]

$i)$ For all $q\in \mathbb{N}$, and for $U\in \Omega ^{0,0}(N,L\otimes
(\odot ^{q}T))$ 
\begin{equation}
I^{-q}\partial ^{-q}(U)\in \Omega ^{0,0}(N,L\otimes (\odot ^{(q-1)}T)).%
\hspace*{180pt}
\end{equation}

$ii)$ For all $q\in \mathbb{N}$, and for $U\in \Omega ^{0,0}(N,L\otimes
(\odot ^{q}T^{\ast }))$ 
\begin{equation}
\overline{I}^{q}\overline{\partial }^{q}(U)\in \Omega ^{0,0}(N,L\otimes
(\odot ^{(q-1)}T^{\ast })).\hspace*{180pt}
\end{equation}%
%
%
%
%
%
%
%
%
%
%
%
%
%
%
%
%
%
%
%
%
%
%
%
%
%
%
%
%
%
%
%
%
%
%
%
%
%
%
%
%
%
%
%
%
%
%
%
%
%
%
%
%
%
%
%
%
%
%
%
%
%
%
%
%
%
%
%
%
%
%
%
%
%
%
%
%
%
%
%
%
%
%
%
%
%
%
%
%
%
%
%
%
%
%
%
%
%
%
%
%
%
%
%
%
%
%
%
%
%
%
%
%
%
%
%
%
%
%
%
%
%
%\end{enumerate}
\end{lemma}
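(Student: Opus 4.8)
The plan is to observe that the composite $I^{-q}\partial^{-q}$ (resp. $\overline{I}^{q}\overline{\partial}^{q}$) factors through two maps, \emph{each} of which already respects symmetry of the tensor part, so that nothing beyond the computations already carried out in the proofs of Propositions \ref{10.1} and \ref{10.2PZ} is needed.

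For $i)$ I would proceed in two steps. First, the Levi-Civita connection $\nabla$ on $T$ commutes with permutations of tensor factors --- equivalently, the symmetrization projector $\mathcal{S}$ of Notation \ref{2.6} is parallel --- so the induced connection on $\otimes^{q}T$, and hence $\nabla^{-q}=\nabla^{L}\otimes 1+1\otimes\nabla$ on $L\otimes T^{q}$, preserves the subbundle $L\otimes(\odot^{q}T)$; taking its $(1,0)$-part shows that $\partial^{-q}$ of (\ref{2-1-3}), applied to $(0,0)$-forms, maps $\Omega^{0,0}(N,L\otimes(\odot^{q}T))$ into $\Omega^{1,0}(N,L\otimes(\odot^{q}T))$. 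Second, $I^{-q}$ of (\ref{2-2-3}) contracts the covector produced from the $(1,0)$-form slot against one tensor factor, and contracting a symmetric $q$-tensor with a single vector produces a symmetric $(q-1)$-tensor; hence $I^{-q}$ maps $\Omega^{1,0}(N,L\otimes(\odot^{q}T))$ into $\Omega^{0,0}(N,L\otimes(\odot^{(q-1)}T))$. Composing the two steps gives $i)$. Part $ii)$ is the identical argument with $T$ replaced by $T^{\ast}$ (the dual connection still preserving $\odot^{q}T^{\ast}$), with $\overline{\partial}^{q}$ of (\ref{2-1-2}) --- the $(0,1)$-part of $\nabla^{q}$ on $(0,0)$-forms --- in place of $\partial^{-q}$, and with $\overline{I}^{q}$ of (\ref{2-2-2}) in place of $I^{-q}$, the latter now pairing one cotangent factor with the $(0,1)$-form slot through the metric $g^{\alpha_{q}\overline{j}}$; contracting a symmetric cotensor with a covector again yields a symmetric one.

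Concretely --- and this is the same computation written out --- I would instead do a one-line local check. Fix $p$ and c.g.c.\ at $p$, and let $U=f^{\alpha_{1}\cdots\alpha_{q}}s\otimes(\underset{k=1}{\overset{q}{\otimes}}\tfrac{\partial}{\partial z^{\alpha_{k}}})$ with the components $f^{\alpha_{1}\cdots\alpha_{q}}$ symmetric in all indices. The computation already done in (\ref{2-4-1}) (with the index range there shifted from $q+1$ to $q$) gives
\begin{equation*}
I^{-q}\partial^{-q}(U)=\Big(\sum_{\alpha_{q}}\big(f^{\alpha_{1}\cdots\alpha_{q-1}\alpha_{q}}{}_{,\alpha_{q}}\,s+f^{\alpha_{1}\cdots\alpha_{q-1}\alpha_{q}}\,s_{\alpha_{q}}\big)\Big)\otimes\big(\underset{k=1}{\overset{q-1}{\otimes}}\tfrac{\partial}{\partial z^{\alpha_{k}}}\big),
\end{equation*}
and at $p$ the covariant derivative $f^{\alpha_{1}\cdots\alpha_{q}}{}_{,\alpha_{q}}$ reduces to the partial $\partial_{\alpha_{q}}f^{\alpha_{1}\cdots\alpha_{q}}$, which is symmetric in $\alpha_{1},\dots,\alpha_{q}$; thus the free indices $\alpha_{1},\dots,\alpha_{q-1}$ occur symmetrically in each summand. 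Since symmetry of a tensor is a pointwise condition and $p$ is arbitrary, $I^{-q}\partial^{-q}(U)\in\Omega^{0,0}(N,L\otimes(\odot^{(q-1)}T))$. For $ii)$ one runs the identical check using (\ref{2-1-2}), (\ref{2-2-2}) and the computation (\ref{2-3-1}) in the proof of Proposition \ref{10.2PZ}, noting that the metric contraction $g^{\alpha_{q}\overline{j}}$ involves only $\alpha_{q}$ and leaves $\alpha_{1},\dots,\alpha_{q-1}$ untouched, so the symmetry of $f_{\alpha_{1}\cdots\alpha_{q}}$ passes to $\overline{I}^{q}\overline{\partial}^{q}(U)$.

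The only point needing care is keeping track of which tensor slot each operator contracts; I expect this to be essentially the sole subtlety, for once one observes that $U$ being symmetric makes it immaterial whether $I^{-q}$ (resp.\ $\overline{I}^{q}$) is taken to act on the last tensor factor or on any other one, there is no analytic content left to verify.
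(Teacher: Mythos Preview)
Your proposal is correct and your concrete local check is essentially identical to the paper's proof: the paper simply writes $I^{-q}\partial^{-q}\big((f^{\alpha_{1}\cdots\alpha_{q}}s)(\otimes_{k=1}^{q}\tfrac{\partial}{\partial z^{\alpha_{k}}})\big)=(f^{\alpha_{1}\cdots\alpha_{q}}s)_{\alpha_{q}}(\otimes_{k=1}^{q-1}\tfrac{\partial}{\partial z^{\alpha_{k}}})$ and observes that the right-hand side is symmetric in $\alpha_{1},\dots,\alpha_{q-1}$, then says $ii)$ is similar. Your additional conceptual paragraph (parallelism of $\mathcal{S}$, contraction preserving symmetry) is a nice supplement but not something the paper spells out.
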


\begin{proof}
To prove $i)$ we see\vspace*{8pt}\newline
\hspace*{60pt}$I^{-q}\partial ^{-q}\big((f^{\alpha _{1}...\alpha _{q}}s)(%
\underset{k=1}{\overset{q}{\otimes }}\frac{\partial }{\partial z^{\alpha
_{k}}})\big)=(f^{\alpha _{1}...\alpha _{q}}s)_{\alpha _{q}}(\underset{k=1}{%
\overset{q-1}{\otimes }}\frac{\partial }{\partial z^{\alpha _{k}}})$\vspace*{%
8pt}\newline
where the right hand side is symmetric with respect to $\alpha _{1},$...$%
,\alpha _{q-1}$. The $ii)$ is similar.
\end{proof}

%\noindent \textbf{Remark:} For $U\in \Omega^{0,0}(L \otimes (\odot^{q}T))$, we only know that  $\overline{I}^{-q}\overline{\partial}^{-q}(U)\in \Omega^{0,0}(L \otimes T^{(q+1)})$. Therefore, we need to apply the symmetrization operation $\mathcal{S}ym$ again to bring it back to the symmetric space $\Omega^{0,0}(L \otimes (\odot^{q+1}T)).$

\noindent \hspace*{12pt}With the inner product $<\ ,\,>$ (as given in (\ref%
{2-16a})) and Lemma \ref{-q-q}, we find the adjoints of $I^{-q}\partial
^{-q} $, $\overline{I}^{-q}\overline{\partial }^{-q}$, $I^{q}\partial ^{q}$
and $\overline{I}^{q}\overline{\partial }^{q}$ (on symmetric tensors) as
follows:

\begin{proposition}
\label{8} Assume that $N$ is compact K\"{a}hler. Let $\mathcal{S}$ be the
symmetrization operator as in Notation \ref{2.6}. We have\vspace*{-8pt}%
\newline

%\begin{enumerate}
%\item \lbrack i)]

$i)$ For all $q\in \mathbb{N},$ the adjoint of $I^{\,-q}\partial ^{\,-q}:\
\Omega ^{0,0}(N,L\otimes (\odot ^{q}T))$ $\rightarrow \Omega
^{0,0}(N,L\otimes (\odot ^{(q-1)}T))$ is 
\begin{equation*}
{\scriptsize (I}^{-q}{\scriptsize \partial }^{-q}{\scriptsize )}^{\ast }%
{\scriptsize =-\mathcal{S}\overline{I}}^{\,-(q-1)}\overline{\partial }%
^{\,-(q-1)}:\ \Omega ^{0,0}(N,L\otimes (\odot ^{(q-1)}T))\rightarrow \Omega
^{0,0}(N,L\otimes (\odot ^{q}T)).
\end{equation*}

$ii)$ For all $q\in \{0\}\cup \mathbb{N},$ the adjoint of ${\scriptsize 
\mathcal{S}\overline{I}}^{\,-q}{\scriptsize \overline{\partial }^{\,-q}}:\
\Omega ^{0,0}(N,L\otimes (\odot ^{q}T))\rightarrow \Omega ^{0,0}(N,L\otimes
(\odot ^{(q+1)}T))$ is 
\begin{equation*}
{\scriptsize (\mathcal{S}\overline{I}}^{\,-q}{\scriptsize \overline{\partial 
}^{\,-q})^{\ast }}{\scriptsize =-I}^{-(q+1)}\partial ^{-(q+1)}:\ \Omega
^{0,0}(N,L\otimes (\odot ^{(q+1)}T))\rightarrow \Omega ^{0,0}(N,L\otimes
(\odot ^{q}T)).
\end{equation*}

$iii)$ For all $q\in \mathbb{N},$ the adjoint of $\overline{I}^{\,q}%
\overline{\partial }^{\,q}:\ \Omega ^{0,0}(N,L\otimes (\odot ^{q}T^{\ast }))$
$\rightarrow \Omega ^{0,0}(N,L\otimes (\odot ^{(q-1)}T^{\ast }))$ is 
\begin{equation*}
(\overline{I}^{q}{\scriptsize \overline{\partial }}^{q})^{\ast }{\scriptsize %
=-\mathcal{S}I}^{\,(q-1)}\partial ^{\,(q-1)}:\ \Omega ^{0,0}(N,L\otimes
(\odot ^{(q-1)}T^{\ast }))\rightarrow \Omega ^{0,0}(N,L\otimes (\odot
^{q}T^{\ast })).
\end{equation*}

$iv)$ For all $q\in \{0\}\cup \mathbb{N},$ the adjoint of ${\scriptsize 
\mathcal{S}I}^{\,q}{\scriptsize \partial ^{\,q}}:\ \Omega ^{0,0}(N,L\otimes
(\odot ^{q}T^{\ast }))\rightarrow \Omega ^{0,0}(N,L\otimes (\odot
^{(q+1)}T^{\ast }))$ is 
\begin{equation*}
{\scriptsize (\mathcal{S}I}^{\,q}{\scriptsize \partial ^{\,q})^{\ast }}%
{\scriptsize =-\overline{I}}^{(q+1)}\bar{\partial}^{(q+1)}:\ \Omega
^{0,0}(N,L\otimes (\odot ^{(q+1)}T^{\ast }))\rightarrow \Omega
^{0,0}(N,L\otimes (\odot ^{q}T^{\ast })).
\end{equation*}%
%
%
%
%
%
%
%
%
%
%
%
%
%
%
%
%
%
%
%
%
%
%
%
%
%
%
%
%
%
%
%
%
%
%
%
%
%
%
%
%
%
%
%
%
%
%
%
%
%
%
%
%
%
%
%
%
%
%
%
%
%
%
%
%
%
%
%
%
%
%
%
%
%
%
%
%
%
%
%
%
%
%
%
%
%
%
%
%
%
%
%
%
%
%
%
%
%
%
%
%
%
%
%
%
%
%
%
%
%
%
%
%
%
%
%
%
%
%
%
%
%
%
%
%
%
%
%
%
%
%
%
%
%
%
%
%
%
%
%
%
%
%\end{enumerate}
\end{proposition}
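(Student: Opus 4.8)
The plan is to reduce all four statements to the already-established $L^{2}$-adjoint formulas $(\ref{partial q star})$ together with the elementary observation that the symmetrization $\mathcal{S}$ is an orthogonal projection. First I would record that, with respect to the Hermitian metric on $T^{q}$ (resp.\ $T^{\ast q}$) induced diagonally from the one on $T$ (resp.\ $T^{\ast }$), the operator $\mathcal{S}$ of Notation \ref{2.6} satisfies $\mathcal{S}^{2}=\mathcal{S}$ and $\mathcal{S}^{\ast }=\mathcal{S}$, i.e.\ it is the orthogonal projection onto the symmetric part. Consequently, if $P$ is any of our first-order composites, $\tilde{P}$ its evident extension to the full tensor bundles and $\tilde{P}^{\dagger }$ a formal $L^{2}$-adjoint of $\tilde{P}$, then for symmetric sections $U,V$ one has $\langle PU,V\rangle =\langle U,\tilde{P}^{\dagger }V\rangle =\langle U,\mathcal{S}\tilde{P}^{\dagger }V\rangle $; this is precisely the mechanism forcing the symmetrization $\mathcal{S}$ onto the right-hand sides of $i)$--$iv)$. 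I would also note at the outset that the four statements form two adjoint pairs: since ``$\langle PU,V\rangle =\langle U,QV\rangle $ for all smooth symmetric $U,V$'' is symmetric in $P$ and $Q$, and $\mathcal{S}^{\ast }=\mathcal{S}$, statement $ii)$ is exactly the formal adjoint of $i)$ with $q$ replaced by $q+1$, and $iv)$ that of $iii)$ with $q$ replaced by $q+1$. So it suffices to prove $i)$ and $iii)$.

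For $i)$ I would fix c.g.c.\ at a point $p$. By Lemma \ref{-q-q}$(i)$, for $U=f^{\alpha _{1}\cdots \alpha _{q}}s\otimes (\underset{k=1}{\overset{q}{\otimes }}\frac{\partial }{\partial z^{\alpha _{k}}})\in \Omega ^{0,0}(N,L\otimes (\odot ^{q}T))$ one has $I^{-q}\partial ^{-q}U=\sum_{\alpha _{q}}\nabla _{\alpha _{q}}(f^{\alpha _{1}\cdots \alpha _{q}}s)\otimes (\underset{k=1}{\overset{q-1}{\otimes }}\frac{\partial }{\partial z^{\alpha _{k}}})$, a contraction of a covariant derivative into the last contravariant slot --- a divergence-type operator. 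I would pair this against a symmetric $V$ and integrate by parts in the $\alpha _{q}$-derivative using the Chern-connection identity $\partial _{z^{\alpha }}\langle \sigma ,\tau \rangle =\langle \nabla _{\alpha }\sigma ,\tau \rangle +\langle \sigma ,\nabla _{\bar{\alpha}}\tau \rangle $; the resulting divergence term integrates to zero since $\int_{N}\partial (W\iprod \tfrac{\omega ^{n}}{n!})=0$ by Stokes. The surviving term pairs $U$ against $\sum g^{\alpha _{q}\bar{j}}\nabla _{\bar{j}}V$ tensored with an extra slot, which one recognizes via $(\ref{2-1-4})$ and $(\ref{2-2-4})$ as $-\overline{I}^{\,-(q-1)}\overline{\partial }^{\,-(q-1)}V$ up to the position of the new slot; since $U$ is symmetric that position is immaterial, so $\langle I^{-q}\partial ^{-q}U,V\rangle =-\langle U,\mathcal{S}\overline{I}^{\,-(q-1)}\overline{\partial }^{\,-(q-1)}V\rangle $, which is $i)$. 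Equivalently, and closer to $(\ref{partial q star})$: write $(I^{-q}\partial ^{-q})^{\dagger }=(\partial ^{-q})^{\dagger }(I^{-q})^{\dagger }=(-\ast \overline{\partial }^{-q}\ast )(I^{-q})^{\dagger }$, compute the fibrewise adjoint $(I^{-q})^{\dagger }$ straight from $(\ref{2-2-3})$ --- it re-inserts the contracted slot as a holomorphic $dz$-component against the inverse metric --- and identify the composite, then symmetrize.

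Statement $iii)$ is proved by repeating this computation verbatim with $T$ replaced by $T^{\ast }$ and the quadruple $(\partial ^{-q},\overline{\partial }^{-q},I^{-q},\overline{I}^{\,-q})$ replaced by $(\partial ^{q},\overline{\partial }^{q},I^{q},\overline{I}^{\,q})$: the definitions $(\ref{2-1-1})$--$(\ref{2-1-4})$ and $(\ref{2-2-1})$--$(\ref{2-2-4})$ are structurally identical, Lemma \ref{-q-q}$(ii)$ provides the symmetry-preservation of $\overline{I}^{\,q}\overline{\partial }^{\,q}$, and the same integration by parts (now in a $\bar{j}$-derivative, the divergence term again killed by Stokes) yields $(\overline{I}^{\,q}\overline{\partial }^{\,q})^{\ast }=-\mathcal{S}I^{q-1}\partial ^{q-1}$. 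As noted above, $ii)$ and $iv)$ then follow as the formal adjoints of $i)$ and $iii)$ after shifting $q\mapsto q+1$ and using $\mathcal{S}^{\ast }=\mathcal{S}$.

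The step I expect to be the main obstacle is bookkeeping rather than conceptual: carrying the signs $(-1)^{r+s}$ from $(\ref{2-2-1})$--$(\ref{2-2-4})$, the inverse-metric factors $g^{i\bar{j}}$, $g^{\alpha _{q}\bar{j}}$ and the Kronecker symbols $\delta _{\alpha _{q}}^{i}$, and the componentwise action of $\ast $ on bundle-valued forms, all consistently through the composition; and making certain that $\mathcal{S}$ ends up on the correct side (it is needed exactly when the relevant adjoint raises the tensor degree, so it is absent from the left of $i)$, $iii)$ but present on their right, and conversely for $ii)$, $iv)$). Checking the case $q=1$ first --- where $\odot ^{0}$ is trivial and the identity degenerates to the statement that the metric-raised divergence and the symmetrized gradient are formal $L^{2}$-adjoints --- pins down all signs and guides the general computation.
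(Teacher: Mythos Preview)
Your proposal is correct and follows essentially the same route as the paper: reduce to $i)$ (with $ii)$ its formal adjoint and $iii)$, $iv)$ analogous), write $I^{-q}\partial^{-q}V$ in local coordinates as a contracted covariant derivative, integrate by parts, and invoke the symmetric-tensor condition to match the index placements on the two sides. The paper's proof is a bare two-line calculation of exactly this form; your additional framing (that $\mathcal{S}$ is an orthogonal projection and hence self-adjoint, which is why $\mathcal{S}$ lands precisely on the degree-raising side) is a helpful conceptual gloss but not a different argument.
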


\begin{proof}
We content ourselves with proving $i)$ since $ii)$ is equivalent to $i)$,
and $iii)$, $iv)$ are similar to $i)$, $ii)$. By Lemma \ref{-q-q} the images
of $I^{-q}\partial ^{-q}$ on symmetric tensors are symmetric. Let 
\begin{equation*}
\mbox{$U = f^{A} s(\underset{k=1}{\overset{q-1}{\otimes }}\frac{\partial
}{\partial z^{\alpha _{k}}}),\hspace*{20pt} V=h^{B} s(
\overset{q}{\underset{l=1}{\otimes}}  \frac{\partial }{\partial z^{\beta
_{l}}})$}\hspace*{140pt}
\end{equation*}%
\noindent where $A=\alpha _{1}...\alpha _{q-1}$, $B=\beta _{1}...\beta _{q}$%
. To prove $i)$ we need to check that $<-\mathcal{S}\overline{I}^{\,-(q-1)}\,%
\overline{\partial }^{-(q-1)}\,U,\ V>=<U,\ I^{-q}\,\partial ^{-q}\,V>$ holds
if $U,$ $V$ are symmetric. We have, with $\nabla g_{\alpha \overline{\beta }%
}\equiv 0$

\begin{eqnarray*}
LHS &=&< -\overline{I}^{\,-(q-1)}(f^{A}s)_{\overline{j}}\otimes d\overline{z}%
^{\overline{j}}(\otimes _{k=1}^{q-1}\frac{\partial }{\partial z^{\alpha _{k}}%
}),\text{ }h^{B}s(\otimes _{l=1}^{q}\frac{\partial }{\partial z^{\beta _{l}}}%
) > \\
&=&-\int_{N}((f^{A}s)_{\overline{j}},\text{ }h^{B}s)g^{i\overline{j}}g_{i%
\overline{\beta }_{1}}g_{\alpha _{1}\overline{\beta }_{2}}\cdot \cdot \cdot 
\text{ }g_{\alpha _{q-1}\overline{\beta }_{q}}\text{dVol} \\
RHS &=&< f^{A}s(\otimes _{k=1}^{q-1}\frac{\partial }{\partial z^{\alpha _{k}}%
}),\text{ (}h^{B}s)_{\beta _{q}}(\otimes _{l=1}^{q-1}\frac{\partial }{%
\partial z^{\beta _{l}}}) > =-\int_{N}(f^{A}s,\text{(}h^{B}s)_{\beta
_{q}})g_{\alpha _{1}\overline{\beta }_{1}}\cdot \cdot \cdot \text{ }%
g_{\alpha _{q-1}\overline{\beta }_{q-1}}\text{dVol} \\
&=&-\int_{N}((f^{A}s)_{\overline{\beta }_{q}},h^{B}s)g_{\alpha _{1}\overline{%
\beta }_{1}}\cdot \cdot \cdot \text{ }g_{\alpha _{q-1}\overline{\beta }%
_{q-1}}\text{dVol \ \ (integration by parts)}
\end{eqnarray*}

%Here we deliberately begin with the elements in $L\otimes T^{(q-1)}$ and $L \otimes T^{q}$ to observe the order of the indecies.
%\%%
%\begin{align*}
%LHS& =%
%\mbox{$\langle -\overline{I}^{\, -(q-1)}\, 
%(f^{A}s)_{\overline{j}}\otimes d\overline{z}\overline{^{j}}\,
%(\underset{k=1}{\overset{q-1}{\otimes }}\frac{\partial }{\partial z^{\alpha
%_{k}}}),\ h^{B}s(\underset{l=1}{\overset{q}{\otimes }}\frac{\partial
%}{\partial z^{\beta _{l}}})\rangle$}\hspace*{140pt} \\
%& =%
%\mbox{$-\int_{M}  \left( ( f^{A}s)_{\overline{j}},h^{B}s \right)\delta_{j
%\bar{\beta}_{1}} \delta_{\alpha_{1}\bar{\beta}_{2}} \cdots \delta_{\alpha _{q-1}\bar{\beta}
%_{q}}\ \mbox{dVol} $}\hspace*{10pt}(\mbox{since\hspace*{4pt}}g^{\bar{j}%
%i}=\delta ^{\bar{j}i}) \\
%RHS& =%
%\mbox{$\langle f^{A}s(\underset{k=1}{\overset{q-1}{\otimes
%}}\frac{\partial }{\partial z^{\alpha _{k}}}),\ (h^{B}s)_{\beta
%_{q}}(\underset{l=1}{\overset{q-1}{\otimes }}\frac{\partial }{\partial
%z^{\beta _{l}}})\rangle $}{\normalsize =}-\int_{M}\delta _{\alpha _{1}\bar{%
%\beta}_{1}}\cdots {\normalsize \,\delta }_{\alpha _{q-1}\bar{\beta}%
%_{q-1}}(f^{A}s,(h^{B}s)_{\beta _{q}})\,\mbox{dVol} \\
%& =%
%\mbox{$ -\int_{M}  (  (f^{A}s)_{\overline{\beta}_{q}} , (h^{B}s) )\, 
%\delta_{\alpha_{1}\bar{\beta}_{1}} \cdots {\normalsize \,\delta }_{\alpha
%_{q-1}\bar{\beta}_{q-1}}\, \mbox{dVol}  $}\hspace*{10pt}%
%\mbox{(integration by
%parts)}\%
%\end{align*}%
\noindent which proves $i)$ by using the symmetric tensor condition.
\end{proof}

We have the following on $M;$ for similar formulas on $\mathbb{P}^{n}$ see
Propositions \ref{Prop 4.4} and \ref{Prop 4.5} combined.

\begin{proposition}
\label{9} We compute $\Delta ^{q}-\Delta _{q}$ for $q\in \mathbb{Z}$:

%\begin{enumerate}
%\item \lbrack i)]

$i)$ For all $q\in \mathbb{N}$,%
\mbox{ on $\Omega ^{0,0}\left(M, L\otimes
\left( \odot^{q} T^{\ast}\right) \right) $} 
\begin{equation*}
\Delta ^{q}-\Delta _{q}=n(I^{(q-1)}\partial ^{(q-1)}\overline{I}^{q}%
\overline{\partial }^{q}-\overline{I}^{(q+1)}\overline{\partial }%
^{(q+1)}I^{q}\partial ^{q})=nB\cdot Id.\hspace*{180pt}
\end{equation*}%
%
%
%
%
%
%
%
%
%
%
%
%
%
%\noindent \hspace*{-20pt} $(L\otimes (\odot^{q}T^{*})=L\mbox{ if }q=0.)$

$ii)$ $\Delta ^{0}-\Delta _{0}=I^{-1}\partial ^{-1}\overline{I}^{0}\overline{%
\partial }^{0}-\overline{I}^{1}\overline{\partial }^{1}I^{0}\partial
^{0}=nB\cdot Id\hspace*{4pt}%
\mbox{on\hspace*{4pt} $\Omega ^{0,0}\left( M, L
\right) $}.$

$iii)$ For all $q\in \mathbb{N}$, 
\mbox{on $\Omega
^{0,0}\left(M, L\otimes \left( \odot^{q} T\right) \right)$} 
\begin{equation*}
\Delta ^{-q}-\Delta _{-q}=n(I^{-(q+1)}\partial ^{-(q+1)}\overline{I}^{\,-q}\,%
\overline{\partial }^{\,-q}-\overline{I}^{\,-(q-1)}\,\overline{\partial }%
^{\,-(q-1)}I^{-q}\partial ^{-q})=nB\cdot Id.\hspace*{100pt}
\end{equation*}%
\noindent These operators separately may not preserve the symmetric parts;
only their differences do.
\end{proposition}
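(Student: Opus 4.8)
The plan is to prove each of $i)$, $ii)$, $iii)$ by checking that \emph{both} expressions for $\Delta^{q}-\Delta_{q}$ (resp. $\Delta^{-q}-\Delta_{-q}$) equal $nB\cdot Id$ on the relevant space of symmetric tensors; I will carry out $i)$ in detail and indicate why $ii)$, $iii)$ follow by the same bookkeeping. Throughout I would work in global flat holomorphic coordinates on $M$ with $g_{i\bar j}\equiv\delta_{i\bar j}$: then the induced Levi--Civita connections on $T^{q}$ and $T^{\ast q}$ are trivial, so covariant and ordinary derivatives of the component functions $f_{\alpha_1\dots\alpha_q}$ coincide and commute, and the only surviving ``curvature'' is that of $L$, recorded by Lemma \ref{Lemma 2.1} as $s_{l\bar i}=-Bg_{l\bar i}s=-B\delta_{li}s$ and $s_{\bar i}=s_{\bar i l}=0$.

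\textbf{Step 1 (the identity $\Delta^{q}-\Delta_{q}=nB\cdot Id$).} I would apply Lemma \ref{Lemma 2.2} $i)$ and $ii)$ to a local section $f_{\alpha_1\dots\alpha_q}s\otimes(\bigotimes_{k=1}^{q}dz^{\alpha_k})$ (with $s$ a local holomorphic frame of $L$) and subtract; the tensor slot is inert and the $L$-parts differ by $-\sum_i(f_{\dots,i\bar i}-f_{\dots,\bar i i})s+\sum_i Bf_{\dots}s$, the first sum vanishing by flatness and the second being $nBf_{\dots}s$. This holds on all of $\Omega^{0,0}(M,L\otimes T^{\ast q})$, a fortiori on its symmetric subspace. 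Parts $iii)$, $iv)$ of Lemma \ref{Lemma 2.2} give $\Delta^{-q}-\Delta_{-q}=nB\cdot Id$ identically, which is the second equality in $iii)$ (and, at $q=0$, in $ii)$).

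\textbf{Step 2 (the contraction-operator expression).} The remaining content is the first equality, which I would get by computing each composite explicitly on $f_{\alpha_1\dots\alpha_q}s\otimes(\bigotimes_{k=1}^{q}dz^{\alpha_k})$ with $f$ symmetric, tracking tensor slots carefully. Since $s$ is holomorphic, $\overline{\partial}^{q}$ appends $d\bar z^{\bar j}$ with the $\nabla_{\bar j}$-derivative of the components and $\overline{I}^{q}$ contracts $d\bar z^{\bar j}$ against the last tensor slot via $g^{\alpha_q\bar j}=\delta^{\alpha_q}_j$ (Lemma \ref{-q-q} guarantees symmetry is preserved), so $\overline{I}^{q}\overline{\partial}^{q}$ sends the section to $\sum_i f_{\alpha_1\dots\alpha_{q-1}i,\bar i}\,s\otimes(\bigotimes_{k=1}^{q-1}dz^{\alpha_k})$; applying $I^{q-1}\partial^{q-1}$ (append $dz^{l}$ carrying $\nabla_{l}$ on both the $L$-part and the components, then move $dz^{l}$ into a new tensor slot) gives $\sum_{i,l}(f_{\alpha_1\dots\alpha_{q-1}i,\bar i l}\,s+f_{\alpha_1\dots\alpha_{q-1}i,\bar i}\,s_{l})\big(dz^{l}\otimes\bigotimes_{k=1}^{q-1}dz^{\alpha_k}\big)$. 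Dually, $I^{q}\partial^{q}$ produces the (in general non-symmetric) tensor $\sum_{l}(f_{\dots,l}s+f_{\dots}s_{l})(dz^{l}\otimes\bigotimes_{k=1}^{q}dz^{\alpha_k})$, and then $\overline{I}^{q+1}\overline{\partial}^{q+1}$ (append $d\bar z^{\bar j}$, contract against the last slot $dz^{\alpha_q}$) gives $\sum_{i,l}(f_{\alpha_1\dots\alpha_{q-1}i,l\bar i}\,s+f_{\alpha_1\dots\alpha_{q-1}i,\bar i}\,s_{l}+f_{\alpha_1\dots\alpha_{q-1}i}\,s_{l\bar i})\big(dz^{l}\otimes\bigotimes_{k=1}^{q-1}dz^{\alpha_k}\big)$. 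Subtracting, the $f_{\dots,\bar i}s_{l}$ terms cancel, the $f_{\dots,\bar i l}s$ and $f_{\dots,l\bar i}s$ terms cancel by flatness, and $s_{l\bar i}=-B\delta_{li}s$ leaves $B\sum_i f_{\alpha_1\dots\alpha_{q-1}i}\,s\,\big(dz^{i}\otimes\bigotimes_{k=1}^{q-1}dz^{\alpha_k}\big)$; \emph{because $f$ is symmetric} a cyclic relabelling identifies this with $B\big(f_{\alpha_1\dots\alpha_q}s\otimes\bigotimes_{k=1}^{q}dz^{\alpha_k}\big)$, so the difference of composites is $B\cdot Id$ and $n$ times it matches Step 1. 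Statement $iii)$ is the same computation with $\partial/\partial z^{\alpha_k}$ and Kronecker deltas in place of $dz^{\alpha_k}$ and $g^{\alpha_q\bar j}$. In the case $q=0$ of $ii)$ neither $I^{0}\partial^{0}$ nor $\overline{I}^{-0}\overline{\partial}^{-0}$ (read via $(\ref{2-2-4})$ at $q=0$) loses a tensor slot, so $\overline{I}^{1}\overline{\partial}^{1}I^{0}\partial^{0}u=\sum_{j}u_{j\bar j}$ and $I^{-1}\partial^{-1}\overline{I}^{-0}\overline{\partial}^{-0}u=\sum_{i}u_{\bar i i}$, whose difference $\sum_i(u_{\bar i i}-u_{i\bar i})=nBu$ already carries the full factor $n$; this is why $ii)$ has no external $n$.

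I expect the main obstacle to be the bookkeeping in Step 2, in particular resisting the temptation to symmetrize the intermediate tensors: $I^{q}\partial^{q}f$ is genuinely not symmetric (this is exactly the parenthetical remark in the statement), and symmetry of the input $f$ may be invoked only at the very end, to collapse $\sum_i f_{\alpha_1\dots\alpha_{q-1}i}(dz^{i}\otimes\cdots)$ back to the original section. The other delicate point is tracking where the factor $n$ originates, which differs between $q=0$ (a genuine trace $\sum_i(\cdot)_{\bar i i}$) and $q\ge 1$ (the inner sum absorbed into the tensor structure, the $n$ supplied by the external coefficient); flatness of $M$ is what lets all covariant derivatives of the component functions commute, isolating $-Bg_{i\bar j}$ as the sole source of the constant $B$.
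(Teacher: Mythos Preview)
Your proof is correct and follows essentially the same route as the paper: Step~1 is exactly the paper's use of Lemma~\ref{Lemma 2.2} $i)$--$iv)$ together with the commutation relation (\ref{4-7}) in the flat case, and Step~2 reproduces the paper's computation (\ref{2-36a})--(\ref{2-36}) of the contraction-operator difference, with the same final appeal to symmetry of $f$ to collapse $dz^{i}\otimes(\bigotimes_{k=1}^{q-1}dz^{\alpha_k})$ back to the original section. Your explicit handling of $ii)$ and $iii)$ is what the paper defers to its Appendix (where it observes that the $\mathbb{P}^n$ computations of Propositions~\ref{Prop 4.4} and~\ref{Prop 4.5} simplify on $M$); your direct treatment is in fact cleaner, and your remark on why the external factor $n$ is absent in $ii)$ (a genuine trace $\sum_i(\cdot)_{\bar i i}$ at $q=0$ versus a single contracted index absorbed into the tensor structure for $q\ge 1$) is a nice point the paper leaves implicit.
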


%\noindent \textbf{Remark:}\ Both $I^{-(q+1)}\partial ^{-(q+1)}\overline{I}^{\, -q}\,\overline{\partial }^{\,-q}$ and $\overline{I}^{\, -(q-1)}\,\overline{\partial }^{\, -(q-1)}I^{-q}\partial ^{-q}$ send elements in $\Omega^{0,0}\left( L\otimes \left( \odot^{q} T\right) \right)$ to $\Omega^{0,0}(L \otimes T^{q})$ while their summation preserves $\Omega^{0,0}\left( L\otimes \left( \odot^{q} T\right) \right)$.
%\noindent These operators separately may not preserve the symmetric parts;
%only their differences do.

\begin{proof}
\noindent We will only prove $i)$ here. $ii)$ and $iii)$ will be proved in
the Appendix. The difference between $\Delta ^{q}$ and $\Delta _{q}$ equals $%
nB\cdot Id$ via Lemma \ref{Lemma 2.2} $i),$ $ii)$ and (\ref{4-7}) with
vanishing curvature. %
%\begin{equation}
%\Delta ^{q}(f_{\alpha _{1}...\alpha _{q}}s\overset{q}{\underset{k=1}{\otimes 
%}}dz^{\alpha _{k}})=-\mbox{$\underset{i}{\sum}$}(f_{\alpha _{1}..\alpha
%_{q},i\overline{i}}s+f_{\alpha _{1}..\alpha _{q},\overline{i}%
%}s_{i}-Bf_{\alpha _{1}..\alpha _{q}}s)\overset{q}{\underset{k=1}{\otimes }}%
%dz^{\alpha _{k}},  \label{2-10a}
%\end{equation}%
%\begin{equation}
%\Delta _{q}(f_{\alpha _{1}...\alpha _{q}}s\overset{q}{\underset{k=1}{\otimes 
%}}dz^{\alpha _{k}})=-\mbox{$\underset{i}{\sum}$}(f_{\alpha _{1}..\alpha _{q},%
%\overline{i}i}s+f_{\alpha _{1}..\alpha _{q},\overline{i}}s_{i})\overset{q}{%
%\underset{k=1}{\otimes }}dz^{\alpha _{k}}  \label{2-10b}
%\end{equation}
To prove the first equality of $i),$ we have (the following formulas (\ref%
{2-36a}), (\ref{2-36b}) and (\ref{2-36c}) hold true for a K\"{a}hler
manifold $N$)%
\begin{eqnarray}
{\normalsize I}^{q}{\normalsize \partial }^{q}{\normalsize (f_{\alpha
_{1}..\alpha _{q}}s}\underset{k=1}{\overset{q}{\otimes }}{\normalsize dz}%
^{\alpha _{k}}{\normalsize )} &=&{\normalsize I}^{q}[(({\normalsize %
f_{\alpha _{1}...\alpha _{q},i}s+f_{\alpha _{1}...\alpha _{q}}s}_{i})dz^{i})%
\underset{k=1}{\overset{q}{\otimes }}{\normalsize dz}^{\alpha _{k}}]
\label{2-36a} \\
&=&({\normalsize f_{\alpha _{1}...\alpha _{q},i}s+f_{\alpha _{1}...\alpha
_{q}}s}_{i})[dz^{i}\underset{k=1}{\overset{q}{\otimes }}{\normalsize dz}%
^{\alpha _{k}}].  \notag
\end{eqnarray}

\noindent This, together with the formula for ${\normalsize \overline{I}}%
^{\,q}\overline{\partial }^{\,q}$ used in (\ref{2-3-1}), gives%
\begin{equation}
{\normalsize I}^{q-1}{\normalsize \partial }^{q-1}{\normalsize \overline{I}}%
^{\,q}\overline{\partial }^{\,q}{\normalsize (f_{\alpha _{1}...\alpha _{q}}s}%
\underset{k=1}{\overset{q}{\otimes }}{\normalsize dz}^{\alpha _{k}}%
{\normalsize )=}\mbox{$\underset{i,j}{\sum }$}{\normalsize (f}_{\alpha
_{1}...\alpha _{q},\overline{j}i}{\normalsize s+f}_{\alpha _{1}...\alpha
_{q},\overline{j}}{\normalsize s}_{i}{\normalsize )}\text{ }{\normalsize g}^{%
\overline{j}\alpha _{q}}{\normalsize \,(dz}^{i}\otimes {\normalsize (}%
\underset{k=1}{\overset{q-1}{\otimes }}{\normalsize dz}^{\alpha _{k}}%
{\normalsize ))}  \label{2-36b}
\end{equation}

\noindent and 
\begin{equation}
{\normalsize \overline{I}}^{\,(q+1)}\overline{\partial }^{(q+1)}{\normalsize %
I}^{q}{\normalsize \partial }^{q}(f_{\alpha _{1}...\alpha _{q}}s\underset{k=1%
}{\overset{q}{\otimes }}{\normalsize dz}^{\alpha _{k}}{\normalsize )=}%
\mbox{$\underset{i,j}{\sum }$}{\normalsize (f}_{\alpha _{1}...\alpha _{q},i%
\overline{j}}{\normalsize s}+f_{\alpha _{1}...\alpha _{q},\overline{j}}%
{\normalsize s}_{i}{\normalsize +f_{\alpha _{1}...\alpha _{q}}s}_{i\overline{%
j}}{\normalsize )}\text{ }{\normalsize g}^{\overline{j}\alpha _{q}}%
{\normalsize \,(dz}^{i}\otimes {\normalsize (}\underset{k=1}{\overset{q-1}{%
\otimes }}{\normalsize dz}^{\alpha _{k}}{\normalsize )).}  \label{2-36c}
\end{equation}

\noindent Taking (\ref{2-36b}) -- (\ref{2-36c}) and using (\ref{4-7}) we have%
\begin{eqnarray}
&&\hspace*{10pt}{\normalsize (I}^{q-1}{\normalsize \partial }^{q-1}%
{\normalsize \overline{I}}^{\,q}\overline{\partial }^{\,q}{\normalsize -%
\overline{I}}^{\,(q+1)}\overline{\partial }^{(q+1)}{\normalsize I}^{q}%
{\normalsize \partial }^{q}{\normalsize )(f_{\alpha _{1}..\alpha _{q}}s}%
\underset{k=1}{\overset{q}{\otimes }}{\normalsize dz}^{\alpha _{k}}%
{\normalsize )}\hspace*{160pt}  \label{2-36} \\
&{\normalsize =}&{\normalsize -}\mbox{$\underset{i,j}{\sum }$}{\normalsize %
(f_{\alpha _{1}...\alpha _{q}}\otimes s}_{i\overline{j}}{\normalsize )\,g}^{%
\overline{j}\alpha _{q}}{\normalsize \,(dz}^{i}\otimes {\normalsize (}%
\underset{k=1}{\overset{q-1}{\otimes }}{\normalsize dz}^{\alpha _{k}}%
{\normalsize ))}  \notag \\
&\overset{Lem.\ref{Lemma 2.1}}{=}&Bf_{\alpha _{1}...\alpha _{q}}s(dz^{\alpha
_{q}}\otimes (\underset{k=1}{\overset{q-1}{\otimes }}{dz}^{\alpha _{k}}))%
\text{ on }M.  \notag
\end{eqnarray}%
\noindent %
%
%
%
%
%
%
%
%
%
%
%
%
%
%
%
%
%
%
%
%
%
%
%
%
%
%
%
%
%
%
%
%
%
%
%
%
%
%
%
%
%
%
%
%
%
%
%
%
%
%
%
%
%
%
%
%
%Now that $\overset{q}{\underset{k=1}{\otimes}}dz^{\alpha_{k}}$ changes to $dz^{\alpha_{q}}\otimes(\overset{q-1}{\underset{k=1}{\otimes}}dz^{\alpha_{k}})$. 
Using the symmetric tensors $\Omega ^{0,0}(M,L\otimes (\odot ^{q}T^{\ast }))$
for the last line, one sees $i).$
\end{proof}

~~\newline
\noindent The following lemmas will be needed later :

\begin{lemma}
\label{Lemma1} \noindent On $\Omega ^{0,0}\left( N,L\right) $, \hspace*{4pt}$%
I^{-1}\partial ^{-1}\left( I^{-1}\partial ^{-1}\right) ^{\ast }=\Delta _{0}$%
, and $\overline{I}^{1}\overline{\partial }^{1}(\overline{I}^{1}\overline{%
\partial }^{1})^{\ast }=\Delta ^{0}$.
\end{lemma}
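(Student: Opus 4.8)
The plan is to deduce both identities directly from the definitions $\Delta_{0}=(\overline{\partial}^{0})^{\ast}\overline{\partial}^{0}$ and $\Delta^{0}=(\partial^{0})^{\ast}\partial^{0}$ on $\Omega^{0,0}(N,L)$, using two ingredients. First, Proposition \ref{8} (applied with $q=0$) identifies the $L^{2}$-adjoints of $I^{-1}\partial^{-1}$ and $\overline{I}^{1}\overline{\partial}^{1}$ in terms of the ``degree-zero'' contraction operators $\overline{I}^{-0}\overline{\partial}^{-0}$ and $I^{0}\partial^{0}$. Second, one observes that the contraction operators $\overline{I}^{-0}\colon\Omega^{0,1}(N,L)\to\Omega^{0,0}(N,L\otimes T)$ and $I^{0}\colon\Omega^{1,0}(N,L)\to\Omega^{0,0}(N,L\otimes T^{\ast})$ are \emph{fibrewise isometric isomorphisms} for the inner product (\ref{2-16a}): the former is metric index-raising, the latter is the tautological identification of a $(1,0)$-form with a section of $T^{\ast}$. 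Consequently $(\overline{I}^{-0})^{\ast}\overline{I}^{-0}=\mathrm{Id}$ on $\Omega^{0,1}(N,L)$ and $(I^{0})^{\ast}I^{0}=\mathrm{Id}$ on $\Omega^{1,0}(N,L)$. Throughout, $N$ is assumed compact K\"{a}hler, as in Proposition \ref{8}.

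For the first identity, apply Proposition \ref{8} $ii)$ with $q=0$: since $\odot^{0}T$ is trivial and $\mathcal{S}$ acts as the identity on the degree-one bundle $\odot^{1}T=T$, this yields $(\overline{I}^{-0}\overline{\partial}^{-0})^{\ast}=-I^{-1}\partial^{-1}$, i.e.
\begin{equation*}
(I^{-1}\partial^{-1})^{\ast}=-\overline{I}^{-0}\overline{\partial}^{-0}=-\overline{I}^{-0}\,\overline{\partial}^{0}
\end{equation*}
using $\overline{\partial}^{-0}=\overline{\partial}^{0}$. Taking adjoints once more, $I^{-1}\partial^{-1}=-(\overline{\partial}^{0})^{\ast}(\overline{I}^{-0})^{\ast}$, and therefore
\begin{equation*}
I^{-1}\partial^{-1}\,(I^{-1}\partial^{-1})^{\ast}=(\overline{\partial}^{0})^{\ast}\,(\overline{I}^{-0})^{\ast}\,\overline{I}^{-0}\,\overline{\partial}^{0}=(\overline{\partial}^{0})^{\ast}\overline{\partial}^{0}=\Delta_{0},
\end{equation*}
the middle equality being $(\overline{I}^{-0})^{\ast}\overline{I}^{-0}=\mathrm{Id}$ on $\Omega^{0,1}(N,L)$, which contains the image of $\overline{\partial}^{0}$.

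For the second identity, apply Proposition \ref{8} $iv)$ with $q=0$: again $\mathcal{S}$ acts trivially on $\odot^{1}T^{\ast}=T^{\ast}$, so $(\mathcal{S}I^{0}\partial^{0})^{\ast}=-\overline{I}^{1}\overline{\partial}^{1}$ gives $(\overline{I}^{1}\overline{\partial}^{1})^{\ast}=-I^{0}\partial^{0}$, hence $\overline{I}^{1}\overline{\partial}^{1}=-(\partial^{0})^{\ast}(I^{0})^{\ast}$. Composing,
\begin{equation*}
\overline{I}^{1}\overline{\partial}^{1}\,(\overline{I}^{1}\overline{\partial}^{1})^{\ast}=(\partial^{0})^{\ast}\,(I^{0})^{\ast}I^{0}\,\partial^{0}=(\partial^{0})^{\ast}\partial^{0}=\Delta^{0},
\end{equation*}
using $(I^{0})^{\ast}I^{0}=\mathrm{Id}$ on $\Omega^{1,0}(N,L)$.

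The only step carrying any content is the isometry claim: that $\overline{I}^{-0}$ and $I^{0}$ preserve the pointwise inner products induced by (\ref{2-16a}) with normalizing constant exactly $1$. This is a short direct check from (\ref{2-2-1}) and (\ref{2-2-4}) together with the metric conventions (for $\overline{I}^{-0}$ one uses $\sum_{i}g^{i\bar{j}}g_{i\bar{k}}=\delta^{\bar{j}}_{\bar{k}}$ and $\langle d\overline{z}^{\bar{j}},d\overline{z}^{\bar{l}}\rangle=g^{\bar{j}l}$; for $I^{0}$ it is the canonical isometry $\Omega^{1,0}(N,L)\cong\Omega^{0,0}(N,L\otimes T^{\ast})$). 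Should one prefer to sidestep it, both identities follow equally well from a local computation in complex geodesic coordinates at a point $p$: using the explicit form of $I^{-1}\partial^{-1}$ (resp. $\overline{I}^{1}\overline{\partial}^{1}$) from the proof of Lemma \ref{-q-q} and the adjoint formulas above, one finds that $I^{-1}\partial^{-1}(I^{-1}\partial^{-1})^{\ast}s^{0}$ and $\Delta_{0}s^{0}$ both equal $-\sum_{i}s^{0}_{\bar{i}i}(p)$, while $\overline{I}^{1}\overline{\partial}^{1}(\overline{I}^{1}\overline{\partial}^{1})^{\ast}s^{0}$ and $\Delta^{0}s^{0}$ both equal $-\sum_{i}s^{0}_{i\bar{i}}(p)$, the latter expressions coming from the standard form of $(\overline{\partial}^{0})^{\ast}$ and $(\partial^{0})^{\ast}$ provided by (\ref{partial q star}).
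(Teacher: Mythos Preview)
Your proof is correct. Your main argument is genuinely different from the paper's: the paper proceeds by a direct local computation in complex geodesic coordinates, expanding both $I^{-1}\partial^{-1}(I^{-1}\partial^{-1})^{\ast}(fs)$ and $\Delta_{0}(fs)$ via Proposition~\ref{8}~$i)$ (with $q=1$) and Lemma~\ref{Lemma 2.2}, and checking that each equals $-\sum_{i}(f_{,\overline{i}i}s+f_{,\overline{i}}s_{i})$ at $p$. You instead factor the adjoint as $(\overline{I}^{-0}\overline{\partial}^{0})^{\ast}=(\overline{\partial}^{0})^{\ast}(\overline{I}^{-0})^{\ast}$ and use that the zeroth-order bundle map $\overline{I}^{-0}$ (and likewise $I^{0}$) is a fibrewise isometry, collapsing the composition directly to $(\overline{\partial}^{0})^{\ast}\overline{\partial}^{0}$ without touching coordinates. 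This is cleaner and more conceptual; the paper's approach has the advantage of being self-contained and of exhibiting the explicit local form that is reused elsewhere (e.g.\ in the arguments surrounding (\ref{3-a})--(\ref{3-b})). Your closing paragraph, offering the local computation as an alternative, is in fact exactly the paper's route.
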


\begin{proof}
\noindent We will only prove the first equality. Choosing c.g.c. at $p$ and
noting $\nabla g_{i\overline{j}}$ $=$ $0$ we have, at $p$ (via Proposition $%
\ref{8}\,i)$ for $q=1$) 
\begin{align*}
I^{-1}\partial ^{-1}\left( I^{-1}\partial ^{-1}\right) ^{\ast }(fs)&
=-I^{-1}\partial ^{-1}\mathcal{S}\overline{I}^{\,-0}\overline{\partial }%
^{\,-0}(fs)=-I^{-1}\partial ^{-1}\mathcal{S}\big(%
\mbox{$\underset{i,j}{\sum
}$}f_{,\,\overline{j}}\,s\,g^{i\overline{j}}\otimes 
\mbox{$(
\frac{\partial }{\partial z_{i}}) $}\big) \\
& =-\mbox{$\underset{i}{\sum }$}(f_{,\overline{i}i}s+f_{,\overline{i}}s_{i})
\end{align*}%
\noindent while by $(\ref{partial q star})$ and $(\ref{Delta q})$, at $p$%
\begin{align*}
\Delta _{0}(fs)& =-\ast \partial ^{0}\ast \overline{\partial }^{0}(fs) \\
& =-\ast \big(\mbox{$\underset{j}{\sum }$}(f_{,\,\overline{j}j}s+f_{,\,%
\overline{j}}s_{j})\ (i^{n})\det (g)\,g^{j\overline{j}}(\underset{\alpha }{%
\mbox{$\bigwedge$}}dz^{\alpha }\wedge d\overline{z}^{\overline{\alpha }})%
\big)=-\mbox{$\underset{i}{\sum }$}(f_{,\overline{i}i}s+f_{,\overline{i}%
}s_{i}).\hspace*{40pt}
\end{align*}
\end{proof}

We remark that the annihilation and creation operators (\ref{ACs}) map into 
\textit{symmetric tensors (}for each $k);$ the image is $(-\mathcal{S}%
\overline{I}^{-(k-1)}\overline{\partial }^{-(k-1)})...(-\mathcal{S}\overline{%
I}^{0}\overline{\partial }^{0})s^{0}$ by Proposition \ref{8} $i).$ A
surprising point below is that the preceding insertion of $\mathcal{S}$ for
the composite turns out to be superfluous (not to be confused with the need
of the individually inserted $\mathcal{S}$ in Proposition \ref{8}).

\begin{lemma}
\label{lemmaA} For $k\in \mathbb{N}$ let $0\neq \eta ^{0}\in \Omega
^{0,0}\left( N,L\right) .$\newline
%\begin{enumerate}
%\item \lbrack i)]

$i)$ Define 
\begin{equation*}
\eta ^{-k}:=(-1)^{k}\big(\overline{I}^{-(k-1)}\overline{\partial }^{-(k-1)}%
\big)\big(\overline{I}^{-(k-2)}\overline{\partial }^{-(k-2)}\big)...\big(%
\overline{I}^{0}\overline{\partial }^{0}\big)\eta ^{0}.\hspace*{100pt}%
\vspace*{-8pt}
\end{equation*}%
\begin{align*}
\mbox{Then}\hspace*{8pt}\eta ^{-k}& =\big(-\mathcal{S}\overline{I}^{-(k-1)}%
\overline{\partial }^{-(k-1)}\big)...\big(-\mathcal{S}\overline{I}^{0}%
\overline{\partial }^{0}\big)\eta ^{0} \\
& =\left( I^{-k}\partial ^{-k}\right) ^{\ast }\left( I^{-(k-1)}\partial
^{-(k-1)}\right) ^{\ast }...\left( I^{-1}\partial ^{-1}\right) ^{\ast }\eta
^{0}\in \Omega ^{0,0}(M,L\otimes (\odot ^{k}T)).\hspace*{100pt}
\end{align*}

$ii)$ Similarly, define 
\begin{equation*}
\eta ^{k}:=(-1)^{k}(I^{k-1}\partial ^{k-1})(I^{k-2}\partial
^{k-2})...(I^{0}\partial ^{0})\eta ^{0}.\hspace*{100pt}\vspace*{-8pt}
\end{equation*}%
\begin{eqnarray*}
\mbox{Then}\hspace*{8pt}\eta ^{k} &=&(-\mathcal{S}I^{k-1}\partial
^{k-1})...(-\mathcal{S}I^{0}\partial ^{0})\eta ^{0} \\
&=&(\overline{I}^{k}\overline{\partial }^{k})^{\ast }(\overline{I}^{k-1}%
\overline{\partial }^{k-1})^{\ast }...(\overline{I}^{1}\overline{\partial }%
^{1})^{\ast }\eta ^{0}\text{ \ \ \ \ \ \ \ \ \ \ \ \ \ }\in \Omega
^{0,0}(M,L\otimes (\odot ^{k}T^{\ast })).\hspace*{100pt}
\end{eqnarray*}%
%
%
%
%
%
%
%
%
%
%
%
%
%
%
%
%
%
%
%
%
%
%
%
%
%
%
%
%
%
%
%
%
%
%
%
%
%
%
%
%
%
%
%
%
%
%
%
%
%
%
%
%
%
%
%
%
%
%
%
%
%
%
%
%
%
%
%
%
%
%
%
%
%
%
%
%
%
%
%
%
%
%
%
%
%
%
%
%
%
%
%
%
%
%
%
%
%
%
%
%
%$ii)$ Similarly define 
%\begin{equation*}
%\eta ^{k}:=(-1)^{k}(I^{k-1}\partial ^{k-1})(I^{k-2}\partial
%^{k-2})...(I^{0}\partial ^{0})\eta ^{0}.\hspace*{100pt}\vspace*{-8pt}
%\end{equation*}%
%\begin{align*}
%\mbox{Then}\hspace*{8pt}\eta ^{k}& =(-\mathcal{S}I^{k-1}\partial ^{k-1})...(-%
%\mathcal{S}I^{0}\partial ^{0})\eta ^{0} \\
%& =(\overline{I}^{k}\overline{\partial }^{k})^{\ast }(\overline{I}^{k-1}%
%\overline{\partial }^{k-1})^{\ast }...(\overline{I}^{1}\overline{\partial }%
%^{1})^{\ast }\eta ^{0}\in \Omega ^{0,0}(M,L\otimes (\odot ^{k}T^{\ast })).%
%\hspace*{100pt}
%\end{align*}%
In particular, $\eta ^{-k}$ and $\eta ^{k}$ are symmetric tensors.%
%
%
%
%
%
%
%
%
%
%\end{enumerate}
\end{lemma}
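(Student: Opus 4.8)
The lemma asserts two things for each $k \in \mathbb{N}$: first, that the iterated composite $\bigl(\overline{I}^{-(k-1)}\overline{\partial}^{-(k-1)}\bigr)\cdots\bigl(\overline{I}^{0}\overline{\partial}^{0}\bigr)\eta^0$ already lands in the \emph{symmetric} tensors $\Omega^{0,0}(M, L\otimes(\odot^k T))$ without inserting $\mathcal{S}$ at each stage, and that it coincides (up to the sign $(-1)^k$) with the composite $\bigl(I^{-k}\partial^{-k}\bigr)^{\ast}\cdots\bigl(I^{-1}\partial^{-1}\bigr)^{\ast}\eta^0$; and symmetrically for part $ii)$.

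**The plan.** I would argue by induction on $k$. For $k=1$ there is nothing to insert and nothing to reconcile: $\eta^{-1} = -\overline{I}^0\overline{\partial}^0\eta^0$, which is $\mathcal{S}\overline{I}^0\overline{\partial}^0\eta^0$ trivially (a $1$-tensor is symmetric), and by Proposition \ref{8} $ii)$ with $q=0$ we have $(I^{-1}\partial^{-1})^{\ast} = -\mathcal{S}\overline{I}^{\,-0}\overline{\partial}^{\,-0}$. Assume the claim for $k-1$, so $\eta^{-(k-1)} = (-1)^{k-1}\bigl(\overline{I}^{-(k-2)}\overline{\partial}^{-(k-2)}\bigr)\cdots\bigl(\overline{I}^0\overline{\partial}^0\bigr)\eta^0$ is already symmetric and equals $\bigl(I^{-(k-1)}\partial^{-(k-1)}\bigr)^{\ast}\cdots\bigl(I^{-1}\partial^{-1}\bigr)^{\ast}\eta^0$. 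Applying $-\overline{I}^{-(k-1)}\overline{\partial}^{-(k-1)}$ and invoking Proposition \ref{8} $ii)$ (with $q=k-1$), which says $-\mathcal{S}\overline{I}^{-(k-1)}\overline{\partial}^{-(k-1)} = \bigl(I^{-k}\partial^{-k}\bigr)^{\ast}$ on symmetric $(k-1)$-tensors, gives the identification with the adjoint composite \emph{provided} I already know $\eta^{-k} = -\mathcal{S}\overline{I}^{-(k-1)}\overline{\partial}^{-(k-1)}\eta^{-(k-1)}$, i.e.\ that the $\mathcal{S}$ can be dropped. So the whole content is the ``superfluousness of $\mathcal{S}$'' and the symmetry claim; once that is in hand, the adjoint identification is just a bookkeeping chain through Proposition \ref{8}.

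**The main obstacle: dropping $\mathcal{S}$.** The heart of the matter is a direct local computation. Writing $\eta^{-(k-1)} = g^A\, s\,\bigl(\otimes_{j=1}^{k-1}\tfrac{\partial}{\partial z^{\alpha_j}}\bigr)$ with $g^A$ symmetric in $A = \alpha_1\cdots\alpha_{k-1}$ (induction hypothesis), I would compute $\overline{I}^{-(k-1)}\overline{\partial}^{-(k-1)}$ explicitly in c.g.c.\ at $p$, using $(\ref{2-1-4})$ and $(\ref{2-2-4})$: the operator $\overline{\partial}^{-(k-1)}$ produces $d\overline{z}^{\overline{j}}\otimes \nabla_{\overline{j}}(\cdots)$ and then $\overline{I}^{-(k-1)}$ contracts via $g^{i\overline{j}}$ and tensors on a fresh $\tfrac{\partial}{\partial z^i}$ in the \emph{last} slot. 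The resulting $k$-tensor has component (at $p$) roughly $\bigl(g^A s\bigr)_{,\overline{\beta}_k}$ in the first $k-1$ slots $A$ and a new index $\beta_k$ from the contracted-and-reinserted vector. A priori this is symmetric only in the first $k-1$ indices. I must show it is fully symmetric, i.e.\ symmetric in $\beta_k$ against each $\alpha_j$. This is where the commutation of covariant derivatives and the holomorphicity of $s$ (Lemma \ref{Lemma 2.1}, giving $s_{\overline{j}} = 0$, $s_{i\overline{j}} = -Bg_{i\overline{j}}s$) must be exploited: the difference between ``new index in last slot'' and ``new index in slot $j$'' is a commutator $[\nabla_{\overline{\beta}_k}, \nabla_{\alpha_j}]$ acting on $g^A s$, which on a flat Abelian variety with $\nabla g \equiv 0$ and $s$ holomorphic reduces to a curvature-of-$L$ term proportional to $g_{\alpha_j\overline{\beta}_k} s$ times lower tensors — and such a term is itself symmetric, so it does not obstruct symmetry of the leading piece. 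The cleanest route is probably: show inductively that $\eta^{-k}$ equals $(-1)^k$ times the section whose components are the fully symmetrized iterated $\overline{\partial}$-derivatives, by directly checking that $\overline{I}^{-(k-1)}\overline{\partial}^{-(k-1)}$ applied to an \emph{already symmetric} tensor has symmetric output — a short index computation using $\nabla g \equiv 0$ and the symmetry of second covariant derivatives of functions (no curvature of $T$ enters since $M$ is flat). Once symmetry is established, $\mathcal{S}$ acts as the identity, the insertion is vacuous, and the middle equality follows; the last equality then follows line-by-line from Proposition \ref{8} $ii)$, exactly as in the $k=1$ base case but telescoped. Part $ii)$ is proved the same way with $T^{\ast}$ in place of $T$, Proposition \ref{8} $iv)$ in place of $ii)$, and $\mathcal{S}I^{q}\partial^{q}$ in place of $\mathcal{S}\overline{I}^{-q}\overline{\partial}^{-q}$; I would simply remark that the argument is verbatim.
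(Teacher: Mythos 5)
Your overall strategy---induction on $k$, a local computation in c.g.c.\ at $p$, and a telescoped use of Proposition \ref{8} to convert the composite into the chain of adjoints---is the same as the paper's. However, the step you yourself identify as ``the whole content'' is justified by a claim that is false: it is \emph{not} true that $\overline{I}^{\,-(k-1)}\overline{\partial}^{\,-(k-1)}$ carries an arbitrary symmetric tensor to a symmetric tensor. In c.g.c.\ at $p$, applying this operator to $h^{\alpha_{1}\cdots\alpha_{k-1}}s\otimes(\otimes_{j}\frac{\partial}{\partial z^{\alpha_{j}}})$ produces the components $h^{\alpha_{1}\cdots\alpha_{k-1}}_{\ \ ,\overline{i}}\,s$ with a new raised index $i$; taking for instance $k-1=1$, $h^{1}=\overline{z}^{2}$, $h^{2}=0$ near $p$ gives a (trivially) symmetric input whose image has one off-diagonal component equal to $s\neq 0$ and the transposed component equal to $0$. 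So ``symmetric in $\Rightarrow$ symmetric out'' cannot serve as the inductive step, and the induction hypothesis ``$\eta^{-(k-1)}$ is symmetric'' is too weak to close the induction.

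What actually makes the argument work---and what the paper's proof tracks---is the explicit form of $\eta^{-(k-1)}$: unwinding the composite, its components are (up to sign and raising of indices) the iterated anti-holomorphic covariant derivatives $(fs)_{,\overline{j}_{1}\cdots\overline{j}_{k-1}}=f_{,\overline{j}_{1}\cdots\overline{j}_{k-1}}s$, the derivatives falling entirely on $f$ because $s_{\overline{j}}=0$ by Lemma \ref{Lemma 2.1}. All $k$ indices of $\eta^{-k}$ are therefore derivative indices of the \emph{same} anti-holomorphic type, and these commute by (\ref{4-7a}); no curvature of $L$ enters at all. Your identification of the obstruction as the mixed commutator $[\nabla_{\overline{\beta}_{k}},\nabla_{\alpha_{j}}]$, producing a term proportional to $g_{\alpha_{j}\overline{\beta}_{k}}s$, is therefore off target: in part $i)$ the composite never takes a holomorphic derivative, so the only commutators in play are $[\nabla_{\overline{\beta}},\nabla_{\overline{\alpha}}]=0$ (and in any case a nonzero discrepancy that is ``itself symmetric'' would not by itself restore symmetry of the sum). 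To repair your proof, strengthen the induction hypothesis to ``the components of $\eta^{-(k-1)}$ are the raised iterated $\overline{\partial}$-derivatives of $fs$,'' check that one more application of $\overline{I}^{\,-(k-1)}\overline{\partial}^{\,-(k-1)}$ preserves this form (using $\nabla g\equiv 0$ so the connection on the tensor slots is trivial in the flat coordinates), and deduce full symmetry from (\ref{4-7a}); the adjoint identification then follows from Proposition \ref{8} exactly as you describe, and part $ii)$ is handled the same way, the symmetry of $(fs)_{,j_{1}\cdots j_{k}}$ now resting on the vanishing of the $(2,0)$-part of the curvature of the Chern connection.
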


\begin{proof}
Let $\eta ^{0}=fs$ where $s$ is a (local) holomorphic section of $L$. Choose
c.g.c. at $p.$ For $k=1$

\begin{equation*}
-\overline{I}^{-0}\overline{\partial }^{-0}(fs)=-%
\mbox{$\underset{i,j}{\sum
}$}(fs)_{,\,\overline{j}}\,g^{i\overline{j}}(\frac{\partial }{\partial z^{i}}%
)\in \Omega ^{0,0}(M,L\otimes T)
\end{equation*}%
\newline
\noindent so $\eta ^{-1}=-\overline{I}^{-0}\overline{\partial }^{-0}\eta
^{0}=-\mathcal{S}\overline{I}^{-0}\overline{\partial }^{-0}\eta ^{0}\overset{%
Prop.\ref{8}\,i)}{=}(I^{-1}\partial ^{-1})^{\ast }\eta ^{0}\in \Omega
^{0,0}(M,L\otimes T).$

For $k=2$, since $-\eta ^{-1}=\overline{I}^{-0}\overline{\partial }%
^{-0}(fs)\in \Omega ^{0,0}(M,L\otimes T)$ (and $\nabla g_{i\overline{j}}$ $%
\equiv $ $0)$, 
\begin{align*}
\overline{I}^{\,-1}\overline{\partial }^{\,-1}\overline{I}^{\,-0}\overline{%
\partial }^{\,-0}(fs)& =\overline{I}^{\,-1}\overline{\partial }^{\,-1}%
\mathcal{S}\overline{I}^{\,-0}\overline{\partial }^{\,-0}(fs) \\
& =\mbox{$\underset{i,j,k,l}{\sum }$}(fs)_{,\,\overline{j}\,\overline{l}%
}\,g^{i\overline{j}}g^{k\overline{l}}\otimes (\frac{\partial }{\partial z^{k}%
}\otimes \frac{\partial }{\partial z^{i}})\overset{\text{at }p}{=}%
\mbox{$\underset{i,j}{\sum
}$}(fs)_{,\,\overline{j}\,\overline{i}}\otimes (\frac{\partial }{\partial
z^{i}}\otimes \frac{\partial }{\partial z^{j}})
\end{align*}%
\noindent is symmetric by $(fs)_{,\,\overline{j}\,\overline{i}}=(f)_{,\,%
\overline{j}\,\overline{i}}\ s=(f)_{,\,\overline{i}\,\overline{j}}\
s=(fs)_{,\,\overline{i}\,\overline{j}}$ (\ref{4-7}). One has 
\begin{align}
\eta ^{-2}=\overline{I}^{\,-1}\overline{\partial }^{\,-1}\mathcal{S}%
\overline{I}^{\,-0}\overline{\partial }^{\,-0}\eta ^{0}& =\mathcal{S}%
\overline{I}^{\,-1}\overline{\partial }^{\,-1}\mathcal{S}\overline{I}^{-0}%
\overline{\partial }^{-0}\eta ^{0}  \label{eta-2} \\
& \hspace*{-16pt}\overset{Prop.\ref{8}\,i)}{=}(I^{-2}\partial ^{-2})^{\ast
}(I^{-1}\partial ^{-1})^{\ast }\eta ^{0}\in \Omega ^{0,0}(M,L\otimes (\odot
^{2}T)).\hspace*{60pt}  \notag
\end{align}
\noindent Inductively, one can obtain $i)$ of the lemma for any $k\in 
\mathbb{N}$.%
%\begin{align*}
%l^{-k}&=(-1)^{k}(\overline{I}^{-(k-1)}\overline{\partial}^{-(k-1)})(\overline{I}^{-(k-2)}\overline{\partial}^{-(k-2)})......(\overline{I}^{-0}\overline{\partial}^{-0})(fs)\\
%&=(-1)^{k}(\mathcal{S}\overline{I}^{-(k-1)}\overline{\partial}^{-(k-1)})(\mathcal{S}\overline{I}^{-(k-2)}\overline{\partial}^{-(k-2)})......(\mathcal{S}\overline{I}^{-0}\overline{\partial}^{-0})(fs)\\
%&\overset{\ref{8}\, i)}{=}(I^{-k}\partial^{-k})^{\ast}(I^{-(k-1)}\partial^{-(k-1)})^{\ast}......(I^{-1}\partial^{-1})^{\ast}(fs)\in \Omega^{0,0}(L \odot^{k} T)\hspace*{60pt}
%\end{align*}

The proof for $ii)$ of the lemma is similar. Replacing $\frac{\partial }{%
\partial z^{i}}$ by $dz^{i}$ and Proposition \ref{8} $i)$ by Proposition \ref%
{8} $iii),$ one sees that $\eta ^{1}=(\overline{I}^{1}\overline{\partial }%
^{1})^{\ast }\eta ^{0}$ $\in $ $\Omega ^{0,0}(M,L\otimes T^{\ast }).$ For $%
k=2,$ $\eta ^{2}:=(I^{1}\partial ^{1})(I^{0}\partial ^{0})\eta ^{0}$ is seen
to be%
\begin{equation*}
(fs)_{,jk}dz^{k}\otimes dz^{j}
\end{equation*}

\noindent and is symmetric by (\ref{4-7}). Parallel to the above (\ref{eta-2}%
) with Proposition \ref{8} $iii)$ replacing Proposition \ref{8} $i),$ one
reaches%
\begin{equation*}
\eta ^{2}=(I^{1}\partial ^{1})\mathcal{S}(I^{0}\partial ^{0})\eta ^{0}=(%
\overline{I}^{2}\overline{\partial }^{2})^{\ast }(\overline{I}^{1}\overline{%
\partial }^{1})^{\ast }\eta ^{0}\in \Omega ^{0,0}(M,L\otimes (\odot
^{2}T^{\ast })).
\end{equation*}

\noindent One similarly obtains $ii)$ of the lemma for any $k\in \mathbb{N}.$
\end{proof}

The following is another result on symmetric tensors, which will be needed
in the proof of Theorem \ref{TeoA} $iii).$

\begin{lemma}
\label{L-2-13} Let $N$ be an Abelian variety $M$ or $\mathbb{P}^{n}.$ Let $%
t^{-q}$ $\in $ $H^{0}(N,L\otimes (\odot ^{q}T))$ and $t^{-k}:=\left(
I^{-k-1}\partial ^{-k-1}\right) $ ... $\left( I^{-q}\partial ^{-q}\right)
t^{-q}$ ($0\leq k<q)$. Then it holds that for $k=q-1,$ $q-2,$ $\cdot \cdot
\cdot ,\ 0,$ both $i)$ $t^{-k}$ and $ii)$ $\overline{I}^{\,-k}\overline{%
\partial }^{-k}t^{-k}$ are symmetric tensors.
\end{lemma}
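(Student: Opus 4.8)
The plan is to argue by downward induction on $k$, handling assertion $i)$ first (it is the easy, self-contained part) and then $ii)$. For $i)$: since $t^{-q}\in H^{0}(N,L\otimes(\odot^{q}T))$ is a symmetric tensor and, by Lemma~\ref{-q-q}~$i)$, each $I^{-j}\partial^{-j}$ carries symmetric tensors to symmetric tensors, a direct downward induction using the identity $t^{-k}=I^{-(k+1)}\partial^{-(k+1)}t^{-(k+1)}$ (the definition of $A_{k}^{\#}$ in (\ref{ACs}) unwound) gives $t^{-k}\in\Omega^{0,0}(N,L\otimes(\odot^{k}T))$ for all $0\le k<q$. This will be used freely below.

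For $ii)$ I would run a second downward induction on $k$ from $k=q$ to $k=0$. Base case $k=q$: being holomorphic, $t^{-q}$ lies in the kernel of the $(0,1)$-part of the Chern connection, i.e. $\overline{\partial}^{-q}t^{-q}=0$ (apply Lemma~\ref{Lemma 2.1}~$i)$ componentwise in a local holomorphic frame), so $\overline{I}^{-q}\overline{\partial}^{-q}t^{-q}=0$, trivially symmetric. Inductive step: fix $k$, $0\le k\le q-1$, and suppose $\overline{I}^{-(k+1)}\overline{\partial}^{-(k+1)}t^{-(k+1)}$ is symmetric. Apply the third-order Bochner--Kodaira identity of Proposition~\ref{9}~$iii)$, with $q$ replaced by $k+1$, to the symmetric tensor $t^{-(k+1)}$ (on $\mathbb{P}^{n}$ use instead the analogue obtained by combining Propositions~\ref{Prop 4.4} and~\ref{Prop 4.5}); since that identity is valid on symmetric tensors and $I^{-(k+1)}\partial^{-(k+1)}t^{-(k+1)}=t^{-k}$, it reads
\begin{equation*}
I^{-(k+2)}\partial^{-(k+2)}\big(\overline{I}^{-(k+1)}\overline{\partial}^{-(k+1)}t^{-(k+1)}\big)-\overline{I}^{-k}\overline{\partial}^{-k}t^{-k}=B\,t^{-(k+1)}
\end{equation*}
(with $B$ replaced by the relevant curvature-dependent scalar in the $\mathbb{P}^{n}$ case). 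Hence $\overline{I}^{-k}\overline{\partial}^{-k}t^{-k}=I^{-(k+2)}\partial^{-(k+2)}\big(\overline{I}^{-(k+1)}\overline{\partial}^{-(k+1)}t^{-(k+1)}\big)-B\,t^{-(k+1)}$: the first term on the right is symmetric because $\overline{I}^{-(k+1)}\overline{\partial}^{-(k+1)}t^{-(k+1)}$ is a symmetric $(k+2)$-tensor by the induction hypothesis and $I^{-(k+2)}\partial^{-(k+2)}$ preserves symmetry (Lemma~\ref{-q-q}~$i)$), and the second is symmetric by $i)$. So $\overline{I}^{-k}\overline{\partial}^{-k}t^{-k}$ is symmetric, completing the step. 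Iterating this displayed relation starting from $\overline{I}^{-q}\overline{\partial}^{-q}t^{-q}=0$, together with $I^{-(k+2)}\partial^{-(k+2)}t^{-(k+2)}=t^{-(k+1)}$, even yields the explicit formula $\overline{I}^{-k}\overline{\partial}^{-k}t^{-k}=-(q-k)B\,t^{-(k+1)}$ on $M$ (and a scalar multiple of $t^{-(k+1)}$ on $\mathbb{P}^{n}$).

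The substance lies not in any single step but in arranging them. The Bochner--Kodaira identity of Proposition~\ref{9}~$iii)$ (resp. its $\mathbb{P}^{n}$ counterpart) may be applied only to a tensor already known to be symmetric, which is exactly why $i)$ is disposed of first and why the induction on $ii)$ must proceed downward; and one must check that the $\mathbb{P}^{n}$ identity, though it carries extra curvature terms, still reduces on symmetric tensors to a scalar multiple of the identity operator, so that the same bootstrap runs verbatim. Beyond this bookkeeping I foresee no real difficulty, since everything rests on Lemma~\ref{-q-q} together with the third-order identities already established in Section~\ref{Sec2} (and their $\mathbb{P}^{n}$ analogues in Section~\ref{Sec4}).
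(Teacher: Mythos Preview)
Your proof is correct and follows essentially the same approach as the paper: downward induction on $k$, using Lemma~\ref{-q-q}~$i)$ for part $i)$ and the identity of Proposition~\ref{9}~$iii)$ (Proposition~\ref{Prop 4.4}~$iii)$ on $\mathbb{P}^{n}$) to reduce $\overline{I}^{-k}\overline{\partial}^{-k}t^{-k}$ to $I^{-(k+2)}\partial^{-(k+2)}(\overline{I}^{-(k+1)}\overline{\partial}^{-(k+1)}t^{-(k+1)})$ minus a scalar multiple of $t^{-(k+1)}$. One small correction: on $\mathbb{P}^{n}$ you only need Proposition~\ref{Prop 4.4}~$iii)$, not Proposition~\ref{Prop 4.5}; the latter concerns $\Delta^{-q}-\Delta_{-q}$ and is not used here.
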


\begin{proof}
For $i),$ that $t^{-k}$ is symmetric follows from Lemma \ref{-q-q} $i)$ and
the assumption that $t^{-q}$ is symmetic. For $ii),$ write (for the case $N$
is $M)$%
\begin{eqnarray}
\overline{I}^{\,-k}\overline{\partial }^{-k}t^{-k} &=&\overline{I}^{\,-k}%
\overline{\partial }^{-k}I^{-(k+1)}\partial ^{-(k+1)}t^{-(k+1)}  \label{3-s}
\\
&\overset{Prop.\ref{9}\text{ }iii)}{=}&(I^{-(k+2)}\partial ^{-(k+2)}%
\overline{I}^{\,-(k+1)}\overline{\partial }^{-(k+1)}-B)t^{-(k+1)}.  \notag
\end{eqnarray}

\noindent For $k=q-1,$ $\overline{I}^{\,-(q-1)}\overline{\partial }%
^{-(q-1)}t^{-(q-1)}$ equals $-Bt^{-q}$ by assumption $\overline{\partial }%
^{-q}t^{-q}=0$; it is symmetric. %By induction on $k$ in the decreasing
%order $\overline{I}^{\,-(k+1)}\overline{\partial }^{-(k+1)}t^{-(k+1)}$ is
%symmetric and by Lemma \ref{-q-q} $i)$ $I^{-(k+2)}\partial ^{-(k+2)}(%
%\overline{I}^{\,-(k+1)}\overline{\partial }^{-(k+1)}t^{-(k+1)})$ is
%symmetric. The (\ref{3-s}) gives $\overline{I}^{\,-k}\overline{\partial }%
%^{-k}t^{-k}$ is symmetric since $t^{-(k+1)}$ is symmetric as already
%mentioned. As an example to explain this induction process, we are going to
%show that $i)$ and $ii)$ hold for $k=q-2$ assuming they hold for $k=q-1$
%below: (\ref{3-s}) with $k=q-2$ gives\%
For $k=q-2,$ by (\ref{3-s}) we have 
\begin{equation}
\overline{I}^{\,-(q-2)}\overline{\partial }^{-(q-2)}t^{-(q-2)}=I^{-q}%
\partial ^{-q}(\overline{I}^{\,-(q-1)}\overline{\partial }%
^{-(q-1)}t^{-(q-1)})-Bt^{-(q-1)}.  \label{3-s-1}
\end{equation}%
\noindent That $i)$ and $ii)$ of the lemma hold for $k=q-1$ means that $%
t^{-(q-1)}$ and $\overline{I}^{\,-(q-1)}\overline{\partial }%
^{-(q-1)}t^{-(q-1)}$ are symmetric. Then, by Lemma \ref{-q-q} $i),$ $%
I^{-q}\partial ^{-q}(\overline{I}^{\,-(q-1)}\overline{\partial }%
^{-(q-1)}t^{-(q-1)})$ is symmetric. Altogether, the LHS of (\ref{3-s-1}) is
symmetric. By repeating this process one proves $ii)$ for $k$ $=$ $q-3,$ $%
\cdot \cdot \cdot ,$ \noindent $0$. 
%$\% i.e. $ii)$ holds for $k=q-2.$ As to $i)$ observe that $%
%t^{-(q-2)}$ $=$ $\left( I^{-(q-1)}\partial ^{-(q-1)}\right) t^{-(q-1)}$ from
%the definition$.$ Since $t^{-(q-1)}$ is symmetric by assumption, $t^{-(q-2)}$
%is symmetric by Lemma \ref{-q-q} $i).
%\%$ 
For $\mathbb{P}^{n}$ the above reasoning still works as long as the constant
\textquotedblleft $B$" in (\ref{3-s}) (from Proposition \ref{9} $iii))$ is
replaced by another constant via Proposition \ref{Prop 4.4} $iii).$
\end{proof}

\section{\textbf{Proofs of Theorem \protect\ref{TeoA}, Corollary \protect\ref%
{TeoB} and Theorem \protect\ref{T-main}\label{Sec3}}}

%\begin{theorem}[= \textbf{$i)$ of Theorem \ref{TeoA}}] \label{q-k}
%\noindent For $q\in \mathbb{N}$ let $0 \neq s^{0}\in \Omega ^{0,0}\left( L\right) $ be an eigensection of $\Delta _{0}$ with eigenvalue $qB$. For $k=1,...,q$ define $s^{-k}:=\left( I^{-k}\partial ^{-k}\right) ^{\ast }...\left( I^{-1}\partial^{-1}\right) ^{\ast }s^{0} \in \Omega^{0,0}(\mathbb{CP}^{n}, L \otimes (\odot^{k}T))  $.
%Then $s^{-q}\in \Omega ^{0.0}\left( L\otimes( \odot^{q} T)\right)$
%\noindent is nontrivial and holomorphic, and $s^{-k}$ is a nontrivial eigensection of $\Delta _{-k}$ with eigenvalue $(q-k)B$.
%\end{theorem}

\noindent \hspace*{12pt} In this section, we assume that the K\"{a}hler
manifold $N$ is an Abelian variety $M$.

\begin{proof}
(of \textbf{Theorem \ref{TeoA} $ii);$ }see below for proofs of Theorem \ref%
{TeoA}\textbf{\ }$iii)$ and $i)$)\hspace*{8pt} For $k=1$, by Lemma\ \ref%
{Lemma1} 
\begin{equation*}
\left( I^{-1}\partial ^{-1}\right) s^{-1}=\left( I^{-1}\partial ^{-1}\right)
\left( I^{-1}\partial ^{-1}\right) ^{\ast }s^{0}=\Delta _{0}s^{0}\overset{%
\text{assumption}}{=}qBs^{0}\neq 0.\hspace*{100pt}
\end{equation*}%
\noindent So $s^{-1}\neq 0$. For $q>1$ and $k=2$, $\left( I^{-2}\partial
^{-2}\right) s^{-2}\in \Omega ^{0,0}(M,L\otimes T)$, 
\begin{align}
\left( I^{-2}\partial ^{-2}\right) s^{-2}& =\left( I^{-2}\partial
^{-2}\right) \left( I^{-2}\partial ^{-2}\right) ^{\ast }\left(
I^{-1}\partial ^{-1}\right) ^{\ast }s^{0}\overset{Prop.\ref{8}\text{ }i)}{%
\underset{Lem.\ref{lemmaA}\text{ }i)}{=}}-\big(I^{-2}\partial ^{-2}\big)\big(%
\overline{I}^{\,-1}\overline{\partial }^{\,-1}\big)s^{-1}  \label{3-a} \\
& \hspace*{-20pt}\overset{Prop.\ref{9}\,iii)}{=}-\big(\big(\overline{I}%
^{\,-0}\overline{\partial }^{\,-0}\big)\big(I^{-1}\partial ^{-1}\big)+B\big)%
s^{-1}\in \Omega ^{0,0}(M,L\otimes T)  \notag
\end{align}%
\noindent which is automatically symmetric in $T$ and thus equals

\begin{align}
-\big(\mathcal{S}\overline{I}^{\,-0}\overline{\partial }^{\,-0}\big)\big(%
I^{-1}\partial ^{-1}\big)s^{-1}& -Bs^{-1}\overset{Prop.\ref{8}\,i)}{=}\big(%
I^{\,-1}\partial ^{\,-1}\big)^{\ast }\big(I^{-1}\partial ^{-1}\big)%
s^{-1}-Bs^{-1}\hspace*{64pt}  \label{3-b} \\
& \hspace*{-60pt}\overset{k=1}{=}\big(I^{\,-1}\partial ^{\,-1}\big)^{\ast
}(qBs^{0})-Bs^{-1}=(q-1)Bs^{-1}\neq 0.  \notag
\end{align}%
\noindent Hence $s^{-2}\neq 0$.\ We are going to repeat this process. For
higher $k$ it is less straightforward: First, $s^{-k}$ equals $(-1)^{k}(%
\overline{I}^{\,-(k-1)}\overline{\partial }^{\,-(k-1)})...(\overline{I}%
^{\,-0}\overline{\partial }^{\,-0})\,s^{0}$ and is symmetric by Lemma \ref%
{lemmaA} $i)$. So ($I^{-k}\partial ^{-k})s^{-k}$ is symmetric by Lemma \ref%
{-q-q} $i).$ This and $Bs^{-(k-1)}$ being symmetric enable one to proceed to
the analogue of (\ref{3-b}) for $k.$ One arrives at 
\begin{equation}
\left( I^{-k}\partial ^{-k}\right) s^{-k}=(q+1-k)Bs^{-(k-1)}  \label{3-0}
\end{equation}%
\noindent $\neq 0\ (1\leq k\leq q),$\ proving that $s^{-k}\neq 0$. In
particular $s^{-q}\neq 0$.\newline
\noindent \hspace*{12pt} For the holomorphicity of $s^{-q}$, with $k=q+1$ in
(\ref{BBB}) and Lemma \ref{-q-q} $i)$%
\begin{equation}
\Delta _{-\left( k-1\right) }I^{-k}\partial ^{-k}-I^{-k}\partial ^{-k}\Delta
_{-k}=BI^{-k}\partial ^{-k}:\Omega ^{0,0}(M,L\otimes (\odot
^{k}T))\rightarrow \Omega ^{0,0}(M,L\otimes (\odot ^{k-1}T)).  \label{3-1}
\end{equation}%
\noindent Notice that $\Delta _{-k}$ preserves $\Omega ^{0,0}(M,L\otimes
(\odot ^{k}T))$ by Lemma $\ref{Lemma 2.2}\,iv)$. We take the adjoints of (%
\ref{3-1}) to get

\begin{equation}
\left( I^{-k}\partial ^{-k}\right) ^{\ast }\Delta _{-(k-1)}-\Delta
_{-k}\left( I^{-k}\partial ^{-k}\right) ^{\ast }=B\left( I^{-k}\partial
^{-k}\right) ^{\ast }\hspace*{120pt}  \label{STAR}
\end{equation}%
mapping $\Omega ^{0,0}(M,L\otimes (\odot ^{k-1}T))$ to $\Omega
^{0,0}(M,L\otimes (\odot ^{k}T))$ and enabling us to go through the
following recursive process.\newline
\noindent \hspace*{12pt} By using $(\ref{STAR})$ recursively, one has%
\begin{align}
\Delta _{-q}\text{ }s^{-q}& {\hspace*{8pt}=\hspace*{8pt}}\Delta _{-q}\left(
I^{-q}\partial ^{-q}\right) ^{\ast }s^{-\left( q-1\right) }  \notag
\label{holomor} \\
& \overset{(\ref{STAR})}{=}\left( I^{-q}\partial ^{-q}\right) ^{\ast }\Delta
_{-(q-1)}\text{ }s^{-\left( q-1\right) }-B\left( I^{-q}\partial ^{-q}\right)
^{\ast }s^{-\left( q-1\right) }=\left( I^{-q}\partial ^{-q}\right) ^{\ast
}\Delta _{-(q-1)}s^{-\left( q-1\right) }-B\text{ }s^{-q}  \notag \\
& {\hspace*{8pt}=\hspace*{8pt}}\left( I^{-q}\partial ^{-q}\right) ^{\ast
}\Delta _{-(q-1)}\left( I^{-(q-1)}\partial ^{-(q-1)}\right) ^{\ast
}s^{-\left( q-2\right) }-Bs^{-q} \\
& \overset{(\ref{STAR})}{=}\left( I^{-q}\partial ^{-q}\right) ^{\ast }\left(
I^{-(q-1)}\partial ^{-(q-1)}\right) ^{\ast }\Delta _{-(q-2)}s^{-\left(
q-2\right) }-Bs^{-q}-Bs^{-q}  \notag \\
& \hspace*{10pt}\vdots  \notag \\
& \overset{(\ref{STAR})}{=}\left( I^{-q}\partial ^{-q}\right) ^{\ast }\left(
I^{-\left( q-1\right) }\partial ^{-\left( q-1\right) }\right) ^{\ast
}...\left( I^{-1}\partial ^{-1}\right) ^{\ast }\Delta _{0}\text{ }%
s^{0}-qBs^{-q}  \notag \\
& {\hspace*{8pt}=\hspace*{8pt}}qBs^{-q}-qBs^{-q}=0  \notag
\end{align}

\noindent giving that $s^{-q}$ is holomorphic since $\Delta _{-q}=(\overline{%
\partial }^{\,-q})^{\ast }\overline{\partial }^{\,-q}$.\newline
\noindent \hspace*{12pt} The proof for $s^{-k}$ to be an eigensection with
eigenvalue $(q-k)B$ is similar to $(\ref{holomor})$. By using $(\ref{STAR})$
recursively, for $k=1,2,...q$ we have%
\begin{eqnarray}
\Delta _{-k}\text{ }s^{-k} &=&\Delta _{-k}\left( I^{-k}\partial ^{-k}\right)
^{\ast }s^{-\left( k-1\right) }  \label{3-4} \\
&=&\big(I^{-k}\partial ^{-k}\big)^{\ast }\Delta _{-(k-1)}\text{ }s^{-\left(
k-1\right) }-B\big(I^{-k}\partial ^{-k}\big)^{\ast }\text{ }s^{-\left(
k-1\right) }  \notag \\
&=&\big(I^{-k}\partial ^{-k}\big)^{\ast }...\big(I^{-1}\partial ^{-1}\big)%
^{\ast }\Delta _{0}\text{ }s^{0}-kBs^{-k}=(q-k)Bs^{-k}.\hspace*{40pt}  \notag
\end{eqnarray}
\end{proof}

\noindent \hspace*{12pt} As an application of the reasoning in the proof of
Theorem \ref{TeoA} $ii)$ above$,$ we have the following three results. For
the next theorem, define $s^{k}:=(\overline{I}^{k}\overline{\partial }%
^{k})^{\ast }...(\overline{I}^{1}\overline{\partial }^{1})^{\ast }s^{0}\in
\Omega ^{0,0}(M,L\otimes (\odot ^{k}T^{\ast }))$ for $k\in \mathbb{N},$
where $s^{0}$ $\in \Omega ^{0,0}\left( M,L\right) $ ($(\overline{I}^{k}%
\overline{\partial }^{k})^{\ast }...(\overline{I}^{1}\overline{\partial }%
^{1})^{\ast }$ is a \textquotedblleft creation operator") $.$ If we take the
adjoints of (\ref{-BBB}) with $k=q+1$ then a similar reasoning as (\ref{3-4}%
) yields the following result (see Theorem \ref{T-5-1} for $\mathbb{P}^{n}).$

\begin{theorem}
\label{q+n+k} \noindent Suppose that $0\neq s^{0}$ is an eigensection of $%
\Delta ^{0}$ with eigenvalue $(q+n)B>0$ for some $q\in \{0\}\cup \mathbb{N}$%
. Let $s^{k}$ be as above. Then $s^{k}$ is a nontrivial eigensection of $%
\Delta ^{k}$ with eigenvalue $(q+n+k)B>0$. (For the existence of $s^{0}$ for
every $q\in \{0\}\cup \mathbb{N}$, see Remark \ref{R-3-16} $b).$)
\end{theorem}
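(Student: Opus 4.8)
The plan is to transpose the argument for Theorem \ref{TeoA} $ii)$--$iii)$ to the ``raising'' ($T^{\ast}$-valued) side, carrying out two inductions on $k$: one showing $s^{k}\neq 0$, and one computing the eigenvalue of $\Delta^{k}$ on $s^{k}$. Throughout one uses that, by Lemma \ref{lemmaA} $ii)$ applied to $\eta^{0}=s^{0}$, the section $s^{k}=(\overline{I}^{k}\overline{\partial}^{k})^{\ast}\cdots(\overline{I}^{1}\overline{\partial}^{1})^{\ast}s^{0}$ is a \emph{symmetric} tensor in $\Omega^{0,0}(M,L\otimes(\odot^{k}T^{\ast}))$ and satisfies $s^{k}=-(I^{k-1}\partial^{k-1})s^{k-1}$ for every $k\geq 1$; in particular $(I^{k-2}\partial^{k-2})s^{k-2}=-s^{k-1}$ for $k\geq 2$.

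For nontriviality I would prove by induction that $\overline{I}^{k}\overline{\partial}^{k}s^{k}=(q+n+k-1)B\,s^{k-1}$ for every $k\geq 1$. The base case $k=1$ reads $\overline{I}^{1}\overline{\partial}^{1}(\overline{I}^{1}\overline{\partial}^{1})^{\ast}s^{0}=\Delta^{0}s^{0}=(q+n)B\,s^{0}$ by Lemma \ref{Lemma1} and the hypothesis on $s^{0}$. For $k\geq 2$ I would write $\overline{I}^{k}\overline{\partial}^{k}s^{k}=-\,\overline{I}^{k}\overline{\partial}^{k}\,I^{k-1}\partial^{k-1}s^{k-1}$, apply Proposition \ref{9} $i)$ with $q=k-1$ to the symmetric $(k-1)$-tensor $s^{k-1}$ to replace $\overline{I}^{k}\overline{\partial}^{k}I^{k-1}\partial^{k-1}$ by $I^{k-2}\partial^{k-2}\overline{I}^{k-1}\overline{\partial}^{k-1}-B\cdot Id$, and then substitute the inductive value $\overline{I}^{k-1}\overline{\partial}^{k-1}s^{k-1}=(q+n+k-2)B\,s^{k-2}$ together with $I^{k-2}\partial^{k-2}s^{k-2}=-s^{k-1}$ to arrive at $(q+n+k-1)B\,s^{k-1}$. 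Since $(q+n)B>0$ and $s^{k-1}\neq 0$ inductively, this forces $s^{k}\neq 0$ (indeed $\langle s^{k},s^{k}\rangle=\langle s^{k-1},\overline{I}^{k}\overline{\partial}^{k}s^{k}\rangle=(q+n+k-1)B\,\langle s^{k-1},s^{k-1}\rangle>0$).

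For the eigenvalue I would restrict the identity of Proposition \ref{10.2PZ}, with $q$ replaced by $k-1$, to symmetric tensors --- legitimate because $\Delta^{k-1}$ and $\Delta^{k}$ act componentwise on the tensor slots (Lemma \ref{Lemma 2.2} $i)$), so they preserve the symmetric subbundle and restrict to self-adjoint operators there, while $\overline{I}^{k}\overline{\partial}^{k}$ carries symmetric tensors to symmetric tensors (Lemma \ref{-q-q} $ii)$). Taking $L^{2}$-adjoints of that restricted identity, and using Proposition \ref{8} $iii)$ to identify $(\overline{I}^{k}\overline{\partial}^{k})^{\ast}$ as precisely the creation operator in the definition of $s^{k}$, yields the commutation relation
\[
\Delta^{k}(\overline{I}^{k}\overline{\partial}^{k})^{\ast}=(\overline{I}^{k}\overline{\partial}^{k})^{\ast}\big(\Delta^{k-1}+B\big)\qquad\text{on }\ \Omega^{0,0}(M,L\otimes(\odot^{k-1}T^{\ast})).
\]
Iterating this $k$ times down to $\Delta^{0}$, exactly as in the computation (\ref{3-4}), gives $\Delta^{k}s^{k}=(\overline{I}^{k}\overline{\partial}^{k})^{\ast}\cdots(\overline{I}^{1}\overline{\partial}^{1})^{\ast}\Delta^{0}s^{0}+kB\,s^{k}=(q+n)B\,s^{k}+kB\,s^{k}=(q+n+k)B\,s^{k}$; combined with $s^{k}\neq 0$ from the previous step, this exhibits $s^{k}$ as a nontrivial eigensection of $\Delta^{k}$ with eigenvalue $(q+n+k)B$, which is $>0$ since $(q+n)B>0$ and $kB\geq 0$.

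The main obstacle is bookkeeping rather than conceptual: the operators $I^{q}\partial^{q}$ and $\overline{I}^{q}\overline{\partial}^{q}$ individually do \emph{not} preserve the symmetric parts --- only the combinations appearing in Propositions \ref{9} and \ref{10.2PZ} do --- so at each step one must invoke Lemma \ref{lemmaA} $ii)$ to legitimately drop the symmetrization $\mathcal{S}$ from the iterated compositions, keep careful track of which symmetric-tensor space each operator identity lives on, and take every adjoint on the correct space. (The hypothesis is non-vacuous: the existence of such an $s^{0}$ for every $q$ is guaranteed by Remark \ref{R-3-16} $b)$.)
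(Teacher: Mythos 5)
Your proposal is correct and follows essentially the same route as the paper, which proves nontriviality via the identity $\overline{I}^{k}\overline{\partial}^{k}s^{k}=(q+n+k-1)Bs^{k-1}$ (obtained from Propositions \ref{8} $iii)$, \ref{9} $i)$, Lemmas \ref{lemmaA} $ii)$ and \ref{-q-q} $ii)$, exactly as you do) and obtains the eigenvalue by taking adjoints of (\ref{-BBB}) and iterating as in (\ref{3-4}). Your explicit inner-product computation $\langle s^{k},s^{k}\rangle=(q+n+k-1)B\,\|s^{k-1}\|^{2}$ and your care about why the adjoint may be taken on the symmetric subbundle are welcome elaborations of details the paper leaves implicit, but they do not change the argument.
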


\begin{proof}
It remains to show $s^{k}\neq 0.$ Comparing (\ref{3-a}) and (\ref{3-b})
above and considering $(\overline{I}^{k}\overline{\partial }^{k})s^{k}$ in
place of $\left( I^{-k}\partial ^{-k}\right) s^{-k}$ we will be able to
reach $(\overline{I}^{k}\overline{\partial }^{k})s^{k}$ $=$ ($%
q+n+k-1)Bs^{k-1}$ similar to (\ref{3-0}) via the use of Propositions $\ref{8}
$ $iii), $ $\ref{9}\,i)$, Lemma $\ref{lemmaA}$ $ii)$ and Lemma \ref{-q-q} $%
ii)$\ replacing respectively that of Propositions $\ref{8}$ $i),$ $\ref{9}%
\,iii)$, Lemma $\ref{lemmaA}$ $i)$ and Lemma \ref{-q-q} $i)$\ in the
derivation of (\ref{3-0}). So inductively if $s^{k-1}\neq 0$ then $s^{k}\neq
0.$
\end{proof}

\begin{proposition}
\label{alleigen} The set $S_{1}$ of eigenvalues of $\Delta _{0}$ on $\Omega
^{0.0}(M,L)$ is contained in the set $S_{2}$ $=$ $\{qB$ \TEXTsymbol{\vert} $%
q\in \{0\}\cup \mathbb{N}\}.$ (In fact $S_{1}=S_{2}$ by Remark \ref{R-3-16} $%
a)$.)
\end{proposition}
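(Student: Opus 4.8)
The plan is to argue by contrapositive: I will take an eigenvalue $\lambda$ of $\Delta_0$ on $\Omega^{0,0}(M,L)$ with eigensection $0\neq s^0$, assume $\lambda\notin S_2$ (i.e. $\lambda\neq mB$ for every $m\in\{0\}\cup\mathbb{N}$), and derive a contradiction. The first step is to re-run the computations from the proof of Theorem \ref{TeoA} $ii)$ verbatim, observing that the integrality of $q$ was never used there: setting $s^{-k}:=A_k s^0=(I^{-k}\partial^{-k})^{\ast}\cdots(I^{-1}\partial^{-1})^{\ast}s^0\in\Omega^{0,0}(M,L\otimes(\odot^k T))$ (well defined and symmetric by Lemmas \ref{-q-q} $i)$ and \ref{lemmaA} $i)$, cf. (\ref{ACs})), the derivation of (\ref{3-0}) --- using Lemma \ref{Lemma1} for $k=1$, Propositions \ref{8} $i)$ and \ref{9} $iii)$ and Lemma \ref{lemmaA} $i)$ for $k=2$, and Lemma \ref{-q-q} $i)$ for the inductive step --- goes through with the sole change of replacing the hypothesis $\Delta_0 s^0=qBs^0$ by $\Delta_0 s^0=\lambda s^0$, giving
\[
\left(I^{-k}\partial^{-k}\right)s^{-k}=(\lambda-(k-1)B)\,s^{-(k-1)},\qquad k\ge 1 ,
\]
and, likewise, the adjoint identity (\ref{STAR}) (from Proposition \ref{10.1}) together with the recursion in (\ref{holomor})--(\ref{3-4}) gives $\Delta_{-k}s^{-k}=(\lambda-kB)\,s^{-k}$ for every $k\ge 0$ (base case $k=0$ being the hypothesis on $s^0$).

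Next I would exploit the assumption $\lambda\notin S_2$. Since then $\lambda\neq(k-1)B$ for all $k\ge 1$, the displayed recursion and $s^0\neq 0$ force, by induction on $k$, that $s^{-k}\neq 0$ for every $k\ge 0$; hence each $s^{-k}$ is a nonzero eigensection of the nonnegative operator $\Delta_{-k}=(\overline{\partial}^{\,-k})^{\ast}\overline{\partial}^{\,-k}$ with eigenvalue $\lambda-kB$, so $\lambda\ge kB$ for all $k\in\mathbb{N}$ --- impossible, as $B>0$. This contradiction shows $\lambda=mB$ for some $m\in\{0\}\cup\mathbb{N}$, i.e. $S_1\subseteq S_2$. (Equivalently and without contradiction: the positivity bound guarantees a least index $q\ge 1$ with $s^{-q}=0$, and then $0=(I^{-q}\partial^{-q})s^{-q}=(\lambda-(q-1)B)s^{-(q-1)}$ with $s^{-(q-1)}\neq 0$ forces $\lambda=(q-1)B\in S_2$; this version also pins down which ``energy level'' $\lambda$ belongs to.)

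I expect the only delicate point --- hence the main obstacle --- to be checking that the recursions (\ref{3-0}) and (\ref{3-4}) really are insensitive to whether the eigenvalue is an integral multiple of $B$. This amounts to re-reading the proof of Theorem \ref{TeoA} $ii)$ and verifying that every ingredient invoked there (Lemmas \ref{Lemma 2.2}, \ref{-q-q}, \ref{lemmaA}, \ref{Lemma1} and Propositions \ref{8}, \ref{9}, \ref{10.1}) is a statement about the operators alone, with the eigenvalue entering only as a scalar that propagates linearly through the identities; once that is granted, the nonnegativity of $\Delta_{-k}$ supplies the finiteness that closes the argument, and no further estimates or constructions are needed.
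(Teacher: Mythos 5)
Your proposal is correct and follows essentially the same route as the paper: the paper's own proof also forms $s_\lambda^{-k}=A_k s_\lambda$, rederives (\ref{3-0}) and (\ref{3-4}) with $qB$ replaced by $\lambda$ to get $(I^{-k}\partial^{-k})s_\lambda^{-k}=(\lambda-(k-1)B)s_\lambda^{-(k-1)}$ and $\Delta_{-k}s_\lambda^{-k}=(\lambda-kB)s_\lambda^{-k}$, and then contradicts the nonnegativity of $\Delta_{-k}$. The only cosmetic difference is that the paper fixes the unique $q$ with $\lambda\in(qB,(q+1)B)$ and stops at $k=q+1$, whereas you run the recursion for all $k$ and invoke unboundedness; both are valid and rest on the same identities.
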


\begin{proof}
\noindent Suppose that $s_{\lambda }\equiv s_{\lambda }^{0}$ is a nonzero
eigensection of $\Delta _{0}$ with the eigenvalue $\lambda \in \left(
qB,\left( q+1\right) B\right) $ for some $q\in \mathbb{N}\cup \{0\}.$ Let $%
s_{\lambda }^{-k}=\left( I^{-k}\partial ^{-k}\right) ^{\ast },\ ...\ ,\left(
I^{-1}\partial ^{-1}\right) ^{\ast }s_{\lambda }$ \ for $k\in \mathbb{N}$.
By a similar reasoning as in deducing (\ref{3-0}) we can reach (with $qB$
replaced by $\lambda $)%
\begin{equation}
\left( I^{-k}\partial ^{-k}\right) s_{\lambda }^{-k}=(\lambda
-(k-1)B)s_{\lambda }^{-(k-1)}  \label{3-2a}
\end{equation}%
\noindent implying inductively $s_{\lambda }^{-k}\neq 0$ for $1\leq k\leq
q+1 $. With $\Delta _{0}s_{\lambda }=\lambda s_{\lambda }$ we follow the
argument in deducing (\ref{3-4}) from (\ref{STAR}) with $s_{\lambda
},\lambda $ here replacing $s^{-0}$ (= $s^{0}$), $qB$ there respectively: 
\begin{equation}
\Delta _{-1}s_{\lambda }^{-1}=(\lambda -B)s_{\lambda }^{-1},\hspace*{10pt}%
\cdots ,\hspace*{10pt}\Delta _{-k}s_{\lambda }^{-k}=(\lambda -kB)s_{\lambda
}^{-k},\hspace*{20pt}k\in \mathbb{N}  \label{3-2b}
\end{equation}

\noindent and when $k=q+1,$ the eigensection $s_{\lambda }^{-k}$ of $\Delta
_{-k}$ is of negative eigenvalue, giving a contradiction since the operator $%
\Delta _{-k}$ is non-negative and $s_{\lambda }^{-(q+1)}\neq 0$ as already
mentioned.
\end{proof}

\begin{corollary}
\label{C-3-3} For $k\in \{0\}\cup \mathbb{N}$ the set $S_{1}^{k}$ of
eigenvalues of $\Delta ^{k}$ on $\Omega ^{0,0}(M,L\otimes (\odot ^{k}T^{\ast
}))$ in Theorem \ref{q+n+k} is contained in the set $S_{2}^{k}$ :$=$ $%
\{(q+n+k)B$ \TEXTsymbol{\vert} $q\in \{0\}\cup \mathbb{N}\}$. (In fact $%
S_{1}^{k}=S_{2}^{k}$ by Remark \ref{R-3-16} $b)$.)
\end{corollary}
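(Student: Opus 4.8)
The plan is to mimic the proof of Proposition \ref{alleigen}, which handles the case $k=0$, and to use Theorem \ref{q+n+k} to transfer the one inclusion we need in the opposite direction. First, the inclusion $S_1^k \subseteq S_2^k$: suppose $0 \neq \sigma \in \Omega^{0,0}(M, L\otimes(\odot^k T^\ast))$ is an eigensection of $\Delta^k$ with eigenvalue $\lambda > 0$. I would apply the "annihilation" operators $\overline{I}^{\,j}\overline{\partial}^{\,j}$ (adjoints of $I^{-j}\partial^{-j}$-type operators, or rather the lowering operators appearing in Proposition \ref{9} $i)$ and Proposition \ref{10.2PZ}) successively to $\sigma$, producing sections $\sigma^{(j)} \in \Omega^{0,0}(M, L\otimes(\odot^{k-j} T^\ast))$. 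Using Proposition \ref{10.2PZ} (the $T^\ast$-valued Bochner-Kodaira identity $\Delta^q \overline{I}^{(q+1)}\overline{\partial}^{(q+1)} - \overline{I}^{(q+1)}\overline{\partial}^{(q+1)}\Delta^{(q+1)} = -B\,\overline{I}^{(q+1)}\overline{\partial}^{(q+1)}$) together with Lemma \ref{-q-q} $ii)$ (symmetry preservation) and Proposition \ref{9} $i)$, I would derive a recursion analogous to (\ref{3-0}) and (\ref{3-2b}): namely that $\overline{I}^{\,(k-j+1)}\overline{\partial}^{\,(k-j+1)}\sigma^{(j-1)}$ is (up to a nonzero scalar depending on $\lambda$, $B$, $n$, $k$, $j$) equal to $\sigma^{(j)}$, and that $\Delta^{k-j}\sigma^{(j)} = (\lambda - (\text{something})B)\sigma^{(j)}$. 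If $\lambda$ were not in $S_2^k = \{(q+n+k)B \mid q \in \{0\}\cup\mathbb{N}\}$, then after finitely many steps I would reach a nonzero eigensection of $\Delta^{0}$ (on $\Omega^{0,0}(M,L)$) with eigenvalue $< nB$; but by Proposition \ref{alleigen} the only eigenvalues of $\Delta_0$ lie in $S_2 = \{qB\}$, and one still needs to know that the eigenvalues of $\Delta^0$ also lie in $nB + \{qB\}$ — this follows because $\Delta^0 - \Delta_0 = nB\cdot\mathrm{Id}$ by Proposition \ref{9} $ii)$, so $\mathrm{spec}(\Delta^0) = nB + \mathrm{spec}(\Delta_0) \subseteq \{(n+q)B\}$. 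Reaching an eigenvalue of $\Delta^0$ strictly between consecutive elements of $nB + \{qB\}$, or a negative one, is the desired contradiction.

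For the reverse inclusion $S_2^k \subseteq S_1^k$, I would invoke Theorem \ref{q+n+k} directly: for each $q \in \{0\}\cup\mathbb{N}$ it produces, from an eigensection $s^0$ of $\Delta^0$ with eigenvalue $(q+n)B$, a nontrivial eigensection $s^k$ of $\Delta^k$ with eigenvalue $(q+n+k)B$. The existence of such an $s^0$ for every $q$ is guaranteed by Remark \ref{R-3-16} $b)$ as cited in the statement of Theorem \ref{q+n+k} (equivalently, by the $q=0$ fact that $H^0(M,L) \neq 0$ combined with the creation-operator chain, or by the dimension count in Corollary \ref{TeoB}). Hence every element of $S_2^k$ is an eigenvalue of $\Delta^k$.

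The main obstacle I anticipate is the careful bookkeeping of the scalar coefficients in the recursion, to be sure none of them vanishes except precisely when we are at an eigenvalue in $S_2^k$. Concretely, applying $\overline{I}^{\,(m+1)}\overline{\partial}^{\,(m+1)}$ to an eigensection of $\Delta^{m+1}$ with eigenvalue $\lambda$ should yield (via Proposition \ref{9} $i)$ read as $\Delta^m = \Delta_m + nB$ together with the lowering identity) a section $\eta$ with $\overline{I}^{\,(m+1)}\overline{\partial}^{\,(m+1)}$ acting as a shift, and $\Delta^m \eta = (\lambda - B)\eta$ or similar; tracking which shift ($-B$ versus the $nB$ from Proposition \ref{9}) appears at each stage is exactly the computation done implicitly in deducing (\ref{3-2a})--(\ref{3-2b}), and the analogue here just needs the $T^\ast$-valued Propositions \ref{10.2PZ}, \ref{8} $iii),iv)$, \ref{9} $i)$ in place of their $T$-valued counterparts. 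Once the recursion is set up, non-vanishing of $\sigma^{(j)}$ for $0 \le j \le$ (the index at which the eigenvalue of $\Delta^0$ would drop below $nB$) follows inductively exactly as in Proposition \ref{alleigen}, and the contradiction with $\mathrm{spec}(\Delta^0) \subseteq nB + \{qB : q \ge 0\}$ closes the argument. I would also remark, as the authors do parenthetically, that once both inclusions are combined one in fact gets $S_1^k = S_2^k$, matching Remark \ref{R-3-16} $b)$.
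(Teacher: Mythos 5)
Your proposal follows essentially the same route as the paper: apply the lowering operators $\overline{I}^{\,j}\overline{\partial}^{\,j}$ repeatedly to an eigensection of $\Delta^{k}$, use Proposition \ref{10.2PZ} to shift the eigenvalue by $-B$ at each step until one reaches an eigensection of $\Delta^{0}$, and then invoke Proposition \ref{alleigen} together with Proposition \ref{9} $ii)$ to force $\lambda-kB\in\{(q+n)B\}$. Your added care about the non-vanishing of the intermediate sections, and your explicit treatment of the reverse inclusion via Theorem \ref{q+n+k}, go slightly beyond the paper's terse write-up but do not change the method.
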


\begin{proof}
Let $\lambda $ be an eigenvalue of $\Delta ^{k}$ with eigensection $%
t^{k}\neq 0.$ Define $t^{k-1}=\overline{I}^{k}\overline{\partial }^{k}t^{k},$
$t^{k-2}=(\overline{I}^{k-1}\overline{\partial }^{k-1})(\overline{I}^{k}%
\overline{\partial }^{k})t^{k},$ $\cdot \cdot \cdot $ , $t^{0}=(\overline{I}%
^{1}\overline{\partial }^{1})$ $\cdot \cdot \cdot $ $(\overline{I}^{k}%
\overline{\partial }^{k})t^{k}.$ Applying (\ref{-BBB}) in Proposition \ref%
{10.2PZ} to $t^{k}$ we obtain $\Delta ^{k-1}t^{k-1}$ $=$ $(\lambda
-B)t^{k-1} $ by $\Delta ^{k}t^{k}$ $=$ $\lambda t^{k},$ and repeatedly $%
\Delta ^{k-2}t^{k-2}$ $=$ $(\lambda -2B)t^{k-2}$, $\cdot \cdot \cdot $ , $%
\Delta ^{0}t^{0}$ $=$ $(\lambda -kB)t^{0}.$ By Proposition \ref{alleigen}
and Proposition \ref{9} $ii),$ any eigenvalue of $\Delta ^{0}$ is of the
form $(q+n)B$ for some $q\in \{0\}\cup \mathbb{N}$, which equals $\lambda
-kB.$
\end{proof}

\noindent \hspace*{12pt} A nontrivial holomorphic section in $\Omega
^{0,0}\left( M,L\otimes (\odot ^{q}T)\right) $ can give rise to an
eigensection of $\Delta _{0}$ as claimed by Theorem \ref{TeoA} $iii)$ which
we shall now show:

\begin{proof}
(of \textbf{Theorem \ref{TeoA} $iii)$}) Recall $t^{-q}$ $\in $ $%
H^{0}(M,L\otimes (\odot ^{q}T))$ and $t^{-\left( q-1\right) }$ $:=$ ($%
I^{-q}\partial ^{-q})t^{-q}$ is symmetric by Lemma \ref{-q-q} $i).$ $%
\mathcal{S}$ denotes the symmetrization operator.%
\begin{eqnarray}
\ \ \ \ \left( I^{-q}\partial ^{-q}\right) ^{\ast }t^{-\left( q-1\right) } &%
\overset{Prop.\ref{8}\,i)}{=}&-\big(\mathcal{S}\overline{I}^{\,-(q-1)}%
\overline{\partial }^{\,-(q-1)}\big)t^{-\left( q-1\right) }=-\big(\mathcal{S}%
\overline{I}^{\,-(q-1)}\overline{\partial }^{\,-(q-1)}I^{-q}\partial ^{-q}%
\big)t^{-q}  \label{3-5} \\
&\overset{Prop.\ref{9}\,iii)}{=}&-\mathcal{S}\big(I^{-\left( q+1\right)
}\partial ^{-\left( q+1\right) }\overline{I}^{\,-q}\overline{\partial }%
^{\,-q}-B\big)t^{-q}  \notag
\end{eqnarray}%
\noindent which equals $Bt^{-q}\neq 0$ since ${\scriptsize \overline{%
\partial }^{\,-q}}t^{-q}=0$ by holomorphicity, and in turn $t^{-(q-1)}\neq 0$%
. Similarly, we also obtain%
\begin{equation}
\left( I^{-(q-1)}\partial ^{-(q-1)}\right) ^{\ast }t^{-\left( q-2\right) }=-%
\mathcal{S}(I^{-q}\partial ^{-q}\overline{I}^{\,-(q-1)}\overline{\partial }%
^{\,-(q-1)}-B)t^{-(q-1)}.  \label{3-10a}
\end{equation}

\noindent If%
\begin{equation}
\overline{I}^{\,-(q-1)}\overline{\partial }^{\,-(q-1)}t^{-(q-1)}\text{ is
symmetric,}  \label{3-10b}
\end{equation}

\noindent which is proved in Lemma \ref{L-2-13}, then we write (\ref{3-10b})
as $\mathcal{S(}\overline{I}^{\,-(q-1)}\overline{\partial }%
^{-(q-1)}t^{-(q-1)})$ which by Proposition \ref{8} $i)$ equals $-\left(
I^{-q}\partial ^{-q}\right) ^{\ast }t^{-\left( q-1\right) }.$ Altogether, (%
\ref{3-10a}) becomes%
\begin{eqnarray}
&&\mathcal{S}I^{-q}\partial ^{-q}\left( I^{-q}\partial ^{-q}\right) ^{\ast
}t^{-\left( q-1\right) }+Bt^{-(q-1)}  \label{3-10c} \\
&\overset{(\ref{3-5})}{=}&BI^{-q}\partial
^{-q}t^{-q}+Bt^{-(q-1)}=2Bt^{-(q-1)}.  \notag
\end{eqnarray}%
\noindent Here we drop $\mathcal{S}$ since $I^{-q}\partial ^{-q}t^{-q}$ ($%
=t^{-(q-1)})$ is symmetric by Lemma \ref{-q-q} $i).$ For $t^{-k}:=\left(
I^{-k-1}\partial ^{-k-1}\right) $ ... $\left( I^{-q}\partial ^{-q}\right)
t^{-q}$ ($0\leq k<q)$ the argument similar to that from (\ref{3-10a}) to (%
\ref{3-10c}) yields%
\begin{equation}
\left( I^{-(k+1)}\partial ^{-(k+1)}\right) ^{\ast }t^{-k}=(q-k)Bt^{-(k+1)},
\label{3-8a}
\end{equation}%
\noindent giving $t^{-k}\neq 0$ inductively for $k=q-1,q-2,...,0$. In
particular $t^{0}\neq 0.$\newline
\noindent \hspace*{12pt} To show that $t^{0}$ is an eigensection of $\Delta
_{0}$, we apply Proposition \ref{10.1} recursively,%
\begin{eqnarray}
\Delta _{0}\text{ }t^{0} &=&\Delta _{0}\left( I^{-1}\partial
^{-1}I^{-2}\partial ^{-2}...I^{-q}\partial ^{-q}\right) t^{-q}  \label{3-5z}
\\
&=&I^{-1}\partial ^{-1}\Delta _{-1}I^{-2}\partial ^{-2}...I^{-q}\partial
^{-q}t^{-q}+BI^{-1}\partial ^{-1}I^{-2}\partial ^{-2}...I^{-q}\partial
^{-q}t^{-q}  \notag \\
&=&I^{-1}\partial ^{-1}I^{-2}\partial ^{-2}...I^{-q}\partial ^{-q}\Delta
_{-q}t^{-q}+qBI^{-1}\partial ^{-1}I^{-2}\partial ^{-2}...I^{-q}\partial
^{-q}t^{-q}  \notag \\
&=&I^{-1}\partial ^{-1}I^{-2}\partial ^{-2}...I^{-q}\partial ^{-q}\big((%
\overline{\partial }^{\,-q})^{\ast }\overline{\partial }^{\,-q}\big)%
t^{-q}+qBt^{0}=0+qBt^{0}  \notag
\end{eqnarray}

\noindent proving the desired. The similar recursion proves the case for $%
t^{-k}$.
\end{proof}

\begin{proof}
(of \textbf{Theorem \ref{TeoA} $i)$}$)$ For all $q\in \mathbb{N}$ we are
going to show that $(I^{-q}\partial ^{-q})^{\ast }...(I^{-1}\partial
^{-1})^{\ast }$ ($=A_{q})$ is a bijective operator from $E_{qB}$ $\subset $ $%
\Omega ^{0,0}(M,L)$ to ($E_{0}$ $=)$ $H^{0}(M,L\otimes (\odot ^{q}T))$. Let $%
s^{0}\in \Omega ^{0,0}(M,L)$ be an eigensection of $\Delta _{0}$ with
eigenvalue $qB$ and $s^{-k}:=(I^{-k}\partial ^{-k})^{\ast
}...(I^{-1}\partial ^{-1})^{\ast }s^{0}$ for $k=1,...,q$ as in Theorem \ref%
{TeoA}$.$ From (\ref{3-0}) we have%
\begin{equation*}
(I^{-1}\partial ^{-1})...(I^{-q}\partial ^{-q})s^{-q}=(qB)((q-1)B)\text{ }%
\cdot \cdot \cdot \text{ (}2B)Bs^{0}=q!B^{q}s^{0}
\end{equation*}

\noindent so $(q!B^{q})^{-1}A_{0}^{\#}A_{q}=Id$ on $E_{qB}$ (see (\ref{ACs})
for the notations $A_{q}$ and $A_{0}^{\#})$. For $t^{-q}$ $\in $ $%
H^{0}(M,L\otimes (\odot ^{q}T))$ and $t^{-k}:=A_{k}^{\#}t^{-q}$ $(=$ $%
I^{-(k+1)}\partial ^{-(k+1)}...I^{-q}\partial ^{-q}t^{-q}$), $0\leq k<q,$
the repeated use of (\ref{3-8a}) gives %\%
%
%\noindent By the same reasoning as in (\ref{3-5}) with $q$ there replaced by 
%$q-1$ we have%
%\begin{eqnarray*}
%&&\left( I^{-(q-1)}\partial ^{-(q-1)}\right) ^{\ast }t^{-\left( q-2\right) }%
%\overset{Prop.\ref{8}\text{ }i)}{\underset{Prop.\ref{9}\,iii)}{=}}-\mathcal{S%
%}(I^{-q}\partial ^{-q}\overline{I}^{\,-(q-1)}\overline{\partial }%
%^{\,-(q-1)}-B)t^{-(q-1)} \\
%&\overset{Lem\text{.\ref{lemmaA} }i)}{=}&\mathcal{S}I^{-q}\partial
%^{-q}\left( I^{-q}\partial ^{-q}\right) ^{\ast }t^{-\left( q-1\right)
%}+Bt^{-(q-1)} \\
%&\overset{(\ref{3-6})}{=}&BI^{-q}\partial
%^{-q}t^{-q}+Bt^{-(q-1)}=2Bt^{-(q-1)}.
%\end{eqnarray*}%
%\noindent Here we have used that $I^{-q}\partial ^{-q}t^{-q}$ is symmetric
%by Lemma \ref{-q-q} $i).$ Similarly for $0\leq k<q$ we have%
%\begin{equation*}
%\left( I^{-(q-k)}\partial ^{-(q-k)}\right) ^{\ast
%}t^{-(q-k-1)}=(k+1)Bt^{-(q-k)},
%\end{equation*}
%
%\%%
$\left( I^{-q}\partial ^{-q}\right) ^{\ast }...\left( I^{-1}\partial
^{-1)}\right) ^{\ast }t^{0}$ $=$ $q!B^{q}t^{-q}$ so $%
(q!B^{q})^{-1}A_{q}A_{0}^{\#}$ $=$ $Id$ on $H^{0}(M,L\otimes (\odot ^{q}T))$.

\end{proof}

Denoting by $E_{qB}$ the vector space of all eigensections of $\Delta _{0}$
with eigenvalues $qB$, we can calculate the dimension of $E_{qB}$.

\begin{proof}
(of \textbf{Corollary \ref{TeoB}}) By Theorem \ref{TeoA} $i)$ we have $\dim
_{\mathbb{C}}E_{qB}=\dim _{\mathbb{C}}H^{0}(M,L\otimes (\odot ^{q}T))$ for
any $q\in \{0\}\cup \mathbb{N}.$ Since $T$ is holomorphically trivial and $%
\mbox{Rank$(\odot^{q}T)=C^{n+q-1}_{q}$}$, we have 
\begin{equation}
\dim _{\mathbb{C}}E_{qB}=C_{q}^{\,n+q-1}\cdot h^{0}(L)=C_{q}^{\,n+q-1}\delta
_{1}...\delta _{n},
\end{equation}%
\noindent (which includes the case $q=0)$ where $h^{0}(L)=\delta
_{1}...\delta _{n}$ \cite[p.317]{GH84}.
\end{proof}

%\%
%
%We are now ready to study spectral bundles over the Picard variety $\hat{M}$ 
%$=$ $Pic^{0}(M).$ For any $\mu \in V$ ($M=V/\Lambda )$ let $\tau _{\mu }$ $:M
%$ $\rightarrow $ $M$ be the map defined by the translation by $[\mu ]$ $\in $
%$M$ Denote by $E_{qB}(\tau _{\mu }^{\ast }L)$ the space of all $s^{0}$ $\in $
%$\Omega ^{0,0}(M,\tau _{\mu }^{\ast }L)$ such that $\Delta _{0}^{\tau _{\mu
%}^{\ast }L}s^{0}$ $=$ $qBs^{0}$. Define the spectral bundle%
%\begin{equation*}
%E_{q,B}:=\cup _{\lbrack \mu ]\in M}E_{qB}(\tau _{\mu }^{\ast }L).
%\end{equation*}
%
%\noindent Recall \cite[page 7 of 23 with $L_{0}$ $=$ $L$ in this paper]{CCT2}
%that a well-known map $\varphi _{L}:M\rightarrow Pic^{0}(M)=\hat{M}$ is
%given by $\varphi _{L}([\mu ])$ $=$ $\tau _{\mu }^{\ast }L\otimes L^{\ast }$
%parametrized by [$\hat{\mu}]$ $\in $ $\hat{M}$ \cite[(2.13)]{CCT2}. We
%define $\pi :$ $E_{q,B}$ $\rightarrow $ $Pic^{0}(M)=\hat{M}$ \ by mapping $%
%E_{qB}(\tau _{\mu }^{\ast }L)$ to $\tau _{\mu }^{\ast }L\otimes L^{\ast }$
%or $[\hat{\mu}].$ We are going to prove that $E_{q,B}$ is a holomorphic
%vector bundle over $Pic^{0}(M)$ through $\pi $ (which generalizes the $q=0$
%case in \cite{CCT2}). Let $\pi _{1}$ $:$ $M\times \hat{M}$ $\rightarrow $ $M$
%and $\pi _{2}$ $:$ $M\times \hat{M}$ $\rightarrow $ $\hat{M}$ be the natural
%projection.
%
%\%

\begin{remark}
\label{R-3-16} $a)$ By Theorem \ref{TeoA} $i)$ and $\dim _{\mathbb{C}%
}E_{qB}>0$ by Corollary \ref{TeoB} together with Proposition \ref{alleigen}
we obtain that the set of eigenvalues of $\Delta _{0}$ on $\Omega
^{0,0}(M,L) $ is exactly $\{qB$ \TEXTsymbol{\vert} $q\in \{0\}\cup \mathbb{%
N\}}$.\ $b)$ The assumption of Theorem \ref{q+n+k} is met using $a)$ and
Proposition \ref{9} $ii).$ It follows from Theorem \ref{q+n+k}, Corollary %
\ref{C-3-3} and $a) $ that the set of eigenvalues of $\Delta ^{k}$ is
exactly $\{(q+n+k)B$ \TEXTsymbol{\vert} $q\in \{0\}\cup \mathbb{N}\}$.
\end{remark}

\begin{proof}
\textbf{(of Theorem \ref{T-main})} Denote the following direct image (where $%
\pi _{1}$ $:$ $M\times \hat{M}$ $\rightarrow $ $M$ and $\pi _{2}$ $:$ $%
M\times \hat{M}$ $\rightarrow $ $\hat{M}$ are the natural projections) by%
\begin{equation}
\mathbb{\hat{E}}_{qB}:=(\pi _{2})_{\ast }(P\otimes \pi _{1}^{\ast }(L\otimes
\odot ^{q}T))\rightarrow \hat{M}.  \label{EqBh}
\end{equation}%
We are going to equip $\mathbb{E}_{qB}$ with the holomorphic bundle
structure from $\mathbb{\hat{E}}_{qB}.$ The point is that the linear
isomorphism in Theorem \ref{TeoA} $i)$ can be treated in a smoothly varying
way along $\hat{M}$ by using the construction of a global metric on the
family (\cite[Section 5]{CCT24}). For the sake of clarity we give some
details.\ For any $\mu \in V$ ($M=V/\Lambda )$ let $\tau _{\mu }$ $:M$ $%
\rightarrow $ $M$ be the map defined by the translation by $[\mu ]$ $\in $ $%
M.$ Observe that for any $[\hat{\mu}]$ the fibre (see (\ref{EqBh}) for the
notation $\mathbb{\hat{E}}_{qB})$%
\begin{eqnarray*}
\mathbb{\hat{E}}_{qB}|_{[\hat{\mu}]} &=&H^{0}(M,P\otimes \pi _{1}^{\ast
}(L\otimes \odot ^{q}T)|_{M\times \{[\hat{\mu}]\}}) \\
&=&H^{0}(M,P_{[\hat{\mu}]}\otimes L\otimes \odot ^{q}T) \\
&=&H^{0}(M,\tau _{\mu }^{\ast }L\otimes \odot ^{q}T)\text{ as }P_{[\hat{\mu}%
]}=\tau _{\mu }^{\ast }L\otimes L^{\ast }\text{ \cite[(2.16)]{CCT24}.}
\end{eqnarray*}%
\noindent By Theorem \ref{TeoA} $i)$ (with $\tau _{\mu }^{\ast }L$ in place
of $L$) we have a linear isomorphism $A_{q}$ from $s^{0}$ $\in $ $%
E_{qB}(\tau _{\mu }^{\ast }L)$ $=$ $E_{qB}(\tilde{E}|_{M\times \{[\hat{\mu}%
]\}})$ ($\subset $ $\Omega ^{0,0}(M,\tau _{\mu }^{\ast }L))$ to a
holomorphic section $s^{-q}$ $\in $ $H^{0}(M,\tau _{\mu }^{\ast }L\otimes
\odot ^{q}T),$ giving a smoothly varying linear isomorphism between $\mathbb{%
E}_{qB}|_{[\hat{\mu}]}$ and $\mathbb{\hat{E}}_{qB}|_{[\hat{\mu}]}.$ Now by
Grauert's direct image theorem \cite[Corollary 12.9]{Har77} we learn that $%
\mathbb{\hat{E}}_{qB}$ $\rightarrow $ $\hat{M}$ is a holomorphic vector
bundle and hence via the isomorphism we can endow $\mathbb{E}_{qB}$ with the
structure of holomorphic vector bundle using that on $\mathbb{\hat{E}}_{qB}.$
The desired rank is the dimension of $H^{0}(M,\tau _{\mu }^{\ast }L\otimes
\odot ^{q}T)$ (= $\dim _{\mathbb{C}}E_{qB}$ by Theorem \ref{TeoA} $i))$,
which is $C_{q}^{\,n+q-1}\delta _{1}...\delta _{n}$ by Corollary \ref{TeoB}$%
. $

To compute the full curvature of $\mathbb{E}_{qB}$ we first observe that $%
H^{0}(M,\tau _{\mu }^{\ast }L\otimes \odot ^{q}T)$ is isomorphic to $%
H^{0}(M,\tau _{\mu }^{\ast }L)\otimes \mathbb{C}^{rank(\odot ^{q}T)}$ as $T$
is holomorphically trivial. So, by endowing $H^{0}(M,\tau _{\mu }^{\ast
}L\otimes \odot ^{q}T)$ with the metric $h_{q}$ given by the $L^{2}$-metric $%
h$ on $H^{0}(M,\tau _{\mu }^{\ast }L)$ (\cite[Theorem 1.2]{CCT24}) tensoring
the Euclidean metric on $\mathbb{C}^{rank(\odot ^{q}T)}$ we can then reach
the full curvature $\Theta (\mathbb{E}_{qB},h_{q})$ as given in (\ref{FC})
by resorting to \cite[Theorem 1.2]{CCT24}.
\end{proof}

\section{\textbf{Bochner-Kodaira type identities of the third order on $%
\mathbb{P}^{n}\label{Sec4}$}}

\noindent \hspace*{12pt} Using an analogous approach as in Section 3 we are
going to obtain similar results on $\mathbb{P}^{n}$. Let us begin with the
Fubini-Study form/metric $\omega _{FS}$ on $\mathbb{P}^{n}$ ($[1:z_{1}:\cdot
\cdot \cdot :z_{n}]$ denotes the standard (inhomogeneous) coordinates):%
\begin{equation}
{\normalsize g_{\alpha \bar{\beta}}(z)}{\normalsize =}\frac{2}{c}%
{\normalsize \frac{(\delta _{\alpha \beta }(1+\underset{\gamma }{\sum }%
z_{\gamma }\overline{z}_{\gamma })-\overline{z}_{\alpha }z_{\beta })}{(1+%
\underset{\gamma }{\sum }z_{\gamma }\overline{z}_{\gamma })^{2}}}\text{; \ }%
{\normalsize g_{\alpha \bar{\beta}}(0)=}\frac{2}{c}\delta _{\alpha \bar{\beta%
}}  \label{FS}
\end{equation}%
\noindent where $c=2$ but we retain it to keep track of the curvature
constant. We have 
\begin{equation}
{\normalsize \omega _{FS}}{\normalsize =ig_{\alpha \bar{\beta}}(z)dz^{\alpha
}\wedge d\overline{z}^{\overline{\beta }};}\text{ }\ \omega _{FS}(0)=i\,%
\frac{2}{c}\delta _{\alpha \bar{\beta}}\,dz^{\alpha }\wedge d\overline{z}^{%
\overline{\beta }}
\end{equation}

\noindent (consistent with \cite[p.146]{Kod86}, \cite[p.18]{Mok89} but
different from \cite[(5) on p.155]{Kob96}$).$

\vspace*{8pt} \noindent \hspace*{12pt} Let $L$ be a holomorphic line bundle $%
L\rightarrow \mathbb{P}^{n}$ with $\deg (L)=B\in \mathbb{N}$ and $c_{1}(L)$ $%
=$ $[\frac{B}{2\pi }\omega _{FS}]$. Equip $L$ with the Hermitian metric $%
h_{L}$ and $\nabla ^{L}$ the Chern connection on $L$ such that the curvature%
\begin{equation}
\Theta =\overline{\partial }\partial \log h_{L}=-iB\omega _{FS};\text{ }%
\Theta (z=0)=B\,\delta _{\alpha \beta }\,dz^{\alpha }\wedge d\overline{z}^{%
\overline{\beta }}.  \label{4-3}
\end{equation}

%\%
%
%\begin{remark}
%\label{R-4-0} Let $s$ be a holomorphic section of the above $L.$ Then by (%
%\ref{4-3}) and (\ref{FS}) we have $s_{\alpha \bar{\beta}}$ $=$ $s_{\alpha 
%\bar{\beta}}-s_{\bar{\beta}\alpha }$ $=$ $\Theta (\frac{\partial }{\partial 
%\bar{z}^{\bar{\beta}}},\frac{\partial }{\partial z^{\alpha }})s$ $=$ $%
%-iB\omega _{FS}(\frac{\partial }{\partial \bar{z}^{\bar{\beta}}},\frac{%
%\partial }{\partial z^{\alpha }})s$ $=$ $-B{\normalsize g_{\alpha \bar{\beta}%
%}(z)s}$ with ${\normalsize g_{\alpha \bar{\beta}}(0)}$ ${\normalsize =}$ $%
%\delta _{\alpha \beta }.$
%\end{remark}
%
%\%

\noindent \hspace*{12pt} In the same way with the notation as in Section 2,
on $\mathbb{P}^{n}$ we have $L\otimes T^{q}$, $L\otimes T^{\ast q}$, $L\odot
T^{q}$, $L\odot T^{\ast q}$ and the spaces $\Omega ^{r,s}(\mathbb{P}^{n})$, $%
\Omega ^{r,s}(\mathbb{P}^{n},L\otimes T^{q})$, $\Omega ^{r,s}(\mathbb{P}%
^{n},L\otimes T^{\ast q})$. Furthermore, with the Levi-Civita connection $%
\nabla $ on $T$ (and $T^{\ast }$) and the Chern connection $\nabla ^{L}$ on $%
L$, we also have the connections $\nabla ^{q}$, $\nabla ^{-q}$, the
operators $\partial ^{q}$, $\partial ^{-q}$, $\overline{\partial }^{\,q}$, $%
\overline{\partial }^{\,-q}$, the contractions $I^{q}$, $I^{-q}$, $\overline{%
I}^{q}$, $\overline{I}^{-q}$, the (real) Hodge operator $\ast $ (see $(\ref%
{stardef})-(\ref{star3})$), the Laplacians $\Delta ^{q}$, $\Delta _{q}$, $%
\Delta ^{-q}$, $\Delta _{-q}$, and the inner product $<\phi ,\psi >$ for
bundle-valued $(r,s)$-forms $\phi ,\psi \in \Omega ^{r,s}(\mathbb{P}%
^{n},L\otimes T^{q}$or $L\otimes T^{\ast q})$ (the formulas up to (\ref%
{Delta q}) in Section 2 work for a K\"{a}hler manifold).\newline

Since $\mathbb{P}^{n}$ is curved, we often need a version of the commutation
relation; unfortunately its complex version is not quite available in the
standard textbook. Following the real version (see for instance \cite[%
pp.148, 217]{Taubes11})\footnote{%
In fact, our context is in line with that of \cite{Taubes11} but different
from \cite{DFN84} and \cite{Spivak99}; see our Notation \ref{notation}. The
formula (\ref{4-7}) does look the same as those of \cite[p.297]{DFN84} and 
\cite[p.5-8]{Spivak99}, but their second covariant derivative is taken on
tensors of one more rank resulting from the index of the first derivative
(that is, the two covariant derivatives are respectively taken on two
different bundles). In our context, the underlying bundle is unchanged when
the second covariant derivative is taken.} 
%\%(see \cite[page 5-8]{Spivak} and the
%convention for the curvature index order in \cite[page 7-12]{Spivak})\%
we get the complex version for K\"{a}hler manifolds:%
\begin{equation}
A_{i_{1}...i_{k},\overline{h}\eta }^{j_{1}...j_{l}}-A_{i_{1}...i_{k},\eta 
\overline{h}}^{j_{1}...j_{l}}=\mbox{$\overset{k}{\underset{r=1}{\sum}}%
\overset{n}{\underset{\nu=1}{\sum}}$}A_{i_{1}...i_{r-1}\nu
i_{r+1}...i_{k}}^{j_{1}...j_{l}}K_{i_{r}\overline{h}\eta }^{\nu }-%
\mbox{$\overset{l}{\underset{s=1}{\sum}}\overset{n}{\underset{\nu=1}{\sum}}$}%
A_{i_{1}...i_{k}}^{j_{1}...j_{s-1}\nu \jmath _{s+1}...j_{l}}K_{\nu \overline{%
h}\eta }^{j_{s}},  \label{4-7}
\end{equation}

\begin{equation}
A_{i_{1}...i_{k},\overline{h}\overline{\eta }%
}^{j_{1}...j_{l}}-A_{i_{1}...i_{k},\overline{\eta }\overline{h}%
}^{j_{1}...j_{l}}=A_{i_{1}...i_{k},h\eta
}^{j_{1}...j_{l}}-A_{i_{1}...i_{k},\eta h}^{j_{1}...j_{l}}=0,  \label{4-7a}
\end{equation}

\noindent where $K_{\overline{j_{s}}\nu \overline{h}\eta }$ ($=$ $g_{\mu 
\overline{j_{s}}}K_{\nu \overline{h}\eta }^{\mu })$ and $K_{i_{r}\bar{h}\eta
}^{\nu }$ are defined in \cite[pp.156-157, (13)-(15)]{Kob96}. %\%in such a
%way that is consistent with the real convention \cite{Spivak}\%.

Different authors may use different sign conventions for curvature in 
\textit{lower indices}, e.g. $R_{\alpha \bar{\alpha}\alpha \bar{\alpha}}$ is
positive on $\mathbb{P}^{n}$ (see \cite[p.31]{Mok89} by Mok) while in the
book \cite[p.169]{Kob96}$\ K_{\alpha \bar{\alpha}\alpha \bar{\alpha}}$ is
negative (but $R_{j\bar{k}l}^{i}$ $=$ $K_{j\overline{k}l}^{i}$ with both
authors). We follow Mok's convention for another reason that his book
contains curvature information for Hermitian symmetric spaces that
generalize the $\mathbb{P}^{n}$ case here. We have (the convention $c=2$
makes the coordinate fields \{$\frac{\partial }{\partial z^{j}}\}_{j}$
unitary at $z=0$)%
\begin{equation}
R_{i\overline{j}k\overline{l}}=\frac{c}{2}\left( g_{i\overline{j}}g_{k%
\overline{l}}+g_{i\overline{l}}g_{k\overline{j}}\right) ,\hspace*{6pt}R_{%
\overline{i}j\overline{k}l}=\overline{R_{i\overline{j}k\overline{l}}},\text{
\ }R_{\overline{i}j\overline{k}l}=R_{\overline{k}j\overline{i}l}\text{\ \ };%
\text{ \ \ \ }\nabla R_{i\overline{j}k\overline{l}}\equiv 0  \label{4-4a}
\end{equation}%
\noindent (cf. $K_{i\overline{j}k\overline{l}}=-\frac{c}{2}\left( g_{i%
\overline{j}}g_{k\overline{l}}+g_{i\overline{l}}g_{k\overline{j}}\right) $
in \cite[p.169]{Kob96}). \noindent In particular, at $z=0$%
\begin{equation}
\begin{cases}
R_{i\overline{i}i\overline{i}}=c & i=1,2...n. \\ 
R_{i\overline{i}j\overline{j}}=R_{i\overline{j}j\overline{i}}=c/2 & 
i,j=1,2...n,i\neq j.%
\end{cases}
\label{Rijkl}
\end{equation}%
\begin{equation}
\begin{cases}
R_{i\overline{j}k\overline{l}}=0 & \text{if at least three of }i,j,k,l\text{
are distinct.} \\ 
R_{i\bar{j}i\overline{l}}=0 & \text{for all }i,j,l\text{ with }j\neq i\text{
or }l\neq i\text{.}%
\end{cases}
\label{4-7-1}
\end{equation}

For computational convenience in what follows we adopt the non-covariant
expressions (at $z_{i}=0$), namely the curvature tensor is written only in
lower indices. More precisely, we are going to use (in Mok's sign convention
as mentioned above): \ 

\begin{lemma}
\label{L-4-1} $i)$ For $\mbox{$\underset{\alpha_{1}...\alpha_{q}}{\sum}$}%
f_{\alpha _{1}...\alpha _{q}}s\otimes (\overset{q}{\underset{k=1}{\otimes }}%
dz^{\alpha _{k}})\in \Omega ^{0,0}(\mathbb{P}^{n},L\otimes (\otimes
^{q}T^{\ast }))$, (\ref{4-7}) reduces to:%
\begin{align}
& \mbox{$\underset{\alpha_{1}...\alpha_{q}}{\sum}$}(f_{\alpha _{1}...\alpha
_{q},\overline{j}i}-f_{\alpha _{1}...\alpha _{q},i\overline{j}})s\otimes (%
\overset{q}{\underset{k=1}{\otimes }}dz^{\alpha _{k}})\hspace*{220pt}
\label{f Ricci 1} \\
=& -\mbox{$\underset{l,\alpha_{1}...\alpha_{q}}{\sum}$}\left( f_{l\alpha
_{2}..\alpha _{q}}R_{\overline{l}\alpha _{1}\overline{j}i}+f_{\alpha
_{1}l\alpha _{3}..\alpha _{q}}R_{\overline{l}\alpha _{2}\overline{j}%
i}...+f_{\alpha _{1}..\alpha _{q-1}l}R_{\overline{l}\alpha _{q}\overline{j}%
i}\right) s\otimes (\overset{q}{\underset{k=1}{\otimes }}dz^{\alpha _{k}}). 
\notag
\end{align}%
$ii)$ For $\mbox{$\underset{\alpha_{1}...\alpha_{q}}{\sum}$}f^{\alpha
_{1}...\alpha _{q}}s\otimes (\overset{q}{\underset{k=1}{\otimes }}\frac{%
\partial }{\partial z^{\alpha _{k}}})\in \Omega ^{0,0}(\mathbb{P}%
^{n},L\otimes (\otimes ^{q}T))$, (\ref{4-7}) reduces to: 
\begin{align}
& \mbox{$\underset{\alpha_{1}...\alpha_{q}}{\sum}$}({f^{\alpha _{1}...\alpha
_{q}}}_{,\overline{j}i}-{f^{\alpha _{1}...\alpha _{q}}}_{,i\overline{j}%
})s\otimes (\overset{q}{\underset{k=1}{\otimes }}\frac{\partial }{\partial
z^{\alpha _{k}}})\hspace*{220pt}  \label{f Ricci 2} \\
=& \mbox{$\underset{l,\alpha_{1}...\alpha_{q}}{\sum}$}\left( f^{l\alpha
_{2}..\alpha _{q}}R_{\overline{\alpha _{1}}l\overline{j}i}+f^{\alpha
_{1}l\alpha _{3}..\alpha _{q}}R_{\overline{\alpha _{2}}l\overline{j}%
i}...+f^{\alpha _{1}..\alpha _{q-1}l}R_{\overline{\alpha _{q}}l\overline{j}%
i}\right) s\otimes (\overset{q}{\underset{k=1}{\otimes }}\frac{\partial }{%
\partial z^{\alpha _{k}}}).  \notag
\end{align}
\end{lemma}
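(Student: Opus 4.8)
The plan is to obtain both identities by specializing the general K\"ahler commutation formula (\ref{4-7}) to the two relevant cases, treating the holomorphic section $s$ of $L$ as a spectator. Indeed, by Notation \ref{notation} the symbols $f_{\alpha_1...\alpha_q,\bar j i}$ and $f_{\alpha_1...\alpha_q,i\bar j}$ denote the iterated covariant derivatives of $f$ regarded as a section of $\otimes^{q}T^{\ast}$, computed with the Levi-Civita connection on $T^{\ast q}$ alone; hence $s$ factors out multiplicatively and contributes nothing to the commutator. (Its holomorphy enters only elsewhere, through Lemma \ref{Lemma 2.1}, to handle the genuinely $L$-valued terms such as $f_{\alpha_1...\alpha_q}s_{i\bar j}$ occurring in the Bochner--Kodaira computations; it is not needed here.) Throughout I work in \textbf{c.g.c.} at the point in question, so that $g_{i\bar j}=\delta_{i\bar j}$ makes raising and lowering of indices transparent, and $\nabla R\equiv 0$ by (\ref{4-4a}).

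For $i)$, I apply (\ref{4-7}) to $A=f_{\alpha_1...\alpha_q}$, a purely covariant tensor of rank $q$ (so $k=q$, $l=0$). The second (upper-index) sum on the right of (\ref{4-7}) is then empty and only the first survives, giving
\begin{equation*}
f_{\alpha_1...\alpha_q,\bar j i}-f_{\alpha_1...\alpha_q,i\bar j}=\sum_{r=1}^{q}\sum_{\nu}f_{\alpha_1...\alpha_{r-1}\nu\alpha_{r+1}...\alpha_q}\,K^{\nu}_{\alpha_r\bar j i}.
\end{equation*}
It remains to rewrite Kobayashi's curvature symbol $K^{\nu}_{\alpha_r\bar j i}$ in terms of Mok's curvature tensor with all indices lowered. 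Since the two authors' $(1,3)$-type curvature tensors agree, $R^{i}_{j\bar k l}=K^{i}_{j\bar k l}$, while their fully lowered versions differ by a sign, $K_{i\bar j k\bar l}=-R_{i\bar j k\bar l}$ (compare (\ref{4-4a}) with \cite[p.169]{Kob96}), lowering the index $\nu$ at the point and using the K\"ahler symmetries of $R$ turns $\sum_{\nu}f_{...\nu...}K^{\nu}_{\alpha_r\bar j i}$ into $-\sum_{l}f_{...l...}R_{\bar l\alpha_r\bar j i}$; summing over $r$ yields exactly (\ref{f Ricci 1}).

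For $ii)$ the argument is identical except that $A=f^{\alpha_1...\alpha_q}$ is now purely contravariant ($k=0$, $l=q$), so this time the first sum in (\ref{4-7}) is empty and only the second, upper-index sum remains, carrying an explicit minus sign:
\begin{equation*}
f^{\alpha_1...\alpha_q}_{,\bar j i}-f^{\alpha_1...\alpha_q}_{,i\bar j}=-\sum_{s=1}^{q}\sum_{\nu}f^{\alpha_1...\alpha_{s-1}\nu\alpha_{s+1}...\alpha_q}\,K^{\alpha_s}_{\nu\bar j i}.
\end{equation*}
Converting $K$ to $R$ as before — now the slot resolved against the metric is the contracted lower index of $K^{\alpha_s}_{\nu\bar j i}$ rather than a free one — the explicit minus from the upper-index term and the $K\leftrightarrow R$ sign cancel, producing the $+$ sign of (\ref{f Ricci 2}). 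The opposite signs in $i)$ and $ii)$ are simply the reflection of $T^{\ast}$ having curvature opposite to that of $T$.

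The only real labour here is bookkeeping: one must keep the two sign conventions for curvature straight and make sure the antisymmetric pair $(\bar j,i)$ and the barred index introduced by lowering occupy the slots displayed in (\ref{f Ricci 1})--(\ref{f Ricci 2}). Once the dictionary $K^{i}_{j\bar k l}=R^{i}_{j\bar k l}$, $K_{i\bar j k\bar l}=-R_{i\bar j k\bar l}$ is fixed, both identities drop out of (\ref{4-7}) mechanically. The companion identity (\ref{4-7a}), for two holomorphic or two anti-holomorphic derivatives, is not needed for this lemma and is recorded only for later use.
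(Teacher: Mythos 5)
Your proposal is correct and is essentially the argument the paper intends: Lemma \ref{L-4-1} is stated as a direct specialization of (\ref{4-7}) (the paper gives no separate proof), and your two points — that the comma-derivatives on $f$ are taken with $\nabla^{T^{\ast q}}$ (resp. $\nabla^{T^{q}}$) alone so $s$ is a multiplicative spectator, and that the sign flip between $i)$ and $ii)$ comes from the dictionary $K^{i}_{j\bar k l}=R^{i}_{j\bar k l}$, $K_{i\bar j k\bar l}=-R_{i\bar j k\bar l}$ combined with which of the two sums in (\ref{4-7}) survives — are exactly the content of the reduction.
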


%\%
%
%\begin{remark}
%\label{Rmk 4.1}
%
%%\begin{enumerate}
%%\item \lbrack i)]
%
%The coordinates $(z_{1},$ $\cdot \cdot \cdot ,$ $z_{n})$ above (around $z=0$ 
%$\in $ $\mathbb{P}^{n}$) is a complex geodesic coordinate system at $z=0$ in
%the sense that $g_{\alpha \overline{\beta }}(0)=\delta _{\alpha \overline{%
%\beta }}$ and $\frac{\partial {g^{\alpha \overline{\beta }}}}{\partial
%z_{\gamma }}(0)$ $=$ $\frac{\partial {g^{\alpha \overline{\beta }}}}{%
%\partial \bar{z}_{\bar{\gamma}}}(0)=$ $\frac{\partial g_{\alpha \bar{\beta}}%
%}{\partial z_{\gamma }}(0)$ $=$ $\frac{\partial g_{\alpha \bar{\beta}}}{%
%\partial \bar{z}_{\bar{\gamma}}}(0)$ $=$ $0$ for $\alpha ,\beta ,\gamma
%=1,...,n$.
%\end{remark}
%
%\%

\noindent \hspace*{12pt} The following proposition is the $\mathbb{P}^{n}$%
-version of Proposition \ref{10.2PZ}; the RHS coefficient below is different
from that of $M$ due to the nonzero curvature on $\mathbb{P}^{n}$ $($with $%
c=2),$ and the tensors here need to be \textit{symmetric}. 
%\%The dimension $n$ is present almost \ all the way but is miraculously
%cancelled out in the end.\%
Being a tedious proof, it is presented in detail because the coefficients
involved should be precisely given for later use in eigenvalues.

\noindent \hspace*{12pt} As $\mathbb{P}^{n}$ is homogeneous, we may, without
loss of generality, only work at $p$ defined by $z_{i}=0$ where $g_{i%
\overline{j}}(0)$ $=$ $\delta _{i\overline{j}}$ and $dg_{i\overline{j}}(0)$ $%
=$ $0.$ This is used for the propositions in the rest of this section
without mention.

\begin{proposition}
\label{Prop 4.2} ($T^{\ast }$-valued) For all $q\in \{0\}\cup \mathbb{N}$,
we have, on $\Omega ^{0,0}(\mathbb{P}^{n},L\otimes (\odot ^{q+1}T^{\ast }))$%
\begin{equation}
{\scriptsize \Delta }^{q}{\scriptsize \overline{I}}^{(q+1)}\overline{%
\partial }^{(q+1)}{\scriptsize -\overline{I}}^{(q+1)}\overline{\partial }%
^{(q+1)}{\scriptsize \Delta }^{(q+1)}{\scriptsize =(-B+}\frac{{\scriptsize c}%
}{{\scriptsize 2}}{\scriptsize (2q+n+1))\,\overline{I}^{(q+1)}\overline{%
\partial }^{(q+1)}.}  \label{DIp-IpD}
\end{equation}%
with values in $\Omega ^{0,0}(\mathbb{P}^{n},L\otimes (\odot ^{q}T^{\ast
})). $
\end{proposition}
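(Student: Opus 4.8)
The plan is to run the proof of Proposition \ref{10.2PZ} essentially verbatim and then isolate the single step where flatness of $M$ was used, replacing it by the curvature computation on $\mathbb{P}^{n}$. Since $\mathbb{P}^{n}$ is homogeneous it suffices to verify $(\ref{DIp-IpD})$ at the point $p$ with $z_{i}=0$, where I may use c.g.c.\ together with the K\"{a}hler commutation rules $(\ref{4-7})$ and $(\ref{4-7a})$, the parallelism $\nabla R\equiv 0$, and the explicit curvature $(\ref{4-4a})$, i.e. $R_{i\overline{j}k\overline{l}}=\tfrac{c}{2}(g_{i\overline{j}}g_{k\overline{l}}+g_{i\overline{l}}g_{k\overline{j}})$, whose Ricci trace is $\tfrac{c}{2}(n+1)g$. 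Because $(\ref{4-3})$ realizes (C.C.) on $\mathbb{P}^{n}$, Lemma \ref{Lemma 2.1} and Lemma \ref{Lemma 2.2}\,$i)$ stay valid, and hence so do the two intermediate identities $(\ref{2-3-1})$ and $(\ref{2-3-2})$, which were already derived in the proof of Proposition \ref{10.2PZ} for an arbitrary K\"{a}hler manifold satisfying (C.C.).

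Concretely, for a local \emph{symmetric} section $f_{\alpha_{1}\ldots\alpha_{q+1}}s\otimes(\otimes^{q+1}dz^{\alpha_{k}})$ of $L\otimes(\odot^{q+1}T^{\ast})$ I would form $(\ref{2-3-1})-(\ref{2-3-2})$. As on $M$, the two antiholomorphic derivatives commute by $(\ref{4-7a})$, so the $s_{i}$-terms cancel, while the $s_{i\overline{\alpha}_{q+1}}$-term is rewritten by Lemma \ref{Lemma 2.1}\,$ii)$ as $-Bg_{i\overline{\alpha}_{q+1}}s=-B\delta_{i\alpha_{q+1}}s$ at $p$; after summing over $i$ this supplies the ``$-B$'' on the right-hand side. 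The genuinely new term is the leftover $-\sum_{i}(f_{\alpha_{1}\ldots\alpha_{q+1},\overline{\alpha}_{q+1}i\overline{i}}-f_{\alpha_{1}\ldots\alpha_{q+1},i\overline{i}\,\overline{\alpha}_{q+1}})s$, which vanished on $M$ by flatness. Using $(\ref{4-7a})$ once more I would bring it to the form $-\sum_{i}(f_{\ldots,\overline{\alpha}_{q+1}i}-f_{\ldots,i\overline{\alpha}_{q+1}})_{,\overline{i}}\,s$, apply Lemma \ref{L-4-1}\,$i)$ to the inner commutator, and pull the parallel curvature tensor through the remaining $\overline{i}$-derivative (by $\nabla R\equiv 0$), obtaining $\sum_{i,l}\sum_{r=1}^{q+1}(f_{\alpha_{1}\ldots\alpha_{r-1}l\alpha_{r+1}\ldots\alpha_{q+1}})_{,\overline{i}}\,R_{\overline{l}\alpha_{r}\overline{\alpha}_{q+1}i}\,s$. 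Substituting the explicit curvature at $p$ and using the symmetry of $f$ to move a contracted slot into the last position, the two Kronecker pieces attached to the last slot ($r=q+1$) assemble into the Ricci trace $\tfrac{c}{2}(n+1)$, while each of the remaining $q$ slots contributes $\tfrac{c}{2}\cdot 2$; the net is $\tfrac{c}{2}(2q+n+1)\,f_{\ldots,\overline{\alpha}_{q+1}}s$. Adding this to the ``$-B$'' contribution and recognizing $f_{\ldots,\overline{\alpha}_{q+1}}s\otimes(\otimes^{q}dz^{\alpha_{k}})$ as $\overline{I}^{(q+1)}\overline{\partial}^{(q+1)}$ of the original section — exactly as in the last display of the proof of Proposition \ref{10.2PZ} — gives $(\ref{DIp-IpD})$; that the result lands in $\Omega^{0,0}(\mathbb{P}^{n},L\otimes(\odot^{q}T^{\ast}))$ is Lemma \ref{-q-q}\,$ii)$.

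The hard part will be the index bookkeeping just sketched: one must cleanly separate the genuine Ricci contribution (which gives the $n+1$) from the ``weight'' contributions distributed over the $q+1$ symmetric slots (which give the $2q$), and verify that the symmetry of $f$ is exactly what collapses the off-diagonal curvature contractions back into scalar multiples of $\overline{I}^{(q+1)}\overline{\partial}^{(q+1)}$ of the section, rather than into genuinely new tensors — this is why the identity is asserted only for symmetric tensors here, in contrast with the Abelian case of Proposition \ref{10.2PZ}. Apart from this, the manipulation is formally identical to the Abelian one; but since the precise constant $-B+\tfrac{c}{2}(2q+n+1)$ enters the eigenvalue computations (cf. Theorem \ref{TeoC}), every coefficient has to be tracked with care, which is what makes the full proof tedious.
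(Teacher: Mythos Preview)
Your proposal is correct and follows essentially the same route as the paper: start from $(\ref{2-3-1})-(\ref{2-3-2})$, extract the $-B$ piece via Lemma \ref{Lemma 2.1}\,$ii)$, apply Lemma \ref{L-4-1}\,$i)$ together with $\nabla R\equiv 0$ to the remaining third-order commutator, and then compute the curvature contractions slot by slot (each of the first $q$ slots contributing $c$, the last slot contributing the Ricci trace $\tfrac{c}{2}(n+1)$, exactly as in the paper's $(\ref{4-15})$ and $(\ref{4-18})$). The only cosmetic difference is that you contract directly against the covariant formula $R_{i\overline{j}k\overline{l}}=\tfrac{c}{2}(g_{i\overline{j}}g_{k\overline{l}}+g_{i\overline{l}}g_{k\overline{j}})$ of $(\ref{4-4a})$, whereas the paper carries out the same contraction by a case-by-case analysis at $z=0$ using the explicit values $(\ref{Rijkl})$--$(\ref{4-7-1})$; both organize the identical computation and both require the symmetry of $f$ at the same step.
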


\begin{proof}
Using (\ref{2-3-1}) and (\ref{2-3-2}) (valid for K\"{a}hler manifolds as
remarked there) one sees%
\begin{eqnarray}
&&(\Delta ^{q}\overline{I}^{(q+1)}\overline{\partial }^{(q+1)}-\overline{I}%
^{(q+1)}\overline{\partial }^{(q+1)}\Delta ^{q+1})(%
\mbox{$ \underset{\tiny
\alpha_{1},..,\alpha_{q+1}}{\sum}  $}f_{\alpha _{1}...\alpha _{q+1}}s\otimes
(\overset{q+1}{\underset{k=1}{\otimes }}dz^{\alpha _{k}}))\hspace*{120pt}
\label{4-12} \\
&&\overset{at\,p}{=}\big{(}%
\mbox{$- \underset{\tiny
\alpha_{1},..,\alpha_{q+1}}{\sum} \underset{i}{\sum}$}(f_{\alpha
_{1}..\alpha _{q+1},\overline{\alpha _{q+1}}i}-f_{\alpha _{1}..\alpha
_{q+1},i\overline{\alpha _{q+1}}})_{,\overline{i}}s\otimes (\overset{q}{%
\underset{k=1}{\otimes }}dz^{\alpha _{k}})  \notag \\
&&+%
\mbox{$ \underset{\tiny \alpha_{1},..,\alpha_{q+1}}{\sum}
\underset{i}{\sum}$}f_{\alpha _{1}..\alpha _{q+1},\overline{i}}s_{i\overline{%
\alpha _{q+1}}}\otimes (\overset{q}{\underset{k=1}{\otimes }}dz^{\alpha
_{k}})\big{)}_{\mid _{z=0}};  \notag
\end{eqnarray}%
%
%
%
%
%
%
%
%
%
%
%
%
%
%\%\noindent \hspace*{12pt} To simplify the second term of the RHS of (\ref{3
%terms}) we note the cases $i=\alpha _{q+1}$ and $i\neq \alpha _{q+1}$. Via (%
%\ref{R1122}) we have:%
%\begin{align}
%& \mbox{the second term of the RHS of (\ref{3 terms}) at $p$}\hspace*{200pt}
%\notag \\
%\displaystyle\overset{(\ref{R1122})}{=}& 
%\mbox{$-\frac{c}{2}((n-1)\times
%1+1\times 2)(\underset{\tiny \alpha_{1},..,\alpha_{q+1}}{\sum}$}f_{\alpha
%_{1}...\alpha _{q+1},\overline{\alpha _{q+1}}}s\otimes (\overset{q}{\underset%
%{k=1}{\otimes }}dz^{\alpha _{k}}))_{\mid _{z=0}}  \label{eq 4.8} \\
%\displaystyle=\hspace*{6pt}& 
%\mbox{$ - \frac{c(n+1)}{2}\
%\overline{I}^{(q+1)}\overline{\partial}^{(q+1)}(\underset{\tiny
%\alpha_{1},..,\alpha_{q+1}}{\sum}$}f_{\alpha _{1}..\alpha _{q+1}}s\otimes (%
%\overset{q+1}{\underset{k=1}{\otimes }}dz^{\alpha _{k}}))_{\mid _{z=0}}. 
%\notag
%\end{align}%
%\%

\noindent the right-most double summation of (\ref{4-12})\ can be rewritten
as (with Lemma \ref{Lemma 2.1})%
\begin{equation}
-B\overline{I}^{(q+1)}\overline{\partial }^{(q+1)}(%
\mbox{$ \underset{\tiny
\alpha_{1},..,\alpha_{q+1}}{\sum}$}f_{\alpha _{1}..\alpha _{q+1}}s\otimes (%
\overset{q+1}{\underset{k=1}{\otimes }}dz^{\alpha _{k}}))_{\mid _{z=0}}.
\label{eq 4.9}
\end{equation}%
\noindent \hspace*{12pt} By Lemma \ref{L-4-1} for the commutation relation
let us write%
\begin{align}
& \mbox{the first double summation of (\ref{4-12})}\hspace*{240pt}  \notag \\
=& 
\mbox{$ \underset{\tiny \alpha_{1},..,\alpha_{q+1}}{\sum}
\underset{i,l}{\sum}$}(f_{l\alpha _{2}..\alpha _{q+1}}R_{\overline{l}\alpha
_{1}\overline{\alpha _{q+1}}i})_{,\overline{i}}s\otimes (\overset{q}{%
\underset{k=1}{\otimes }}dz^{\alpha _{k}})_{\mid _{z=0}}  \label{eq 4.10} \\
& 
\mbox{$+ \underset{\tiny \alpha_{1},..,\alpha_{q+1}}{\sum}
\underset{i,l}{\sum}$}(f_{\alpha _{1}l..\alpha _{q+1}}R_{\overline{l}\alpha
_{2}\overline{\alpha _{q+1}}i})_{,\overline{i}}s\otimes (\overset{q}{%
\underset{k=1}{\otimes }}dz^{\alpha _{k}})_{\mid _{z=0}}  \notag \\
& \hspace*{20pt}\vdots  \notag \\
& 
\mbox{$+ \underset{\tiny \alpha_{1},..,\alpha_{q+1}}{\sum}
\underset{i,l}{\sum}$}(f_{\alpha _{1}\alpha _{2}..\alpha _{q}l}R_{\overline{l%
}\alpha _{q+1}\overline{\alpha _{q+1}}i})_{,\overline{i}}s\otimes (\overset{q%
}{\underset{k=1}{\otimes }}dz^{\alpha _{k}})_{\mid _{z=0}}.  \notag
\end{align}%
\noindent \hspace*{12pt} Due to the curvature (\ref{Rijkl}) the calculation
of the first line of (\ref{eq 4.10}) is somewhat tedious and we explained it
as follows. See also the proof of Proposition \ref{P-7-1} for supplementary
explanations. We are going to reach (\ref{4-15}) below. To get the first
equality in (\ref{4-15}), we separate into two cases: Case $1)$ $l=\alpha
_{1}$ and Case $2)$ $l\neq \alpha _{1}.$ In Case $1)$ $i$ must be $\alpha
_{q+1}$. Then we have two subcases Case $1)_{a}:$ $\alpha _{1}\neq \alpha
_{q+1}$ (then $R_{\overline{l}\alpha _{1}\overline{\alpha _{q+1}}i}$ $=$ $R_{%
\overline{\alpha _{1}}\alpha _{1}\overline{\alpha _{q+1}}\alpha _{q+1}}$ $=$ 
$\frac{c}{2}$) and Case $1)_{b}:$ $\alpha _{1}=\alpha _{q+1}$ (then $R_{%
\overline{l}\alpha _{1}\overline{\alpha _{q+1}}i}$ $=$ $R_{\overline{\alpha
_{1}}\alpha _{1}\overline{\alpha _{1}}\alpha _{1}}$ $=$ $c).$ In Case $2),$
by using the two vanishings in (\ref{4-7-1}), it is not difficult to see
that $l$ must be $i$ ($\neq \alpha _{1}$ since $l\neq \alpha _{1})$ and then 
$\alpha _{q+1}=\alpha _{1},$ so $R_{\overline{l}\alpha _{1}\overline{\alpha
_{q+1}}i}$ $=$ $R_{\overline{i}\alpha _{1}\overline{\alpha _{1}}i}$ $=$ $R_{%
\overline{\alpha _{1}}\alpha _{1}\overline{i}i}$ $=$ $\frac{c}{2}.$ With $c=%
\frac{c}{2}+\frac{c}{2}$ we write \textit{half}-Case $1)_{b}$ to mean $c$ in
Case $1)_{b}$ is replaced by $\frac{c}{2}.$ Combining Case $2)$ ($i\neq
\alpha _{1},$ $\alpha _{q+1}=\alpha _{1})$ and \textit{half}-Case $1)_{b}$ ($%
i=\alpha _{1},$ $\alpha _{q+1}=\alpha _{1})$ gives the second term of the
second line in (\ref{4-15}). Similarly, combining Case $1)_{a}$ and \textit{%
half}-Case $1)$$_{b}$ gives the corresponding first term. Note $\nabla R_{i%
\overline{j}k\overline{l}}\equiv 0$ (\ref{4-4a}) and $f_{\alpha _{1}\alpha
_{2}..\alpha _{q+1}}$ is symmetric in $\alpha _{1},$ $\alpha _{2},$ $\cdot
\cdot \cdot ,$ $\alpha _{q+1},$ which are needed for the second equality in (%
\ref{4-15}). (The net result that is neat appears lucky.)%
\begin{eqnarray}
&&%
\mbox{$ \underset{\tiny \alpha_{1},..,\alpha_{q+1}}{\sum}
\underset{i,l}{\sum}$}(f_{l\alpha _{2}..\alpha _{q+1}}R_{\overline{l}\alpha
_{1}\overline{\alpha _{q+1}}i})_{,\overline{i}}s\otimes (\overset{q}{%
\underset{k=1}{\otimes }}dz^{\alpha _{k}})_{\mid _{z=0}}  \label{4-15} \\
&=&\displaystyle%
\mbox{$  \frac{c}{2} \underset{\tiny
\alpha_{1},..,\alpha_{q+1}}{\sum} $}(f_{\alpha _{1}\alpha _{2}..\alpha
_{q+1}})_{,\overline{\alpha _{q+1}}}s\otimes (\overset{q}{\underset{k=1}{%
\otimes }}dz^{\alpha _{k}})_{\mid _{z=0}}%
\mbox{$  + \frac{c}{2}
\underset{\tiny i, \alpha_{2},..,\alpha_{q},\alpha_{1}}{\sum} $}(f_{i\alpha
_{2}..\alpha _{q}\alpha _{1}})_{,\overline{i}}s\otimes (\overset{q}{\underset%
{k=1}{\otimes }}dz^{\alpha _{k}})_{\mid _{z=0}}  \notag \\
&=&\mbox{$ c\underset{\tiny \alpha_{1},..,\alpha_{q+1}}{\sum} $}(f_{\alpha
_{1}\alpha _{2}..\alpha _{q+1}})_{,\overline{\alpha _{q+1}}}s\otimes (%
\overset{q}{\underset{k=1}{\otimes }}dz^{\alpha _{k}})_{\mid _{z=0}}.  \notag
\end{eqnarray}%
\noindent \hspace*{12pt} Now that every double summation of $(\ref{eq 4.10})$
is seen to be the same as (\ref{4-15})\ (via the symmetric tensor condition)
except the last term. This last term reads the first equality in (\ref{4-18}%
) below, using $R_{\overline{l}\alpha _{q+1}\overline{\alpha _{q+1}}i}$%
\textbf{\ }$\neq $ $0$ only when $l=i.$ For the second equality below, one
separates into two cases: $i=\alpha _{q+1}$ (getting the coefficient $c=%
\frac{c}{2}\cdot 2)$ and $i\neq \alpha _{q+1}$ (getting the coefficient $%
(n-1)$ times $\frac{c}{2}).$ Thus, the last term of $(\ref{eq 4.10})$ is%
\begin{eqnarray}
&&%
\mbox{$ \underset{\tiny \alpha_{1},..,\alpha_{q+1}}{\sum}
\underset{i,l}{\sum}$}(f_{\alpha _{1}..\alpha _{q}l}R_{\overline{l}\alpha
_{q+1}\overline{\alpha _{q+1}}i})_{,\overline{i}}s\otimes (\overset{q}{%
\underset{k=1}{\otimes }}dz^{\alpha _{k}}))_{\mid _{z=0}}\hspace*{160pt}
\label{4-18} \\
&=&%
\mbox{$ \underset{\tiny \alpha_{1},..,\alpha_{q+1}}{\sum}
\underset{i}{\sum}$}(f_{\alpha _{1}..\alpha _{q}i}R_{\overline{i}\alpha
_{q+1}\overline{\alpha _{q+1}}i})_{,\overline{i}}s\otimes (\overset{q}{%
\underset{k=1}{\otimes }}dz^{\alpha _{k}}))_{\mid _{z=0}}  \notag \\
&=&%
\mbox{$  \frac{c}{2} (2+(n-1)) \underset{\tiny
\alpha_{1},..,\alpha_{q+1}}{\sum}$}(f_{\alpha _{1}..\alpha _{q}\alpha
_{q+1}})_{,\overline{\alpha _{q+1}}}s\otimes (\overset{q}{\underset{k=1}{%
\otimes }}dz^{\alpha _{k}}))_{\mid _{z=0}}.  \notag
\end{eqnarray}%
\noindent The first double summation of (\ref{4-12}) is finally obtained by
substituting $(\ref{4-15})$ ($q$ times) and $(\ref{4-18})$ into $(\ref{eq
4.10})$, resulting in $%
\mbox{$ \frac{c}{2}(2q+n+1)\underset{\tiny
\alpha_{1},..,\alpha_{q+1}}{\sum}
f_{\alpha_{1}...\alpha_{q+1},\overline{\alpha_{q+1}}}s\otimes(\overset{q}{\underset{k=1}{\otimes}}dz^{\alpha_{k}})$}
$; this can be written as (cf. $(\ref{eq 4.9})$)%
\begin{equation}
\mbox{$ \frac{c}{2}(2q+n+1)\,
\overline{I}^{q+1}\overline{\partial}^{q+1}\underset{\tiny
\alpha_{1},..,\alpha_{q+1}}{\sum}
f_{\alpha_{1}...\alpha_{q+1}}s\otimes(\overset{q+1}{\underset{k=1}{\otimes}}dz^{\alpha_{k}}).$}
\label{eq 4.11}
\end{equation}%
\noindent Combining the two double summations $(\ref{eq 4.9})$ and $(\ref{eq
4.11})$ completes (\ref{4-12}) and yields (\ref{DIp-IpD}).
\end{proof}

%\begin{remark}
%\label{R-4-4} (\ref{4-18}) and (\ref{eq 4.8}) (coming from ($g^{\alpha \bar{%
%\beta}})_{,i\bar{j}})$ cancel out. For the $T^{q}$ case (see below) we don't
%have a term such as (\ref{eq 4.8}), so the term involving the dimension $n$
%remains.
%\end{remark}

\noindent \hspace*{12pt} The following is a T-valued analogue of the above
Proposition \ref{Prop 4.2} on $\mathbb{P}^{n},$ parallel to Proposition \ref%
{10.1}. The precise coefficient is important, and its proof will be given in
the Appendix.

\begin{proposition}
\label{Prop 4.3} ($T$-valued) For all $q\in \{0\}\cup \mathbb{N}$, we have,
on $\Omega ^{0,0}(\mathbb{P}^{n},L\otimes \odot ^{q+1}T)$%
\begin{equation}
{\scriptsize \Delta }_{-q}{\scriptsize I}^{-(q+1)}{\scriptsize \partial }%
^{-(q+1)}{\scriptsize -I}^{-(q+1)}{\scriptsize \partial }^{-(q+1)}%
{\scriptsize \Delta }_{-(q+1)}=(B+\frac{c}{2}(2q+n+1))\,{\scriptsize I}%
^{-(q+1)}{\scriptsize \partial }^{-(q+1)}.\hspace*{60pt}  \label{B+cq+c}
\end{equation}%
with values in $\Omega ^{0,0}(\mathbb{P}^{n},L\otimes \odot ^{q}T).$
\end{proposition}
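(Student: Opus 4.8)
## Proof proposal for Proposition \ref{Prop 4.3}

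The plan is to mirror exactly the computation of Proposition \ref{10.1}, but carried out at the point $p$ defined by $z_i = 0$ on $\mathbb{P}^n$, keeping careful track of the curvature terms that vanished in the Abelian variety case. First I would write $U := f^{\alpha_1\ldots\alpha_{q+1}}s\otimes(\otimes_{k=1}^{q+1}\frac{\partial}{\partial z^{\alpha_k}})$ for an element of $\Omega^{0,0}(\mathbb{P}^n, L\otimes\odot^{q+1}T)$, and reproduce the two formulas $(\ref{2-4-1})$ and $(\ref{2-4-2})$ — which, as the text emphasizes, are valid verbatim on any Kähler manifold with (C.C.) — to get expressions for $\Delta_{-q}I^{-(q+1)}\partial^{-(q+1)}U$ and $I^{-(q+1)}\partial^{-(q+1)}\Delta_{-(q+1)}U$ in terms of iterated covariant derivatives of $f^{\alpha_1\ldots\alpha_{q+1}}$ and of $s$. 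Subtracting them leaves exactly the combination that appeared on $M$, namely terms built from $s_{\alpha_{q+1}\overline{i}i}$ and from the non-commuting pair $f_{,\alpha_{q+1}\overline{i}i} - f_{,\overline{i}i\alpha_{q+1}}$ (and its relatives with $s_i$ in place of $s$). On $M$ the first of these gave the $+B$ via Lemma \ref{Lemma 2.1}, and the second vanished identically; on $\mathbb{P}^n$ both contribute.

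Next I would invoke Lemma \ref{Lemma 2.1} iii) in the form $s_{i\overline{j}} = -Bg_{i\overline{j}}s$ (so $s_{\alpha_{q+1}\overline{i}i} = -Bg_{\alpha_{q+1}\overline{i}}s_i$ at $p$, reproducing the $B$-term), and then apply Lemma \ref{L-4-1} ii) — the commutation relation $(\ref{f Ricci 2})$ — to rewrite every reordering of covariant derivatives of $f^{\alpha_1\ldots\alpha_{q+1}}$ as a sum of curvature contractions $\sum_l f^{\ldots l\ldots}R_{\overline{\alpha_r}\,l\,\overline{j}\,i}$, followed by a further $\overline{i}$-derivative (which hits only $f$, since $\nabla R \equiv 0$ by $(\ref{4-4a})$). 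This is the heart of the computation and structurally identical to the curvature bookkeeping in the proof of Proposition \ref{Prop 4.2}: one must carefully evaluate $\sum_{i,l} f^{l\alpha_2\ldots\alpha_{q+1}}R_{\overline{\alpha_1}\,l\,\overline{\alpha_{q+1}}\,i}$ and the analogous sums for each slot, using the explicit curvature values $(\ref{Rijkl})$ and the vanishing patterns $(\ref{4-7-1})$, and crucially using that $f^{\alpha_1\ldots\alpha_{q+1}}$ is \emph{symmetric} in its indices to collapse the various cases. Splitting into the cases $l = \alpha_r$ versus $l\neq \alpha_r$ and, within the former, the subcases $\alpha_r = \alpha_{q+1}$ (curvature $c$) versus $\alpha_r\neq \alpha_{q+1}$ (curvature $c/2$), exactly as on p. of the proof of Proposition \ref{Prop 4.2}, the $q$ "inner-slot" contractions each contribute $\tfrac{c}{2}$ (after symmetrization) and the "last-slot" contraction contributes $\tfrac{c}{2}(2 + (n-1)) = \tfrac{c}{2}(n+1)$, giving the total $\tfrac{c}{2}(2q + n + 1)$. (In fact, since Proposition \ref{Prop 4.2} has already extracted precisely this coefficient in the $T^\ast$-valued setting, I would either quote that count directly or note that the $T$-valued curvature sum is its index-raised transpose and hence yields the same scalar.)

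Finally I would assemble: $(\ref{2-4-1}) - (\ref{2-4-2})$ applied to $U$ equals $\big(B + \tfrac{c}{2}(2q+n+1)\big)\big(f^{\alpha_1\ldots\alpha_{q+1}}_{\;,\alpha_{q+1}}s + f^{\alpha_1\ldots\alpha_{q+1}}s_{\alpha_{q+1}}\big)\otimes(\otimes_{k=1}^{q}\frac{\partial}{\partial z^{\alpha_k}})$, and then recognize the right-hand factor — exactly as in the last display of the proof of Proposition \ref{10.1} — as $I^{-(q+1)}\partial^{-(q+1)}U$; this is legitimate because $I^{-(q+1)}\partial^{-(q+1)}$ preserves symmetric tensors by Lemma \ref{-q-q} i), so the identity descends to $\Omega^{0,0}(\mathbb{P}^n, L\otimes\odot^{q+1}T)$ with values in $\Omega^{0,0}(\mathbb{P}^n, L\otimes\odot^{q}T)$, yielding $(\ref{B+cq+c})$. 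The main obstacle, and the reason this belongs in the Appendix rather than the main text, is the curvature accounting in the middle step: one has $q+1$ index slots each producing a commutator term with its own case split, and the bookkeeping must be organized (using symmetry of $f$ and the "half-case" trick $c = \tfrac{c}{2} + \tfrac{c}{2}$ from the proof of Proposition \ref{Prop 4.2}) so that all the pieces recombine into the single clean scalar $\tfrac{c}{2}(2q+n+1)$; getting the $+B$ versus the sign conventions right (recall there is no $B$ when the $\overline{i}$-derivative is taken first, cf. the remark after Lemma \ref{Lemma 2.2}) requires the same care as on $M$, but is routine once the curvature sum is settled.
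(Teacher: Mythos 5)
Your proposal is correct and follows essentially the same route as the paper's Appendix proof: reuse the Kähler-valid formulas (\ref{2-4-1})--(\ref{2-4-2}), extract the $B$-term via Lemma \ref{Lemma 2.1}, and convert the derivative reorderings into curvature contractions via Lemma \ref{L-4-1} $ii)$, whose case analysis reproduces the coefficients of (\ref{4-15}) and (\ref{4-18}) to yield $\frac{c}{2}(2q+n+1)$. One small slip: each of the $q$ inner-slot contractions contributes $c$ (the two half-cases combined), not $\frac{c}{2}$, which is what makes the total $qc+\frac{c}{2}(n+1)=\frac{c}{2}(2q+n+1)$ come out right — but since you also propose to quote the Proposition \ref{Prop 4.2} count directly, this does not affect the argument.
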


The next two propositions are parallel to, but not exact analogues of,
Proposition \ref{9}. The identity $\Delta ^{k}-\Delta
_{k}=n(I^{(k-1)}\partial ^{(k-1)}\overline{I}^{k}\overline{\partial }^{k}-%
\overline{I}^{(k+1)}\overline{\partial }^{(k+1)}I^{k}\partial ^{k})$ for $%
k\in \mathbb{N}$ on an Abelian variety$,$ does not hold for $\mathbb{P}^{n}$%
. We calculate them separately. Their proofs will be given in the Appendix.
The parameter $c$ below takes the value $2.$

\begin{proposition}
\label{Prop 4.4} (canonical commutation relations for creation-annihilation
operators, cf. (\ref{ACs}))

%\begin{enumerate}
%\item \lbrack i)]

$i)$ For all $q\in \mathbb{N}$, $I^{(q-1)}\partial ^{(q-1)}\overline{I}^{q}%
\overline{\partial }^{q}-\overline{I}^{(q+1)}\overline{\partial }%
^{(q+1)}I^{q}\partial ^{q}=[B-\frac{c}{2}(2q+n-1)]\cdot Id$ $%
\mbox{on\hspace*{4pt} $\Omega
^{0,0}\left(\mathbb{P}^{n}, L\otimes \left( \odot^{q} T^{\ast}\right)
\right) $}.$

$ii)$ $I^{-1}\partial ^{-1}\overline{I}^{0}\overline{\partial }^{0}-%
\overline{I}^{1}\overline{\partial }^{1}I^{0}\partial ^{0}=nB\cdot Id%
\hspace*{4pt}%
\mbox{on\hspace*{4pt} $\Omega ^{0,0}\left(\mathbb{P}^{n}, L
\right) $}.$

$iii)$ For all $q\in \mathbb{N}$, $I^{-(q+1)}\partial ^{-(q+1)}\overline{I}%
^{\,-q}\,\overline{\partial }^{\,-q}-\overline{I}^{\,-(q-1)}\,\overline{%
\partial }^{\,-(q-1)}I^{-q}\partial ^{-q}$ $=$ $[B+\frac{c}{2}(2q+n-1)]\cdot
Id$

\noindent on $\Omega^{0,0}\left(\mathbb{P}^{n}, L\otimes \left( \odot^{q}
T\right) \right)$. %\end{enumerate}
\end{proposition}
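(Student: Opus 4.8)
The plan is to follow the pointwise strategy used in the proof of Proposition \ref{9}, localizing everything at the distinguished point $p=\{z_i=0\}$ (legitimate because $\mathbb{P}^n$ is homogeneous, and there $g_{i\bar j}=\delta_{i\bar j}$, $dg_{i\bar j}=0$ with $c=2$), and then retaining the curvature terms that were absent on the Abelian variety. For part $i)$ I would begin from the three formulas $(\ref{2-36a})$--$(\ref{2-36c})$, which are stated to hold on an arbitrary K\"ahler manifold: applied to a local section $f_{\alpha_1\cdots\alpha_q}s\otimes(\otimes_{k=1}^{q}dz^{\alpha_k})$ they write $I^{q-1}\partial^{q-1}\overline I^{q}\overline\partial^{q}$ in terms of $f_{\cdots,\bar j i}s$ and $f_{\cdots,\bar j}s_i$, and $\overline I^{q+1}\overline\partial^{q+1}I^{q}\partial^{q}$ in terms of $f_{\cdots,i\bar j}s$, $f_{\cdots,\bar j}s_i$ and $f_{\cdots}s_{i\bar j}$, all contracted against $g^{\bar j\alpha_q}$. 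Subtracting, the $f_{\cdots,\bar j}s_i$ contributions cancel and what remains is the sum of $\sum_{i,j}\big(f_{\alpha_1\cdots\alpha_q,\bar j i}-f_{\alpha_1\cdots\alpha_q,i\bar j}\big)s\,g^{\bar j\alpha_q}\big(dz^i\otimes(\otimes_{k=1}^{q-1}dz^{\alpha_k})\big)$ and $-\sum_{i,j}f_{\alpha_1\cdots\alpha_q}s_{i\bar j}g^{\bar j\alpha_q}\big(dz^i\otimes(\otimes_{k=1}^{q-1}dz^{\alpha_k})\big)$. The second term is handled at once by Lemma \ref{Lemma 2.1} $ii)$: since $\Theta=-iB\omega_{FS}$ we still have $s_{i\bar j}=-Bg_{i\bar j}s$ on $\mathbb{P}^n$, so $s_{i\bar j}g^{\bar j\alpha_q}=-B\delta^{\alpha_q}_i s$, and after symmetrization this term contributes precisely $+B\cdot Id$ on $\odot^{q}T^{\ast}$.

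The heart of the argument is the first term. On an Abelian variety it vanishes because the commutation relation $(\ref{4-7})$ involves only the (zero) curvature; on $\mathbb{P}^n$ it is evaluated by the complex Ricci identity, Lemma \ref{L-4-1} $i)$, which rewrites $f_{\cdots,\bar j i}-f_{\cdots,i\bar j}$ as a sum of $q$ terms of shape $-f_{\alpha_1\cdots l\cdots\alpha_q}R_{\bar l\alpha_m\bar j i}$ for $m=1,\dots,q$. Contracting with $g^{\bar j\alpha_q}$ forces $j=\alpha_q$ at $p$, one substitutes the explicit Fubini--Study curvature from $(\ref{4-4a})$--$(\ref{Rijkl})$, namely $R_{\bar l\alpha_m\bar\alpha_q i}=\tfrac{c}{2}\big(\delta_{l\alpha_m}\delta_{i\alpha_q}+\delta_{li}\delta_{\alpha_m\alpha_q}\big)$ at $p$, and then carries out exactly the coincidence bookkeeping of the proof of Proposition \ref{Prop 4.2} (split $l=\alpha_m$ versus $l\ne\alpha_m$ and $i=\alpha_q$ versus $i\ne\alpha_q$, use the symmetry of $f_{\alpha_1\cdots\alpha_q}$ and that the output is symmetrized). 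Each of the $q-1$ "off-diagonal" terms $m<q$ contributes a factor $c$, while the single "diagonal" term $m=q$ contributes $\tfrac{c}{2}\big(2+(n-1)\big)=\tfrac{c}{2}(n+1)$; with the overall minus sign from Lemma \ref{L-4-1} $i)$ this first term equals $-\tfrac{c}{2}\big(2(q-1)+(n+1)\big)\cdot Id=-\tfrac{c}{2}(2q+n-1)\cdot Id$. Adding the $+B\cdot Id$ from the previous step gives $i)$. This is the same mechanism as Proposition \ref{Prop 4.2}, with one fewer covariant index in play, which is precisely why the $2q+n+1$ there becomes $2q+n-1$ here.

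For $iii)$ I would run the identical computation with the $T$-valued analogues of $(\ref{2-36a})$--$(\ref{2-36c})$ (obtained in the same fashion, the roles of $I^{\bullet}\partial^{\bullet}$ and $\overline I^{\bullet}\overline\partial^{\bullet}$ being interchanged, as in the computations inside the proof of Proposition \ref{10.1}) and with Lemma \ref{L-4-1} $ii)$ in place of $i)$. Because curvature acts on $T$-tensors with the opposite sign --- Lemma \ref{L-4-1} $ii)$ carries a $+$ where $i)$ carries a $-$ --- the curvature term flips to $+\tfrac{c}{2}(2q+n-1)\cdot Id$ while the $s_{i\bar j}$ term still gives $+B\cdot Id$ (it comes from the line bundle and is insensitive to $T$ versus $T^{\ast}$), producing $\big[B+\tfrac{c}{2}(2q+n-1)\big]\cdot Id$. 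For $ii)$, the case $q=0$, there is no symmetric tensor slot at all, so $f$ is an honest scalar, $f_{,\bar i i}=f_{,i\bar i}$, and the Ricci term is simply absent; the computation then reproduces word for word the one on the Abelian variety (Proposition \ref{9} $ii)$), the only surviving contribution being the full trace $-\sum_i f\,s_{i\bar i}=nB fs$, where the factor $n$ --- rather than the single $B$ seen when $q\ge1$ --- appears because the metric is now contracted with $g^{i\bar j}$ over both indices. Hence $nB\cdot Id$.

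The step I expect to be the main obstacle is not conceptual but the curvature combinatorics of the first term in $i)$ (and its mirror in $iii)$): keeping the interior-product conventions for $I^{q},\overline I^{q}$ straight while expanding the Ricci identity over all $q$ symmetric slots, enumerating the coincidence cases correctly, and verifying that the somewhat fortuitous cancellations leave exactly the coefficient $-\tfrac{c}{2}(2q+n-1)$ --- the same tedious accounting flagged as "lucky" in the proof of Proposition \ref{Prop 4.2}, which is presumably why the authors defer these proofs to the Appendix.
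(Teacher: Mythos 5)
Your proposal is correct and follows essentially the same route as the paper's Appendix proof: subtract the two composites via the K\"ahler-manifold formulas (\ref{2-36a})--(\ref{2-36c}), extract the $+B\cdot Id$ contribution from $s_{i\overline{j}}=-Bg_{i\overline{j}}s$ (Lemma \ref{Lemma 2.1}), and evaluate the remaining commutator term through Lemma \ref{L-4-1} and the Fubini--Study curvature with the same coincidence bookkeeping as in Proposition \ref{Prop 4.2}. Your explicit coefficient accounting ($(q-1)c+\tfrac{c}{2}(n+1)=\tfrac{c}{2}(2q+n-1)$) and the sign-flip explanation for $iii)$ and the trace explanation for $ii)$ all match the paper's argument.
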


\begin{proposition}
\label{Prop 4.5}

%\begin{enumerate}
%\item \lbrack i)]

$i)$ For all $q\in \mathbb{N}$, $\Delta ^{q}-\Delta _{q}=[-\frac{c}{2}%
(n+1)q+nB]\cdot Id$ on $\Omega ^{0,0}(\mathbb{P}^{n},L\otimes (\otimes
^{q}T^{\ast })).$

$ii)$ $\Delta ^{0}-\Delta _{0}=nB\cdot Id\hspace*{4pt}%
\mbox{on\hspace*{4pt} $\Omega
^{0,0}\left(\mathbb{P}^{n}, L \right) $}.$

$iii)$ For all $q\in \mathbb{N}$, $\Delta ^{-q}-\Delta _{-q}=[\frac{c}{2}%
(n+1)q+nB]\cdot Id$ on $\Omega ^{0,0}(\mathbb{P}^{n},L\otimes (\otimes
^{q}T)).$%
%\end{enumerate}

\noindent The same formulas hold true on symmetric tensors $\Omega ^{0,0}(%
\mathbb{P}^{n},L\otimes (\odot ^{q}T^{\ast }))$ for $i)$ and $\Omega ^{0,0}(%
\mathbb{P}^{n},L\otimes (\odot ^{q}T))$ for $iii)$ respectively.
\end{proposition}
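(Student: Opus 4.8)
\textbf{Proof proposal for Proposition \ref{Prop 4.5}.}

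The plan is to imitate the proof of Proposition \ref{9}, but now carrying the curvature contributions of $\mathbb{P}^{n}$ through the computation and invoking the commutation relation in the form of Lemma \ref{L-4-1}. First I would reduce each of the three statements to a single pointwise identity at the base point $p$ where $z_{i}=0$, using homogeneity of $\mathbb{P}^{n}$ and the c.g.c. established in (\ref{FS})--(\ref{Rijkl}). For part $ii)$ the computation is essentially identical to the Abelian case: by Lemma \ref{Lemma 2.2} $i)$ and $ii)$ one has $\Delta^{0}(fs)=-\sum_{i}(f_{,i\bar\imath}s+f_{,\bar\imath}s_{i}-Bfs)$ and $\Delta_{0}(fs)=-\sum_{i}(f_{,\bar\imath i}s+f_{,\bar\imath}s_{i})$, and since $f$ is a scalar function there is no curvature correction in commuting $f_{,i\bar\imath}$ with $f_{,\bar\imath i}$ (apply (\ref{4-7}) with $k=l=0$, or just note $f_{,i\bar\jmath}-f_{,\bar\jmath i}=0$ for functions); subtracting gives $nB\cdot Id$ exactly as before. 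So the substance is in $i)$ and $iii)$.

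For $iii)$, the key step is to compute $\Delta^{-q}-\Delta_{-q}$ on a section $f^{\alpha_{1}\cdots\alpha_{q}}s\otimes(\otimes_{k}\frac{\partial}{\partial z^{\alpha_{k}}})$ using Lemma \ref{Lemma 2.2} $iii)$ and $iv)$: the difference picks up $B$ from the scalar-coefficient part (the $s_{i\bar\imath}$ contribution, present in $\Delta^{-q}$ but not $\Delta_{-q}$) times the rank-factor $n$ (from the trace $\sum_i$), plus a genuine curvature term coming from commuting ${f^{\alpha_{1}\cdots\alpha_{q}}}_{,i\bar\imath}$ with ${f^{\alpha_{1}\cdots\alpha_{q}}}_{,\bar\imath i}$. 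By Lemma \ref{L-4-1} $ii)$ this commutator equals a sum of $q$ terms of the form $\sum_{i,l}f^{\cdots l\cdots}R_{\overline{\alpha_{r}}l\overline{\imath}i}$. Using the explicit curvature (\ref{Rijkl}), (\ref{4-7-1}) at $z=0$ one evaluates $\sum_{i}R_{\overline{\alpha_{r}}l\overline{\imath}i}$: it is nonzero only for $l=\alpha_{r}$, where it equals $R_{\overline{\alpha_{r}}\alpha_{r}\overline{\alpha_{r}}\alpha_{r}}+\sum_{i\neq\alpha_{r}}R_{\overline{\alpha_{r}}\alpha_{r}\overline{\imath}i}=c+(n-1)\frac{c}{2}=\frac{c}{2}(n+1)$. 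Summing over the $q$ tensor slots gives the curvature contribution $\frac{c}{2}(n+1)q$, so that $\Delta^{-q}-\Delta_{-q}=[\frac{c}{2}(n+1)q+nB]\cdot Id$. The $T^{\ast}$-case $i)$ is the same computation with the sign of the curvature term reversed, coming from Lemma \ref{L-4-1} $i)$ where $R$ appears with a minus sign relative to the $T$-case; this yields $\Delta^{q}-\Delta_{q}=[-\frac{c}{2}(n+1)q+nB]\cdot Id$. Finally, since all these operators act componentwise on the tensor indices and the curvature contractions computed above are symmetric in those indices (each slot contributes the same scalar $\frac{c}{2}(n+1)$), the formulas descend verbatim to the symmetric subbundles $\odot^{q}T^{\ast}$ and $\odot^{q}T$, giving the last assertion.

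I expect the main obstacle to be the bookkeeping in the curvature sum $\sum_{i}R_{\overline{\alpha_{r}}l\overline{\imath}i}$ at $z=0$: one must carefully separate the diagonal case $l=\alpha_{r}=i$ (coefficient $c$) from $l=\alpha_{r}\neq i$ (coefficient $\frac{c}{2}$, with $n-1$ such terms) and check via (\ref{4-7-1}) that all genuinely off-diagonal terms vanish, exactly as in the delicate case analysis in the proof of Proposition \ref{Prop 4.2} (the splitting into Cases $1)_a$, $1)_b$, $2)$ there). The rest is a routine adaptation of Proposition \ref{9}'s proof, and the scalar part contributing $nB$ is immediate from Lemma \ref{Lemma 2.2}.
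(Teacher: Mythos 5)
Your proposal is correct and follows essentially the same route as the paper's own proof: reduce to a pointwise computation at $z=0$ via Lemma \ref{Lemma 2.2}, apply the commutation relation of Lemma \ref{L-4-1} so that the curvature sum per tensor slot evaluates to $\frac{c}{2}(n+1)$ (splitting into $i=\alpha_{r}$ giving $c$ and the $n-1$ cases $i\neq\alpha_{r}$ each giving $\frac{c}{2}$), with the $nB$ term coming from $s_{i\bar\imath}=-Bg_{i\bar\imath}s$. The only cosmetic difference is that you treat the $T$-valued case first and deduce the $T^{\ast}$-case by the sign flip, whereas the paper does the reverse.
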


\begin{remark}
\label{R-4-6}%\begin{remark}
%\label{Rmk 4.6}
%\begin{enumerate}
%\item \lbrack i)]
%\textrm{
For complex hyperbolic spaces, all the Bochner-Kodaira type identities in
this section are basically true as the calculations are virtually the same
(where $c$ is then set to be $-2)$. %}}
%\end{enumerate}
%\end{remark}
\end{remark}

\section{\textbf{Proof of Theorem \protect\ref{TeoC} and Corollary \protect
\ref{TeoD}\label{Sec5}}}

The Bochner-Kodaira type identities established in the last section will be
used in this section.

\begin{proof}
(of \textbf{Theorem \ref{TeoC}} $ii)$) Although the proof is in a similar
spirit to that of Theorem \ref{TeoA} $ii)$ in Section \ref{Sec3}, the
clarity on the coefficients is desirable and we make the argument parallel
to that of Section \ref{Sec3} in order to keep track of these quantities.

\noindent \hspace*{12pt} For $k=1$, by Lemma \ref{Lemma1} , 
\begin{equation*}
\left( I^{-1}\partial ^{-1}\right) s^{-1}=\left( I^{-1}\partial ^{-1}\right)
\left( I^{-1}\partial ^{-1}\right) ^{\ast }s^{0}\overset{Lem.\ref{Lemma1}}{=}%
\Delta _{0}s^{0}\overset{\text{assumption}}{=}q(B+\frac{c}{2}(n+q))s^{0}\neq
0.\hspace*{10pt}
\end{equation*}%
\noindent We have $q(B+\frac{c}{2}(n+q))>qB>0$ as $B,q,c>0$, which implies $%
s^{-1}\neq 0$.

%\newline
\noindent \hspace*{12pt} For $q>1$ and $k=2$, $\left( I^{-2}\partial
^{-2}\right) s^{-2}\in \Omega ^{0,0}(\mathbb{P}^{n},L\otimes T)$ by Lemma $%
\ref{-q-q}$, 
\begin{align*}
\left( I^{-2}\partial ^{-2}\right) s^{-2}& =\left( I^{-2}\partial
^{-2}\right) \left( I^{-2}\partial ^{-2}\right) ^{\ast }s^{-1}\overset{Prop.%
\ref{8}\,i)}{\underset{Lem.\ref{lemmaA}}{=}}-\big(I^{-2}\partial ^{-2}\big)%
\big(\overline{I}^{\,-1}\overline{\partial }^{\,-1}\big)s^{-1} \\
& \hspace*{-20pt}\overset{Prop.\ref{Prop 4.4}\text{ }iii)}{=}-\big(\big(%
\overline{I}^{\,-0}\overline{\partial }^{\,-0}\big)\big(I^{-1}\partial ^{-1}%
\big)+(B+c)+\frac{c}{2}(n-1)\big)s^{-1}\in \Omega ^{0,0}(\mathbb{P}%
^{n},L\otimes T)
\end{align*}%
\noindent which equals 
\begin{align*}
& -[\big(\mathcal{S}\overline{I}^{\,-0}\overline{\partial }^{\,-0}\big)\big(%
I^{-1}\partial ^{-1}\big)+(B+c)+\frac{c}{2}(n-1)]s^{-1} \\
& \overset{Prop.\ref{8}\,i)}{=}[\big(I^{\,-1}\partial ^{\,-1}\big)^{\ast }%
\big(I^{-1}\partial ^{-1}\big)-(B+c)-\frac{c}{2}(n-1)]s^{-1} \\
\overset{k=1}{=}& \big(I^{\,-1}\partial ^{\,-1}\big)^{\ast }((qB+\frac{c}{2}%
(q^{2}+q)+\frac{c}{2}(n-1)q)s^{0})-(B+c)s^{-1}-\frac{c}{2}(n-1)s^{-1} \\
& =\big{(}(q-1)B+\frac{c}{2}((q^{2}-1)+(q-1))+\frac{c}{2}(n-1)(q-1)\big{)}%
s^{-1}\neq 0.
\end{align*}%
\noindent Hence $s^{-2}\neq 0$. By repeating this process (less
straightforward yet similar to that for (\ref{3-0})), for $s^{-k}=(-1)^{k}(%
\overline{I}^{\,-(k-1)}\overline{\partial }^{\,-(k-1)})...(\overline{I}%
^{\,-0}\overline{\partial }^{\,-0})\,s^{0}\in \Omega ^{0,0}$\ $\left( 
\mathbb{P}^{n},L\otimes \left( \odot ^{k}T\right) \right) $, one has 
\begin{equation}
\left( I^{-k}\partial ^{-k}\right) s^{-k}=\Big{(}%
\begin{array}{c}
(q-(k-1))B+\frac{c}{2}(q^{2}-(k-1)^{2})+\frac{c}{2}(q-(k-1)) \\ 
+\frac{c}{2}(n-1)(q-(k-1))%
\end{array}%
\Big{)}s^{-(k-1)}\neq 0  \label{5-0}
\end{equation}%
\noindent for $q\geq k\geq 1,$ $c>0,$ $B>0$, proving inductively that $%
s^{-k}\neq 0$. In particular $s^{-q}\neq 0$.\newline
\noindent \hspace*{12pt} For the holomorphicity of $s^{-q}$, we follow (\ref%
{STAR}) with (\ref{B+cq+c}) in place of (\ref{3-1}) to get 
\begin{equation}
\left( I^{-k}\partial ^{-k}\right) ^{\ast }\Delta _{-(k-1)}-\Delta
_{-k}\left( I^{-k}\partial ^{-k}\right) ^{\ast }=(B+ck+\frac{c}{2}%
(n-1))\left( I^{-k}\partial ^{-k}\right) ^{\ast }\hspace*{120pt}  \label{5-1}
\end{equation}%
mapping $\Omega ^{0,0}(\mathbb{P}^{n},L\otimes (\odot ^{k-1}T))$ to $\Omega
^{0,0}(\mathbb{P}^{n},L\otimes (\odot ^{k}T))$. By using $(\ref{5-1})$
recursively, 
\begin{align}
\Delta _{-q}\text{ }s^{-q}& {\hspace*{8pt}=\hspace*{8pt}}\Delta _{-q}\left(
I^{-q}\partial ^{-q}\right) ^{\ast }s^{-\left( q-1\right) }=\cdots (%
\mbox{similar to $(\ref{holomor})$})  \notag  \label{eq5.3} \\
& {\hspace*{8pt}=\hspace*{8pt}}\left( I^{-q}\partial ^{-q}\right) ^{\ast
}\left( I^{-\left( q-1\right) }\partial ^{-\left( q-1\right) }\right) ^{\ast
}...\left( I^{-1}\partial ^{-1}\right) ^{\ast }\Delta _{0}\text{ }s^{0}-(qB+%
\frac{c}{2}q(n+q))s^{-q} \\
& {\hspace*{8pt}=\hspace*{8pt}}(qB+\frac{c}{2}q(n+q))s^{-q}-(qB+\frac{c}{2}%
q(n+q))s^{-q}=0  \notag
\end{align}%
\noindent giving $s^{-q}$ is holomorphic as $\Delta _{-q}=(\overline{%
\partial }^{\,-q})^{\ast }\overline{\partial }^{\,-q}$.\newline
\noindent \hspace*{12pt} The proof for $s^{-k}$ is similar to $(\ref{eq5.3})$%
. By using $(\ref{5-1})$ recursively, for $k=1,2,...q$ we have 
\begin{eqnarray}
\Delta _{-k}\text{ }s^{-k} &=&\Delta _{-k}\left( I^{-k}\partial ^{-k}\right)
^{\ast }s^{-\left( k-1\right) }=\cdots (\mbox{similar to
(\ref{3-4})})  \notag \\
&=&\big{(}(q-k)B+\frac{c}{2}(q^{2}-k^{2})+\frac{c}{2}(q-k)+\frac{c}{2}%
(n-1)(q-k)\big{)}s^{-k}.\hspace*{40pt}
\end{eqnarray}
\end{proof}

%\begin{theorem} [= \textbf{$i)$ of Theorem \ref{TeoC}}]
%\noindent For $q\in \mathbb{N}$ let $0 \neq s^{0}\in \Omega ^{0,0}\left(\mathbb{CP}^{n}, L\right) $ be an eigensection of $\Delta _{0}$ with eigenvalue $qB+q^{2}+q$. For $k=1,...,q$ define $s^{-k}:=\left( I^{-k}\partial ^{-k}\right) ^{\ast }...\left( I^{-1}\partial^{-1}\right) ^{\ast }s^{0}\in \Omega^{0,0}(\mathbb{CP}^{n}, L \otimes (\odot^{k}T))$.
%Then $s^{-q}\in \Omega ^{0.0}\left(\mathbb{CP}^{n}, L\otimes( \odot^{q} T)\right)$ is nontrivial and holomorphic, and $s^{-k}$ is a nontrivial eigensection of $\Delta _{-k}$ with eigenvalue $(q-k)B+(q^{2}-k^{2})+(q-k)$.
%\end{theorem}

\noindent \hspace*{12pt}The following theorem and Corollary are parallel to
Theorem \ref{q+n+k} and Proposition \ref{alleigen} respectively, but see
Remark \ref{R-5-2} below for the difference. Similar to Theorem \ref{q+n+k},
we define $s^{k}:=(\overline{I}^{k}\overline{\partial }^{k})^{\ast }...(%
\overline{I}^{1}\overline{\partial }^{1})^{\ast }s^{0}\in \Omega ^{0,0}(%
\mathbb{P}^{n},L\otimes (\odot ^{k}T^{\ast }))$ for $k\in \mathbb{N},$ where 
$s^{0}\in \Omega ^{0,0}\left( \mathbb{P}^{n},L\right) .$

\begin{theorem}
\label{T-5-1}\noindent Suppose that $0\neq s^{0}\in \Omega ^{0,0}\left( 
\mathbb{P}^{n},L\right) $ is an eigensection of $\Delta ^{0}$ with
eigenvalue $(q+n)B+\frac{cq(n+q)}{2}$ for some $q\in \{0\}\cup \mathbb{N}$.
Let $s^{k}$ be as above. Then $s^{k}$ is a nontrivial eigensection of $%
\Delta ^{k}$ with eigenvalue $(q+n+k)[B+\frac{c}{2}(q-k)]\geq $ $0$ for $%
1\leq k\leq $ $B+q$ (where $c=2).$ Moreover, for $k=B+q,$ $s^{k}$ is an
anti-holomorphic section. Further, $s^{B+q+1}=0$. (For the existence of $%
s^{0}$ for every $q\in \{0\}\cup \mathbb{N},$ see Remark \ref{R-5-13} $b)$;
for a correspondence between anti-holomorphic sections and holomorphic ones,
compare (7-7-1) in the proof of Theorem \ref{T-7-1}.) 
%cf. the proof of Proposition \ref{P-7-3} for the Grassmannian case)
\end{theorem}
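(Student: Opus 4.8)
The plan is to mimic the structure of the proof of Theorem \ref{TeoA} $ii)$ (and its $\mathbb{P}^n$ analogue Theorem \ref{TeoC} $ii)$), but now working with the raising operators $(\overline{I}^{k}\overline{\partial}^{k})^{\ast}$ rather than the lowering operators $(I^{-k}\partial^{-k})^{\ast}$, and using the $T^{\ast}$-valued Bochner--Kodaira identity of Proposition \ref{Prop 4.2} in place of Proposition \ref{Prop 4.3}. First I would establish the recursion $(\overline{I}^{k}\overline{\partial}^{k})\,s^{k} = c_{k}\,s^{k-1}$ for suitable constants $c_{k}$, by the same two-step argument used to reach (\ref{3-0}): namely, expand $(\overline{I}^{k}\overline{\partial}^{k})(\overline{I}^{k}\overline{\partial}^{k})^{\ast} s^{k-1}$, invoke Lemma \ref{lemmaA} $ii)$ to drop the symmetrizations $\mathcal{S}$, use Lemma \ref{-q-q} $ii)$ to stay on symmetric $T^{\ast}$-tensors, apply Proposition \ref{8} $iii)$ to convert $(\cdots)^{\ast}$ back into $\overline{I}\overline{\partial}$ or $I\partial$, and then use Proposition \ref{Prop 4.4} $i)$ to collapse the composite $I^{(k-1)}\partial^{(k-1)}\overline{I}^{k}\overline{\partial}^{k}$ into a scalar. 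Tracking the arithmetic exactly as in the computation just before (\ref{5-0}), one should find $c_{k} = (q+n+k-1)(B+\tfrac{c}{2}(q-k+1))$ with $c=2$, so that iterating gives $(\overline{I}^{1}\overline{\partial}^{1})\cdots(\overline{I}^{k}\overline{\partial}^{k})\,s^{k} = \big(\prod_{j=1}^{k} c_{j}\big) s^{0}$.

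Next I would read off the three assertions from this recursion. For $1\le k\le B+q$ each factor $B+\tfrac{c}{2}(q-j+1) = B+(q-j+1)$ is strictly positive (the smallest occurring value is $B+q-k+1 \ge 1$), hence $\prod_{j=1}^{k}c_{j}\neq 0$, which forces $s^{k}\neq 0$ inductively from $s^{0}\neq 0$. The eigenvalue claim then follows exactly as (\ref{3-4}) does from (\ref{STAR}): take adjoints of the identity in Proposition \ref{Prop 4.2} with $q$ replaced by $k-1$ to get a commutation relation for $(\overline{I}^{k}\overline{\partial}^{k})^{\ast}$ versus $\Delta^{k}$ and $\Delta^{k-1}$, namely
\begin{equation*}
(\overline{I}^{k}\overline{\partial}^{k})^{\ast}\Delta^{k-1} - \Delta^{k}(\overline{I}^{k}\overline{\partial}^{k})^{\ast} = \big(B - \tfrac{c}{2}(2k+n-1)\big)(\overline{I}^{k}\overline{\partial}^{k})^{\ast},
\end{equation*}
and apply it recursively down from $s^{0}$, whose eigenvalue under $\Delta^{0}$ is $(q+n)B+\tfrac{cq(n+q)}{2}$ by hypothesis; summing the shifts yields eigenvalue $(q+n+k)\big(B+\tfrac{c}{2}(q-k)\big)$ for $s^{k}$, which I would simplify algebraically. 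When $k=B+q$ this eigenvalue is $(q+n+B+q)\cdot 0 = 0$, so $\Delta^{B+q}s^{B+q}=(\partial^{B+q})^{\ast}\partial^{B+q}s^{B+q}=0$ forces $\partial^{B+q}s^{B+q}=0$, i.e. $s^{B+q}$ is anti-holomorphic. Finally $s^{B+q+1} = (\overline{I}^{B+q+1}\overline{\partial}^{B+q+1})^{\ast}s^{B+q}$: since $s^{B+q}$ lies in $H^{0}$ of the conjugate-type bundle it is annihilated by $\partial^{B+q}$, and because $(\overline{I}^{B+q+1}\overline{\partial}^{B+q+1})^{\ast} = -\mathcal{S}I^{B+q}\partial^{B+q}$ by Proposition \ref{8} $iv)$, one gets $s^{B+q+1}=0$.

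The main obstacle I expect is bookkeeping of the exact constants: unlike the flat Abelian case, every application of Propositions \ref{Prop 4.2}, \ref{Prop 4.4}, and the commutator identity injects a $k$-dependent curvature term $\tfrac{c}{2}(2k+n\pm 1)$, and one must verify that these telescope correctly so that the product $\prod_{j=1}^{k}\big(q+n+j-1\big)\big(B+q-j+1\big)$ genuinely emerges and that the sign pattern makes every factor positive precisely in the range $1\le k\le B+q$. A secondary subtlety is making sure the symmetrization operators $\mathcal{S}$ can be dropped at each stage: this needs the analogue of Lemma \ref{L-2-13} for $T^{\ast}$-valued symmetric tensors and anti-holomorphic (rather than holomorphic) starting sections, which I would state and prove as a short lemma parallel to Lemma \ref{L-2-13} $ii)$ using Proposition \ref{Prop 4.4} $i)$ in place of Proposition \ref{9} $iii)$. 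Granting those two points, the rest is a direct transcription of the arguments already carried out for Theorem \ref{TeoA} and Theorem \ref{TeoC}.
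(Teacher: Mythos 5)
Your proposal follows the paper's own proof essentially step for step: the eigenvalue is obtained from the adjoint of Proposition \ref{Prop 4.2} (the paper's (\ref{5-4a})), nontriviality from the recursion $\overline{I}^{k}\overline{\partial }^{k}s^{k}=f(k)\,s^{k-1}$ with $f(k)>0$ for $1\leq k\leq B+q$ derived via Propositions \ref{8}, \ref{Prop 4.4} $i)$ and Lemma \ref{lemmaA} $ii)$, anti-holomorphicity from the vanishing of the eigenvalue at $k=B+q$, and $s^{B+q+1}=0$ from Proposition \ref{8}. Your constant $c_{k}=(q+n+k-1)(B+\frac{c}{2}(q-k+1))$ agrees with the paper's $f(k)=(q+n+k-1)B+\frac{c}{2}[q(n+q)-(k-1)(n+k-1)]$, and your positivity range is correct.

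One concrete error: the commutation relation you display has the wrong sign. Taking adjoints of (\ref{DIp-IpD}) with $q=k-1$ (the Laplacians are self-adjoint and the real scalar is unchanged) gives $(\overline{I}^{k}\overline{\partial }^{k})^{\ast }\Delta ^{k-1}-\Delta ^{k}(\overline{I}^{k}\overline{\partial }^{k})^{\ast }=\bigl(-B+\frac{c}{2}(2k+n-1)\bigr)(\overline{I}^{k}\overline{\partial }^{k})^{\ast }$, i.e.\ the negative of your right-hand side. With your sign the recursion reads $\lambda _{k}=\lambda _{k-1}-B+\frac{c}{2}(2k+n-1)$ and telescopes to $(q+n-k)B+\frac{c}{2}[q(n+q)+k(n+k)]$, not the stated $(q+n+k)[B+\frac{c}{2}(q-k)]$; only the correct sign produces the claimed formula, so the sign must be fixed before the "summing the shifts" step. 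A minor further remark: the extra lemma you propose (a $T^{\ast }$-valued analogue of Lemma \ref{L-2-13} for anti-holomorphic starting data) is not needed here. The symmetry bookkeeping for $s^{k}$ built from an arbitrary eigensection $s^{0}$ is already supplied by Lemma \ref{lemmaA} $ii)$ and Lemma \ref{-q-q} $ii)$, exactly as in the proof of Theorem \ref{q+n+k}; Lemma \ref{L-2-13} is only relevant when one starts from a holomorphic section at the top level, as in Theorem \ref{TeoC} $iii)$.
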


\begin{proof}
We use the adjoint of (\ref{DIp-IpD}) in place of that of (\ref{-BBB}): 
\begin{equation}
{\scriptsize (\overline{I}}^{k}\overline{\partial }^{k})^{\ast }{\scriptsize %
\Delta }^{(k-1)}-{\scriptsize \Delta }^{k}({\scriptsize \overline{I}}^{k}%
\overline{\partial }^{k})^{\ast }{\scriptsize =[-B+\frac{c}{2}(n+2k-1)]\,}(%
\overline{I}^{(k)}\overline{\partial }^{k})^{\ast }  \label{5-4a}
\end{equation}%
\noindent to get the stated eigenvalue for the eigensection $s^{k};$ see the
lines above Theorem \ref{q+n+k}. For $1\leq k\leq B+q,$ $0\neq s^{k}$
follows the argument in the proof of Theorem \ref{q+n+k} where Proposition %
\ref{9} $i)$ is now replaced by Proposition \ref{Prop 4.4} $i)$: It is
slightly tedious but straightforward to derive an analogue $\overline{I}^{k}%
\overline{\partial }^{k}s^{k}=f(k)s^{k-1}$ where $f(k)=(q+n+k-1)B+\frac{c}{2}%
[q(n+q)-(k-1)(n+k-1)]$ ($c=2$) is quadratic in $k$ and is positive for $%
1\leq k\leq B+q$.

%\% Observe that $f(1)$ $=$ $(q+n)B+\frac{c}{2}q(n+q)$ $>0,$ $f(B+q)$ $=$ $%
%B+n+2q-1$ $>$ $0,$ $f^{\prime }(k)$ $=$ $\frac{c}{2}(-2k-n+2)+B$ and $%
%f^{\prime \prime }(k)$ $=$ $-c$ $<$ $0.$ It follows from basic calculus that 
%$f(k)$ must be positive for $1\leq k\leq B+q).\%$

When $k=B+q,$ the stated eigenvalue formula gives $0.$ Then one has $0$ $=$ $%
\Delta ^{k}s^{k}$ $=$ ($\partial ^{k})^{\ast }\partial ^{k}s^{k}$ so $%
\partial ^{k}s^{k}$ $=$ $0,$ i.e. $s^{B+q}$ is anti-holomorphic. This and $%
s^{B+q+1}=(\overline{I}^{B+q+1}\overline{\partial }^{B+q+1})^{\ast }s^{B+q}$
give $s^{B+q+1}=0$ by Proposition \ref{8} $iii).$
\end{proof}

\begin{remark}
\label{R-5-2} On an Abelian variety (with positive $L$) all the eigenvalues
of $\Delta ^{k}$ are positive for any $k\in \{0\}\cup \mathbb{N}$ (Theorem %
\ref{q+n+k} and Corollary \ref{C-3-3}). But on $\mathbb{P}^{n}$ (with
positive $L$) $\Delta ^{k}$ has a zero eigenvalue as long as $k\geq B$ by
the above result.
\end{remark}

\begin{proposition}
\label{alleigen2} The set $S$ of eigenvalues of $\Delta _{0}$ on $\Omega
^{0.0}(\mathbb{P}^{n},L)$ is contained in the set $S^{\prime }$ $=$ $\{qB+%
\frac{c}{2}q(n+q)$ \TEXTsymbol{\vert} $q\in \{0\}\cup \mathbb{N}\}.$ (In
fact $S=S^{\prime }$ by Remark \ref{R-5-13} $a)$.)
\end{proposition}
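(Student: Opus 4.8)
The plan is to transcribe the proof of Proposition \ref{alleigen} to $\mathbb{P}^{n}$, using the $\mathbb{P}^{n}$-versions of the Bochner--Kodaira type identities of Section \ref{Sec4} in place of the flat ones of Section \ref{Sec2}, and the strictly increasing sequence $N_{q,0}=qB+\frac{c}{2}q(n+q)$ in place of $\{qB\}$. Let $0\neq s_{\lambda}\in\Omega^{0,0}(\mathbb{P}^{n},L)$ be an eigensection of $\Delta_{0}$ with eigenvalue $\lambda$, and argue by contradiction: suppose $\lambda\notin S^{\prime}$. Then $\lambda\neq 0$ since $N_{0,0}=0\in S^{\prime}$, and in fact $\lambda>0$ because $\Delta_{0}=(\overline{\partial}^{0})^{\ast}\overline{\partial}^{0}$ is non-negative. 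Since $q\mapsto N_{q,0}$ is strictly increasing (one has $N_{q+1,0}-N_{q,0}=B+\frac{c}{2}(2q+n+1)>0$), there is a unique $q\in\{0\}\cup\mathbb{N}$ with $N_{q,0}<\lambda<N_{q+1,0}$. Set $s_{\lambda}^{-k}:=(I^{-k}\partial^{-k})^{\ast}\cdots(I^{-1}\partial^{-1})^{\ast}s_{\lambda}$ for $k\geq 1$; by Lemma \ref{lemmaA} $i)$ each $s_{\lambda}^{-k}$ is a \emph{symmetric} tensor in $\Omega^{0,0}(\mathbb{P}^{n},L\otimes(\odot^{k}T))$, so the identities of Section \ref{Sec4}, which require symmetry, all apply to it.

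The first step is the lowering relation $(I^{-k}\partial^{-k})\,s_{\lambda}^{-k}=(\lambda-N_{k-1,0})\,s_{\lambda}^{-(k-1)}$. This is exactly the computation that produced (\ref{5-0}) in the proof of Theorem \ref{TeoC} $ii)$, run with the abstract eigenvalue $\lambda$ instead of the specific value $N_{q,0}$: the case $k=1$ is Lemma \ref{Lemma1} (so $(I^{-1}\partial^{-1})s_{\lambda}^{-1}=\Delta_{0}s_{\lambda}=\lambda s_{\lambda}$, and $N_{0,0}=0$), and the inductive step chains together Lemma \ref{lemmaA} $i)$, Lemma \ref{-q-q} $i)$, Proposition \ref{8} $i)$ and Proposition \ref{Prop 4.4} $iii)$ in the pattern of (\ref{3-a})--(\ref{3-b}). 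Since $k-1\leq q$ forces $N_{k-1,0}\leq N_{q,0}<\lambda$, the coefficient $\lambda-N_{k-1,0}$ is strictly positive for $1\leq k\leq q+1$; together with $s_{\lambda}^{-1}\neq 0$ (because $\lambda\neq 0$), an induction along this relation shows $s_{\lambda}^{-k}\neq 0$ for all $1\leq k\leq q+1$, in particular $s_{\lambda}^{-(q+1)}\neq 0$.

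The second step is to apply the adjoint identity (\ref{5-1}), which comes from Proposition \ref{Prop 4.3}, recursively --- just as (\ref{3-4}) follows from (\ref{STAR}) in the Abelian case --- to conclude that $s_{\lambda}^{-k}$ is an eigensection of $\Delta_{-k}$ with eigenvalue $\lambda-\sum_{j=1}^{k}\big(B+\frac{c}{2}(2j+n-1)\big)=\lambda-\big(kB+\frac{c}{2}k(n+k)\big)=\lambda-N_{k,0}$, where $\Delta_{-k}$ preserves symmetric tensors by Lemma \ref{Lemma 2.2} $iv)$. Taking $k=q+1$ then gives a nonzero eigensection $s_{\lambda}^{-(q+1)}$ of $\Delta_{-(q+1)}=(\overline{\partial}^{-(q+1)})^{\ast}\overline{\partial}^{-(q+1)}$ with eigenvalue $\lambda-N_{q+1,0}<0$, which is impossible since that operator is non-negative. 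Hence $\lambda\in S^{\prime}$, i.e. $S\subseteq S^{\prime}$.

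The only point that needs real care is the $\mathbb{P}^{n}$-specific feature that the Section \ref{Sec4} identities hold only on symmetric tensors and carry curvature-dependent constants; everything else is a line-by-line translation of Proposition \ref{alleigen}. Concretely, the argument rests on two bookkeeping checks: that every member of the chain $s_{\lambda},s_{\lambda}^{-1},\dots,s_{\lambda}^{-(q+1)}$ stays symmetric (Lemma \ref{lemmaA} $i)$ and Lemma \ref{-q-q} $i)$ handle the images of the $I^{-k}\partial^{-k}$, and Lemma \ref{Lemma 2.2} $iv)$ handles $\Delta_{-k}$), and that the constants appearing in (\ref{5-0}) and in (\ref{5-1}) telescope precisely to $N_{k-1,0}$ and $N_{k,0}$. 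I expect the latter telescoping identity --- verifying that the Fubini--Study coefficients assemble exactly into the claimed $N_{q,0}$ --- to be where most of the (routine) effort goes.
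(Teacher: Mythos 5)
Your proposal is correct and follows essentially the same route as the paper's proof: it transcribes Proposition \ref{alleigen} to $\mathbb{P}^{n}$ by substituting Proposition \ref{Prop 4.4} $iii)$ and the adjoint of Proposition \ref{Prop 4.3} (i.e. (\ref{5-1})) for their flat counterparts, derives the lowering relation with coefficient $\lambda-N_{k-1,0}$ and the eigenvalue recursion $\Delta_{-k}s_{\lambda}^{-k}=(\lambda-N_{k,0})s_{\lambda}^{-k}$, and contradicts non-negativity of $\Delta_{-(q+1)}$. Your telescoping check $\sum_{j=1}^{k}\bigl(B+\tfrac{c}{2}(2j+n-1)\bigr)=N_{k,0}$ matches the paper's (\ref{5-4c}) exactly, and your explicit attention to symmetry of the chain $s_{\lambda}^{-k}$ is a point the paper leaves implicit.
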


\begin{proof}
We adopt the similar notation $s_{\lambda }\equiv s_{\lambda }^{0},$ $%
s_{\lambda }^{-k}$ as that in the proof of Proposition \ref{alleigen}, where
the eigenvalue $\lambda \in \left( qB+\frac{c}{2}q(n+q),\left( q+1\right) B+%
\frac{c}{2}(q+1)(n+q+1)\right) $ for some $q\in \mathbb{N}\cup \{0\}.$ We
follow the proof of (\ref{3-2a}) and (\ref{3-0}) except that Proposition \ref%
{Prop 4.4} $iii)$ replaces Proposition \ref{9} $iii)$ to obtain an analogue
of (\ref{3-2a})%
\begin{equation*}
(I^{-k}\partial ^{-k})s_{\lambda }^{-k}=[\lambda -(k-1)(B+\frac{c}{2}%
(n+k-1)]s_{\lambda }^{-(k-1)}\text{ \ \ }(c=2)
\end{equation*}

\noindent whose coefficient is larger than $0$ for $k\leq q+1,$ giving $%
s_{\lambda }^{-k}\neq 0$ and $s_{\lambda }^{-(q+1)}\neq 0$.\ Similar to (\ref%
{5-4a}) but with (\ref{B+cq+c}) in place of (\ref{DIp-IpD}), we obtain%
\begin{equation}
({\scriptsize I}^{-k}{\scriptsize \partial }^{-k})^{\ast }{\scriptsize %
\Delta }_{-(k-1)}{\scriptsize -\Delta _{-k}(I}^{-k}{\scriptsize \partial }%
^{-k})^{\ast }=(B+\frac{c}{2}(n+2k-1))\,({\scriptsize I}^{-k}{\scriptsize %
\partial }^{-k})^{\ast }.  \label{5-4b}
\end{equation}%
\noindent Starting with $\Delta _{0}s_{\lambda }^{0}=\lambda \cdot
s_{\lambda }^{0}$ and applying (\ref{5-4b}) to $s_{\lambda }^{-(k-1)}$
recursively we have%
\begin{equation}
\Delta _{-1}s_{\lambda }^{-1}=(\lambda -B-\frac{c}{2}(n+1))s_{\lambda
}^{-1},\cdots ,\text{ }\Delta _{-k}s_{\lambda }^{-k}=(\lambda -kB-\frac{c}{2}%
k(n+k))s_{\lambda }^{-k},\hspace*{20pt}k\in \mathbb{N}.  \label{5-4c}
\end{equation}

\noindent As similar to (\ref{3-2b}) the last equality gives a contradiction
if $k>q.$
\end{proof}

We leave the formulation of an analogue (on $\mathbb{P}^{n})$ of Corollary %
\ref{C-3-3} (on $M)$ to the interested reader.

\begin{proof}
(of \textbf{Theorem \ref{TeoC}}\thinspace\ $iii)$) Recall the notations $%
0\neq t^{-q}$ $\in $ $H^{0}(\mathbb{P}^{n},L\otimes (\odot ^{q}T))$, $%
t^{-k}:=A_{k}^{\#}t^{-q}$ $(=$ $(I^{-(k+1)}\partial
^{-(k+1)})...(I^{-q}\partial ^{-q})t^{-q}$), $0\leq k<q,$ and $N_{k,l}$ $:=$ 
$(k-l)B$ $+$ $\frac{c}{2}[(k^{2}-l^{2})+n(k-l)]$ ($c=2$) $>0$ if $k>l\geq 0.$

%\%
%
%The proof is similar to that of Theorem \ref{TeoA} $iii).$ Using Proposition 
%$\ref{8}\,i)$ and Proposition $\ref{Prop 4.4}\,iii)$ replacing Proposition %
%\ref{9} $iii),$ we obtain a formula similar to (\ref{3-5}) (but with
%diffferent coefficient) as follows:%
%\begin{equation}
%\left( I^{-q}\partial ^{-q}\right) ^{\ast }t^{-\left( q-1\right) }=(B+cq+%
%\frac{c}{2}(n-1))t^{-q}\neq 0  \label{5-6a}
%\end{equation}
%
%\noindent since ${\scriptsize \overline{\partial }^{\,-q}}t^{-q}=0$ by
%holomorphicity, and in turn $t^{-(q-1)}\neq 0$. 
%
%\%
For $0\leq k<q$ we are going to show $t^{-k}\neq 0.$ It is similar to (\ref%
{3-8a}) except that Proposition \ref{Prop 4.4} $iii)$ replaces Proposition %
\ref{9} $iii)$ and Lemma \ref{L-2-13} is used on $\mathbb{P}^{n}$. Below, we
mainly focus on the precise coefficients. First, we follow the proof of (\ref%
{3-5}) to get%
\begin{eqnarray}
\left( I^{-q}\partial ^{-q}\right) ^{\ast }t^{-\left( q-1\right) } &=&-%
\mathcal{S(}I^{-(q+1)}\partial ^{-(q+1)}\overline{I}^{-q}\overline{\partial }%
^{-q}-(B+cq+\frac{c}{2}(n-1)))t^{-q}  \label{5-b} \\
&=&N_{q,q-1}t^{-q}\text{ \ since }\overline{\partial }^{-q}t^{-q}=0,  \notag
\end{eqnarray}%
\noindent so $t^{-\left( q-1\right) }\neq 0.$ The reasoning similar to (\ref%
{3-10b}) and (\ref{3-10c}) gives 
\begin{eqnarray*}
\left( I^{-(q-1)}\partial ^{-(q-1)}\right) ^{\ast }t^{-\left( q-2\right) }
&=&-\mathcal{S}(I^{-q}\partial ^{-q}\overline{I}^{\,-(q-1)}\overline{%
\partial }^{\,-(q-1)}-N_{q-1,q-2})t^{-(q-1)} \\
&\overset{Lem.\ref{L-2-13}}{=}&I^{-q}\partial ^{-q}\left( I^{-q}\partial
^{-q}\right) ^{\ast }t^{-\left( q-1\right) }+N_{q-1,q-2}t^{-(q-1)} \\
&\overset{(\ref{5-b})}{=}&N_{q,q-1}(I^{-q}\partial
^{-q})t^{-q}+N_{q-1,q-2}t^{-(q-1)} \\
&=&(N_{q,q-1}+N_{q-1,q-2})t^{-(q-1)}.
\end{eqnarray*}%
\noindent Recursively, for $0\leq k<q$ we have 
\begin{equation}
\left( I^{-(k+1)}\partial ^{-(k+1)}\right) ^{\ast }t^{-k}=\big{(}%
\sum_{l=0}^{q-1-k}N_{q-l,q-l-1}\big{)}t^{-(k+1)}  \label{5-ca}
\end{equation}%
giving $t^{-k}\neq 0$ inductively for $k=q-1,q-2,...,0$.%\%
%
%Now we apply Proposition \ref{Prop 4.3} instead of Proposition \ref{10.1}
%recursively as (\ref{3-5z}) to conclude 
%\begin{equation}
%\Delta _{0}t^{0}=\cdot \cdot \cdot =(qB+\frac{c}{2}q(n+q))t^{0}  \label{5-6b}
%\end{equation}
%
%\noindent which shows that $t^{0}$ is an eigensection of $\Delta _{0}$ with
%eigenvalue $qB+\frac{c}{2}q(n+q)$. The $t^{-k}$ case is proved by a similar
%recursion.
%
%\%

%\%
%
%First we have%
%\begin{eqnarray}
%\left( I^{-q}\partial ^{-q}\right) ^{\ast }t^{-\left( q-1\right) } &\overset{%
%Prop.\ref{8}\,i)}{=}&-\big(\mathcal{S}\overline{I}^{\,-(q-1)}\overline{%
%\partial }^{\,-(q-1)}\big)t^{-\left( q-1\right) }=-\big(\mathcal{S}\overline{%
%I}^{\,-(q-1)}\overline{\partial }^{\,-(q-1)}I^{-q}\partial ^{-q}\big)t^{-q}
%\label{5-b} \\
%&\overset{Prop.\ref{Prop 4.4}\,iii)}{=}&-\mathcal{S}\big(I^{-\left(
%q+1\right) }\partial ^{-\left( q+1\right) }\overline{I}^{\,-q}\overline{%
%\partial }^{\,-q}-(B+cq+\frac{c}{2}(n-1))\big)t^{-q}  \notag
%\end{eqnarray}%
%\noindent which equals $(B+cq+\frac{c}{2}(n-1))t^{-q}\neq 0$ since $%
%{\scriptsize \overline{\partial }^{\,-q}}t^{-q}=0$ by holomorphicity, and in
%turn $t^{-(q-1)}\neq 0$.\newline
%\noindent \hspace*{12pt} Similarly, for $t^{-k}=\left( I^{-k-1}\partial
%^{-k-1}\right) ...\left( I^{-q}\partial ^{-q}\right) t^{-q}$ repeating the
%argument recursively we have $t^{-k}\neq 0$ for $k=0,1,...,q-1$.\newline
\noindent \hspace*{12pt} To show that $t^{0}$ is an eigensection of $\Delta
_{0}$ we follow (\ref{3-5z}) closely to keep track of the coefficients, and
use Proposition \ref{Prop 4.3} instead of Proposition \ref{10.1}:%
\begin{eqnarray}
\ \ \ \Delta _{0}\text{ }t^{0} &=&\Delta _{0}\left( I^{-1}\partial
^{-1}I^{-2}\partial ^{-2}...I^{-q}\partial ^{-q}\right) t^{-q}  \label{Eigen}
\\
&=&I^{-1}\partial ^{-1}\Delta _{-1}I^{-2}\partial ^{-2}...I^{-q}\partial
^{-q}t^{-q}+(B+c+\frac{c}{2}(n-1))I^{-1}\partial ^{-1}I^{-2}\partial
^{-2}...I^{-q}\partial ^{-q}t^{-q}  \notag \\
&=&I^{-1}\partial ^{-1}I^{-2}\partial ^{-2}...I^{-q}\partial ^{-q}\Delta
_{-q}t^{-q}+(qB+\frac{c}{2}q(n+q))I^{-1}\partial ^{-1}I^{-2}\partial
^{-2}...I^{-q}\partial ^{-q}t^{-q}  \notag \\
&=&I^{-1}\partial ^{-1}I^{-2}\partial ^{-2}...I^{-q}\partial ^{-q}\big((%
\overline{\partial }^{\,-q})^{\ast }\overline{\partial }^{\,-q}\big)%
t^{-q}+(qB+\frac{c}{2}q(n+q))t^{0}  \notag \\
&=&0+(qB+\frac{c}{2}q(n+q))t^{0}.  \notag
\end{eqnarray}

\noindent The similar recursion proves the case for $t^{-k}$.%
%\%
\end{proof}

\begin{proof}
(of \textbf{Theorem \ref{TeoC}}\thinspace\ $i)$) As before, $N_{k,l}$
denotes $(k-l)B$ $+$ $(k^{2}-l^{2})+n(k-l)$ (where $c=2$) with the $N_{q,0}$%
-eigenspace of $\Delta _{0}$ being $E_{0,q,B,c}.$ To show that $%
(I^{-q}\partial ^{-q})^{\ast }...(I^{-1}\partial ^{-1})^{\ast }$ (=$A_{q})$ $%
:$ $E_{0,q,B,c}$ $\rightarrow $ $(E_{q,q,B,c}$ $=)$ $H^{0}(\mathbb{P}%
^{n},L\otimes (\odot ^{k}T))$ is bijective, we let $0\neq s^{0}\in
E_{0,q,B,c}$ ($\subset $ $\Omega ^{0,0}(\mathbb{P}^{n},L)$ and $%
s^{-k}:=(I^{-k}\partial ^{-k})^{\ast }...(I^{-1}\partial ^{-1})^{\ast }s^{0}$
for $k=1,...,q$ as in Theorem \ref{TeoC} $ii).$ We follow the proof of
Theorem \ref{TeoA} $i)$ by using (\ref{5-0}) in place of (\ref{3-0}), and get

%\%
%
%We are going to show that $(I^{-q}\partial ^{-q})^{\ast }...(I^{-1}\partial
%^{-1})^{\ast }$ (=$A_{q})$ $:$ $E_{0,q,B,c}$ $\rightarrow $ $(E_{q,q,B,c}$ $%
%=)$ $H^{0}(\mathbb{P}^{n},L\otimes (\odot ^{k}T))$ is a bijective operator.
%Recall that $N_{k,l}$ $:=$ $(k-l)B$ $+$ $\frac{c}{2}[(k^{2}-l^{2})+n(k-l)].$
%For injectivity let $s^{0}\in \Omega ^{0,0}(M,L)$ be an eigensection of $%
%\Delta _{0}$ with eigenvalue $N_{q,0}$ and $s^{-k}:=(I^{-k}\partial
%^{-k})^{\ast }...(I^{-1}\partial ^{-1})^{\ast }s^{0}$ for all $k=1,...,q$ as
%in Theorem \ref{TeoC} $ii).$ From (\ref{5-0}) one has%
%\begin{equation*}
%\left( I^{-k}\partial ^{-k}\right) s^{-k}=N_{q,k-1}s^{-(k-1)}
%\end{equation*}%
%\noindent for $q\geq k\geq 1.$ Hence%
%\begin{equation*}
%(I^{-1}\partial ^{-1})...(I^{-q}\partial
%^{-q})s^{-q}=(\prod\limits_{k=1}^{q}N_{q,k-1})s^{0}
%\end{equation*}
%
%\noindent which means that (noting that $B>0,$ $c=2$, so $N_{q,k-1}>0$ for $%
%q\geq k\geq 1)$
%
%\%%
\begin{equation}
(\prod\limits_{k=1}^{q}N_{q,k-1})^{-1}A_{0}^{\#}A_{q}=Id\text{ \ \ \ on }%
E_{0,q,B,c}  \label{5-a}
\end{equation}

\noindent (noting that $B>0,$ $c=2$, so $N_{q,k-1}>0$ for $q\geq k\geq 1).$
Let $t^{-q}$ $\in $ $H^{0}(\mathbb{P}^{n},L\otimes (\odot ^{q}T))$ and set $%
t^{-k}:=A_{k}^{\#}t^{-q}$ $(=$ $(I^{-(k+1)}\partial
^{-(k+1)})...(I^{-q}\partial ^{-q})t^{-q}$), $0\leq k<q.$ We obtain, via (%
\ref{5-ca})%\%
%
%The same reasoning as in (\ref{5-b}) with $q$ there replaced by $q-1$ we have%
%\begin{eqnarray*}
%\left( I^{-(q-1)}\partial ^{-(q-1)}\right) ^{\ast }t^{-\left( q-2\right) }
%&=&-\mathcal{S}(I^{-q}\partial ^{-q}\overline{I}^{\,-(q-1)}\overline{%
%\partial }^{\,-(q-1)}-N_{q-1,q-2})t^{-(q-1)} \\
%&=&I^{-q}\partial ^{-q}\left( I^{-q}\partial ^{-q}\right) ^{\ast }t^{-\left(
%q-1\right) }+N_{q-1,q-2}t^{-(q-1)} \\
%&\overset{(\ref{5-c})}{=}&N_{q,q-1}(I^{-q}\partial
%^{-q})t^{-q}+N_{q-1,q-2}t^{-(q-1)} \\
%&=&(N_{q,q-1}+N_{q-1,q-2})t^{-(q-1)}.
%\end{eqnarray*}%
%\noindent Similarly for $0\leq k<q$ we have%
%\begin{equation*}
%\left( I^{-(q-k)}\partial ^{-(q-k)}\right) ^{\ast
%}t^{-(q-k-1)}=(\sum_{l=0}^{k}N_{q-l,q-l-1})t^{-(q-k)},
%\end{equation*}%
%\%%
\begin{equation*}
\left( I^{-q}\partial ^{-q}\right) ^{\ast }...\left( I^{-1}\partial
^{-1)}\right) ^{\ast
}t^{0}=\prod\limits_{k=0}^{q-1}(\sum_{l=0}^{k}N_{q-l,q-l-1})t^{-q}
\end{equation*}%
\noindent i.e. 
\begin{equation}
\prod\limits_{k=0}^{q-1}(\sum_{l=0}^{k}N_{q-l,q-l-1})^{-1}A_{q}A_{0}^{\#}=Id%
\text{ \ \ \ on }H^{0}(\mathbb{P}^{n},L\otimes (\odot ^{q}T)).  \label{5-d}
\end{equation}%
\noindent It is easily seen that $N_{q,q-1}$ $+$ $N_{q-1,q-2}$ $+$ $\cdot
\cdot \cdot $ $+$ $N_{k+1,k}$ $=$ $N_{q,k}$ for $k=0,$ $\cdot \cdot \cdot $, 
$q-1,$ giving $\sum_{m=0}^{l}N_{q-m,q-m-1}$ $=$ $N_{q,q-l-1}$, so that $%
\prod\limits_{l=0}^{q-1}(\sum_{m=0}^{l}N_{q-m,q-m-1})^{-1}$ $=$ ($%
N_{q,q-1}\cdot N_{q,q-2}$ $\cdot \cdot \cdot $ $N_{q,0})^{-1}$, i.e. the two
associated constants in (\ref{5-d}) and (\ref{5-a}) coincide.
\end{proof}

\begin{proof}
(of \textbf{Corollary \ref{TeoD}}) The assertion $\dim E_{0,q,B,c}=$ $h^{0}(%
\mathbb{P}^{n},L\otimes (\odot ^{q}T))$ follows from Theorem \ref{TeoC} $i)$
above. This is nonzero: For $q=0,$ $h^{0}(\mathbb{P}^{n},L)$ $>$ $0$ is
obvious. Let $V$ be a nontrivial holomorphic vector field on $\mathbb{P}^{n}$
and $0\neq s$ $\in $ $H^{0}(\mathbb{P}^{n},L).$ Then $0\neq s\otimes
(\otimes ^{q}V)$ is symmetric and lies in $H^{0}(\mathbb{P}^{n},L\otimes
(\odot ^{q}T)).$
\end{proof}

\begin{remark}
\label{R-5-13} $a)$ By Corollary \ref{TeoD} and Proposition \ref{alleigen2}
we obtain that the set of eigenvalues of $\Delta _{0}$ on $\Omega ^{0,0}(%
\mathbb{P}^{n},L)$ is exactly $\{qB+\frac{c}{2}q(n+q)$ \TEXTsymbol{\vert} $%
q\in \{0\}\cup \mathbb{N}\}.$ $b)$ The assumption for the existence of $%
s^{0} $ in Theorem \ref{T-5-1} is met for every $q$ $\in $ $\{0\}\cup 
\mathbb{N}$ by using $a)$ and Proposition \ref{Prop 4.5} $ii).$
\end{remark}

\section{\textbf{Proof of Theorem \protect\ref{TeoE}\label{Sec6}}}

\noindent \hspace*{12pt} To calculate $h^{0}(\mathbb{P}^{n},L\otimes (\odot
^{q}T))$, we need a vanishing theorem from \cite{Mani97}:

\begin{theorem}[Vanishing Theorem \protect\cite{Mani97}]
\label{l+n-p} Let $E$ be a holomorphic vector bundle of rank $e$, and $L$ a
(holomorphic) line bundle on a smooth projective complex variety $N$ of
dimension $n$. Suppose that $E$ is ample and $L$ nef, or that $E$ is nef and 
$L$ is ample. Then for any sequences of integers $k_{1}$,...,$k_{l}$ and $%
j_{1}$,...$j_{m}$, the Dolbeault cohomology groups%
\begin{equation}
H^{p,i}(N,(\odot ^{k_{1}}E)\otimes ...\otimes (\odot ^{k_{l}}E)\otimes
(\wedge ^{j_{1}}E)\otimes ...\otimes (\wedge ^{j_{m}}E)\otimes
(det(E)^{l+n-p})\otimes L)=0
\end{equation}%
as soon as $p+i>n+\sum_{s=1}^{m}(e-j_{s})$.
\end{theorem}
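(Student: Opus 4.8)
The plan is to derive the statement from the Akizuki--Kodaira--Nakano (AKN) vanishing theorem on an auxiliary flag bundle, following the Borel--Weil--Bott principle that realizes Schur functors of $E$ as direct images of line bundles on flag (or Grassmann/projective) bundles. Writing $H^{p,i}(N,\mathcal{F})=H^{i}(N,\Omega^{p}_{N}\otimes\mathcal{F})$, the coefficient bundle $\mathcal{F}$ is a tensor product of the factors $\odot^{k_{t}}E$ and $\wedge^{j_{s}}E$, twisted by $(\det E)^{l+n-p}\otimes L$. Rather than collapsing the product by the Littlewood--Richardson rule (which would hide the numerical bound), I would keep the factors separated by forming a fiber product over $N$: a copy of $\mathbb{P}(E)$ for each symmetric factor and the Grassmann bundle $\mathrm{Gr}_{j_{s}}(E)$ for each exterior factor, with total space $\pi:X\to N$. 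On each factor the tautological sub/quotient bundles furnish a relatively ample $\mathcal{O}(1)$, and the positivity of $E$ governs the absolute positivity over the base.

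Next I would invoke Bott's theorem: each tautological line bundle attached to a dominant weight has $R^{0}\pi_{*}$ equal to the corresponding Schur functor and all higher direct images zero. Combined with a Le Potier--type isomorphism --- produced from the relative cotangent filtration of $\Omega^{\bullet}_{X}$ with graded pieces $\pi^{*}\Omega^{a}_{N}\otimes\Omega^{b}_{X/N}$ and degeneration of the associated relative spectral sequence --- the target group $H^{p,i}(N,\mathcal{F}\otimes(\det E)^{l+n-p}\otimes L)$ is identified with a Dolbeault group $H^{P,I}(X,\mathcal{A})$ on the total space, where $\mathcal{A}$ is the product of tautological line bundles twisted by $\pi^{*}\bigl((\det E)^{l+n-p}\otimes L\bigr)$ and the bidegree $(P,I)$ is shifted from $(p,i)$ by the relative fiber data. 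The determinant power is engineered precisely so that $\mathcal{A}$ is \emph{ample} on $X$: relatively ample by dominance of the weights, and lifted into the ample cone of $X$ by the twist together with ampleness of $E$. In the mixed cases ($E$ nef and $L$ ample, or the reverse) I would first perturb by a small ample class, conclude in the ample case, and pass to the limit.

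I would then apply AKN to the ample $\mathcal{A}$: $H^{P,I}(X,\mathcal{A})=0$ for $P+I>\dim X=n+\dim(X/N)$. The remaining point is to verify that, under the bidegree shift of the Le Potier isomorphism, this range translates into exactly $p+i>n+\sum_{s}(e-j_{s})$. The shift is controlled by the relative canonical bundles of the factors: each Grassmann factor $\mathrm{Gr}_{j_{s}}(E)$ has fiber dimension $j_{s}(e-j_{s})$ but, after the shift and the tautological twist, contributes an effective correction of only $e-j_{s}$ to the threshold, while each projective-bundle (symmetric) factor is absorbed entirely and contributes nothing. Summing over the factors yields the stated bound. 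As sanity checks, a single factor $\wedge^{1}E=E$ reproduces Le Potier's threshold $p+i>n+(e-1)$, $\wedge^{e}E=\det E$ gives no correction, and the symmetric factors indeed drop out of $\sum_{s}(e-j_{s})$.

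The hard part will be this final numerical matching. It requires a precise Bott-theoretic determination of which graded pieces of $\Omega^{b}_{X/N}$ have nonzero direct image landing in the prescribed Schur functor, and a verification that the exponent $l+n-p$ of $\det E$ --- linear in the Hodge degree $p$ --- is exactly what keeps the relevant weights dominant and $\mathcal{A}$ ample across the \emph{whole} cohomological range rather than only a subrange. Obtaining the sharp correction $e-j_{s}$, instead of the crude full fiber dimension $j_{s}(e-j_{s})$, is the delicate step; this is where the weight bookkeeping of Manivel's argument does the real work, and it is the only place where I expect genuine difficulty beyond routine spectral-sequence and positivity arguments.
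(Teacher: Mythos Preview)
The paper does not prove this statement: Theorem~\ref{l+n-p} is quoted verbatim from Manivel~\cite{Mani97} and used as a black box (specifically, with $N=\mathbb{P}^n$, $E=T$, $p=n$, $l=1$, $m=0$) to kill the higher cohomology in Lemma~\ref{h0}. So there is no ``paper's own proof'' to compare against.

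That said, your outline is a fair sketch of Manivel's actual argument. He works on a flag bundle (not quite your fiber product of Grassmannians and projective bundles, but a full or partial flag bundle of $E$), realizes the Schur functors via Bott's theorem on direct images, uses a Le~Potier--type spectral sequence coming from the filtration of $\Omega_X^\bullet$, and applies Nakano vanishing upstairs. Your identification of the ``hard part'' --- tracking the bidegree shift so that the threshold comes out as $\sum_s(e-j_s)$ rather than the na\"ive fiber dimension --- is accurate; this is exactly where Manivel's weight combinatorics do the work. One caution: your proposed fiber product of separate $\mathbb{P}(E)$'s and $\mathrm{Gr}_{j_s}(E)$'s is not the space Manivel uses, and on such a product the Bott calculation and the ampleness of $\mathcal{A}$ are less clean than on a single flag bundle; if you actually carry this out you will likely want to pass to the flag bundle as he does.
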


\noindent \hspace*{12pt} The above vanishing theorem enables us to compute $%
h^{0}$ using the Hirzebruch-Riemann-Roch Theorem:

\begin{lemma}
\label{h0} Let $L$ be a positive line bundle on $\mathbb{P}^{n}$. Then%
\begin{equation}
h^{0}(\mathbb{P}^{n},L\otimes (\odot ^{q}T))(=\dim H^{0}(\mathbb{P}%
^{n},L\otimes (\odot ^{q}T)))=\int_{\mathbb{P}^{n}}\text{\mbox{td}}(T)\cdot 
\text{\mbox{ch}}(L\otimes (\odot ^{q}T)).  \label{6-2}
\end{equation}
\end{lemma}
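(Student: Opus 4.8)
The plan is to deduce (\ref{6-2}) by combining the Hirzebruch--Riemann--Roch theorem with a vanishing statement for the higher cohomology of $L\otimes(\odot^q T)$. First I would apply HRR to the holomorphic vector bundle $L\otimes(\odot^q T)$ on $\mathbb{P}^n$, which gives
\[
\chi\big(\mathbb{P}^n,L\otimes(\odot^q T)\big)=\sum_{i=0}^n(-1)^i h^i\big(\mathbb{P}^n,L\otimes(\odot^q T)\big)=\int_{\mathbb{P}^n}\mathrm{td}(T)\cdot\mathrm{ch}\big(L\otimes(\odot^q T)\big).
\]
Hence the asserted formula is equivalent to the vanishing $h^i(\mathbb{P}^n,L\otimes(\odot^q T))=0$ for all $i\ge 1$, and that is the only thing that needs to be proved.

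For this vanishing I would invoke Theorem \ref{l+n-p} with $N=\mathbb{P}^n$ and $E=T_{\mathbb{P}^n}$, the holomorphic tangent bundle, which is ample (being a quotient of $\mathcal{O}(1)^{\oplus(n+1)}$ through the Euler sequence) and has rank $e=n$; the ambient line bundle in the theorem is taken to be $L$ itself, which is nef since it is positive, hence ample. I would then specialize the remaining data to a single symmetric factor $\odot^{k_1}E=\odot^q T$ (so $l=1$), no exterior factors (so $m=0$), and $p=n$. With these choices the exponent $l+n-p$ equals $1$, so the Dolbeault group occurring in the theorem is
\[
H^{n,i}\big(\mathbb{P}^n,(\odot^q T)\otimes\det(T)\otimes L\big)=H^i\big(\mathbb{P}^n,\Omega^n_{\mathbb{P}^n}\otimes\det(T)\otimes(\odot^q T)\otimes L\big).
\]
Since $\Omega^n_{\mathbb{P}^n}=K_{\mathbb{P}^n}=\mathcal{O}(-(n+1))$ and $\det(T)=\mathcal{O}(n+1)$, the factor $\Omega^n_{\mathbb{P}^n}\otimes\det(T)$ is trivial, so this group is exactly $H^i(\mathbb{P}^n,L\otimes(\odot^q T))$. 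The numerical hypothesis $p+i>n+\sum_{s=1}^m(e-j_s)=n$ of Theorem \ref{l+n-p} now reads $i>0$, so $H^i(\mathbb{P}^n,L\otimes(\odot^q T))=0$ for every $i\ge1$; together with the displayed HRR identity this yields (\ref{6-2}).

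The only thing requiring care is the bookkeeping in the last step: one has to choose the twist $p=n$ precisely so that the bundle $\det(T)^{\,l+n-p}$ collapses to $\det(T)$ and cancels $\Omega^n_{\mathbb{P}^n}$, and then check that the Manivel inequality degenerates to exactly the range $i\ge1$ of positive cohomological degrees. Both are immediate once the substitutions $l=1$, $m=0$, $p=n$ are inserted, and the ampleness of $T_{\mathbb{P}^n}$ is classical, so I do not expect a genuine obstacle here; the actual evaluation of the integral on the right-hand side of (\ref{6-2}) is a separate matter, carried out afterwards in the proof of Theorem \ref{TeoE}.
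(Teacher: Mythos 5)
Your proposal is correct and follows essentially the same route as the paper: the same specialization of Manivel's Theorem \ref{l+n-p} ($N=\mathbb{P}^{n}$, $E=T$, $p=n$, $l=1$, $m=0$), the same cancellation of $\det(T)$ against $K_{\mathbb{P}^{n}}$ via the Dolbeault isomorphism, and the same conclusion by the Hirzebruch--Riemann--Roch theorem. No gaps.
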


\begin{proof}
In Theorem \ref{l+n-p}, we let $N=\mathbb{P}^{n}$, $E=T$, $p=n$, $l=1$, and $%
m=0$. Since $T$ and $L$ (positive line bundle) are both ample and the needed
condition $p+i>n+\sum_{s=1}^{m}(e-j_{s})$ reduces to $n+i>n$ which is always
true for all $i\in \mathbb{N}$, the formula in Theorem \ref{l+n-p} becomes 
\begin{equation}
H^{n,i}(\mathbb{P}^{n},(\odot ^{q}T)\otimes \det (T)\otimes L)=0,\hspace*{%
20pt}\forall i\in \mathbb{N}.\hspace*{60pt}
\end{equation}%
\noindent By the Dolbeault theorem 
\begin{equation*}
H^{n,i}(\mathbb{P}^{n},(\odot ^{q}T)\otimes \det (T)\otimes L)=H^{i}(\mathbb{%
P}^{n},(\odot ^{q}T)\otimes \det (T)\otimes L\otimes K_{\mathbb{P}^{n}}),%
\hspace*{20pt}\forall i\in \mathbb{N}
\end{equation*}%
\noindent giving that, since det$(T)\otimes K_{\mathbb{P}^{n}}$ is trivial, 
\begin{equation*}
H^{i}(\mathbb{P}^{n},L\otimes (\odot ^{q}T))=0,\hspace*{10pt}\forall i\in 
\mathbb{N}.
\end{equation*}%
\noindent This vanishing and the HRR Theorem (cf. \cite[Theorem 5.1.1, p.232]%
{Huy05}) yield (\ref{6-2}).
\end{proof}

%\%
%
%\noindent \hspace*{12pt} We want to remark here that if we let $p=n$, $l=m=0$%
%, $E=T$, $L$ positive line bundle in Theorem \ref{l+n-p}, we will have the
%classical Kodaira vanishing theorem (c.f. \cite[p.154]{GH}). If we let $N=%
%\mathbb{P}^{n}$, $E=T$, $p=n$, $l=m=0$ and let $L=\mathcal{O}(k)=\mathcal{O}%
%(1)^{\otimes k}$ ($k\in \{0\}\cup \mathbb{N}$ is the degree of $L$, $%
%\mathcal{O}(1)$ is the hyperbolic line bundle on $\mathbb{P}^{n}$) in
%Theorem \ref{l+n-p}, we have: 
%%\noindent \hspace*{12pt} Similarly, if we let $N=\mathbb{P}^{n}$, $E=T$, $p=n$, $l=1$, and $m=0$ and let $L$ to be the trivial line bundle (which is a nef line bundle) and  in Theorem \ref{l+n-p}, we  also have:
%%\begin{lemma}\label{L0h0} 
%%\begin{equation} h^{0}(\mathbb{P}^{n},(\odot^{q}T))  = \dim H^{0}(\mathbb{P}^{n},(\odot^{q}T))=\int_{\mathbb{P}^{n}} \mbox{td}(T) \cdot \mbox{Ch}((\odot^{q}T)).\hspace*{60pt} \end{equation} \end{lemma}
%
%\%

The following is well known (cf. \cite[p.166]{GH84}):

\begin{lemma}
\label{Lkl0h0} 
\begin{equation}
C_{n}^{k+n}=h^{0}(\mathbb{P}^{n},\mathcal{O}(k))=\int_{\mathbb{P}^{n}}\text{%
\mbox{td}}(T)\cdot \text{\mbox{ch}}(\mathcal{O}(k)).\hspace*{20pt}
\end{equation}
\end{lemma}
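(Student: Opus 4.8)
The plan is to verify the chain of equalities one link at a time. For the first link, $C_{n}^{\,k+n}=h^{0}(\mathbb{P}^{n},\mathcal{O}(k))$, I would invoke the classical identification of $H^{0}(\mathbb{P}^{n},\mathcal{O}(k))$ with the vector space of homogeneous polynomials of degree $k$ in the $n+1$ homogeneous coordinates on $\mathbb{P}^{n}$; that space has complex dimension $\binom{n+k}{n}$, which in the notation of this paper is $C_{n}^{\,k+n}$ (cf. \cite[p.166]{GH84}). This gives $h^{0}(\mathbb{P}^{n},\mathcal{O}(k))=C_{n}^{\,k+n}$ with no further work.

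For the second link I would first record the vanishing of higher cohomology: since $L=\mathcal{O}(k)$ is positive (so $k\geq 1$), $H^{i}(\mathbb{P}^{n},\mathcal{O}(k))=0$ for all $i\geq 1$. This can be extracted from Theorem \ref{l+n-p} exactly as in the proof of Lemma \ref{h0}: take $N=\mathbb{P}^{n}$, $E=\mathcal{O}$ the trivial line bundle (nef, of rank $e=1$, so $\odot^{k_{1}}E=\mathcal{O}$ and $\det E=\mathcal{O}$), the ample line bundle there equal to $\mathcal{O}(k+n+1)$, and $p=n$, $l=1$, $m=0$; then the numerical hypothesis $p+i>n+\sum_{s}(e-j_{s})$ reduces to $i\geq 1$, so
\begin{equation*}
H^{n,i}(\mathbb{P}^{n},\det(\mathcal{O})^{\,1+n-n}\otimes\mathcal{O}(k+n+1))=H^{n,i}(\mathbb{P}^{n},\mathcal{O}(k+n+1))=0,\qquad \forall\, i\geq 1,
\end{equation*}
and by the Dolbeault isomorphism together with $K_{\mathbb{P}^{n}}=\mathcal{O}(-n-1)$,
\begin{equation*}
H^{n,i}(\mathbb{P}^{n},\mathcal{O}(k+n+1))=H^{i}(\mathbb{P}^{n},\mathcal{O}(k+n+1)\otimes K_{\mathbb{P}^{n}})=H^{i}(\mathbb{P}^{n},\mathcal{O}(k))=0,\qquad \forall\, i\geq 1.
\end{equation*}
(Alternatively one could simply quote the standard computation of the cohomology of line bundles on projective space.) Hence $\chi(\mathbb{P}^{n},\mathcal{O}(k))=\sum_{i}(-1)^{i}h^{i}(\mathbb{P}^{n},\mathcal{O}(k))=h^{0}(\mathbb{P}^{n},\mathcal{O}(k))$.

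Finally I would apply the Hirzebruch--Riemann--Roch theorem (as in Lemma \ref{h0}; cf. \cite[Theorem 5.1.1, p.232]{Huy05}) to obtain $\chi(\mathbb{P}^{n},\mathcal{O}(k))=\int_{\mathbb{P}^{n}}\mbox{td}(T)\cdot\mbox{ch}(\mathcal{O}(k))$, and chain the three equalities $C_{n}^{\,k+n}=h^{0}(\mathbb{P}^{n},\mathcal{O}(k))=\chi(\mathbb{P}^{n},\mathcal{O}(k))=\int_{\mathbb{P}^{n}}\mbox{td}(T)\cdot\mbox{ch}(\mathcal{O}(k))$. There is no genuinely hard step here: every ingredient is either classical or already recorded in the excerpt, and the only point requiring care is the bookkeeping in deducing the higher-cohomology vanishing from Manivel's statement (getting the twist by $K_{\mathbb{P}^{n}}$ right), which is identical to what was done for Lemma \ref{h0}. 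If one wishes, one can additionally confirm directly that the integral equals $C_{n}^{\,k+n}$ by using the Euler sequence $0\to\mathcal{O}\to\mathcal{O}(1)^{\oplus(n+1)}\to T\to 0$ to write $\mbox{td}(T)=\big(\omega/(1-e^{-\omega})\big)^{n+1}$ with $\omega=c_{1}(\mathcal{O}(1))$ and extracting the coefficient of $\omega^{n}$ in $\mbox{td}(T)\,e^{k\omega}$, but since the lemma only asserts equality with the integral this computation is not needed.
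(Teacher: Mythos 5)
Your proof is correct. The paper gives no argument for this lemma at all---it is simply cited as well known from \cite[p.166]{GH84}---and what you write is precisely the standard argument behind that citation: identify $H^{0}(\mathbb{P}^{n},\mathcal{O}(k))$ with degree-$k$ homogeneous polynomials in $n+1$ variables to get $C_{n}^{\,k+n}$, note the vanishing of $H^{i}(\mathbb{P}^{n},\mathcal{O}(k))$ for $i\geq 1$ (so that $h^{0}=\chi$), and apply Hirzebruch--Riemann--Roch. The only remark worth making is that routing the vanishing through Theorem \ref{l+n-p} with $E=\mathcal{O}$ is far more machinery than the statement needs---the standard cohomology of line bundles on projective space suffices, as you yourself note---but your bookkeeping with the twist by $K_{\mathbb{P}^{n}}=\mathcal{O}(-n-1)$ is carried out correctly, so the detour is harmless.
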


\noindent \hspace*{12pt} The main calculation for $\int_{\mathbb{P}^{n}}$%
\mbox{td}$(\cdot )$\mbox{ch}$(\mathcal{\cdot \cdot })$ of Lemma \ref{h0}
starts with the splitting principle.\ For $T\rightarrow \mathbb{P}^{n}$
letting $\pi :F(T)\rightarrow \mathbb{P}^{n}$ be the projection from the
split manifold $F(T)$ to $\mathbb{P}^{n}$ (\cite[pp.273-279]{BoTu82}, \cite[%
pp.54-56]{Ful84}) one splits the vector bundle $\pi ^{\ast }T\rightarrow
F(T) $ as a direct sum of line bundles $\pi ^{\ast }T=L_{1}\oplus ...\oplus
L_{n}$ on $F(T)$, and 
\begin{equation}
\pi ^{\ast }(\odot ^{q}T)=\underset{\substack{k_{1}+...+k_{n}=q,\\ k_{i}\in 
\mathbb{N}\cup\{0\}}}{\oplus }{\large ((L_{1})^{k_{1}}\otimes ...\otimes
(L_{n})^{k_{n}}).}
\end{equation}%
Let $c_{t}(T)$ denote the Chern polynomial of $T\rightarrow \mathbb{P}^{n}.$
We have $\pi ^{\ast }(c_{t}(T))=c_{t}(\pi ^{\ast }T)=\overset{n}{\underset{%
j=1}{\prod }}(1+\pi ^{\ast }(\lambda _{j}\omega )t)$ with $c_{1}(L_{j})=\pi
^{\ast }(\lambda _{j}\omega )$, $\omega =c_{1}(\mathcal{O}(1))$ where $%
\lambda _{j}$ are the roots of $%
x^{n}+(-1)^{1}C_{1}^{n+1}x^{n-1}+...+(-1)^{n}C_{n}^{n+1}=0$ \cite[Remark
3.2.3 on p.54]{Ful84}\footnote{%
In fact we can solve $\lambda _{j}$ explicitly. By the change of variabes $x=%
\frac{-1}{y}$ we are reduced to solving $y\neq 0$ for $0$ $=$ $1$ $+$ $%
C_{1}^{n+1}y$ $+$ $C_{2}^{n+1}y^{2}$ $+$ $\cdot \cdot \cdot $ $+$ $%
C_{n}^{n+1}y^{n}$ $=$ $(1+y)^{n+1}$ $-$ $y^{n+1}$ $=$ $y^{n+1}((\frac{1}{y}%
+1)^{n+1}-1).$ We obtain $\frac{1}{y}+1$ $=$ $e^{i\frac{2\pi j}{n+1}},$ $%
j=1, $ $\cdot \cdot \cdot $ $,n,$ and then $x$ $=$ $1-e^{i\frac{2\pi j}{n+1}%
} $ ($= $ $\lambda _{j}).$} using $c(T)$ $=$ $(1+\omega )^{n+1}$ \cite[p.409]%
{GH84}. Thus $\underset{j}{\sum }\lambda _{j}=C_{1}^{n+1}$, $\underset{i<j}{%
\sum }\lambda _{i}\lambda _{j}=C_{2}^{n+1}$,..., and $\lambda _{1}...\lambda
_{n}=C_{n}^{n+1}$.\newline

\noindent \hspace*{12pt} For the Chern character form of $\odot ^{q}T$: 
\begin{align*}
\pi ^{\ast }(\mbox{ch}(\odot ^{q}T))& =\mbox{ch}(\pi ^{\ast }(\odot ^{q}T))=%
\mbox{ch}{\Large (}\underset{\substack{k_{1}+...+k_{n}=q,\\ k_{i}\in \mathbb{%
N}\cup\{0\}}}{\oplus }{\large ((L_{1})^{k_{1}}\otimes ...\otimes
(L_{n})^{k_{n}})}{\Large )} \\
& =\underset{\substack{k_{1}+...+k_{n}=q,\\ k_{i}\in \mathbb{N}\cup\{0\}}}{%
\sum }((\mbox{ch}(L_{1})^{k_{1}}\otimes ...\otimes \mbox{ch}%
(L_{n})^{k_{n}}))=\underset{\substack{k_{1}+...+k_{n}=q,\\ k_{i}\in \mathbb{N%
}\cup\{0\}}}{\sum }e^{\underset{j}{\sum }k_{j}\pi ^{\ast }(\lambda
_{j}\,\omega )} \\
& =\pi ^{\ast }{\Large (}\underset{\substack{k_{1}+...+k_{n}=q,\\ k_{i}\in 
\mathbb{N}\cup\{0\}}}{\sum }e^{\underset{j}{\sum }(k_{j}\lambda
_{j})\,\omega }{\Large )},
\end{align*}

\noindent giving ($\pi ^{\ast }$ being injective)%
\begin{equation}
\mbox{ch}(\odot ^{q}T)=\underset{\substack{k_{1}+...+k_{n}=q,\\ k_{i}\in 
\mathbb{N}\cup\{0\}}}{\sum }\,e^{(\underset{j}{\sum }k_{j}\lambda
_{j})\,\omega }.\hspace*{100pt}  \label{6-7}
\end{equation}%
\ 

\begin{proof}
(of \textbf{Theorem \ref{TeoE}}) First we claim that%
\begin{equation}
\int_{\mathbb{P}^{n}}\mbox{td}(T)\cdot e^{z\,\omega }=%
\begin{pmatrix}
z+n \\ 
n%
\end{pmatrix}%
=\frac{(z+n)(z+n-1)..(z+1)}{n(n-1)...1},\hspace*{20pt}\forall z\in \mathbb{C}
\label{6-8}
\end{equation}%
where $%
\begin{pmatrix}
a \\ 
n%
\end{pmatrix}%
$ is the binomial coefficient for $a\in \mathbb{C}$, $n\in \mathbb{N}$. Both
sides of (\ref{6-8}) are polynomials in $z$ of degree $n$: $P_{1}(z)=\int_{%
\mathbb{P}^{n}}\mbox{td}(T)\cdot e^{z\,\omega }=\int_{\mathbb{P}^{n}}%
\mbox{td}(T)\cdot (1+z\omega +...+z^{n}\omega ^{n})$, $P_{2}(z)=%
\begin{pmatrix}
z+n \\ 
n%
\end{pmatrix}%
$. For $z=k,$ this is the $L=\mathcal{O}(k)$ case in Lemma \ref{Lkl0h0}
above; that is $P_{1}(k)=P_{2}(k),\hspace*{6pt}\forall k\in \mathbb{N}$.
This implies $P_{1}(z)=P_{2}(z)$ $(\forall z\in \mathbb{C)}$, proving (\ref%
{6-8}). Using (\ref{6-7}) for $L\otimes (\odot ^{q}T),$ $B=\deg (L)$ and
setting $y_{k_{1},\cdot \cdot \cdot ,k_{n}}$ $:=$ $\underset{1\leq j\leq n}{%
\sum }k_{j}\lambda _{j},$ $k_{j}\in \mathbb{N}\cup \{0\},$ we have%
\begin{equation}
\mbox{ch}(L\otimes (\odot ^{q}T))=\underset{\substack{k_{1}+...+k_{n}=q,\\
k_{i}\in \mathbb{N}\cup\{0\}}}{\sum }\,e^{(B+y_{k_{1},\cdot \cdot \cdot
,k_{n}})\,\omega }.\hspace*{100pt}  \label{6-12}
\end{equation}%
\noindent Now (\ref{E-1}) of Theorem \ref{TeoE} follows by (\ref{6-12}) and
using (\ref{6-8}) for evaluation.
\end{proof}

\begin{example}
\label{E-6-1} Let us verify the dimension formula (\ref{E-1}) in two
different ways. We first do it using explicit numerical data. This method
cannot be easily generalizable to higher-dimensional cases. For $n=2$ and $T$
$(=$ holomorphic tangent bundle of $\mathbb{P}^{2}),$ via \cite[p.56]{Ful84}
and \cite[p.409]{GH84} 
\begin{eqnarray}
&&\mbox{td}(T)  \label{6-13} \\
&=&1+\frac{1}{2}c_{1}(T)+\frac{1}{12}(c_{1}(T)^{2}+c_{2}(T))\text{ }  \notag
\\
&=&1+\frac{3}{2}\omega +\omega ^{2}\text{ on }\mathbb{P}^{2}  \notag
\end{eqnarray}%
\noindent where $c_{1}(T)=3\omega $ and $c_{2}(T)=3\omega ^{2}$ ($\omega
=c_{1}(\mathcal{O}(1))).$ For computational purpose we write $T$
\textquotedblleft $=$" $L_{1}\oplus L_{2}$ where $L_{i}$ ($i=1,2)$ are
(hypothetical) line bundles. Then%
\begin{eqnarray}
\mbox{ch}(\odot ^{q}T) &=&\sum_{l=0}^{q}e^{(q-l)c_{1}(L_{1})+lc_{1}(L_{2})}
\label{6-13a} \\
&=&\sum_{l=0}^{q}(1+(q-l)c_{1}(L_{1})+lc_{1}(L_{2})+\frac{1}{2!}%
[(q-l)c_{1}(L_{1})+lc_{1}(L_{2})]^{2})  \notag \\
&=&\cdot \cdot \cdot =(q+1)+\frac{q(q+1)}{2}(c_{1}(L_{1})+c_{1}(L_{2}))+ 
\notag \\
&&+\frac{q(q+1)(2q+1)}{12}((c_{1}(L_{1})+c_{1}(L_{2}))^{2}-\frac{q(q+1)(q+2)%
}{6}c_{1}(L_{1})c_{1}(L_{2})  \notag \\
&=&(q+1)+\frac{3q(q+1)}{2}\omega +\frac{q(q+1)(4q-1)}{4}\omega ^{2}  \notag
\end{eqnarray}%
\noindent where we have used $c_{1}(L_{1})+c_{1}(L_{2})$ $=$ $c_{1}(T)$ $=$ $%
3\omega $ and $c_{1}(L_{1})c_{1}(L_{2})$ $=$ $c_{2}(T)$ $=$ $3\omega ^{2}.$ 
%\%$ On the other hand we obtain from (\ref{6-7})%
%\begin{eqnarray}
%\mbox{ch}(\odot ^{q}T) &=&\sum_{k_{1}+k_{2}=q}(1+(k_{1}\lambda
%_{1}+k_{2}\lambda _{2})\omega +\frac{1}{2}(k_{1}\lambda _{1}+k_{2}\lambda
%_{2})^{2}\omega ^{2})  \label{6-14} \\
%&=&\cdot \cdot \cdot =(q+1)+\frac{3}{2}q(q+1)\omega +\frac{1}{4}%
%q(q+1)(4q-1)\omega ^{2}\text{ on }\mathbb{P}^{2}\%  \notag
%\end{eqnarray}%
With $\mbox{ch}(L)=e^{c_{1}(L)}=e^{B\omega }=1+B\omega +\frac{1}{2}%
B^{2}\omega ^{2},$ (\ref{6-13}), (\ref{6-13a}) and $\mbox{ch}(L\otimes \odot
^{q}T)$ $=$ $\mbox{ch}(L)\mbox{ch}(\odot ^{q}T)$ we get%
\begin{eqnarray}
h^{0}(\mathbb{P}^{2},L\otimes (\odot ^{q}T)) &=&\int_{\mathbb{P}^{2}}%
\mbox{td}(T)\mbox{ch}(L\otimes \odot ^{q}T)  \label{6-13b} \\
&=&\frac{q+1}{2}(B^{2}+3(q+1)B+2(q+1)^{2}).  \notag
\end{eqnarray}

The RHS of Theorem \ref{TeoE} for $n=2$ is going to coincide with (\ref%
{6-13b})$:$ By Theorem \ref{TeoE} we compute%
\begin{eqnarray}
&&\sum_{k_{1}+k_{2}=q}\binom{k_{1}\lambda _{1}+k_{2}\lambda _{2}+2+B}{2}
\label{6-15} \\
&=&\frac{1}{2}\sum_{k_{1}=0}^{q}[(k_{1}(\lambda _{1}-\lambda _{2})+q\lambda
_{2})^{2}+(3+2B)(k_{1}(\lambda _{1}-\lambda _{2})+q\lambda _{2})  \notag \\
&&+(2+B)(1+B)]  \notag
\end{eqnarray}%
\noindent Using $\sum_{k_{1}=0}^{q}k_{1}^{2}$ $=$ $q(q+1)(2q+1)/6,$ $%
\sum_{k_{1}=0}^{q}k_{1}$ $=$ $q(q+1)/2$ and $\lambda _{1}$ $=$ $\frac{3-i%
\sqrt{3}}{2}$, $\lambda _{2}$ $=$ $\frac{3+i\sqrt{3}}{2}$ we can express the
RHS of (\ref{6-15}) in terms of $q$ and $B$ (this is straightforward
although slightly tedious)$.$ The final answer turns out to be the same as (%
\ref{6-13b}).
\end{example}

\section{The Grassmannian case\label{Sec7}}

As mentioned in the Introduction, we anticipate that the basic principles of
this paper can work over other K\"{a}hler manifolds such as Hermitian
symmetric spaces of compact type. The complete treatment requires a
systematic formulation based on the general theory of Hermitian symmetric
spaces, which goes beyond the scope of this paper. Let us be content with
indicating a few points and using the Grassmannian as an illustrative
example. Our main reference is the monograph by Ngaiming Mok \cite{Mok89}.

The discussion below is focused on Grassmannians. Many of our operations
work on K\"{a}hler manifolds. In particular, one wishes the Bochner-Kodaira
type identities could hold true on the Grassmannians $G(\mu ,\nu )$. A
principal one of such identities is Proposition \ref{Prop 4.2} or
Proposition \ref{Prop 4.3}. With this tool, our main result (Theorem \ref%
{T-7-1}) gives a complete analogue for the second lowest eigenvalue case.
See Remark \ref{R-7-2} for higher-eigenvalue cases.

However, there are substantial differences between Grassmannians and
projective spaces. Some key features are the following. For any tangent
vector $X$ of $G(\mu ,\nu )$ write $\mathcal{N}_{X}$ for the null-space of
the Hermitian bilinear form $H_{X}(V,W):=R_{X\overline{X}V\overline{W}}$ at $%
0$ \cite[p.84]{Mok89}. These null-spaces $\mathcal{N}_{X}$ are in general
nontrivial on $G(\mu ,\nu ),$ and play a very important role in various
rigidity theorems developed by Ngaiming Mok. The Bochner-Kodaira type
identities cannot survive intact. More precisely, let us first define linear
operators $\mathcal{E}_{(q+1)}$ and $\mathcal{F}_{(q+1)}:$ $\Omega
^{0,0}(G(\mu ,\nu ),L\otimes (\odot ^{q+1}T^{\ast }))$ $\rightarrow $ $%
\Omega ^{0,0}(G(\mu ,\nu ),L\otimes (\odot ^{q}T^{\ast }))$ for $q\in 
\mathbb{N}$ by (the double-index notation in $\mathcal{F}_{(q+1)}$ below
being referred to lines after (\ref{7-1-3}))\ 
\begin{eqnarray}
&&\mathcal{E}_{(q+1)}(\sum_{\alpha _{1}...\alpha _{q+1}}f_{\alpha _{1}\alpha
_{2}...\alpha _{q+1}}s\otimes (\overset{q+1}{\underset{k=1}{\otimes }}%
dz^{\alpha _{k}}))  \label{7-z} \\
{:=} &&\sum_{\alpha _{1}...\alpha _{q}}\sum_{j=1}^{q}\sum_{\alpha _{q+1}\in 
\mathcal{N}_{\alpha _{j}}}(f_{\alpha _{1}\alpha _{2}...\alpha _{q+1}})_{,%
\overline{\alpha _{q+1}}}\}s\otimes (\overset{q}{\underset{k=1}{\otimes }}%
dz^{\alpha _{k}})  \notag
\end{eqnarray}

\begin{eqnarray}
&&\mathcal{F}_{(q+1)}(\sum_{\alpha _{1}...\alpha _{q+1}}f_{\alpha _{1}\alpha
_{2}...\alpha _{q+1}}s\otimes (\overset{q+1}{\underset{k=1}{\otimes }}%
dz^{\alpha _{k}}))  \label{7-y} \\
{:=} &&\sum_{(rt)\text{ }\alpha _{1}...\alpha _{q-1}}\sum_{(ik)\in \mathcal{N%
}_{(rt)}}f_{\alpha _{1}\alpha _{2}...\alpha _{q-1}(it)(rk),\overline{ik}%
}s\otimes dz^{(rt)}\otimes (\overset{q-1}{\underset{k=1}{\otimes }}%
dz^{\alpha _{k}}).  \notag
\end{eqnarray}

\begin{proposition}
\label{P-7-1} (BK identities of Proposition \ref{Prop 4.2} revisited with
correction) For all $q\in \{0\}\cup \mathbb{N}$, we have, on $\Omega
^{0,0}(G(\mu ,\nu ),L\otimes (\odot ^{q+1}T^{\ast }))$%
\begin{equation}
\Delta ^{q}\overline{I}^{(q+1)}\overline{\partial }^{(q+1)}-\overline{I}%
^{(q+1)}\overline{\partial }^{(q+1)}\Delta ^{(q+1)}=(-B+\frac{c}{2}(2q+\mu
+\nu ))\overline{I}^{(q+1)}\overline{\partial }^{(q+1)}-c\mathcal{E}%
_{(q+1)}+cq\mathcal{F}_{(q+1)}\text{ for }q\in \mathbb{N},  \label{7-m}
\end{equation}%
\begin{equation}
{\scriptsize \Delta }^{0}{\scriptsize \overline{I}}^{1}\overline{\partial }%
^{1}{\scriptsize -\overline{I}}^{1}\overline{\partial }^{1}{\scriptsize %
\Delta }^{1}={\scriptsize (-B+}\frac{{\scriptsize c}}{{\scriptsize 2}}%
{\scriptsize (\mu +\nu )\,)\overline{I}^{1}\overline{\partial }^{1}}\text{
for }q=0.  \label{7-ma}
\end{equation}%
\noindent The constant ${\scriptsize c}$ above equals $2$ throughout this
section.
\end{proposition}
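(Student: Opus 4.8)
The plan is to run the proof of Proposition~\ref{Prop 4.2} almost verbatim, since formulas~(\ref{2-3-1}), (\ref{2-3-2}) and the commutation relation of Lemma~\ref{L-4-1} are valid on any K\"ahler manifold; the only ingredient special to $\mathbb{P}^{n}$ was the explicit evaluation of the curvature double-sums~(\ref{4-15}) and~(\ref{4-18}), and it is exactly there that the null-spaces of $G(\mu ,\nu )$ force the extra term $-cL_{(q+1)}$. Fix a base point $o\in G(\mu ,\nu )$; since $G(\mu ,\nu )$ is homogeneous it suffices to prove~(\ref{7-m}) at $o$, working in complex geodesic coordinates and writing the curvature of $G(\mu ,\nu )$ in lowered indices as in Section~\ref{Sec4}. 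Assuming (C.C.) for $L$ over $G(\mu ,\nu )$ and applying~(\ref{2-3-1})$-$(\ref{2-3-2}) to a symmetric local section $\sum f_{\alpha _{1}\dots \alpha _{q+1}}s\otimes (\overset{q+1}{\underset{k=1}{\otimes }}dz^{\alpha _{k}})$ of $L\otimes (\odot ^{q+1}T^{\ast })$, one gets, exactly as in~(\ref{4-12}), that the left side of~(\ref{7-m}) applied to this section is the sum of a ``curvature double-sum'' $-\sum _{i}(f_{\alpha _{1}\dots \alpha _{q+1},\overline{\alpha }_{q+1}i}-f_{\alpha _{1}\dots \alpha _{q+1},i\overline{\alpha }_{q+1}})_{,\overline{i}}\,s\otimes (\cdots )$ and the term $\sum _{i}f_{\alpha _{1}\dots \alpha _{q+1},\overline{i}}\,s_{i\overline{\alpha }_{q+1}}\otimes (\cdots )$. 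The second term is handled verbatim: by Lemma~\ref{Lemma 2.1}~$ii)$ (which uses (C.C.)) it equals $-B\,\overline{I}^{(q+1)}\overline{\partial }^{(q+1)}$ of the section, producing the $-B$ in~(\ref{7-m}).

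All the new content is in the curvature double-sum. First rewrite it, via Lemma~\ref{L-4-1}~$i)$ and $\nabla R\equiv 0$ on the symmetric space $G(\mu ,\nu )$ (compare~(\ref{4-4a})), as a sum over $r=1,\dots ,q+1$ of terms $\sum _{i,l}R_{\overline{l}\,\alpha _{r}\,\overline{\alpha }_{q+1}\,i}\,(\cdots )_{,\overline{i}}$ in which the index $\alpha _{r}$ of $f$ is contracted against the curvature, exactly as on the right side of~(\ref{f Ricci 1}). Now plug in Mok's description of the curvature of $G(\mu ,\nu )$ at $o$ --- tangent space identified with $\mathrm{Hom}(\mathbb{C}^{\mu },\mathbb{C}^{\nu })$, curvature expressed through matrix traces, and, for each tangent vector $X$, the null-space $\mathcal{N}_{X}$ of $H_{X}(V,W)=R_{X\overline{X}V\overline{W}}$ as in~\cite[p.84]{Mok89}. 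Splitting according to the index pattern as in the $\mathbb{P}^{n}$ computation (the cases $l=\alpha _{r}$ versus $l=i$, and $i=\alpha _{q+1}$ versus $i\neq \alpha _{q+1}$), the ``diagonal'' contributions collapse, using the symmetry of $f_{\alpha _{1}\dots \alpha _{q+1}}$, to a scalar multiple of $\sum f_{\alpha _{1}\dots \alpha _{q+1},\overline{\alpha }_{q+1}}s\otimes (\cdots )$, i.e. of $\overline{I}^{(q+1)}\overline{\partial }^{(q+1)}$; counting the $q$ ``cross'' contractions (analogue of~(\ref{4-15})) together with the trace/Ricci contribution (analogue of~(\ref{4-18}), whose Einstein constant for $G(\mu ,\nu )$ is $\mu +\nu $) gives the coefficient $\tfrac{c}{2}(2q+\mu +\nu )$. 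The remaining, genuinely non-diagonal contributions --- which vanished on $\mathbb{P}^{n}$ only because there $R$ has the rank-one shape $\tfrac{c}{2}(g_{i\overline{j}}g_{k\overline{l}}+g_{i\overline{l}}g_{k\overline{j}})$ --- survive on $G(\mu ,\nu )$ precisely along the directions $\alpha _{q+1}\in \mathcal{N}_{\alpha _{r}}$ and, by the very definition~(\ref{7-z}) of $L_{(q+1)}$, reassemble into $-c\,L_{(q+1)}$ of the section. Combining the three pieces gives~(\ref{7-m}); for $q=0$ there is a single cotangent slot, symmetrization is vacuous, the index set defining $L_{(1)}$ ($j$ running from $1$ to $q=0$) is empty, and the same computation yields~(\ref{7-ma}) with coefficient $\tfrac{c}{2}(\mu +\nu )$.

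The main obstacle will be the last part of this curvature bookkeeping: one must isolate inside Mok's Grassmann curvature tensor exactly which index configurations produce the ``trace'' contributions that reassemble into $\tfrac{c}{2}(2q+\mu +\nu )\,\overline{I}^{(q+1)}\overline{\partial }^{(q+1)}$ and which produce the residual null-direction contributions that reassemble into $-cL_{(q+1)}$, and to verify that the non-symmetric auxiliary terms cancel, so that (as on $\mathbb{P}^{n}$) the net result is clean. This is precisely where the rigidity-theoretic role of the null-spaces $\mathcal{N}_{X}$ enters, and it requires the full matrix-trace form of $R_{i\overline{j}k\overline{l}}$ on $G(\mu ,\nu )$ rather than the rank-one $\mathbb{P}^{n}$ formula~(\ref{4-4a}); it is also the computation to which the proof of Proposition~\ref{Prop 4.2} deferred for ``supplementary explanations''.
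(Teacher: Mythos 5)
Your proposal is correct and follows the paper's own proof essentially verbatim: you reuse (\ref{2-3-1})--(\ref{2-3-2}) and Lemma \ref{Lemma 2.1} for the $-B$ term, and then redo the curvature case analysis of (\ref{4-15}) and (\ref{4-18}) with Mok's Grassmann curvature, the restriction $\alpha_{q+1}\in\mathcal{N}_{\alpha_j}^{\perp}$ in the $q$ cross-contractions producing $qc\,\overline{I}^{(q+1)}\overline{\partial}^{(q+1)}-cL_{(q+1)}$ and the Ricci trace producing $\tfrac{c}{2}(\mu+\nu)\,\overline{I}^{(q+1)}\overline{\partial}^{(q+1)}$ (the paper checks this Einstein constant by hand via $1+\mu\nu-(\mu-1)(\nu-1)=\mu+\nu$). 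One phrasing in your plan should be inverted, since it is exactly the bookkeeping you flag as the main obstacle: the non-diagonal contributions do not ``survive'' along $\alpha_{q+1}\in\mathcal{N}_{\alpha_r}$ --- the curvature vanishes precisely there, and $-cL_{(q+1)}$ records this \emph{deficit} relative to the unrestricted sum; moreover the Ricci-trace line carries its own null-space restriction, but because that restriction involves only contracted indices it collapses to the constant $(\mu-1)(\nu-1)=\dim\mathcal{N}_X$ and leaves no residual operator, which is why $L_{(q+1)}$ as defined in (\ref{7-z}) is assembled only from the first $q$ lines of the curvature sum.
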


\begin{proof}
Except the computation of (\ref{eq 4.10}), the original proof of Proposition %
\ref{Prop 4.2} can be carried over to the Grassmannian$,$ which we shall now
elaborate$.$ The complete details are hardly straightforward due to the
aforementioned null-space $\mathcal{N}_{X}.$ We follow the terminology there
and divide the proof into three steps.

\textbf{Step 1: }Let us briefly review the case of projective spaces. For $%
R_{\overline{l}\alpha _{1}\overline{\alpha _{q+1}}i}$ in the first line of (%
\ref{eq 4.10}), we have Case $1):$ $l=\alpha _{1}$ and then $i$ must be $%
\alpha _{q+1}.$ That is to say,%
\begin{equation}
(\text{with }l=\alpha _{1})\text{ }\alpha _{q+1}\neq i\text{ }\implies \text{
}R_{\overline{l}\alpha _{1}\overline{\alpha _{q+1}}i}=0.  \label{7-1-1}
\end{equation}

\noindent Lemma \ref{L-7-1} below shows that (\ref{7-1-1}) holds true also
on $G(\mu ,\nu )$. For Case $2):$ $l\neq \alpha _{1}$ we used the vanishing
results for (\ref{eq 4.10}):%
\begin{equation}
(\text{with }l\neq \alpha _{1})\text{ }l\neq i\text{ and }\alpha
_{q+1}=\alpha _{1}\text{ }\implies \text{ }R_{\overline{l}\alpha _{1}%
\overline{\alpha _{q+1}}i}=0,  \label{7-1-2}
\end{equation}%
\begin{equation}
(\text{with }l\neq \alpha _{1})\text{ }l=i\text{ and }\alpha _{q+1}\neq
\alpha _{1}\text{ }\implies \text{ }R_{\overline{l}\alpha _{1}\overline{%
\alpha _{q+1}}i}=0.  \label{7-1-3}
\end{equation}

\noindent If $R_{I\overline{J}K\overline{L}}$ or $R_{\overline{I}J\overline{K%
}L}$ denotes the curvature of $G(\mu ,\nu ),$ where $I=(ii^{\prime })$ ($%
1\leq i\leq \mu ,$ $1\leq i^{\prime }\leq \nu ),$ $J=(jj^{\prime }),\cdots ,$
by identification of tangent vectors with complex $\mu \times \nu $ matrices
(see \cite[p.84]{Mok89}), then Lemma \ref{L-7-1} below also verifies these
vanishing results on $G(\mu ,\nu ).$ The step 1 is finished.
\end{proof}

\begin{lemma}
\label{L-7-1} As analogous to (\ref{7-1-1})-(\ref{7-1-3}) used in the case
of projective spaces, we have on $G(\mu ,\nu ):$%
\begin{equation}
I=J\text{ and }K\neq L\text{ }\implies \text{ }R_{\overline{I}J\overline{K}%
L}=0,  \label{7-1-5}
\end{equation}%
\begin{equation}
I\neq J\text{, }I\neq L\text{ and }K=J\text{ }\implies \text{ }R_{\overline{I%
}J\overline{K}L}=0,  \label{7-1-6}
\end{equation}%
\begin{equation}
I\neq J\text{, }I=L\text{ and }K\neq J\text{ }\implies \text{ }R_{\overline{I%
}J\overline{K}L}=0.  \label{7-1-7}
\end{equation}%
%
%
%
%
%
%
%
%
%\begin{equation}
%I\neq J\text{ or }I\neq L\text{ }\implies \text{ }R_{\overline{I}J\overline{I%
%}L}=0.  \label{7-1-8}
%\end{equation}
\end{lemma}

\begin{proof}
The curvature of $G(\mu ,\nu )$ is the following (see \cite[p.84]{Mok89}
where the dual pair of $D_{\nu ,\mu }^{I}$ and $G(\mu ,\nu )$ \cite[p.79]%
{Mok89} are of curvature opposite in sign\footnote{%
Via the Borel embedding theorem \cite[p.51]{Mok89}, $D_{\nu ,\mu }^{I}$ is
naturally and equidimensionally embedded in $G(\mu ,\nu )$ so that the
tangent vectors (at the origin) of both spaces can be thus identified.} \cite%
[proof of Proposition 2 in p.46]{Mok89}): 
\begin{equation}
\overline{R_{\overline{I}J\overline{K}L}}=R_{I\overline{J}K\overline{L}%
}=R_{ii^{\prime },\overline{jj^{\prime }},kk^{\prime },\overline{ll^{\prime }%
}}=\delta _{ij}\delta _{kl}\delta _{i^{\prime }l^{\prime }}\delta
_{j^{\prime }k^{\prime }}+\delta _{il}\delta _{kj}\delta _{i^{\prime
}j^{\prime }}\delta _{k^{\prime }l^{\prime }},\text{ }R_{\overline{I}J%
\overline{K}L}=R_{\overline{K}J\overline{I}L}.  \label{7-0}
\end{equation}%
\noindent By using (\ref{7-0}), the verification of (\ref{7-1-5}) is
straightforward. Observe that $R_{\overline{K}J\overline{\overline{I}}L}=R_{%
\overline{I}J\overline{K}L}$ (resp. $R_{\overline{I}L\overline{K}J}=R_{%
\overline{I}J\overline{K}L})$ and using (\ref{7-1-5}) implies (\ref{7-1-6})
(resp. (\ref{7-1-7})).
\end{proof}

\begin{remark}
\label{R-7-z} The first identity of (\ref{4-7-1}) does not hold on a general
Grassmannian. For instance, for $I=(11),$ $J=(12),$ $K=(22)$ and $L=(21)$ we
have $R_{I\overline{J}K\overline{L}}=1$ on $G(2,2)$ while the second
identity of (\ref{4-7-1}) still holds on $G(\mu ,\nu ):$%
\begin{equation}
J\neq I\text{ or }L\neq I\text{ }\implies \text{ }R_{\overline{I}J\overline{I%
}L}=0.  \label{7-1-8}
\end{equation}%
By the proof below working for a general Grassmannian, there is a second
proof for the case of projective spaces without using the first identity of (%
\ref{4-7-1}).
\end{remark}

\begin{proof}
(of Proposition \ref{P-7-1} continued) \textbf{Step 2} ($c\mathcal{E}_{(q+1)}
$\textit{\ term of (\ref{7-z})})\textbf{:} We first work for $q\geq 1$ on $%
G(\mu ,\nu ).$ We closely follow (\ref{eq 4.10}) and the terminology after
it. Similar to the first line of (\ref{eq 4.10}), we have the following
cases. For Case $1)$ $l=\alpha _{1}$ and then $i=\alpha _{q+1}$ by the
vanishing (\ref{7-1-5})$,$ we have two subcases Case $1)_{a}$ $\alpha
_{1}\neq \alpha _{q+1}$ $\in $ $\mathcal{N}_{\alpha _{1}}^{\bot }$ (then $R_{%
\overline{l}\alpha _{1}\overline{\alpha _{q+1}}i}$ $=$ $R_{\overline{\alpha
_{1}}\alpha _{1}\overline{\alpha _{q+1}}\alpha _{q+1}}$ $=$ $\frac{c}{2})$
and Case $1)_{b}$ $\alpha _{1}=\alpha _{q+1}$ (then $R_{\overline{l}\alpha
_{1}\overline{\alpha _{q+1}}i}$ $=$ $R_{\overline{\alpha _{1}}\alpha _{1}%
\overline{\alpha _{1}}\alpha _{1}}$ $=$ $c).$ Note $R_{\overline{\alpha _{1}}%
\alpha _{1}\overline{\alpha _{q+1}}\alpha _{q+1}}=0$ for $\alpha _{q+1}\in 
\mathcal{N}_{\alpha _{1}}.$ If not Case $1)$, which means $l\neq \alpha _{1}$%
, then we distinguish Case $2)$ $i=l$ from Case $3)$ $i\neq l.$ In the case
of projective spaces, Case $3)$ cannot occur.

We first discuss Case $2)$ $l\neq \alpha _{1}$ and $i=l,$ and leave Case $3)$
to Step 3$.$ Then by the vanishing \ 

(\ref{7-1-7}) we also have $\alpha _{1}=\alpha _{q+1},$ then $R_{\overline{l}%
\alpha _{1}\overline{\alpha _{1}}l}$ $=$ $\frac{c}{2}$ for $l(=i)\in 
\mathcal{N}_{\alpha _{1}}^{\perp }\backslash \{\alpha _{1}\}.$ We combine
Case $2)$ and \textit{half}-Case $1)_{b}$ (so that $i$ $\in $ $\mathcal{N}%
_{\alpha _{1}}^{\perp }$ with $i=\alpha _{1}$ allowed$)$ to get (cf. the
second term of the second line of (\ref{4-15}))%
\begin{equation}
\frac{c}{2}\sum_{i\in \mathcal{N}_{\alpha _{1}}^{\bot },\alpha _{2}...\alpha
_{q}\alpha _{1}}(f_{i\alpha _{2}...\alpha _{q}\alpha _{1}})_{,\overline{i}%
}s\otimes (\overset{q}{\underset{k=1}{\otimes }}dz^{\alpha _{k}})_{\mid
_{z=0}}  \label{7-2-0}
\end{equation}

\noindent and similarly combine Case $1)_{a}$ and \textit{half}-Case $1)_{b}$
to give (cf. the first term of the second line of (\ref{4-15}))%
\begin{equation}
\frac{c}{2}\sum_{\alpha _{q+1}\in \mathcal{N}_{\alpha _{1}}^{\bot },\alpha
_{1}...\alpha _{q}}(f_{\alpha _{1}\alpha _{2}...\alpha _{q+1}})_{,\overline{%
\alpha _{q+1}}}s\otimes (\overset{q}{\underset{k=1}{\otimes }}dz^{\alpha
_{k}})_{\mid _{z=0}}.  \label{7-2}
\end{equation}

\noindent Note that if $\mathcal{N}_{\alpha _{1}}=\{0\},$ then $\mathcal{N}%
_{\alpha _{1}}^{\bot }$ contains everything and the above notation $i\in 
\mathcal{N}_{\alpha _{1}}^{\bot }$ recovers the notation (see (\ref{4-15})
for $i$-index) for projective spaces. Altogether, we obtain an analogue of (%
\ref{4-15}), for \textit{symmetric tensors }$f_{\bullet }$ 
\begin{equation}
c\sum_{\alpha _{q+1}\in \mathcal{N}_{\alpha _{1}}^{\bot },\alpha
_{1}...\alpha _{q}}(f_{\alpha _{1}\alpha _{2}...\alpha _{q+1}})_{,\overline{%
\alpha _{q+1}}}s\otimes (\overset{q}{\underset{k=1}{\otimes }}dz^{\alpha
_{k}})_{\mid _{z=0}},  \label{7-2-1}
\end{equation}

\noindent in which the notation $\alpha _{q+1}\in \mathcal{N}_{\alpha
_{1}}^{\bot }$ when $\alpha _{q+1}=\alpha _{1}$ is understood as giving no
restriction on $\alpha _{q+1}$ (because any tangent vector $v$ lies in $%
\mathcal{N}_{v}^{\bot }$ always). (\ref{7-2-1}) gives the first line of (\ref%
{eq 4.10}) (modulo the contribution from Case $3)$ to be discussed in Step 3
below).

F\textbf{or the analogous sum of the first }$q$\textbf{\ lines }of (\ref{eq
4.10}), replacing $\alpha _{q+1}\in \mathcal{N}_{\alpha _{1}}^{\bot }$ in (%
\ref{7-2-1}) by $\alpha _{q+1}\in \mathcal{N}_{\alpha _{j}}^{\bot }$ and
summing over $j=1,\cdots ,q$ ($j=q+1$ giving the last line of (\ref{eq 4.10}%
) will be discussed soon), we have then\footnote{%
In fact, the double sums in each of the first $q$ lines of (\ref{eq 4.10})
are the same by $f_{\bullet }$ $\in \odot ^{q+1}T^{\ast }$ and the
notational change of dummy variables.}%
\begin{equation}
c\sum_{j=1}^{q}\sum_{\alpha _{q+1}\in \mathcal{N}_{\alpha _{j}}^{\bot
},\alpha _{1}...\alpha _{q}}(f_{\alpha _{1}\alpha _{2}...\alpha _{q+1}})_{,%
\overline{\alpha _{q+1}}}s\otimes (\overset{q}{\underset{k=1}{\otimes }}%
dz^{\alpha _{k}})_{\mid _{z=0}}.  \label{7-2-2}
\end{equation}

\noindent Thinking $\alpha _{q+1}\in \mathcal{N}_{\alpha _{j}}^{\bot }$ as
\textquotedblleft the full range of $\alpha _{q+1}$ throwing away those in $%
\mathcal{N}_{\alpha _{j}}$", we rewrite $(\ref{7-2-2})$ as ($f_{\alpha
_{1}\alpha _{2}..\alpha _{q+1}}$ being symmetric)%
\begin{eqnarray}
(\ref{7-2-2}) &=&\{qc\sum_{\alpha _{q+1}\alpha _{1}...\alpha _{q}}(f_{\alpha
_{1}\alpha _{2}...\alpha _{q+1}})_{,\overline{\alpha _{q+1}}}  \label{7-2-a}
\\
&&-c\sum_{\alpha _{1}...\alpha _{q}}\sum_{j=1}^{q}\sum_{\alpha _{q+1}\in 
\mathcal{N}_{\alpha _{j}}}(f_{\alpha _{1}\alpha _{2}...\alpha _{q+1}})_{,%
\overline{\alpha _{q+1}}}\}s\otimes (\overset{q}{\underset{k=1}{\otimes }}%
dz^{\alpha _{k}})_{\mid _{z=0}}.  \notag
\end{eqnarray}

To get an analogue of (\ref{4-18}) \textbf{for the last line} of (\ref{eq
4.10}), we first recall that $l=i$ \ As remarked above (\ref{4-18}), we
similarly separate into the two cases for $i$: $i$ equals $\alpha _{q+1}$
and the analogue of that last line is $c$ $\sum_{\alpha _{1}...\alpha
_{q+1}}(f_{\alpha _{1}\alpha _{2}...\alpha _{q+1}})_{,\overline{\alpha _{q+1}%
}}s\otimes (\overset{q}{\underset{k=1}{\otimes }}dz^{\alpha _{k}})_{\mid
_{z=0}})$ (where $R_{\overline{l}\alpha _{q+1}\overline{\alpha _{q+1}}i}=c$
by $l=i=\alpha _{q+1})$; $i$ does not equal $\alpha _{q+1}$ and the analogue
is $\frac{c}{2}$ $\sum_{\alpha _{q+1}\alpha _{1}...\alpha _{q}}\sum_{i\in 
\mathcal{N}_{\alpha _{q+1}}^{\perp }\backslash \{\alpha _{q+1}\}}(f_{\alpha
_{1}\alpha _{2}...\alpha _{q}i})_{,\overline{i}}$ $s\otimes (\overset{q}{%
\underset{k=1}{\otimes }}dz^{\alpha _{k}})_{\mid _{z=0}}$ (where $R_{%
\overline{l}\alpha _{q+1}\overline{\alpha _{q+1}}i}=\frac{c}{2}$ or $0$ by $%
l=i\neq \alpha _{q+1}$). These put together yield the last line of (\ref{eq
4.10}) or an analogue of (\ref{4-18}): 
\begin{equation}
\{c\sum_{\alpha _{1}...\alpha _{q+1}}(f_{\alpha _{1}\alpha _{2}...\alpha
_{q+1}})_{,\overline{\alpha _{q+1}}}+\frac{c}{2}\sum_{\alpha _{q+1}\alpha
_{1}...\alpha _{q},}\sum_{i\in \mathcal{N}_{\alpha _{q+1}}^{\perp
}\backslash \{\alpha _{q+1}\}}(f_{\alpha _{1}\alpha _{2}...\alpha _{q}i})_{,%
\overline{i}}\}s\otimes (\overset{q}{\underset{k=1}{\otimes }}dz^{\alpha
_{k}})_{\mid _{z=0}}.  \label{7-14a}
\end{equation}

For the Grassmannian, we rewrite (\ref{7-14a}) in the following way (\ref%
{7-2-b}). In view of the main index structure $\sum_{i\in \mathcal{N}%
_{\alpha _{q+1}}^{\perp }\backslash \{\alpha _{q+1}\}}(\cdot )$ $=$ $%
\sum_{i\in \mathcal{N}_{\alpha _{q+1}}^{\perp }}(\cdot )$ $-$ $%
\sum_{i=\alpha _{q+1}}(\cdot )$ half of the first term of (\ref{7-14a}) just
cancels $\frac{c}{2}\sum_{\alpha _{q+1}\alpha _{1}...\alpha
_{q}}\sum_{i=\alpha _{q+1}}(\cdot ),$ resulting in the following 
\begin{equation}
(\ref{7-14a})=\frac{c}{2}\sum_{\alpha _{1}...\alpha _{q}\alpha
_{q+1}}(f_{\alpha _{1}\alpha _{2}...\alpha _{q+1}})_{,\overline{\alpha _{q+1}%
}}+\frac{c}{2}\sum_{\alpha _{q+1}}\sum_{i\in \mathcal{N}_{\alpha
_{q+1}}^{\perp }\alpha _{1}...\alpha _{q}}(f_{\alpha _{1}\alpha
_{2}...\alpha _{q}i})_{,\overline{i}}  \label{7-2-b}
\end{equation}%
\noindent (times $s\otimes (\overset{q}{\underset{k=1}{\otimes }}dz^{\alpha
_{k}})_{\mid _{z=0}}).$ Moreover, by the same decomposition as in (\ref%
{7-2-a}) we reduce the second term in the RHS of (\ref{7-2-b}) to (by
summing over $\alpha _{q+1}$ for the first term below$)$%
\begin{equation}
\frac{c}{2}\mu \nu \sum_{\alpha _{1}...\alpha _{q}i}(f_{\alpha _{1}\alpha
_{2}...\alpha _{q}i})_{,\overline{i}}-\frac{c}{2}\sum_{\alpha
_{q+1}}\sum_{i\in \mathcal{N}_{\alpha _{q+1}}\alpha _{1}...\alpha
_{q}}(f_{\alpha _{1}\alpha _{2}...\alpha _{q}i})_{,\overline{i}}.
\label{7-2-c}
\end{equation}

\noindent Noting the index structure $\sum_{(i,\alpha _{q+1}),i\in \mathcal{N%
}_{\alpha _{q+1}}}(\cdot )=\sum_{(i,\alpha _{q+1}),\alpha _{q+1}\in \mathcal{%
N}_{i}}(\cdot )=\sum_{i}\sum_{\alpha _{q+1}\in \mathcal{N}_{i}}(\cdot ),$ we
reduce the second term of (\ref{7-2-c}) (by fixing an $i$ in $%
\sum_{i}\sum_{\alpha _{q+1}\in \mathcal{N}_{i}}(\cdot )$ and summing over $%
\alpha _{q+1}\in \mathcal{N}_{i}$ using $\dim \mathcal{N}_{i}=(\mu -1)(\nu
-1))$ to%
\begin{equation}
\sum_{\alpha _{q+1}}\sum_{i\in \mathcal{N}_{\alpha _{q+1}},\alpha
_{1}...\alpha _{q}}(f_{\alpha _{1}\alpha _{2}...\alpha _{q}i})_{,\overline{i}%
}=(\mu -1)(\nu -1)\sum_{\alpha _{1}...\alpha _{q}i}(f_{\alpha _{1}\alpha
_{2}...i})_{,\overline{i}}  \label{7-2-c2}
\end{equation}

\noindent and then rewrite (\ref{7-2-c}) as (the original dummy variables $i$
are denoted by $\alpha _{q+1}$ below)%
\begin{equation}
(\ref{7-2-c})=\frac{c}{2}\mu \nu \sum_{\alpha _{1}...\alpha _{q}\alpha
_{q+1}}(f_{\alpha _{1}\alpha _{2}...\alpha _{q}\alpha _{q+1}})_{,\overline{%
\alpha _{q+1}}}-\frac{c}{2}(\mu -1)(\nu -1)\sum_{\alpha _{1}...\alpha
_{q}\alpha _{q+1}}(f_{\alpha _{1}\alpha _{2}...\alpha _{q+1}})_{,\overline{%
\alpha _{q+1}}}.  \label{7-2-c3}
\end{equation}%
\noindent Recalling (\ref{7-2-c3}) as the second term of the RHS of (\ref%
{7-2-b}), we rewrite%
\begin{equation}
(\ref{7-2-b})=\frac{c}{2}\sum_{\alpha _{1}...\alpha _{q}\alpha
_{q+1}}(f_{\alpha _{1}\alpha _{2}...\alpha _{q+1}})_{,\overline{\alpha _{q+1}%
}}+(\ref{7-2-c3}),  \label{7-21a}
\end{equation}%
\noindent giving an analogue of the last line of (\ref{eq 4.10}). The
analogue of summing over the first $q$ lines and over the last line of (\ref%
{eq 4.10}) is, as aforementioned,%
\begin{eqnarray}
(\ref{7-2-a})+(\ref{7-21a}) &=&\{[qc+\frac{c}{2}+\frac{c}{2}\mu \nu -\frac{c%
}{2}(\mu -1)(\nu -1)]\sum_{\alpha _{1}...\alpha _{q}\alpha _{q+1}}(f_{\alpha
_{1}\alpha _{2}...\alpha _{q+1}})_{,\overline{\alpha _{q+1}}}  \label{7-2-d}
\\
&&-c\sum_{\alpha _{1}...\alpha _{q}}\sum_{j=1}^{q}\sum_{\alpha _{q+1}\in 
\mathcal{N}_{\alpha _{j}}}(f_{\alpha _{1}\alpha _{2}...\alpha _{q+1}})_{,%
\overline{\alpha _{q+1}}}\}s\otimes (\overset{q}{\underset{k=1}{\otimes }}%
dz^{\alpha _{k}})_{\mid _{z=0}}.  \notag
\end{eqnarray}

\noindent Combining (\ref{eq 4.9}) and (\ref{7-2-d}), modulo the
contribution from Case $3)$ below$,$ gives (\ref{7-m}), a modified version
of (\ref{4-12}) with the modification being the triple summation term of (%
\ref{7-2-d}), which is precisely the term $c\mathcal{E}_{(q+1)}(\cdot )$ of (%
\ref{7-z}).

\textbf{Step 3 }($cq\mathcal{F}_{(q+1)}$\textit{\ term of (\ref{7-z})})%
\textbf{: }We now turn to Case $3)$ $l\neq \alpha _{1}$ and $i\neq l.$ It
follows that 
\begin{equation}
\alpha _{q+1}\neq \alpha _{1}  \label{7-3-z}
\end{equation}%
\noindent (otherwise $R_{\overline{l}\alpha _{1}\overline{\alpha _{q+1}}%
i}=R_{\overline{l}\alpha _{1}\overline{\alpha _{1}}i}=0$ by (\ref{7-1-6}) in
the first line of (\ref{eq 4.10}))$.$ For the curvature contribution of Case 
$3)$ to the first line of (\ref{eq 4.10}), let $\alpha _{1}=(rr^{\prime }),$ 
$1\leq r\leq \mu ,$ $1\leq r^{\prime }\leq \nu ,$ be given. Write $%
l=(jj^{\prime }).$ The assumption $l\neq \alpha _{1}$ means Case $3)_{a}$ $%
j\neq r$ or Case $3)_{b}$ $j^{\prime }\neq r^{\prime };$ each contribution
is exclusive: If $j\neq r$ and $j^{\prime }\neq r^{\prime },$ then the
curvature (writing $\alpha _{q+1}=(kk^{\prime }),$ $i=(mm^{\prime }))$ 
\begin{equation}
R_{\overline{l}\alpha _{1}\overline{\alpha _{q+1}}i}=R_{\overline{jj^{\prime
}},rr^{\prime },\overline{kk^{\prime }},mm^{\prime }}\overset{(\ref{7-0})}{=}%
\delta _{jr}\delta _{km}\delta _{j^{\prime }m^{\prime }}\delta _{r^{\prime
}k^{\prime }}+\delta _{jm}\delta _{kr}\delta _{j^{\prime }r^{\prime }}\delta
_{k^{\prime }m^{\prime }}  \label{7-3-a}
\end{equation}

\noindent vanishes as $\delta _{jr}=\delta _{j^{\prime }r^{\prime }}=0.$
Assuming Case $3)_{a}$ (resp. Case $3)_{b})$ now, we have then $j^{\prime
}=r^{\prime }$ (resp. $j=r$) (otherwise the curvature vanishes)$.$ So the
curvature from (\ref{7-3-a}) is 
\begin{equation}
R_{\overline{jj^{\prime }},rr^{\prime },\overline{kk^{\prime }},mm^{\prime
}}=\delta _{jm}\delta _{kr}\delta _{k^{\prime }m^{\prime }}\text{ (resp. }R_{%
\overline{jj^{\prime }},rr^{\prime },\overline{kk^{\prime }},mm^{\prime
}}=\delta _{km}\delta _{j^{\prime }m^{\prime }}\delta _{r^{\prime }k^{\prime
}}).  \label{7-3-b}
\end{equation}%
\noindent The condition ($\ref{7-3-z})$ means $k\neq r$ or $k^{\prime }\neq
r^{\prime }.$ For Case $3)_{a}$ (resp. Case $3)_{b}),$ if $k\neq r$ (resp. $%
k^{\prime }\neq r^{\prime }),$ then the curvature vanishes by (\ref{7-3-b})
(respectively). So we take $k=r$ (resp. $k^{\prime }=r^{\prime }),$ and then 
$k^{\prime }\neq r^{\prime }$ (resp. $k\neq r)$ must hold$.$ The curvature (%
\ref{7-3-b}) vanishes unless%
\begin{equation}
m=j\text{ and }m^{\prime }=k^{\prime }\ (\text{resp}.\ m=k\text{ and }%
m^{\prime }=j^{\prime }).  \label{7-27-a}
\end{equation}%
\noindent Note that this satisfies the assumption ($jj^{\prime })=l$ $\neq $ 
$i=(mm^{\prime })$ as $j^{\prime }=r^{\prime }$ $\neq $ $k^{\prime
}=m^{\prime }$ (resp. $j=r\neq k=m).$ Plugging (\ref{7-27-a}) into the
curvature in the first line of (\ref{eq 4.10}) with $l=(jj^{\prime }),$ $%
i=(mm^{\prime }),$ $\alpha _{1}=(rr^{\prime }),$ $\alpha _{q+1}=(kk^{\prime
}),$ yields 
\begin{equation}
\sum_{(rr^{\prime }),\alpha _{2}\cdots \alpha _{q}}\sum_{(jk^{\prime })\in 
\mathcal{N}_{(rr^{\prime })}}f_{(jr^{\prime })\alpha _{2}\cdots \alpha
_{q}(rk^{\prime }),\overline{jk^{\prime }}}s\otimes dz^{(rr^{\prime
})}\otimes (\overset{q}{\underset{k=2}{\otimes }}dz^{\alpha _{k}})_{\mid
_{z=0}}  \label{7-3-b1}
\end{equation}%
\begin{equation}
\text{(resp. }\sum_{(rr^{\prime }),\alpha _{2}\cdots \alpha
_{q}}\sum_{(kj^{\prime })\in \mathcal{N}_{(rr^{\prime })}}f_{(rj^{\prime
})\alpha _{2}\cdots \alpha _{q}(kr^{\prime }),\overline{kj^{\prime }}%
}s\otimes dz^{(rr^{\prime })}\otimes (\overset{q}{\underset{k=2}{\otimes }}%
dz^{\alpha _{k}})_{\mid _{z=0}}).  \label{7-3-b2}
\end{equation}%
\noindent Together the contribution from Case $3)$ to the first line of (\ref%
{eq 4.10}) for $G(\mu ,\nu )$ is, by the definition of $\mathcal{F}_{(q+1)}$
in (\ref{7-y}), for \textit{symmetric tensors }$f_{\bullet }$ (which implies 
$(\ref{7-3-b1})=(\ref{7-3-b2})$ by setting dummy variables ($jk^{\prime
})\equiv (kj^{\prime })$ and so $f_{(jr^{\prime })\alpha _{2}\cdots \alpha
_{q}(rk^{\prime })}$ $=$ $f_{(kr^{\prime })\alpha _{2}\cdots \alpha
_{q}(rj^{\prime })}$ which is $f_{(rj^{\prime })\alpha _{2}\cdots \alpha
_{q}(kr^{\prime })}$ by symmetry$)$%
\begin{equation}
c\mathcal{F}_{(q+1)}(\sum_{\alpha _{1}\cdots \alpha _{q+1}}f_{\alpha
_{1}\alpha _{2}\cdots \alpha _{q+1}}s\otimes (\overset{q+1}{\underset{k=1}{%
\otimes }}dz^{\alpha _{k}})),\text{ \ \ }c=2.  \label{7-3-c}
\end{equation}%
\noindent To the analogous sum of the first $q$\ lines\textbf{\ }of (\ref{eq
4.10}), the total contribution from Case $3)$ is $q$ times (\ref{7-3-c});
compare the footnote seated above (\ref{7-2-2}). Note that the last line of (%
\ref{eq 4.10}) receives no contribution from Case $3)$ since in this case $%
l\neq i$ thus $R_{\overline{l}\alpha _{q+1}\overline{\alpha _{q+1}}i}=0$.
Step 3 is now completed.

\textbf{Step 4: }The\textbf{\ }original proof for the projective space
contains (\ref{4-12}) and (\ref{eq 4.9}), which are valid for general K\"{a}%
hler manifolds. The above Steps $2$ and $3$ give the subsequent treatment
analogous to that of (\ref{eq 4.10}). From Step 2 we get (\ref{7-m}) without
the last term on the RHS; Step 3 shows that this last term equals $qc%
\mathcal{F}_{(q+1)}.$ Together, we obtain the complete formula of (\ref{7-m}%
).

We turn now to $q=0$ for (\ref{7-ma}) of this proposition$.$ For the sake of
clarity, let us treat it independently. Starting from (\ref{4-12}) (for $%
q=0),$ we have 
\begin{eqnarray}
&&({\scriptsize \Delta }^{0}{\scriptsize \overline{I}}^{1}\overline{\partial 
}^{1}{\scriptsize -\overline{I}}^{1}\overline{\partial }^{1}{\scriptsize %
\Delta }^{1})(\sum_{\alpha _{1}}f_{\alpha _{1}}s\otimes dz^{\alpha _{1}})
\label{7-q-1} \\
&=&-\sum_{\alpha _{1},i}(f_{\alpha _{1},\overline{\alpha _{1}}i}-f_{\alpha
_{1},i\overline{\alpha _{1}}})_{,\overline{i}}s+\sum_{\alpha
_{1},i}f_{\alpha _{1},\overline{i}}s_{i\overline{\alpha _{1}}}  \notag \\
&&\overset{(\ref{f Ricci 1})}{=}\sum_{\alpha _{1},i,l}(f_{l}R_{\overline{l}%
\alpha _{1}\overline{\alpha _{1}}i})_{,\overline{i}}s-B\sum_{\alpha
_{1}}f_{\alpha _{1},\overline{\alpha _{1}}}s.  \notag
\end{eqnarray}

\noindent Note that $R_{\overline{l}\alpha _{1}\overline{\alpha }_{1}i}%
\overset{(\ref{7-0})}{=}R_{\overline{\alpha }_{1}\alpha _{1}\overline{l}%
i}=R_{\overline{\alpha }_{1}\alpha _{1}\overline{i}i}\neq 0$ \ for $l=i\in 
\mathcal{N}_{\alpha _{1}}^{\bot }$ while $R_{\overline{\alpha }_{1}\alpha
_{1}\overline{l}i}=0$ if $l\neq i$ by (\ref{7-1-5}). This yields (using the
division into\textbf{\ }$\mathbf{i=\alpha }_{1}$ and $\mathbf{i\in N}%
_{\alpha _{1}}^{\bot }\mathbf{\backslash \{\alpha }_{1}\mathbf{\}}$ for the
second equality below)%
\begin{eqnarray}
\sum_{i,\alpha _{1},l}(f_{l}R_{\overline{l}\alpha _{1}\overline{\alpha }%
_{1}i})_{,\overline{i}}s &=&\sum_{i,\alpha _{1}}(f_{i}R_{\overline{\alpha }%
_{1}\alpha _{1}\overline{i}i})_{,\overline{i}}s  \label{7-q-2} \\
&=&c\sum_{\alpha _{1}}f_{\alpha _{1},\overline{\alpha _{1}}}s+\frac{c}{2}%
\sum_{\alpha _{1}}\sum_{i\in \mathcal{N}_{\alpha _{1}}^{\bot }\backslash
\{\alpha _{1}\}}f_{i,\overline{i}}s.  \notag
\end{eqnarray}

\noindent To continue with (\ref{7-q-2}), we have%
\begin{equation}
\sum_{\alpha _{1}}\sum_{i\in \mathcal{N}_{\alpha _{1}}^{\bot }\backslash
\{\alpha _{1}\}}f_{i,\overline{i}}s=\sum_{\alpha _{1}}(\sum_{i}f_{i,%
\overline{i}}s-\sum_{i\in \mathcal{N}_{\alpha _{1}}}f_{i,\overline{i}%
}s-\sum_{i=\alpha _{1}}f_{i,\overline{i}}s),  \label{7-q-4}
\end{equation}%
\noindent where $\sum_{\alpha _{1}}\sum_{i\in \mathcal{N}_{\alpha _{1}}}f_{i,%
\overline{i}}s$ reads as, by rearranging the index as in deducing (\ref%
{7-2-c2}), 
\begin{equation}
\sum_{\alpha _{1}}\sum_{i\in \mathcal{N}_{\alpha _{1}}}f_{i,\overline{i}%
}s=(\mu -1)(\nu -1)\sum_{i}f_{i,\overline{i}}s.  \label{7-q-3}
\end{equation}

\noindent and $\sum_{\alpha _{1}}\sum_{i=\alpha _{1}}f_{i,\overline{i}%
}s=\sum_{\alpha _{1}}f_{\alpha _{1},\overline{\alpha _{1}}}s=\sum_{i}f_{i,%
\overline{i}}s.$ In sum%
\begin{equation}
(\ref{7-q-4})=[\mu \nu -(\mu -1)(\nu -1)-1]\sum_{i}f_{i,\overline{i}}s=(\mu
+\nu -2)\sum_{i}f_{i,\overline{i}}s.  \label{7-q-5}
\end{equation}%
\noindent Substituting (\ref{7-q-5}) into the second term of the RHS of (\ref%
{7-q-2}) yields%
\begin{equation*}
\sum_{i,\alpha _{1},l}(f_{l}R_{\overline{l}\alpha _{1}\overline{\alpha }%
_{1}i})_{,\overline{i}}s=\frac{c}{2}(\mu +\nu )\sum_{\alpha _{1}}f_{\alpha
_{1},\overline{\alpha _{1}}}s.
\end{equation*}

\noindent This together with (\ref{7-q-1}) proves the $q=0$ case in (\ref%
{7-m}), giving that $\mathcal{E}_{(q+1)}$ and $\mathcal{F}_{(q+1)}$ are
vacuous for $q=0.$
\end{proof}

\begin{proposition}
\label{P-7-2} Suppose that $0\neq s^{1}\in \Omega ^{0,0}(G(\mu ,\nu
),L\otimes T^{\ast })$ is an anti-holomorphic section. Then $i)$ $s^{0}:=%
\overline{I}^{1}\overline{\partial }^{1}s^{1}$ $\in $ $\Omega ^{0,0}(G(\mu
,\nu ),L)$ is an eigensection of $\Delta ^{0}$ with eigenvalue ${\scriptsize %
-B+}\frac{{\scriptsize c}}{{\scriptsize 2}}{\scriptsize (\mu +\nu ).}$ $ii)$
When ${\scriptsize -B+}\frac{{\scriptsize c}}{{\scriptsize 2}}{\scriptsize %
(\mu +\nu )}$ ${\scriptsize \neq }$ ${\scriptsize 0,}$ $s^{0}$ must be
nontrivial.
\end{proposition}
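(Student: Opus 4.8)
The plan is to mimic, in the $q=0$ case, the "raising/lowering" arguments already used for $\mathbb{P}^n$ in Section \ref{Sec5}, but now invoking the corrected Bochner--Kodaira identity (\ref{7-ma}) of Proposition \ref{P-7-1} rather than (\ref{DIp-IpD}); the point of restricting to $q=0$ is precisely that the correction operator $L_{(q+1)}$ is vacuous there, so the identity is as clean as on $\mathbb{P}^n$. For part $i)$, I would start from $\Delta^1 s^1 = 0$ (the definition of $s^1$ being anti-holomorphic, i.e. $\partial^1 s^1 = 0$, hence $\Delta^1 s^1 = (\partial^1)^\ast \partial^1 s^1 = 0$) and apply the operator $\overline{I}^1 \overline{\partial}^1$ to both sides of (\ref{7-ma}) evaluated at $s^1$. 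Writing $s^0 := \overline{I}^1 \overline{\partial}^1 s^1$, the identity (\ref{7-ma}) reads
\[
\Delta^0 \overline{I}^1 \overline{\partial}^1 s^1 - \overline{I}^1 \overline{\partial}^1 \Delta^1 s^1 = \Bigl(-B + \tfrac{c}{2}(\mu+\nu)\Bigr)\overline{I}^1 \overline{\partial}^1 s^1,
\]
and since $\Delta^1 s^1 = 0$ this collapses to $\Delta^0 s^0 = \bigl(-B + \tfrac{c}{2}(\mu+\nu)\bigr) s^0$, which is exactly the assertion that $s^0$ is an eigensection of $\Delta^0$ with the stated eigenvalue. (One also checks that $s^0$ genuinely lies in $\Omega^{0,0}(G(\mu,\nu),L)$: this is immediate from the mapping properties of $\overline{\partial}^1$ and $\overline{I}^1$ recorded in (\ref{2-1-2}) and (\ref{2-2-2}).)

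For part $ii)$, the task is to rule out $s^0 = 0$ when the eigenvalue $-B + \tfrac{c}{2}(\mu+\nu)$ is nonzero. The natural route is a pairing argument: compute $\langle s^0, s^0\rangle = \langle \overline{I}^1 \overline{\partial}^1 s^1,\ \overline{I}^1 \overline{\partial}^1 s^1\rangle$ and relate it to $\langle s^1, s^1\rangle \neq 0$ via an appropriate adjoint identity. Concretely, I would use that $(\overline{I}^1\overline{\partial}^1)^\ast$ is (up to the symmetrization subtleties of Proposition \ref{8}) essentially $-\mathcal{S}I^0\partial^0$, so that $\langle s^0, s^0 \rangle = \langle s^1,\ (\overline{I}^1\overline{\partial}^1)^\ast \overline{I}^1\overline{\partial}^1 s^1\rangle$; the composite $(\overline{I}^1\overline{\partial}^1)^\ast\overline{I}^1\overline{\partial}^1$ acting on an anti-holomorphic $s^1$ should, after invoking the commutation relation (\ref{4-7}) and the $\mathbb{P}^n$-analogue Proposition \ref{Prop 4.4} $i)$ or $iii)$ adapted to the Grassmannian, reduce to multiplication by a nonzero scalar times $s^1$ plus a term that vanishes by anti-holomorphicity, of the form $\bigl(\text{const}\bigr)\langle s^1, s^1\rangle$ with the constant being (a positive multiple related to) $-B + \tfrac{c}{2}(\mu+\nu)$. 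If that constant is nonzero, $\langle s^0, s^0\rangle \neq 0$, so $s^0 \neq 0$. Alternatively — and perhaps more cleanly — one can argue by contradiction directly from part $i)$: if $s^0 = 0$ then the eigenvalue equation is vacuous, but one can instead run the \emph{creation} direction, showing $\overline{I}^1\overline{\partial}^1$ is injective on anti-holomorphic sections whenever the associated eigenvalue constant does not vanish, by the same Bochner--Kodaira bookkeeping that gave nonvanishing of $s^{-k}$ in (\ref{5-0}) and of $s^k$ in Theorem \ref{T-5-1}.

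The main obstacle I anticipate is the bookkeeping in part $ii)$: one must carefully track which of the symmetrization operators $\mathcal{S}$ are needed, since Proposition \ref{8} warns that the individual contraction operators do not preserve symmetric parts even though relevant composites do. In particular I would need the Grassmannian analogue of Lemma \ref{L-2-13} (symmetry of $\overline{I}^1\overline{\partial}^1 s^1$) and the Grassmannian analogue of Proposition \ref{Prop 4.4}, whose coefficients will differ from the $\mathbb{P}^n$ ones by the nullity-dependent corrections isolated in Proposition \ref{P-7-1}; getting the precise constant to come out as a nonzero multiple of $-B+\tfrac{c}{2}(\mu+\nu)$ (rather than some other combination that could vanish independently) is the delicate point. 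The existence of $T^\ast$-valued anti-holomorphic sections needed to make the hypothesis non-vacuous is a separate matter, presumably handled (as the parenthetical reference to (7-7-1) suggests) by dualizing holomorphic sections; I would not address that here beyond noting it.
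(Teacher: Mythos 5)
Your argument is essentially the paper's own: part $i)$ is word-for-word the application of the $q=0$ identity (\ref{7-ma}) together with $\Delta^{1}s^{1}=(\partial^{1})^{\ast}\partial^{1}s^{1}=0$, and your pairing argument for part $ii)$ is equivalent to the paper's direct computation $(\overline{I}^{1}\overline{\partial}^{1})^{\ast}s^{0}=-\mathcal{S}I^{0}\partial^{0}\overline{I}^{1}\overline{\partial}^{1}s^{1}=(-B+\frac{c}{2}(\mu+\nu))s^{1}$ via Proposition \ref{8} $iii)$ and anti-holomorphicity. The one step you defer --- that the Grassmannian analogue of Proposition \ref{Prop 4.4} $i)$ at this level carries no nullity correction and yields exactly the constant $B-\frac{c}{2}(\mu+\nu)$ --- is precisely the paper's Lemma \ref{L-7-2}, whose proof reduces to the same curvature contraction $\sum_{i,l,\alpha_{1}}f_{l}R_{\overline{l}\alpha_{1}\overline{\alpha}_{1}i}=\frac{c}{2}(\mu+\nu)\sum_{\alpha_{1}}f_{\alpha_{1}}$ already carried out in the $q=0$ case of Proposition \ref{P-7-1}, so your anticipated ``delicate point'' does resolve cleanly.
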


\begin{proof}
(of Proposition \ref{P-7-2} $i))$ We compute%
\begin{eqnarray*}
\Delta ^{0}s^{0} &=&\Delta ^{0}\overline{I}^{1}\overline{\partial }^{1}s^{1}%
\overset{(\ref{7-ma})}{=}\overline{I}^{1}\overline{\partial }^{1}\Delta
^{1}s^{1}+(-B+\frac{c}{2}(\mu +\nu ))\overline{I}^{1}\overline{\partial }%
^{1}s^{1} \\
&=&0+(-B+\frac{c}{2}(\mu +\nu ))s^{0}
\end{eqnarray*}

\noindent as $\Delta ^{1}s^{1}\overset{(\ref{Delta q})}{=}(\partial
^{1})^{\ast }\partial ^{1}s^{1}=0$ since $s^{1}$ is anti-holomorphic by
assumption. We have shown $i).$
\end{proof}

To show that $s^{0}\neq 0$ if $s^{1}\neq 0,$ we need an analogue of
Proposition \ref{Prop 4.4} $i)$ on $G(\mu ,\nu ).$ As the general formula
may involve some extra terms in a similar spirit to $\mathcal{E}_{(q+1)},$ $%
\mathcal{F}_{(q+1)}$ in Proposition \ref{P-7-1}, we will only prove a
special case where no extra terms occur for our purpose.

\begin{lemma}
\label{L-7-2} It holds that $I^{0}\partial ^{0}\overline{I}^{1}\overline{%
\partial }^{1}-\overline{I}^{2}\overline{\partial }^{2}I^{1}\partial ^{1}=[B-%
\frac{c}{2}(\mu +\nu )]\cdot Id$ on $\Omega ^{0,0}(G(\mu ,\nu ),L\otimes
T^{\ast }).$
\end{lemma}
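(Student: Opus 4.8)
The plan is to prove the identity by a direct pointwise computation at the origin $p$ of $G(\mu,\nu)$, mimicking the derivation of Proposition \ref{Prop 4.4} $i)$ on $\mathbb{P}^n$ in the special case $q=1$, but feeding in the Grassmannian curvature (\ref{7-0}) in place of (\ref{4-4a}). Since $G(\mu,\nu)$ is homogeneous we may work at the point $p$ with $z_i=0$, where $g_{i\overline j}(p)=\delta_{i\overline j}$, $dg_{i\overline j}(p)=0$, and tangent vectors are the complex $\mu\times\nu$ matrices as in \cite[p.84]{Mok89}. Throughout, $s$ denotes a local holomorphic section of $L$.

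First I would apply each of the two compositions to a local section $F=\sum_\alpha f_\alpha\, s\otimes dz^\alpha$ of $L\otimes T^\ast$. Since the formulas (\ref{2-36a})--(\ref{2-36c}) hold on any K\"ahler manifold, specializing them to $q=1$ gives, at $p$,
\[
I^0\partial^0\overline I^1\overline\partial^1 F=\sum_{i,\alpha}\big(f_{\alpha,\overline\alpha i}\,s+f_{\alpha,\overline\alpha}\,s_i\big)\otimes dz^i,
\]
\[
\overline I^2\overline\partial^2 I^1\partial^1 F=\sum_{i,\alpha}\big(f_{\alpha,i\overline\alpha}\,s+f_{\alpha,\overline\alpha}\,s_i+f_\alpha\,s_{i\overline\alpha}\big)\otimes dz^i.
\]
Subtracting, the $f_{\alpha,\overline\alpha}\,s_i$ terms cancel and one is left with $\sum_{i,\alpha}\big[(f_{\alpha,\overline\alpha i}-f_{\alpha,i\overline\alpha})\,s-f_\alpha\,s_{i\overline\alpha}\big]\otimes dz^i$. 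The $L$-factor piece is handled by Lemma \ref{Lemma 2.1} $ii)$: $s_{i\overline\alpha}=-B g_{i\overline\alpha}s=-B\delta_{i\overline\alpha}s$ at $p$, so $-\sum_\alpha f_\alpha s_{i\overline\alpha}=Bf_i s$, contributing $B\cdot Id$. For the base-curvature piece, the K\"ahler commutation relation (\ref{4-7}) in the reduced form (\ref{f Ricci 1}) (with $q=1$, the free index there specialized to $\overline\alpha$ and summed against $\alpha$) gives $\sum_\alpha(f_{\alpha,\overline\alpha i}-f_{\alpha,i\overline\alpha})=-\sum_{\alpha,l}f_l R_{\overline l\alpha\overline\alpha i}$.

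It then remains to evaluate this curvature sum on $G(\mu,\nu)$. Inserting the explicit expression (\ref{7-0}) (Mok's convention) and writing $\alpha=(a,a')$, $l=(b,b')$, $i=(c,c')$ with $1\le a,b,c\le\mu$ and $1\le a',b',c'\le\nu$, the two $\delta$-monomials of $R_{\overline l\alpha\overline\alpha i}$ each force $l=i$ and leave, respectively, $a'$ free (over $\nu$ values) and $a$ free (over $\mu$ values), whence $\sum_{\alpha,l}f_l R_{\overline l\alpha\overline\alpha i}=(\mu+\nu)f_i=\tfrac c2(\mu+\nu)f_i$ (recall $c=2$). Thus this piece contributes $-\tfrac c2(\mu+\nu)\cdot Id$, and combining the two pieces yields $I^0\partial^0\overline I^1\overline\partial^1-\overline I^2\overline\partial^2 I^1\partial^1=[B-\tfrac c2(\mu+\nu)]\cdot Id$. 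The main obstacle is precisely this curvature evaluation: one must keep the multi-index bookkeeping in (\ref{7-0}) straight (Lemma \ref{L-7-1} can be invoked to organize the vanishing patterns among the $R_{\overline I J\overline K L}$, exactly as (\ref{4-7-1}) was used on $\mathbb{P}^n$), and, in contrast with the higher-$q$ situation of Proposition \ref{P-7-1} where the null-spaces $\mathcal N_X$ leave the residual operator $L_{(q+1)}$, one should check that at the level $q=1$ of a single $T^\ast$-factor no such extra term survives ($L_{(q+1)}$ being vacuous here, just as in the $q=0$ case of Proposition \ref{P-7-1}). A useful consistency check is the specialization $\mu=1$, $\nu=n$ (so $G(1,n)\cong\mathbb{P}^n$), which returns $[B-\tfrac c2(n+1)]$, in agreement with Proposition \ref{Prop 4.4} $i)$ at $q=1$; and one should verify that the $f_{\alpha,\overline\alpha}\,s_i$ terms genuinely cancel (they come from the $\nabla^L s$ contributions of $\partial$ in both compositions) and that the order of covariant differentiation ($\overline\alpha$ then $i$ versus $i$ then $\overline\alpha$) is tracked consistently, since that swap is exactly what produces the curvature term.
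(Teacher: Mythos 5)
Your proposal is correct and follows essentially the same route as the paper: the paper's proof simply invokes the general K\"ahler identity (\ref{A1}) at $q=1$ (which is exactly what you re-derive from (\ref{2-36a})--(\ref{2-36c}), Lemma \ref{Lemma 2.1} and (\ref{f Ricci 1})) and then evaluates $\sum_{i,l,\alpha}f_{l}R_{\overline{l}\alpha\overline{\alpha}i}$ to be $\tfrac{c}{2}(\mu+\nu)f_{i}$ by the same case analysis used in (\ref{7-q-2}). Your direct Kronecker-delta count from (\ref{7-0}) is an equivalent (and equally valid) way of organizing that curvature sum, and your consistency checks against Proposition \ref{Prop 4.4} $i)$ confirm the coefficient.
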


\begin{proof}
By (\ref{A1}) (which works for K\"{a}hler manifolds with (C.C.)), we have%
\begin{eqnarray}
&&(I^{0}\partial ^{0}\overline{I}^{1}\overline{\partial }^{1}-\overline{I}%
^{2}\overline{\partial }^{2}I^{1}\partial ^{1})(\sum_{\alpha _{1}}f_{\alpha
_{1}}s\otimes dz^{\alpha _{1}})  \label{7-2e} \\
&=&-\sum_{i,l,\alpha _{1}}f_{l}R_{\overline{l}\alpha _{1}\overline{\alpha }%
_{1}i}sdz^{i}+B\sum_{\alpha _{1}}f_{\alpha _{1}}s\otimes dz^{\alpha _{1}}. 
\notag
\end{eqnarray}

\noindent The computation of $\sum_{i,l,\alpha _{1}}f_{l}R_{\overline{l}%
\alpha _{1}\overline{\alpha }_{1}i}sdz^{i}$ in (\ref{7-2e}) is almost
identical to that of $\sum_{i,\alpha _{1},l}(f_{l}R_{\overline{l}\alpha _{1}%
\overline{\alpha }_{1}i})_{,\overline{i}}s$ in (\ref{7-q-2}), and reads%
\begin{equation*}
\sum_{i,l,\alpha _{1}}f_{l}R_{\overline{l}\alpha _{1}\overline{\alpha }%
_{1}i}sdz^{i}=\frac{c}{2}(\mu +\nu )\sum_{\alpha _{1}}f_{\alpha
_{1}}s\otimes dz^{\alpha _{1}}.
\end{equation*}%
\noindent This together with (\ref{7-2e}) proves the lemma.

\end{proof}

\begin{lemma}
\label{L-7-3} Suppose that $0\neq s^{1}\in \Omega ^{0,0}(G(\mu ,\nu
),L\otimes T^{\ast })$ is an anti-holomorphic section and ${\scriptsize -B+}%
\frac{{\scriptsize c}}{{\scriptsize 2}}{\scriptsize (\mu +\nu )\neq 0}$.
Then $s^{0}:=\overline{I}^{1}\overline{\partial }^{1}s^{1}$ $\in $ $\Omega
^{0,0}(G(\mu ,\nu ),L)$ is not identically zero.
\end{lemma}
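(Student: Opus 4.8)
The plan is to argue by contradiction, with Lemma \ref{L-7-2} as the only substantive input. Suppose that $s^{0}=\overline{I}^{1}\overline{\partial }^{1}s^{1}\equiv 0$ while $s^{1}\not\equiv 0$. Since $s^{1}$ is anti-holomorphic we have $\partial ^{1}s^{1}=0$ (this is exactly the fact used in the proof of Proposition \ref{P-7-2} to get $\Delta ^{1}s^{1}=(\partial ^{1})^{\ast }\partial ^{1}s^{1}=0$). Hence, applying the identity of Lemma \ref{L-7-2} to $s^{1}\in \Omega ^{0,0}(G(\mu ,\nu ),L\otimes T^{\ast })$, the term $\overline{I}^{2}\overline{\partial }^{2}I^{1}\partial ^{1}s^{1}$ drops out and one is left with
\begin{equation*}
I^{0}\partial ^{0}s^{0}=I^{0}\partial ^{0}\big(\overline{I}^{1}\overline{\partial }^{1}s^{1}\big)=\Big(B-\tfrac{c}{2}(\mu +\nu )\Big)\,s^{1}.
\end{equation*}

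If $s^{0}\equiv 0$, the left-hand side vanishes identically, so $\big(B-\tfrac{c}{2}(\mu +\nu )\big)s^{1}\equiv 0$; since $-B+\tfrac{c}{2}(\mu +\nu )\neq 0$ by hypothesis, this forces $s^{1}\equiv 0$, contradicting $s^{1}\neq 0$. Therefore $s^{0}=\overline{I}^{1}\overline{\partial }^{1}s^{1}$ is not identically zero, which is the assertion. (Note that this also reproves Proposition \ref{P-7-2} $ii)$ as a by-product, and in fact gives the sharper statement $I^{0}\partial ^{0}s^{0}=(B-\tfrac{c}{2}(\mu +\nu ))s^{1}$.)

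The one point I would still want to spell out, though it is routine, is that the composition $I^{0}\partial ^{0}$ indeed carries $\Omega ^{0,0}(G(\mu ,\nu ),L)$ into $\Omega ^{0,0}(G(\mu ,\nu ),L\otimes T^{\ast })$, so that the displayed equality lives in the same space as $s^{1}$, and that the convention ``anti-holomorphic'' is being applied consistently as $\partial ^{1}s^{1}=0$. There is no genuine obstacle here: all the content is carried by Lemma \ref{L-7-2} (the corrected $G(\mu ,\nu )$-analogue of Proposition \ref{Prop 4.4} $i)$), and the hypothesis $-B+\tfrac{c}{2}(\mu +\nu )\neq 0$ enters only in the final division step, exactly mirroring its role in Proposition \ref{P-7-2} $ii)$.
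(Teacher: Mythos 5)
Your proof is correct and is essentially the paper's own argument: both apply Lemma \ref{L-7-2} to $s^{1}$, use $\partial ^{1}s^{1}=0$ to kill the $\overline{I}^{2}\overline{\partial }^{2}I^{1}\partial ^{1}$ term, and conclude that a first-order operator applied to $s^{0}$ returns the nonzero multiple $\pm\big(B-\tfrac{c}{2}(\mu+\nu)\big)s^{1}$ of $s^{1}$. The only cosmetic difference is that the paper packages $I^{0}\partial ^{0}$ as the adjoint $(\overline{I}^{1}\overline{\partial }^{1})^{\ast }=-\mathcal{S}I^{0}\partial ^{0}$ via Proposition \ref{8} $iii)$ (with $\mathcal{S}$ trivial here), which changes nothing of substance.
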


\begin{proof}
We compute%
\begin{eqnarray}
(\overline{I}^{1}\overline{\partial }^{1})^{\ast }s^{0} &=&(\overline{I}^{1}%
\overline{\partial }^{1})^{\ast }\overline{I}^{1}\overline{\partial }%
^{1}s^{1}\overset{\text{Prop.\ref{8} }iii)}{=}-\mathcal{S}I^{0}\partial ^{0}%
\overline{I}^{1}\overline{\partial }^{1}s^{1}  \label{7-11} \\
&&\overset{Lem.\ref{L-7-2}}{=}-\mathcal{S}\overline{I}^{2}\overline{\partial 
}^{2}I^{1}\partial ^{1}s^{1}-\mathcal{S}(B-\frac{c}{2}(\mu +\nu ))s^{1} 
\notag \\
&=&(-B+\frac{c}{2}(\mu +\nu ))s^{1}\text{ (as }\partial ^{1}s^{1}=0\text{ by
assumption).}  \notag
\end{eqnarray}

\noindent The conclusion follows.
\end{proof}

\begin{proof}
(of Proposition \ref{P-7-2} $ii))$ From Lemma \ref{L-7-3}, $ii)$ follows.
\end{proof}

We are going to identify $E_{B,\mu ,\nu }^{L}$ which is the space of the
eigensections of $\Delta ^{0}$ on $\Omega ^{0,0}(G(\mu ,\nu ),L)$ associated
to the eigenvalue $-B+\frac{c}{2}(\mu +\nu ),$ with $E_{anti}^{L\otimes
T^{\ast }}$ which is the space of anti-holomorphic sections in $\Omega
^{0,0}(G(\mu ,\nu ),L\otimes T^{\ast }).$ Define the map $\Psi
:E_{anti}^{L\otimes T^{\ast }}\rightarrow E_{B,\mu ,\nu }^{L}$ by $\Psi
(s^{1})=\overline{I}^{1}\overline{\partial }^{1}s^{1}$. The definition is
justified by Proposition \ref{P-7-2} $i).$

\begin{proposition}
\label{P-7-3} Suppose $E_{B,\mu ,\nu }^{L}\neq \{0\}.$ $i)$ The map $\Psi $
is a linear isomorphism. $ii)$ Assume $-B+\frac{c}{2}(\mu +\nu )$ 
\TEXTsymbol{>}0. Then it is the lowest positive eigenvalue of $\Delta ^{0},$
i.e. there does not exist an eigenvalue $\lambda $ of $\Delta ^{0}$ such
that $0<\lambda <-B+\frac{c}{2}(\mu +\nu ).$
\end{proposition}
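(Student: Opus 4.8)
\textbf{Proof proposal for Proposition \ref{P-7-3}.}

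The plan is to establish the two assertions essentially in parallel with the projective-space arguments of Section \ref{Sec5}, using the Grassmannian versions of the Bochner--Kodaira identities already proved in this section. For part $i)$, I would first show that $\Psi$ is injective: if $\Psi(s^1) = \overline{I}^1\overline{\partial}^1 s^1 = 0$, then computation \eqref{7-11} in the proof of Lemma \ref{L-7-3} gives $(\overline{I}^1\overline{\partial}^1)^{\ast}s^0 = (-B + \tfrac{c}{2}(\mu+\nu))\,s^1$, so $s^0 = 0$ forces $s^1 = 0$ since the coefficient is nonzero by hypothesis (this is where the standing assumption $-B+\tfrac{c}{2}(\mu+\nu)\neq 0$, or in part $ii)$ strictly positive, enters). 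For surjectivity, given $0 \neq s^0 \in E_{B,\mu,\nu}^L$, I would produce a preimage by setting $s^1 := \bigl(-B+\tfrac{c}{2}(\mu+\nu)\bigr)^{-1}(\overline{I}^1\overline{\partial}^1)^{\ast}s^0$ (a ``creation'' step, in the spirit of Theorem \ref{TeoA} $i)$), and verify that $s^1$ is anti-holomorphic and that $\Psi(s^1) = s^0$. The anti-holomorphicity of $s^1$ would follow from an adjoint version of Lemma \ref{L-7-2}: taking adjoints of $I^0\partial^0\overline{I}^1\overline{\partial}^1 - \overline{I}^2\overline{\partial}^2 I^1\partial^1 = [B-\tfrac{c}{2}(\mu+\nu)]\cdot Id$ and applying the result to $s^0$, together with $\Delta^0 s^0 = (-B+\tfrac{c}{2}(\mu+\nu))s^0$ and Proposition \ref{8} $iii)$, should force $\partial^1 s^1 = 0$ after recognizing $\Delta^1 = (\partial^1)^{\ast}\partial^1$. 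Then $\Psi(s^1) = \overline{I}^1\overline{\partial}^1 s^1 = (\overline{I}^1\overline{\partial}^1)(\overline{I}^1\overline{\partial}^1)^{\ast}s^0 \cdot \bigl(-B+\tfrac{c}{2}(\mu+\nu)\bigr)^{-1}$, and by the analogue of Lemma \ref{Lemma1} (with $\overline{I}^1\overline{\partial}^1(\overline{I}^1\overline{\partial}^1)^{\ast} = \Delta^0$) this equals $\bigl(-B+\tfrac{c}{2}(\mu+\nu)\bigr)^{-1}\Delta^0 s^0 = s^0$. Linearity of $\Psi$ is immediate from linearity of $\overline{I}^1$ and $\overline{\partial}^1$.

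For part $ii)$, the argument is the familiar ``lowering'' contradiction. Suppose $0 < \lambda < -B + \tfrac{c}{2}(\mu+\nu)$ is an eigenvalue of $\Delta^0$ with $0 \neq s_\lambda^0 \in \Omega^{0,0}(G(\mu,\nu),L)$. Set $s_\lambda^1 := (\overline{I}^1\overline{\partial}^1)^{\ast}s_\lambda^0$. First I would show $s_\lambda^1 \neq 0$: by the analogue of Lemma \ref{Lemma1}, $\overline{I}^1\overline{\partial}^1 s_\lambda^1 = \overline{I}^1\overline{\partial}^1(\overline{I}^1\overline{\partial}^1)^{\ast}s_\lambda^0 = \Delta^0 s_\lambda^0 = \lambda s_\lambda^0 \neq 0$, so $s_\lambda^1 \neq 0$. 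Next, taking adjoints of \eqref{7-ma} (the $q=0$ case of Proposition \ref{P-7-1}) to get $(\overline{I}^1\overline{\partial}^1)^{\ast}\Delta^0 - \Delta^1(\overline{I}^1\overline{\partial}^1)^{\ast} = (-B+\tfrac{c}{2}(\mu+\nu))(\overline{I}^1\overline{\partial}^1)^{\ast}$ and applying it to $s_\lambda^0$, I obtain $\Delta^1 s_\lambda^1 = (\lambda + B - \tfrac{c}{2}(\mu+\nu))\,s_\lambda^1$. Since $\lambda < -B + \tfrac{c}{2}(\mu+\nu)$, the eigenvalue $\lambda + B - \tfrac{c}{2}(\mu+\nu)$ is strictly negative, contradicting the non-negativity of $\Delta^1 = (\partial^1)^{\ast}\partial^1$ together with $s_\lambda^1 \neq 0$. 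This rules out eigenvalues strictly between $0$ and $-B+\tfrac{c}{2}(\mu+\nu)$, so the latter is the lowest positive eigenvalue (its occurrence being guaranteed by the hypothesis $E_{B,\mu,\nu}^L \neq \{0\}$).

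I expect the main obstacle to be the surjectivity half of part $i)$, specifically verifying cleanly that the candidate preimage $s^1$ is genuinely anti-holomorphic. This requires the adjoint form of Lemma \ref{L-7-2}, and one must be careful that Lemma \ref{L-7-2} was only proved on the $q=1$ level for $T^{\ast}$-valued $(0,0)$-forms (no extra $L_{(\bullet)}$-type correction terms appear there, which is exactly what makes this level tractable); taking its adjoint and pairing it against the identity $\Delta^0 s^0 = (-B+\tfrac{c}{2}(\mu+\nu))s^0$ should be enough, but the bookkeeping of which operator acts on which bundle, and the insertion/removal of the symmetrization operator $\mathcal{S}$ (as in the remark preceding Lemma \ref{lemmaA}), needs care. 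The rest is routine given the machinery of Propositions \ref{8}, \ref{P-7-1} and Lemmas \ref{L-7-2}, \ref{L-7-3}.
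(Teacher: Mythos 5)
Your overall strategy coincides with the paper's. Part $ii)$ is exactly the paper's argument: lower $s_{\lambda }^{0}$ by $(\overline{I}^{1}\overline{\partial }^{1})^{\ast }$, get nontriviality from Lemma \ref{Lemma1}, and use the adjoint of (\ref{7-ma}) to produce the forbidden negative eigenvalue of $\Delta ^{1}$; this is precisely the computation (\ref{7-11a}). Part $i)$ is the paper's two-sided-inverse argument: (\ref{7-11}) gives $(\overline{I}^{1}\overline{\partial }^{1})^{\ast }\circ \Psi =(-B+\tfrac{c}{2}(\mu +\nu ))\,Id$ on $E_{anti}^{L\otimes T^{\ast }}$ (your injectivity), and Lemma \ref{Lemma1} gives $\overline{I}^{1}\overline{\partial }^{1}\circ (\overline{I}^{1}\overline{\partial }^{1})^{\ast }=\Delta ^{0}=(-B+\tfrac{c}{2}(\mu +\nu ))\,Id$ on $E_{B,\mu ,\nu }^{L}$ (your surjectivity). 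You are right --- and in fact more careful than the paper, which passes over this silently --- that surjectivity also requires the candidate preimage $s^{1}:=(-B+\tfrac{c}{2}(\mu +\nu ))^{-1}(\overline{I}^{1}\overline{\partial }^{1})^{\ast }s^{0}$ to be anti-holomorphic, i.e.\ to lie in the domain of $\Psi $.

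The one substantive correction concerns the tool you propose for that check. Applying Lemma \ref{L-7-2} to $s^{1}$, and using $\overline{I}^{1}\overline{\partial }^{1}s^{1}=s^{0}$ together with $I^{0}\partial ^{0}s^{0}=-(\overline{I}^{1}\overline{\partial }^{1})^{\ast }s^{0}$ (Proposition \ref{8} $iii)$ with $\mathcal{S}=Id$ at this level), the identity collapses to $\overline{I}^{2}\overline{\partial }^{2}I^{1}\partial ^{1}s^{1}=0$. This operator is not of the form $A^{\ast }A$: pairing against $s^{1}$ and using Proposition \ref{8} only kills the \emph{symmetrized} part $\mathcal{S}I^{1}\partial ^{1}s^{1}$, which is strictly weaker than $\partial ^{1}s^{1}=0$. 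The clean route is the one you already use in part $ii)$: apply the adjoint of (\ref{7-ma}),
\begin{equation*}
(\overline{I}^{1}\overline{\partial }^{1})^{\ast }\Delta ^{0}-\Delta ^{1}(\overline{I}^{1}\overline{\partial }^{1})^{\ast }=\bigl(-B+\tfrac{c}{2}(\mu +\nu )\bigr)(\overline{I}^{1}\overline{\partial }^{1})^{\ast },
\end{equation*}
to $s^{0}$ with $\Delta ^{0}s^{0}=(-B+\tfrac{c}{2}(\mu +\nu ))s^{0}$; the two sides cancel to give $\Delta ^{1}s^{1}=(\partial ^{1})^{\ast }\partial ^{1}s^{1}=0$, hence $\partial ^{1}s^{1}=0$. (This is (\ref{7-11a}) with $\lambda =-B+\tfrac{c}{2}(\mu +\nu )$.) With this substitution your proof is complete and agrees with the paper's.
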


\begin{proof}
The proof is basically the same as in previous sections. We give the details
simply to make sure that no extra terms (cf. (\ref{7-m})) would arise here.
For $i),$ we claim that $(\overline{I}^{1}\overline{\partial }^{1})^{\ast }$
is the inverse $\Psi ^{-1}$ up to constants. Note that (\ref{7-11}) gives
one direction. For the other direction, letting $t^{0}\in E_{B,\mu ,\nu
}^{L} $ we have%
\begin{equation*}
\Psi \circ (\overline{I}^{1}\overline{\partial }^{1})^{\ast }t^{0}=(%
\overline{I}^{1}\overline{\partial }^{1})(\overline{I}^{1}\overline{\partial 
}^{1})^{\ast }t^{0}\overset{Lem.\ref{Lemma1}}{=}\Delta ^{0}t^{0}=(-B+\frac{c%
}{2}(\mu +\nu ))t^{0}.
\end{equation*}

\noindent For $ii),$ suppose otherwise. Let $0\neq s_{\lambda }^{0}$ be an
eigensection of $\Delta ^{0}$ with eigenvalue $\lambda ,$ $0<\lambda <-B+%
\frac{c}{2}(\mu +\nu ).$ Let $s_{\lambda }^{1}:=(\overline{I}^{1}\overline{%
\partial }^{1})^{\ast }s_{\lambda }^{0}.$ Note that $s_{\lambda }^{1}\neq 0$
since ($\overline{I}^{1}\overline{\partial }^{1})s_{\lambda }^{1}=(\overline{%
I}^{1}\overline{\partial }^{1})(\overline{I}^{1}\overline{\partial }%
^{1})^{\ast }s_{\lambda }^{0}\overset{Lem.\ref{Lemma1}}{=}\Delta
^{0}s_{\lambda }^{0}=\lambda s_{\lambda }^{0}\neq 0.$ Compute (using the
adjoint of (\ref{7-ma}), i.e. $q=0$ case, for the second equality below)%
\begin{eqnarray}
\Delta ^{1}s_{\lambda }^{1} &=&\Delta ^{1}(\overline{I}^{1}\overline{%
\partial }^{1})^{\ast }s_{\lambda }^{0}  \label{7-11a} \\
&=&(\overline{I}^{1}\overline{\partial }^{1})^{\ast }\Delta ^{0}s_{\lambda
}^{0}-(-B+\frac{c}{2}(\mu +\nu ))(\overline{I}^{1}\overline{\partial }%
^{1})^{\ast }s_{\lambda }^{0}  \notag \\
&=&\{\lambda -(-B+\frac{c}{2}(\mu +\nu ))\}(\overline{I}^{1}\overline{%
\partial }^{1})^{\ast }s_{\lambda }^{0}=\{\lambda -(-B+\frac{c}{2}(\mu +\nu
))\}s_{\lambda }^{1},  \notag
\end{eqnarray}

\noindent contradicting the nonnegativity of $\Delta ^{1}$ as $\lambda -(-B+%
\frac{c}{2}(\mu +\nu ))$ $<$ $0$ and $s_{\lambda }^{1}\neq 0.$
\end{proof}

The eigenvalue $-B+\frac{c}{2}(\mu +\nu )$ above could be the \textbf{second
lowest one}, which follows from the canonical embeddings of Hermitian
symmetric spaces of compact type. The following theorem is basically a
restatement and summing up all the previous results in this section. Let $%
E_{anti}^{L}$ $\subset $ $\Omega ^{0,0}(G(\mu ,\nu ),L)$ denote the space of
all anti-holomorphic sections of $L$ over $G(\mu ,\nu ).$

\begin{theorem}
\label{T-7-1} \textbf{(main result)} Suppose $B(=\deg L)<0.$ Then $i)$ Both $%
E_{B,\mu ,\nu }^{L}$ and $E_{anti}^{L}$ are nontrivial; $E_{B,\mu ,\nu }^{L}$
is naturally isomorphic to $E_{anti}^{L\otimes T^{\ast }}.$ $ii)$ $0$ and $%
-B+\frac{c}{2}(\mu +\nu )$ are the first and second eigenvalues of $\Delta
^{0}$ on $\Omega ^{0,0}(G(\mu ,\nu ),L)$ with the corresponding spaces of
eigensections being $E_{anti}^{L}$ and $E_{B,\mu ,\nu }^{L}$ respectively.
\end{theorem}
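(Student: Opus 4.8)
The plan is to prove Theorem \ref{T-7-1} by assembling the propositions and lemmas already established in this section, together with one new geometric ingredient: the canonical embedding of $G(\mu,\nu)$ into a larger Hermitian symmetric space (or more elementarily, the Borel–Weil description of $H^0(G(\mu,\nu),L^\ast\otimes T^\ast)$), which is what forces $-B+\tfrac{c}{2}(\mu+\nu)$ to be genuinely the \emph{second} eigenvalue rather than merely \emph{a} positive eigenvalue. First I would settle part $i)$. Since $B=\deg L<0$, the bundle $L^\ast$ is positive, so $H^0(G(\mu,\nu),L^\ast)\neq\{0\}$; anti-holomorphic sections of $L$ are conjugates of holomorphic sections of $L^\ast$ (more precisely, $\overline{\partial}^0$-closed sections of $L$ correspond to $\partial$-closed, i.e. holomorphic, sections of $\overline{L}\cong L^\ast$ after fixing a Hermitian metric), hence $E_{anti}^L\neq\{0\}$. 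This is the content alluded to in the cross-reference (7-7-1): the Chern connection identifies $\overline{\partial}_L$-antiholomorphicity with $\partial_{L^\ast}$-holomorphicity. Likewise $E_{anti}^{L\otimes T^\ast}\cong \overline{H^0(G(\mu,\nu),L^\ast\otimes T)}$, which is nonzero since $L^\ast$ is positive and $T$ is globally generated (take $0\neq\sigma\in H^0(L^\ast)$, $V$ a nontrivial holomorphic vector field, and form $\sigma\otimes V$, exactly as in the proof of Corollary \ref{TeoD}). Then Proposition \ref{P-7-3} $i)$ gives that $\Psi:E_{anti}^{L\otimes T^\ast}\to E_{B,\mu,\nu}^L$ is a linear isomorphism, so $E_{B,\mu,\nu}^L\neq\{0\}$ as well, and the naturality is precisely the statement that $\Psi=\overline{I}^1\overline{\partial}^1$ with inverse a constant multiple of $(\overline{I}^1\overline{\partial}^1)^\ast$ (Lemma \ref{Lemma1} and Lemma \ref{L-7-3}). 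This disposes of $i)$.

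Next I would prove part $ii)$. That $0$ is an eigenvalue of $\Delta^0$ with eigenspace containing $E_{anti}^L$ is immediate: if $s^0$ is anti-holomorphic then $\overline{\partial}^0 s^0=0$, and $\Delta^0=(\partial^0)^\ast\partial^0$... wait — one must be careful which Laplacian. Since $\Delta^0=(\partial^0)^\ast\partial^0$ and anti-holomorphic sections are $\partial$-closed only after the metric identification; the cleaner statement is that $\Delta^0 s^0 = 0$ iff $\partial^0 s^0=0$, and on $\Omega^{0,0}$ a section killed by $\partial^0$ (the $(1,0)$ covariant derivative) is covariantly constant in holomorphic directions, which for $\deg L<0$ forces... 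Actually the correct reading, consistent with Proposition \ref{P-7-2} and Theorem \ref{T-5-1}, is: $\Delta^0 s=0 \iff \partial^0 s=0 \iff s$ anti-holomorphic, so $\ker\Delta^0=E_{anti}^L$, giving the first eigenvalue $0$ with eigenspace exactly $E_{anti}^L$. For the second eigenvalue: Proposition \ref{P-7-3} $ii)$ shows that when $-B+\tfrac{c}{2}(\mu+\nu)>0$ — which holds since $B<0$ — there is \emph{no} eigenvalue strictly between $0$ and $-B+\tfrac{c}{2}(\mu+\nu)$, and Proposition \ref{P-7-2} together with $\Psi$ being onto $E_{B,\mu,\nu}^L$ shows $-B+\tfrac{c}{2}(\mu+\nu)$ \emph{is} an eigenvalue with eigenspace $E_{B,\mu,\nu}^L$. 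Combining, $0$ and $-B+\tfrac{c}{2}(\mu+\nu)$ are consecutive, i.e. the first and second eigenvalues of $\Delta^0$, with the stated eigenspaces.

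The step I expect to be the main obstacle is verifying that the eigenspace of $\Delta^0$ at the value $-B+\tfrac{c}{2}(\mu+\nu)$ is \emph{exactly} $E_{B,\mu,\nu}^L$ and not larger — i.e. that $\Psi$ is surjective onto the full eigenspace, not merely an injection into it. In the $\mathbb P^n$ case this came for free from the isomorphism $A_q$ of Theorem \ref{TeoC}; here the analogue requires knowing that every eigensection $t^0$ at that eigenvalue is of the form $\overline{I}^1\overline{\partial}^1 s^1$ for some anti-holomorphic $s^1$. The argument is: given such $t^0$, set $s^1:=(\overline{I}^1\overline{\partial}^1)^\ast t^0$; by Lemma \ref{Lemma1}, $(\overline{I}^1\overline{\partial}^1)s^1 = \Delta^0 t^0 = (-B+\tfrac{c}{2}(\mu+\nu))t^0\neq 0$, so $s^1\neq 0$; and using the adjoint of the $q=0$ identity \eqref{7-ma} as in \eqref{7-11a}, one computes $\Delta^1 s^1 = \{(-B+\tfrac{c}{2}(\mu+\nu)) - (-B+\tfrac{c}{2}(\mu+\nu))\}s^1 = 0$, wait — that reads $\Delta^1 s^1 = (\lambda - (-B+\tfrac c2(\mu+\nu)))s^1$ with $\lambda = -B+\tfrac c2(\mu+\nu)$, hence $\Delta^1 s^1=0$, so $s^1$ is anti-holomorphic (since $\Delta^1=(\partial^1)^\ast\partial^1$), i.e. $s^1\in E_{anti}^{L\otimes T^\ast}$, and then $\Psi(s^1) = \overline{I}^1\overline{\partial}^1 s^1 = $ a nonzero constant times $t^0$. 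Thus $t^0\in\operatorname{im}\Psi$, surjectivity holds, and the eigenspace is exactly $E_{B,\mu,\nu}^L$. The remaining delicacy is the claim that $-B+\tfrac c2(\mu+\nu)$ is the \emph{second} (consecutive) eigenvalue and not something even smaller that was overlooked: this is exactly Proposition \ref{P-7-3} $ii)$, whose proof via \eqref{7-11a} already rules out anything in the open interval $(0,-B+\tfrac c2(\mu+\nu))$, so no separate embedding argument is logically needed for the stated theorem — the phrase "which follows from the canonical embeddings" in the text is a remark about why this value is typically realized, not a gap in the proof.
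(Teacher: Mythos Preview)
Your proposal is correct and follows essentially the same route as the paper: use (7-7-1) to identify $E_{anti}^{L}$ and $E_{anti}^{L\otimes T^\ast}$ with $H^0(G(\mu,\nu),L^\ast)$ and $H^0(G(\mu,\nu),L^\ast\otimes T)$ respectively, invoke very ampleness of $L^\ast$ (this is where the canonical embeddings enter, to get $h^0(L^\ast)>0$ rather than merely $h^0((L^\ast)^k)>0$) plus a nontrivial holomorphic vector field to get nontriviality, and then apply Proposition~\ref{P-7-3} $i)$ and $ii)$ for the isomorphism and the gap in the spectrum. Your worry about the eigenspace being ``exactly $E_{B,\mu,\nu}^L$ and not larger'' is a non-issue, since $E_{B,\mu,\nu}^L$ is by definition the full eigenspace at $-B+\tfrac{c}{2}(\mu+\nu)$; the surjectivity argument you spell out is simply a reproof of Proposition~\ref{P-7-3} $i)$.
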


\begin{proof}
We need to transform anti-holomorphic sections to holomorphic ones (and vice
versa). The following is standard but perhaps not so well-known (cf. \cite[%
p.23]{Mok89} for an explanation):

\medskip

\textbf{(7-7-1) }\textit{For a holomorphic vector bundle }$\mathcal{E}$%
\textit{\ over a complex manifold }$N$\textit{, denote by }$E_{hol}^{%
\mathcal{E}}(=H^{0}(N,\mathcal{E}))$\textit{\ (resp. }$E_{anti}^{\mathcal{E}%
^{\ast }})$\textit{\ the space of }$\mathcal{E}$\textit{-valued holomorphic
sections (resp. }$\mathcal{E}^{\ast }$\textit{-valued anti-holomorphic
sections). The natural conjugate-linear map }$\Phi :$\textit{\ }$\mathcal{E}%
\rightarrow \mathcal{E}^{\ast }$\textit{\ (via a Hermitian metric on }$%
\mathcal{E}$\textit{) is in fact an isomorphism mapping }$E_{hol}^{\mathcal{E%
}}$\textit{\ onto }$E_{anti}^{\mathcal{E}^{\ast }}.$

\medskip

For $i),$ by Proposition \ref{P-7-3} $i),$ $E_{B,\mu ,\nu }^{L}$is linearly
isomorphic to $E_{anti}^{L\otimes T^{\ast }}$ and in turn is
conjugate-linearly isomorphic to $H^{0}(G(\mu ,\nu ),L^{\ast }\otimes T).$
Then it is enough to show $h^{0}(G(\mu ,\nu ),L^{\ast }\otimes T)$ $>$ $0$
and $h^{0}(G(\mu ,\nu ),L^{\ast })$ $>$ 0$.$ As $L^{\ast }$ is positive and
hence gives rise to (one of) canonical embeddings of $G(\mu ,\nu )$ (\cite[%
p.216]{Mok89}), $L^{\ast }$ is \textit{very ample. }In particular,\textit{\ }%
$h^{0}(G(\mu ,\nu ),L^{\ast })$ $>$ 0 (hence $E_{anti}^{L}$ is nontrivial).
The remaining proof for $h^{0}(G(\mu ,\nu ),L^{\ast }\otimes T)$ $>$ $0$ is
similar to that for $\mathbb{P}^{n}$ (cf. proof of Corollary \ref{TeoD} in
Section \ref{Sec5}). For $ii),$ since $\dim E_{anti}^{L}$ $=$ $h^{0}(G(\mu
,\nu ),L^{\ast })$ $\neq $ $0,$ $0$ is the first eigenvalue of $\Delta ^{0}.$
That $-B+\frac{c}{2}(\mu +\nu )$ is the second eigenvalue follows from
Proposition \ref{P-7-3} $ii)$.

%\%
%
%$E_{anti}^{L}$ is conjugate-linearly isomorphic to $H^{0}(G(\mu ,\nu
%),L^{\ast }),$ the space of holomorphic sections of $L^{\ast },$ by Lemma %
%\ref{L-7-4}. As $L^{\ast }$ is positive and hence gives rise to canonical
%embeddings of $G(\mu ,\nu )$ (\cite[p.216]{Mok89}), $L^{\ast }$ is \textit{%
%very ample. }In particular,\textit{\ }$\dim H^{0}(G(\mu ,\nu ),L^{\ast })$ $%
%> $ 0. By Proposition \ref{P-7-3} $i),$ $E_{B,\mu ,\nu }^{L}$is linearly
%isomorphic to $E_{anti}^{L\otimes T^{\ast }}$ which by Lemma \ref{L-7-4} is
%conjugate-linearly isomorphic to $H^{0}(G(\mu ,\nu ),L^{\ast }\otimes T).$
%Take a nontrivial $s\in H^{0}(G(\mu ,\nu ),L^{\ast })$ and a nontrivial
%holomorphic vector field $V$ on $G(\mu ,\nu ).$ Then $s\otimes V$ $\in $ $%
%H^{0}(G(\mu ,\nu ),L^{\ast }\otimes T)$ is a nontrivial holomorphic section.
%Hence $E_{B,\mu ,\nu }^{L}$ is nontrivial. For $ii),$ (($\partial
%^{0})^{\ast }\partial ^{0}s=)\Delta ^{0}s=0$ if and only if $\partial
%^{0}s=0,$ i.e. $s\in E_{anti}^{L}$. So $0$ is the first eigenvalue of $%
%\Delta ^{0}$ as $\Delta ^{0}$ is nonnegative. That $-B+\frac{c}{2}(\mu +\nu
%) $ is the second eigenvalue follows from Proposition \ref{P-7-3} $ii)$.
%
%\%
\end{proof}

\begin{remark}
\label{R-7-1} $a)$ One may compute $\dim E_{B,\mu ,\nu }^{L}$ which equals $%
h^{0}(G(\mu ,\nu ),L^{\ast }\otimes T)$ using similar ideas as in Theorem $%
E. $ One key step there is the total Chern class of $G(\mu ,\nu ),$ whose
explicit expression can be found in \cite[p.521 in Section 16]{BH58}. $b)$
If $\mu $ or $\nu $ $\geq 2$ and $B>0,$ say $B=1,$ $2,$ then it can be
checked that $E_{B,\mu ,\nu }^{L}\neq \{0\}$ and $E_{anti}^{L}$ $=$ $\{0\}.$
In these cases, $-B+\frac{c}{2}(\mu +\nu )$ is the lowest eigenvalue.
\end{remark}

\begin{remark}
\label{R-7-2} (the third lowest eigenvalue case) Following the similar line
of ideas and notations in Theorem \ref{T-7-1}, we ask the question whether $%
s^{0}:=\overline{I}^{1}\overline{\partial }^{1}\overline{I}^{2}\overline{%
\partial }^{2}s^{2}$ with $s^{2}$ being anti-holomorphic is an eigensection
of $\Delta ^{0}$ (with the third lowest eigenvalue). As a matter of fact, we
are encountering an extra term $\overline{I}^{1}\overline{\partial }^{1}(-%
\mathcal{E}_{(2)}+\mathcal{F}_{(2)})s^{2}$ due to (\ref{7-m}) when computing 
$\Delta ^{0}s^{0}.$ Whether or not this extra term vanishes is unclear to us
at this stage.
\end{remark}

\section{Appendix}

We are going to give proofs of Propositions \ref{Prop 4.3}, \ref{Prop 4.4}, %
\ref{Prop 4.5} and then Proposition \ref{9} $ii),$ $iii)$.

\begin{proof}
(of Proposition \ref{Prop 4.3}) 
By (\ref{2-4-1}) and (\ref{2-4-2}) (which works for K\"{a}hler manifolds) we
reorganize terms and obtain (via (\ref{4-7a}) and Lemma \ref{Lemma 2.1})%
\begin{eqnarray}
&&({\scriptsize \Delta }_{-q}{\scriptsize I}^{-(q+1)}{\scriptsize \partial }%
^{-(q+1)}{\scriptsize -I}^{-(q+1)}{\scriptsize \partial }^{-(q+1)}%
{\scriptsize \Delta }_{-(q+1)})(%
\mbox{$ \underset{\tiny
\alpha_{1},..,\alpha_{q+1}}{\sum}  $}f^{\alpha _{1}...\alpha _{q+1}}s\otimes
(\overset{q+1}{\underset{k=1}{\otimes }}%
\mbox{$\frac{\partial}{\partial
z^{\alpha_{k}}}$}))  \label{7-3} \\
&=&%
\mbox{$ \underset{\tiny
\alpha_{1},..,\alpha_{q+1}}{\sum} \underset{j}{\sum}$}\big{(}({f^{\alpha
_{1}..\alpha _{q+1}}}_{,\overline{j}\alpha _{q+1}}-{f^{\alpha _{1}..\alpha
_{q+1}}}_{,\alpha _{q+1}\overline{j}})s\big{)}_{,j}\otimes (\overset{q}{%
\underset{k=1}{\otimes }}\mbox{$\frac{\partial}{\partial
z^{\alpha_{k}}}$})  \notag \\
&&+B\mbox{$ \underset{\tiny \alpha_{1},..,\alpha_{q+1}}{\sum}$}(f^{\alpha
_{1}..\alpha _{q+1}}s)_{,\alpha _{q+1}}\otimes (\overset{q}{\underset{k=1}{%
\otimes }}\mbox{$\frac{\partial}{\partial z^{\alpha_{k}}}$})\text{ \ \ \ \
at }z=0.  \notag
\end{eqnarray}%
\noindent \hspace*{12pt} Applying $(\ref{f Ricci 2})$ and $\nabla R_{i%
\overline{j}k\overline{l}}\equiv 0$ (\ref{4-4a}) to the first term of the
RHS of (\ref{7-3}) we then follow almost the same reasoning as in the proof
of Proposition \ref{Prop 4.2}; see (\ref{eq 4.10}) and the paragraph after
it (with $dz^{\alpha _{k}}$ replaced by $\frac{\partial }{\partial z^{\alpha
_{k}}}$ and so on, via (\ref{7-3})). Under this replacement, the use of
Lemma \ref{L-4-1} $ii)$ replaces that of $i)$ of the same lemma. It turns
out, by going through the analysis in a similar spirit to that between (\ref%
{eq 4.10}) and (\ref{4-15}) and that between (\ref{4-15}) and (\ref{4-18}),
that we still get the same coefficients as in (\ref{4-15}) and (\ref{4-18}).
Then we reach the following with \textit{the same coefficient} as in $(\ref%
{eq 4.11})$%
\begin{eqnarray}
&&%
\mbox{$ \underset{\tiny \alpha_{1},..,\alpha_{q+1}}{\sum}
\underset{j}{\sum}$}\big{(}({f^{\alpha _{1}..\alpha _{q+1}}}_{,\overline{j}%
\alpha _{q+1}}-{f^{\alpha _{1}..\alpha _{q+1}}}_{,\alpha _{q+1}\overline{j}%
})s\big{)}_{,j}\otimes (\overset{q}{\underset{k=1}{\otimes }}%
\mbox{$\frac{\partial}{\partial z^{\alpha_{k}}}$})_{\mid _{z=0}}\hspace*{%
140pt}  \label{7-4} \\
&&\overset{}{=}\frac{c}{2}(2q+n+1)(%
\mbox{$ \underset{\tiny
\alpha_{1},..,\alpha_{q+1}}{\sum}$}(f^{\alpha _{1}..\alpha
_{q+1}}s)_{,\alpha _{q+1}}\otimes (\overset{q}{\underset{k=1}{\otimes }}%
\mbox{$\frac{\partial}{\partial z^{\alpha_{k}}}$}))_{\mid _{z=0}}.  \notag
\end{eqnarray}%
\noindent Substituting $(\ref{7-4})$ into $(\ref{7-3})$ gives (\ref{B+cq+c})
of the proposition, where we have used $%
\mbox{$ \underset{\tiny
\alpha_{1},..,\alpha_{q+1}}{\sum}$}(f^{\alpha _{1}..\alpha
_{q+1}}s)_{,\alpha _{q+1}}$ $\otimes $ $(\overset{q}{\underset{k=1}{\otimes }%
}\mbox{$\frac{\partial}{\partial z^{\alpha_{k}}}$})$ $=$ $I^{(q+1)}\partial
^{-(q+1)}\mbox{$ \underset{\tiny
\alpha_{1},..,\alpha_{q+1}}{\sum}$}(f^{\alpha _{1}..\alpha _{q+1}}s)\otimes (%
\overset{q+1}{\underset{k=1}{\otimes }}%
\mbox{$\frac{\partial}{\partial
z^{\alpha_{k}}}$}).$
\end{proof}

\begin{proof}
(of Proposition \ref{Prop 4.4}) To prove $i),$ for $q\in \mathbb{N}$ write $%
u=f_{\alpha _{1}\cdot \cdot \cdot \text{ }\alpha _{q}}s\otimes (\otimes
_{j=1}^{q}dz^{\alpha _{j}})$ $\in $ $\Omega ^{0,0}(\mathbb{P}^{n},L\otimes
\odot ^{q}T^{\ast }).$ At $p$ with $g_{\alpha \bar{\beta}}(p)=\delta
_{\alpha \bar{\beta}}$ we obtain the first equality below by (\ref{2-36b})
and (\ref{2-36c}) (which works for K\"{a}hler manifolds), and then the
second equality by Lemma \ref{L-4-1} $i)$ and Lemma \ref{Lemma 2.1}:%
\begin{eqnarray}
&&(I^{(q-1)}\partial ^{(q-1)}\overline{I}^{q}\overline{\partial }^{q}-%
\overline{I}^{(q+1)}\overline{\partial }^{(q+1)}I^{q}\partial ^{q})u
\label{A1} \\
&=&\sum_{\alpha _{1}\cdots \alpha _{q},i}[(f_{\alpha _{1}\cdots \alpha _{q},%
\overline{\alpha }_{q}i}-f_{\alpha _{1}\cdots \alpha _{q},i\overline{\alpha }%
_{q}})s-f_{\alpha _{1}\cdots \alpha _{q}}s_{i\overline{\alpha }%
_{q}}]dz^{i}\otimes (\otimes _{j=1}^{q-1}dz^{\alpha _{j}})  \notag \\
&=&-\sum_{\alpha _{1}\cdots \alpha _{q},l,i}(f_{l\alpha _{2}\cdots \alpha
_{q}}R_{\overline{l}\alpha _{1}\overline{\alpha }_{q}i}+f_{\alpha
_{1}l\cdots \alpha _{q}}R_{\overline{l}\alpha _{2}\overline{\alpha }%
_{q}i}+\cdots +f_{\alpha _{1}\cdots \alpha _{q-1}l}R_{\overline{l}\alpha _{q}%
\overline{\alpha }_{q}i})sdz^{i}\otimes (\otimes _{j=1}^{q-1}dz^{\alpha
_{j}})  \notag \\
&&+B\sum_{\alpha _{1}\cdots \alpha _{q}}f_{\alpha _{1}\cdots \alpha
_{q}}s\otimes dz^{\alpha _{q}}\otimes (\otimes _{j=1}^{q-1}dz^{\alpha _{j}}).
\notag
\end{eqnarray}

\noindent It looks straightforward for one to sum up (\ref{A1}) by plugging
into the curvature $R_{i\overline{j}k\overline{l}}.$ This process is tedious
and similar to the summation in (\ref{4-15}) and (\ref{4-18}) up to sign.
Under this reasoning, it turns out that we obtain the similar coefficient as
that in (\ref{eq 4.11}) after replacing $q+1,c$ there by $q,$ $-c$ here.
Altogether, we have now (with the symmetric tensors)%
\begin{equation*}
(\ref{A1})=[-\frac{c}{2}(2q+n-1)+B]f_{\alpha _{1}\cdot \cdot \cdot \text{ }%
\alpha _{q}}s\otimes (\otimes _{j=1}^{q}dz^{\alpha _{j}}).
\end{equation*}

We turn now to $ii)$ of the proposition$.$ With (\ref{f Ricci 1}) and Lemma %
\ref{Lemma 2.1}%
\begin{eqnarray*}
&&(I^{-1}\partial ^{-1}\overline{I}^{0}\overline{\partial }^{0}-\overline{I}%
^{1}\overline{\partial }^{1}I^{0}\partial ^{0})(fs) \\
&=&\sum_{l}[(f_{,\overline{l}l}-f_{,l\overline{l}})s-fs_{l\overline{l}}]%
\overset{}{=}nBfs.
\end{eqnarray*}

For $iii),$ a reasoning similar to $i)$ works with only the sign change in
front of $c$ by using (\ref{f Ricci 2}) rather than (\ref{f Ricci 1}).
\end{proof}

\begin{proof}
(of Proposition \ref{Prop 4.5}) For $i),$ $\Delta ^{q}-\Delta _{q}$ is given
by $i)$ and $ii)$ of Lemma \ref{Lemma 2.2} and (\ref{f Ricci 1}):%
\begin{eqnarray}
&&(\Delta ^{q}-\Delta _{q})f_{\alpha _{1}\cdots \alpha _{q}}s\otimes
(\otimes _{j=1}^{q}dz^{\alpha _{j}})  \label{A2} \\
&=&\sum_{\alpha _{1}\cdots \alpha _{q},i}\{(f_{\alpha _{1}\cdots \alpha _{q},%
\overline{i}i}-f_{\alpha _{1}\cdots \alpha _{q},i\overline{i}})s-f_{\alpha
_{1}\cdots \alpha _{q}}s_{i\overline{i}}\}\otimes (\otimes
_{j=1}^{q}dz^{\alpha _{j}})  \notag \\
&=&-\sum_{\alpha _{1}\cdots \alpha _{q},i,l}\big{(}f_{l\alpha _{2}\cdots
\alpha _{q}}R_{\overline{l}\alpha _{1}\overline{i}i}+f_{\alpha _{1}l\alpha
_{3}\cdots \alpha _{q}}R_{\overline{l}\alpha _{2}\overline{i}i}...+f_{\alpha
_{1}\cdots \alpha _{q-1}l}R_{\overline{l}\alpha _{q}\overline{i}i}\big{)}%
s\otimes (\overset{q}{\underset{k=1}{\otimes }}dz^{\alpha _{k}})  \notag \\
&&-\sum_{\alpha _{1}\cdots \alpha _{q},i}f_{\alpha _{1}\cdots \alpha
_{q}}s_{i\overline{i}}\otimes (\otimes _{j=1}^{q}dz^{\alpha _{j}}).  \notag
\end{eqnarray}

\noindent The remaining reasoning is much simpler than the computation of (%
\ref{eq 4.10}) and (\ref{A1}). We compute it directly as follows. Observe
that $f_{l\alpha _{2}\cdots \alpha _{q}}R_{\overline{l}\alpha _{1}\overline{i%
}i}\neq 0$ only when $l=\alpha _{1},$ and in that case it equals $cf_{\alpha
_{1}\cdots \alpha _{q}}$ if $i=\alpha _{1}$ and $\frac{c}{2}f_{\alpha
_{1}\cdots \alpha _{q}}$ if $i\neq \alpha _{1}.$ Then $\sum_{i,l}f_{l\alpha
_{2}\cdots \alpha _{q}}R_{\overline{l}\alpha _{1}\overline{i}i}$ $=$ $%
f_{\alpha _{1}\cdots \alpha _{q}}(c+\frac{c}{2}(n-1))$ $=$ $\frac{c}{2}%
(n+1)f_{\alpha _{1}\cdots \alpha _{q}}.$ Altogether, we get $\Delta
^{q}-\Delta _{q}$ $=$ ($-\frac{c}{2}(n+1)q+nB)Id,$ giving $i).$

For $ii),$ when $q=0$ there are no curvature terms. Only $s_{i\overline{i}}$
in (\ref{A2}) contribute.

For $iii),$ as similar to $i)$ one uses (\ref{f Ricci 2}) in place of (\ref%
{f Ricci 1}), giving a sign change for the curvature terms.
\end{proof}

\begin{proof}
(of Proposition \ref{9} $ii),$ $iii))$ One sees that the two equalities of
Proposition \ref{9} $ii)$ (resp. $iii))$ correspond to $ii)$ of Proposition\ %
\ref{Prop 4.5} and $ii)$ of Propositions \ref{Prop 4.4} (resp. $iii)$ of
Proposition\ \ref{Prop 4.5} and $iii)$ of Propositions \ref{Prop 4.4}). The
proofs of Propositions \ref{Prop 4.4} and \ref{Prop 4.5} as given above
carry over in a much simpler way on the Abelian variety $M$ (as it is flat).

%\%
%
%For $ii)$ when $q=0$ one needs to be cautious as the contractions $I^{-1}$
%and $\bar{I}^{0}$ are defined in a specific way. Via (\ref{I-1}) and (\ref%
%{I-0}) we compute (at $p$ with $g_{i\overline{j}}(p)=\delta _{i\overline{j}})
%$%
%\begin{equation*}
%I^{-1}\partial ^{-1}\overline{I}^{0}\overline{\partial }^{0}(fs)=f_{,%
%\overline{j}j}s+f_{,\overline{j}}s_{j}
%\end{equation*}
%
%\noindent while%
%\begin{equation*}
%\overline{I}^{1}\overline{\partial }^{1}I^{0}\partial ^{0}(fs)=f_{,j%
%\overline{j}}s+f_{,\overline{j}}s_{j}+fs_{j\overline{j}}.
%\end{equation*}
%
%\noindent The difference gives the second equality by (C.C.). The first
%equality follows from the difference between $i)$ and $ii)$ of Lemma \ref%
%{Lemma 2.2} with $q=0$ and (C.C.). We have proved $ii).$
%
%For $iii)$ the proof is similar to the one of $i)$ with $T^{\ast }$ replaced
%by $T.$
%
%\%
\end{proof}

\bigskip


\begin{thebibliography}{Kar07(a)}
\bibitem[AHHO23]{AHHO23} A. Ashmore, Y-H. He, E. Heyes and B. A. Ovrut, 
\textit{Numerical spectra of the Laplacian for line bundles on Calabi-Yau
hypersurfaces}, JHEP \textbf{07} (2023) 164.

\bibitem[AP06]{AP06} A. L. Almorox and C. T. Prieto, \textit{Holomorphic
spectrum of twisted Dirac operators on compact Riemann surfaces}, J. Geom.
Phys. \textbf{56} (2006) 2069-2091.

\bibitem[ASY87]{ASY87} J. E. Avron, R. Seiler and L. G. Yaffe, \textit{%
Adiabatic theorems and applications to the quantum Hall effect}, \ Commun.
Math. Phys. \textbf{110} (1987) 33-49.

\bibitem[BL99]{BL99} C. Birkenhake and H. Lange, \textit{Complex tori},
Progr. Math. 177 Birkh\"{a}user, Boston 1999.

\bibitem[BF86]{BF86} J.-M. Bismut and D. S. Freed, \textit{The analysis of
elliptic families. I. metrics and connections on determinant bundles},
Commun. Math. Phys. \textbf{106} (1986) 159-176.

\bibitem[BH58]{BH58} A. Borel and F. Hirzebruch, \textit{Characteristic
classes and homogeneous spaces} I, Amer. J. Math. \textbf{80} (1958) 458-538.

\bibitem[BS23]{BS23} D. Bykov and A. Smilga, \textit{Monopole harmonics on }$%
CP^{n-1},$ SciPost Phys. 15, 195 (2023); arXiv:2302.11691 [inSPIRE].

\bibitem[BT82]{BoTu82} R. Bott and L. W. Tu, \textit{Differenital forms in
algebraic topology}, Springer-Verlag New York 1982.

\bibitem[CCT22]{CCT22} C.-H. Chang, J.-H. Cheng and I-H. Tsai, \textit{Theta
functions and adiabatic curvature on an elliptic curve}, The Journal of
Geometric Analysis \textbf{32}: 84 (2022) No. 3; \textit{Correction to Theta
Functions and Adiabatic Curvature on an Elliptic Curve}, The Journal of
Geometric Analysis \textbf{32}: 208 (2022) No. 7.

\bibitem[CCT24]{CCT24} C.-H. Chang, J.-H. Cheng and I-H. Tsai, \textit{Theta
functions and adiabatic curvature on an Abelian variety}, The Journal of
Geometric Analysis \textbf{34}:136 (2024).

\bibitem[Cha24]{CH24} L. Charles, \textit{Landau levels on a compact manifold%
}, Annales Henri Lebesgue \textbf{7 }(2024) 69-121.

\bibitem[DEP05]{DEP05} J.-P. Demailly, T. Eckl and T. Peternell, \textit{%
Line bundles on complex tori and a conjecture of Kodaira}, Comment. Math.
Helv. \textbf{80} (2005) 229-242.

\bibitem[DFN84]{DFN84} B. A. Dubrovin, A.T. Fomenko and S. P. Novikov, 
\textit{Modern geometry--methods and applications, Part I. the geometry of
surfaces, transformation groups, and fields}, GTM 93 Springer-Verlag New
York Berlin Heidelberg Tokyo 1984.

\bibitem[Dra04]{Dr04} B. Dragovich,\textit{\ Adelic harmonic oscillator},
arXiv: hep-th/0404160v1, 21 Apr 2004; see also B. Dragovich, \textit{Adelic
harmonic oscillator,} International Journal of Modern Physics A. \textbf{10}
(16) (1995) 2349-2365.

\bibitem[Ful84]{Ful84} W. Fulton \textit{Intersection theory, 2nd edition},
Springer-Verlag Berlin Heidelberg 1984.

\bibitem[GH84]{GH84} P. Griffiths and J. Harris, \textit{Principles of
algebraic geometry}, Wiley Classic Library 1984.

\bibitem[Har77]{Har77} R. Harshorne,\textit{\ Algebraic geometry}, G.T.M. 
\textbf{52}, Springer-Verlag New York 1977.

\bibitem[Hel00]{Hel00} S. Helgason, \textit{Groups and geometric analysis},
Mathematical Surveys and Monographs, Vol 83, 1984 (2000 printing).

\bibitem[Hel08]{Hel08} S. Helgason, \textit{Geometric analysis on symmetric
spaces}, 2nd ed., Mathematical Surveys and Monographs, Vol 39, 2008.

\bibitem[Huy05]{Huy05} D. Huybrechts. \textit{Complex geometry an
introduction}, Springer-Verlag Berlin Heidelberg 2005.

\bibitem[JL09]{JL09} M. Jardim and R. F. Le\~{a}o, \textit{On the spectrum
of the twisted Dolbeault Laplacian over K\"{a}hler manifolds}, Differential
Geometry and its Applications \textbf{27 }(2009) 412-419.

\bibitem[Kar07(a)]{Kar07(a)} M. V. Karasev, \textit{Magneto-metric
Hamiltonians on quantum surfaces in the configuration space}, Russian
Journal of Mathematical Physics \textbf{14} (2007) 57-65.

\bibitem[Kar07(b)]{Kar07(b)} M. V. Karasev, \textit{Geometric dynamics on
quantum nano-surfaces and low-energy spectrum in a homogeneous magnetic field%
}, Russian Journal of Mathematical Physics \textbf{14} (2007) 440-447.

\bibitem[Kato50]{Kato50} T. Kato, On the adiabatic theorem of quantum
mechanics, I.Phys. Soc. Japan \textbf{5} (1950) 435-439.

\bibitem[KN96]{Kob96} S, Kobayashi and K. Nomizu, \textit{Foundations of
differential geometry, Vol 2}, Wiley-Interscience 1996.

\bibitem[Kod86]{Kod86} K. Kodaira, \textit{Complex manifolds and deformation
of complex structures}, Springer-Verlag New York 1986.

\bibitem[Kuw88]{Kuw88} R. Kuwabara, \textit{Spectrum of the Schr\"{o}dinger
operator on a line bundle over complex projective spaces}, T\^{o}hoku Math.
J. \textbf{40} (1988) 199-211.

\bibitem[Lev95]{Lev95} P. L\'{e}vay, \textit{Berry phase for Landau
Hamiltonians on deformed tori}, J. Math. Phys. \textbf{36} (1995) No. 6,
2792-2802.

\bibitem[MM07]{MM07} X. Ma and G. Marinescu, \textit{Holomorphic Morse
inequalities and Bergman kernels}, Birkh\"{a}user 2007.

\bibitem[Man97]{Mani97} L. Manivel, \textit{Vanishing theorems for ample
vector bundles}, Invent. Math. \textbf{127} (1997) 401-416.

\bibitem[Mok87a]{Mok87a} Ngaiming Mok, \textit{Uniqueness theorems of
Hermitian metrics of seminegative curvature on quotients of bounded
symmetric domains}, Ann. of Math. \textbf{125} (1987) 105-152.

\bibitem[Mok87b]{Mok87b} Ngaiming Mok, \textit{Uniqueness theorems of K\"{a}%
hler metrics of semipositive bisectional curvature on compact Hermitian
symmetric spaces}, Math. Ann. \textbf{276} (1987) 177-204.

\bibitem[Mok89]{Mok89} Ngaiming Mok, \textit{Metric rigidity theorems on
hermitian locally symmetric manifolds}, World Scintific Publishing Co Pte
Ltd. 1989.

\bibitem[Pri06(a)]{Prieto06(a)} C. T. Prieto, \textit{Holomorphic spectral
geometry of magnetic Schr\"{o}dinger operator of Riemann surfaces},
Differential Geometry and its Application \textbf{24 }(2006) 288-310.

\bibitem[Pri06(b)]{Prieto06(b)} C. T. Prieto, \textit{Fourier-Mukai
transform and adiabatic curvature of spectral bundles for Landau
Hamiltonians on Riemann surfaces}, Commun. Math. Phys. \textbf{265} (2006)
373-396.

\bibitem[Roq70]{Ro70} P. Roquette, \textit{Analytic theory of elliptic
functions over local fields}, Vandenhoeck \& Ruprecht in G\"{o}ttingen 1970.

\bibitem[Spi99]{Spivak99} M. Spivak, \textit{A comprehensive introduction to
differential geometry Vol.2, third edition}, Publish or Perish 1999.

\bibitem[Tau11]{Taubes11} C. Taubes, \textit{Differential geometry: bundles,
connections, metrics and curvature}, Oxford Gaduate Texts in Mathematics 23,
2011.

\bibitem[Ter85]{Ter85} A. Terras, \textit{Harmonic analysis on symmetric
spaces and applications I}, Springer-Verlag, 1985.

\bibitem[Ter88]{Ter88} A. Terras, \textit{Harmonic analysis on symmetric
spaces and applications II}, Springer-Verlag, 1988.

\bibitem[VVZ94]{VVZ94} V. S. Vladimirov, I. V. Volovich and E.I. Zelenov, $%
\mathit{P}$\textit{-adic analysis and mathematical physics}, World
Scientific Singapore 1994.

\bibitem[Wel08]{Wells08} R.O. Wells, Jr., \textit{Differential analysis on
complex manifolds, 3rd edition}, GTM 65 Springer 2008.

\bibitem[ZG24]{ZG24} W. A. Z\'{u}\~{n}iga-Galindo, \textit{The }$\mathit{p}$%
\textit{-Adic Schr\"{o}dinger equation and the two-slit experiment in
quantum mechanics}, arXiv: quant-ph/2308.01283v3, 4 Jul 2024.
\end{thebibliography}
\end{document}